\newcommand{\noun}[1]{\textsc{#1}}
\newcounter{mref}
\newcommand{\bitem}[1]{\addtocounter{mref}{1}\bibitem[\themref]{#1}}
\newenvironment{Referencias}
{\vspace{6mm}\setcounter{mref}{0}\noindent{\bf References}\smallskip\small\begin{enumerate}}	
{\end{enumerate}\vspace{6mm}}
\newtheorem{prop}{Proposition}[section]
\newtheorem{teor}[prop]{Theorem}
\newtheorem{corol}[prop]{Corollary}
\newtheorem{lema}[prop]{Lemma}
\theoremstyle{definition}
\newtheorem{definicion}[prop]{Definition}
\newcommand{\f}[5]{
\begin{array}{rcl}
#1:#2 & \longrightarrow & #3 \\
#4 & \longmapsto & #5  \\
\end{array}
}
\newcommand{\R}{\mathbb R}
\newcommand{\Q}{\mathbb Q}
\newcommand{\N}{\mathbb N}
\newcommand{\Z}{\mathbb Z}
\newcommand{\C}{\mathbb C}
\newcommand{\Id}{\mathrm{id}}
\renewcommand{\phi}{\varphi}
\renewcommand{\P}{\mathbb P}
\newcommand{\eps}{\varepsilon}
\newcommand{\zn}[1]{\Z/ #1\Z}
\newcommand{\Zn}[1]{\frac{\Z}{#1\Z}}
\newcommand{\rd}{\sqrt[3]{2}}
\newcommand{\puntoblanco}[1]{
\draw [color=black] #1 circle (1.5pt);
\fill[color=white] #1 circle (1.3pt);
}
\newcommand{\puntonegro}[1]{
\fill [color=black] #1 circle (1.5pt);
}
\newcommand{\Fd}{\widehat F_2}
\newcommand{\CC}{\mathcal C}
\newcommand{\gal}{\mathrm{Gal}\left(\overline \Q/\Q\right)}
\newcommand{\reg}{\widetilde{(C,f)}}
\newcommand{\Aut}{\mathrm{Aut}}
\newcommand{\wt}{\widetilde}
\newcommand{\D}[1]{\wt{D_#1}}
\newcommand{\oB}{\overline B}
\newcommand{\K}{\overline K}
\newcommand{\F}{\mathbb F}
\newcommand{\QQ}{\overline{\Q}}
\newcommand{\lra}{\longrightarrow}
\newcommand{\Llra}{\Longleftrightarrow}
\DeclareMathOperator{\ord}{ord}
\newcommand{\m}{\mathfrak m}
\renewcommand{\O}{\mathcal O}
\newcommand{\cor}{\mathrm{Core}}
\newcommand{\KK}{\mathcal K}
\newcommand{\Gal}{\mathrm{Gal}}
\newcommand{\gall}{\Gal(\KK/\Q(t))}
\newcommand{\wh}[1]{\widehat{#1}}
\newcommand{\tl}{\triangleleft}
\renewcommand{\o}[1]{\overline{#1}}
\newcommand{\Inn}{\mathrm{Inn}}
\newcommand{\Out}{\mathrm{Out}}
\newcommand{\M}{\mathcal M}
\newcommand{\tri}{\wh{\Delta(l,m,n)}}
\renewcommand{\H}{\mathbb H}
\newcommand{\zz}{\langle z\rangle}
\newcommand{\ZZZ}{\wh \Z(1)}
\newcommand{\GK}{\Gal(\QQ/K)}
\newcommand{\HRule}{\rule{\linewidth}{0.5mm}}
\title{\Huge Dessins d'enfants}
\author{\Large Moisés Herradón Cueto\\\Large Advisor: Andrei Jaikin Zapirain}
\date{\today}
\begin{document}
\begin{titlepage}
\begin{center}

~
\vspace{7 cm}

\textsc{\LARGE Universidad Autónoma de Madrid}\\[1.5cm]

\textsc{\Large Trabajo de Fin de Máster}\\[0.5cm]

\HRule \\[0.4cm]
{ \huge \bfseries The field of moduli and fields of definition of dessins d'enfants \\[0.4cm] }

\HRule \\[1.5cm]

\begin{minipage}{0.4\textwidth}
\begin{flushleft} \large
\emph{Author:}\\
Moisés \textsc{Herradón Cueto}
\end{flushleft}
\end{minipage}
\begin{minipage}{0.4\textwidth}
\begin{flushright} \large
\emph{Advisor:} \\
Andrei \textsc{Jaikin Zapirain}
\end{flushright}
\end{minipage}
\\
\vspace{3 cm}

{\small MSC: 11G32, 14H57\\
Keywords: Dessins d'enfants, algebraic curves, field of definition, field of moduli}
\vfill

{\large June 20, 2014}

\end{center}
\end{titlepage}
\topskip0pt
\vspace*{\fill}
\section*{\centering \begin{normalsize}Abstract\end{normalsize}}
\begin{quotation}
\noindent We introduce dessins d'enfants from the various existing points of view: As topological covering spaces, as surfaces with triangulations, and as algebraic curves with functions ramified over three points. We prove Belyi's theorem that such curves are defined over number fields, and define the action of the Galois group $\gal$ on dessins d'enfants. We prove that several kinds of dessins d'enfants are defined over their field of moduli: regular dessins, dessins with no nontrivial automorphisms and dessins with one face. In the last part, we give two examples of regular dessins d'enfants with a field of moduli that is not an abelian extension of $\Q$. Both of the examples have genus 61 and field of moduli $\Q(\rd)$.
\end{quotation}
\section*{\centering \begin{normalsize}Resumen\end{normalsize}}
\begin{quotation}
\noindent Introducimos los dessins d'enfants desde los distintos puntos de vista existentes: como espacios recubridores, como superficies con triangulaciones y como curvas algebraicas con funciones ramificadas sobre tres puntos. Probamos el teorema de Belyi, que dice que tales curvas se pueden definir sobre cuerpos de números, y definimos la acción del grupo de Galois $\gal$ sobre los dessins. Probamos que varios tipos de dessins d'enfants están definidos sobre su cuerpo de moduli: los dessins regulares, los que no tienen automorfismos no triviales, y los que sólo tienen una cara. En la última parte, damos dos ejemplos de dessins d'enfants regulares con cuerpo de moduli que no es una extensión abeliana de $\Q$. Ambos ejemplos tienen género 61 y cuerpo de moduli $\Q(\rd)$.
\end{quotation}
\vspace*{\fill}

\chapter*{Introduction}

There are many ways to define dessins d'enfants. The first one is as a compact orientable surface with a graph embedded in it, such that its vertices can be bicolored, and the faces are homeomorphic to disks. From here, one can divide the surface in triangles, by choosing a point on each face and joining it to the vertices of the face. Using these triangles, one can then define a covering map from the surface onto the sphere, by dividing the sphere into two triangles (the hemispheres) and then mapping the triangles in the original surface to the ones in the sphere. For example, one could proceed like this:

\begin{figure}[h!]
\centering
\includegraphics[width=0.8\textwidth]{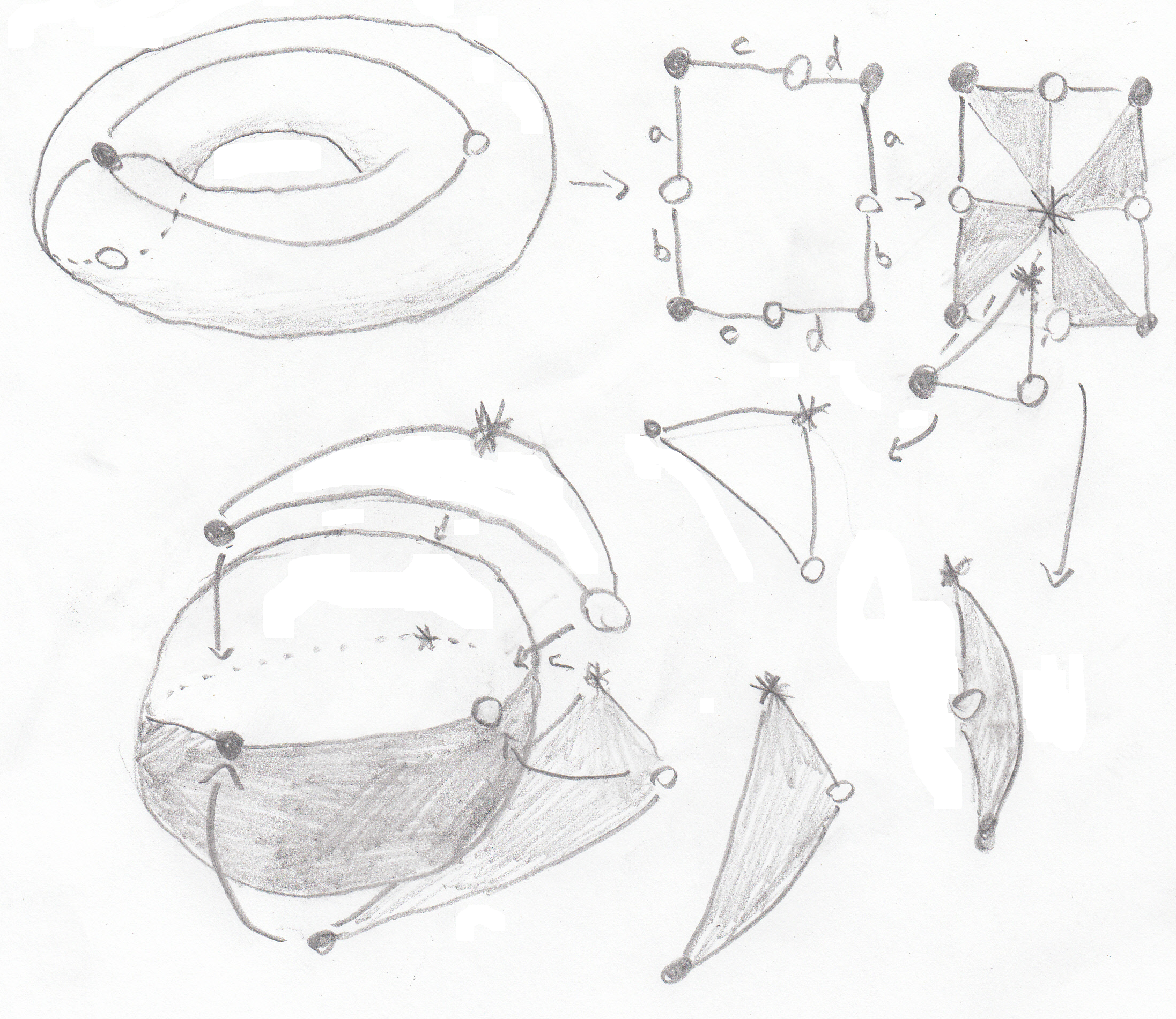}
\caption{The graph on the torus gives rise to a triangulation. We can then chop the torus up into 8 triangles and produce a 4-sheeted covering of the sphere.}\label{dibujito}
\end{figure}
The fun part starts when one takes the complex structure that comes from seeing the sphere as $\P^1(\C)$, and realizes that this structure determines a complex structure on the surface, i.e. it makes it into a Riemann surface. Also, every compact Riemann surface is isomorphic to some complex algebraic curve, so each dessin d'enfant can be interpreted as a covering, and this covering is actually a morphism of algebraic curves, so it can all be interpreted algebraically. What we end up obtaining is a bijection between dessins d'enfants and complex algebraic curves with a map to $\P^1(\C)$ that is ramified over $\{0,1,\infty\}$, which is called a Belyi map. For example, if we lift the conformal structure to the torus in figure \ref{dibujito}, we obtain the plane curve with equation $y^2=x^3-x$, and the covering map is the map $(x,y)\mapsto x^2$.

There's more: these equations that determine the algebraic curves and the functions can actually be written with coefficients not in $\C$, but in $\QQ$. This is one part of what is called Belyi's theorem. The other part, which Belyi proved in 1980 in \cite{belyi}, is that every curve defined over the algebraic numbers has maps that are ramified over three points. This theorem profoundly impressed Grothendieck, and led him to define dessins d'enfants, which he introduced in his Esquisse d'un Programme \cite{gro}.

The interest of dessins d'enfants lies in that we have simple combinatorial objects which are equivalent to curves with maps over $\QQ$. Given a curve and a Belyi map, we can take $\gal$ and make it act on the coefficients of their defining equations. What we obtain is sometimes a different dessin d'enfant. In fact, the action of $\gal$ on the set of dessins is faithful, that is, there are no Galois automorphisms that fix every dessin d'enfant. Thus, dessins d'enfants can be a tool for studying $\gal$. Actually, one can use dessins d'enfants to embed $\gal$ into other groups, like $\Out(\Fd)$ or $\Aut(\Fd)$.

In this work I have tried to present dessins d'enfants for someone with my own background. This means a really poor background in algebraic geometry, some algebra, but enough knowledge of group theory, covering spaces and Galois theory of finite extensions. The main criterion I have followed when choosing which proofs to include, and which approach to take, was based on my own background. From my point of view, a great part of the beauty of dessins d'enfants is that they are related to many different areas, such as algebraic geometry, complex geometry, topology, group theory, Galois theory and number theory. Thus, many results, especially in Part 1, can be proven using several of the points of view one can use in dessins d'enfants. I have no doubt that the reader with some knowledge of algebraic geometry will find many propositions trivial, had I not wandered around trying to prove it using groups. On the plus side, for me, and hopefully for readers with similar background as me, this thesis has meant that I have seen many theorems in algebraic geometry and I have been able to understand them and prove them using techniques that I already know and understand (although at the cost of proving them in a very specific setting). Despite the lack of generality, I think this has given me some great ``feel'' of algebraic geometry, especially for things like the étale fundamental group, and object which, at the time of writing, I do not know how to define.

With this in mind, in Part 1 there are the different definitions of dessins d'enfants, and the proof for the equivalence between them, in an order which I personally find natural. There are many expositions of this, so the reader can try Pierre Guillot's survey \cite{Guillot} for a combinatorics-based approach, or Girondo and González's book \cite{GG} for an approach from Riemann surfaces, where one can also find the uniformization approach, which is really fruitful and isn't included here. Another great reference is the book by Lando and Zvonkin \cite{Lando}. In the next part, the Galois action on dessins d'enfants is defined, and we prove Belyi's Theorem. We prove the ``obvious'' part like \cite{GG}, which helps us to avoid hardcore algebraic geometry concepts like the ones used in Weil's paper \cite{weil}. Then, we define the embedding of $\gal$ in $\Aut(\Fd)$ and $\Out(\Fd)$, and the field of moduli and the field of definition of a dessin.

The field of moduli of a dessin is the fixed field of the subgroup of $\gal$ that leaves it invariant, and a field of definition is a number field such that the dessin can be given by equations with coefficients in this number field. Our main problem is, what is the relationship between these fields? It is clear that the field of moduli will be contained in any field of definition. However, the field of moduli isn't always a field of definition. We prove some cases where the field of moduli is the field of definition. It follows from Weil's result, and it can be seen in \cite{abc} for dessins d'enfants, that regular dessins d'enfants, that is, dessins d'enfants whose automorphism group acts transitively on the edges, and dessins d'enfants without nontrivial automorphisms are both defined over their fields of moduli. Also, it is known (see \cite{Lando}) that dessins that are trees on the sphere are defined over their field of moduli. I present a proof of the slight generalization that dessins with one face, on any surface, can be defined over their field of moduli. This proof is due to my advisor, Andrei Jaikin.

In the last part, we turn our attention to the problem of finding a regular dessin whose field of moduli is not an abelian extension, which appears in \cite{cojowo}. We construct one example of such a dessin, and we use it to give examples of the points of view explained in parts 1 and 2. Also, we show that the curve we construct has itself the same field of moduli. Finally, we comment on another example of a regular dessin with non abelian field of moduli that appeared in \cite{voed} and \cite{malle}.

I am grateful to my advisor, Andrei Jaikin, for many insightful discussions, which I have struggled to follow but I have finally learned a lot from them. I also want to thank my friends, girlfriend and family for virtually everything.

My master's degree has been supported by the program Posgrado de Excelencia Internacional of the Autonomous University of Madrid.

\tableofcontents
\chapter{Various definitions of dessins d'enfants}
\section{Dessins as covering maps and as holomorphic maps}
There are many equivalent ways to define dessins d'enfants. We are going to give definitions closely related to the ones in \cite{Guillot}, which come from the theory of covering spaces and combinatorics. We are going to eventually see that dessins can also be defined as algebraic curves defined over the algebraic numbers with a choice of a meromorphic function with some conditions. To do this from the point of view of topology, we will always see the sphere as the projective complex line $\P^1=\P^1(\C)$.

\begin{definicion}
Let $\P^1\setminus \{0,1,\infty\}$ be the sphere with three points removed. A \textbf{dessin d'enfant} is a connected covering $\phi:S\lra \P^1\setminus \{0,1,\infty\}$ of finite degree.
\end{definicion}

\noindent For example, the identity map of the sphere minus three points is a dessin, which we call the trivial dessin. We can add more structure to a dessin d'enfant: we can give the sphere a complex structure, since we can see it as $\P^1$. Then, a complex structure on the sphere determines a complex structure on the covering surface. This complex structure in the sphere isn't affected by our choice of three points $\{0,1,\infty\}$, since the Möbius transformation $z\longmapsto \frac{z-z_0}{z-z_\infty}\frac{z_1-z_\infty}{z_1-z_0}$ sends any three points $\{z_0,z_1,z_\infty\}$ to $\{0,1,\infty\}$ and it is biholomorphic.

So a dessin is equivalent to choosing a covering of $\P^1\setminus \{0,1,\infty\}$, which will be holomorphic once we pick a suitable complex structure in the covering surface. Now, the key observation is that one can compactify the covering surface and extend the covering map $\phi$ to produce a holomorphic map from a compact Riemann surface into the Riemann sphere.



\begin{prop}
{cubierta}
Let $\phi:S\lra\P^1\setminus\{0,1,\infty\}$ be a dessin d'enfant. There exists a compact Riemann surface $\overline S$ and a holomorphic map $\overline \phi:\overline S\lra \P^1$, such that
\begin{enumerate}
\item $S$ is an open set of $\overline S$.
\item $\overline S\setminus S$ is a finite set.
\item $\overline \phi|_S=\phi$.
\item All the ramification points of $\overline \phi$ lie in the preimage of $\{0,1,\infty\}$.
\end{enumerate}
And any other compact Riemann surface $\o{S}'$ with a map $\o{\phi}':\o{S}'\lra \P^1$ satisfying the same conditions is biholomorphic to $\o{S}$, by a biholomorphism $\Psi$ such that $\o{\phi}'=\o{\phi}\circ\Psi$.
\end{prop}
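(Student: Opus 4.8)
The plan is to equip $S$ with a complex structure that makes $\phi$ holomorphic, and then to compactify by adjoining one point for each ``end'' of $S$ lying over $0$, $1$ or $\infty$, using that finite connected coverings of a punctured disk are completely understood. First I would transport the complex structure of $\P^1\setminus\{0,1,\infty\}$ to $S$ through $\phi$: since a covering is a local homeomorphism, the charts downstairs pull back to charts on $S$, and the transition maps are holomorphic because $\phi$ is locally a homeomorphism onto an open set of $\P^1$. This makes $S$ a Riemann surface and $\phi$ a local biholomorphism, in particular holomorphic and unramified at every point of $S$.

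Next comes the local analysis at the three punctures. I would choose pairwise disjoint coordinate disks $D_0,D_1,D_\infty\subset\P^1$ centred at $0,1,\infty$, and for $p\in\{0,1,\infty\}$ set $D_p^*=D_p\setminus\{p\}$. Restricting $\phi$ gives a finite covering of $D_p^*$, and since $\pi_1(D_p^*)\cong\Z$ every connected finite covering of $D_p^*$ is isomorphic to the standard one $z\mapsto z^n$. Hence each of the finitely many connected components $U$ of $\phi^{-1}(D_p^*)$ is biholomorphic to a punctured disk carrying a coordinate $w$ in which $\phi$ reads $w\mapsto w^n$ (in the local coordinate at $p$). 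I would then form $\o S$ by adjoining one new point ``$w=0$'' to each such $U$, declaring $w$ a holomorphic coordinate there and extending $\o\phi$ by the same formula $w\mapsto w^n$, with value $p$ at the new point. By construction $S$ is open in $\o S$, the complement $\o S\setminus S$ is finite, $\o\phi|_S=\phi$, and the only points where $\o\phi$ ramifies are the new points with $n>1$, all lying over $\{0,1,\infty\}$; this gives $(1)$--$(4)$. Compactness follows by writing $\P^1$ as the union of the compact set $K=\P^1\setminus(D_0\cup D_1\cup D_\infty)$ with the three disks: $\phi^{-1}(K)$ is compact because a finite covering is proper, and over each disk $\o S$ consists of finitely many filled disks with compact closure, so $\o S$ is a finite union of compact sets; it is connected because $S$ is connected and dense.

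For uniqueness, the key observation is that the components of $\phi^{-1}(D_p^*)$ --- the ends of $S$ over the three points --- are intrinsic to the pair $(S,\phi)$, and that conditions $(1)$--$(4)$, together with the local normal form of holomorphic maps and the surjectivity of $\phi$, force any admissible compactification to cap each such end over $\{0,1,\infty\}$ by exactly one point: a sequence running out of an end must converge in the compact $\o S$ to a point of the complement, and the local form $z\mapsto z^k$ shows this point caps that single end. This sets up a canonical bijection between $\o S\setminus S$ and $\o{S}'\setminus S$. I would define $\Psi$ to be the identity on $S$ and extend it by this bijection; near a capping point both surfaces are a disk whose coordinate is matched to $\o\phi=w^n$, so the extension is biholomorphic, and $\Psi$ is a global biholomorphism whose inverse is built the same way. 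Finally $\o{\phi}'\circ\Psi=\o\phi$ holds on the dense set $S$, hence everywhere.

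The main obstacle is the local model at the punctures: the whole proposition rests on classifying the finite connected covers of $D_p^*$ as the maps $z\mapsto z^n$ and transferring this to a holomorphic chart. The uniqueness statement is then largely bookkeeping, the one genuinely delicate point being to argue that the ends are intrinsic and that conditions $(1)$--$(4)$ force a one-point capping only over $\{0,1,\infty\}$, rather than invoking a removable-singularity theorem for maps into $\o{S}'$ directly, which could fail for maps with essential singularities if one dropped the control coming from $\o\phi$.
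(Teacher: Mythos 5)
Your construction half is essentially the paper's: transport the complex structure through the covering, classify the finite connected covers of a punctured disk as $w\mapsto w^n$ using $\pi_1\cong\Z$, adjoin one point to each component of $\phi^{-1}(D_p^*)$, and extend the map in the obvious way; your compactness argument (properness over a compact core of $\P^1$ plus compact closed sub-disks over the punctures) differs only cosmetically from the paper's direct open-cover argument. The genuine divergence is in uniqueness. The paper settles it in two sentences: the complex structure on $S$ is forced, the complements are finite, and ``holomorphic functions can be extended to isolated points.'' Taken literally that is too quick, since a holomorphic map from a punctured disk into a compact Riemann surface need not extend across the puncture (consider $z\mapsto e^{1/z}$ into $\P^1$); one must use the compatibility with $\o{\phi}'$ to exclude essential singularities, which is exactly the point you raise. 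Your end-capping argument makes this explicit and is the right fix, but it contains one step that you flag without proving and that genuinely needs proof, namely that a single end cannot be capped by two distinct points of $\o{S}'\setminus S$; this is where your bijection could a priori fail, and it is not bookkeeping. It does follow from the local normal form: if $q_1\neq q_2$ both capped the end $U\cong\{0<|w|<\rho\}$, they would have disjoint punctured neighborhoods $N_1,N_2\subset U$, yet for small $\delta$ each $N_i$ contains a circle mapping onto the Jordan curve $\{|\zeta|=\delta\}$ around $p$ (where $\zeta$ is the target coordinate of the normal form at $q_i$), and each such circle must equal the entire preimage of that curve in $U$ --- the preimage is a single circle because the curve generates $\pi_1(D_p^*)$ and $U\lra D_p^*$ is a finite cover, and an injective continuous map of a circle into a circle is onto --- contradicting disjointness of $N_1$ and $N_2$. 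With that lemma inserted your proof is complete, and its uniqueness half is more careful than the paper's own.
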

\begin{proof}
There is an obvious complex structure on $S$, defined as follows: for each point $p\in S$, let $U$ be a simply connected neighborhood of $\phi(p)$. Now, since $U$ is simply connected and $\phi$ is a covering map, $\phi^{-1}(U)$ will be a finite disjoint union of open sets homeomorphic to $U$. Let $U_p$ be the one containing $p$. $\phi|_{U_p}$ is a homeomorphism, and therefore we have a chart $(U_p,\phi|_{U_p})$. Any two of these charts are compatible, since the transition maps between different charts are always the identity maps of some sets of $\P^1$.

We need to add some points to $S$ now in order to compactify it. Let us build the preimage of $0$. Let $U$ be the punctured disc of radius $1/2$ around $0$. If we restrict $\phi$ to the preimage of $U$, which is an open set of $\P^1\setminus \{0,1,\infty\}$, it will also be a covering. Therefore, we can split $\phi^{-1}(U)$ into its connected components, which are open, since $S$ is locally connected (it is a surface, after all). Let these connected components be $U_1,\ldots ,U_m$. For each $i$, $\phi_i=\phi|_{U_i}$ is a covering map of $U$. Now, $U$ has the homotopy type of a circle, and every covering of a circle is of the form $z\mapsto z^n$ for some $n\in \mathbb Z\setminus\{0\}$. Via the homotopy equivalence between the circle and $U$, the covering of the circle is equivalent to the map $z\mapsto 2^{n-1}z^n$. This means that there is a homeomorphism $\psi:U_i \lra U$ such that the following diagram commutes:
\begin{center}\hspace{0.5 cm}
\begindc{\commdiag}[40]
\obj(-1,1)[a]{$U_i$}
\obj(1,1)[b]{$U$}
\obj(0,0)[c]{$U$}
\mor{a}{b}{$\psi$}
\mor{a}{c}{$\phi_i$}
\mor{b}{c}{$\pi=z\mapsto 2^{n-1}z^n$}
\enddc
\end{center}
i.e. $\phi_i=\pi\circ \psi$. Now, we can take $\overline U=U\cup \{0\}$ and define a point $p_i=\psi^{-1}(0)\in U_i$, such that $\overline U$ and $\overline U_i=U_i\cup \{p_i\}$ are homeomorphic by a homeomorphism $\overline \psi$ extending $\psi$. Also, we can use $(\overline U_i,\overline \psi^{-1})$ as a chart around $p_i$. This chart is compatible with the atlas previously defined: If we have another point $q$, with a neighborhood $V_q$ which maps homeomorphically to $V$, the transition map is $\phi|_{U_p}\circ \psi^{-1}$. Now, since $\phi_i=\pi\circ \psi$, when we restrict ourselves to the intersection of the charts, we have that $\phi|_{U_p}\circ \psi^{-1}=\pi$, which is holomorphic.

Finally, if we define $\overline \phi(p_i)=0$ for all these new points, the resulting map is holomorphic, because we have defined it to be (if we take charts, the map will be $z\mapsto z^n$ for some $n$, on every chart). We can do the same thing to define preimages of $1$ and $\infty$ (using disjoint neighborhoods of $1$ and $\infty$), and we end up adding a finite number of points, obtaining a Riemann surface $\overline S$ and a holomorphic map to $\P^1$. Note that, in the topology we have defined every open subset of $S$ is an open subset of $\overline S$, so in particular $ S$ is an open subset of $\overline S$.

Also, $\overline S$ is compact: take an open cover $\mathcal U=\{U_i:i\in I\}$ of $\overline S$. Each point $p\in \P^1$, has a finite number of preimages. Now, for every point $p\in \P^1$, there is a neighborhood $V^p$ such that $\overline \phi^{-1}(V^p)=\sqcup_{j=1}^m V^p_j$, and $\overline \phi|_{V^p_j}$ is conjugate to the map $z\mapsto z^n$, in a neighborhood of $0$. For each preimage of $p$, we can pick a neighborhood contained in its corresponding $V_p^j$ and some set $U\in \mathcal U$; take the images of all these neighborhoods and intersect them. This gives a neighborhood $U^p$ of $p$ such that every connected component of its preimage is contained in an element of the cover. Since $\P^1$ is compact, the covering $\{V^p\}$ has a finite subcover $V_1,\ldots ,V_r$. Also, the preimage by $\overline \phi$ of each set $V_j$ is contained in the union of a finite number of $U_i$'s, and so the whole collection of these $U_i$'s is the finite subcover we are looking for. So $\overline S$ is compact.

For the uniqueness, note that the complex structure is completely determined on the points of $S$, so any two surfaces verifying the proposition will have open sets $S$ that will be biholomorphic, and whose complements will be finite sets. However, holomorphic functions can be extended to isolated points, so the surfaces will be isomorphic.
\end{proof}

\noindent Throughout this work, if we have a map $\phi:S\lra T$, and a point $p\in S$, such that $\phi'(p)=0$, we will call $p$ a \textbf{ramification point} and $\phi(p)$ a \textbf{ramification value}. Now, the previous proposition proves that dessins d'enfants can be viewed as holomorphic mappings of compact Riemann surfaces onto the Riemann sphere, such that their ramification values are contained in $\{0,1,\infty\}$. The converse is also true: if one restricts an holomorphic mapping to its unramified points, the result is a covering, because of the open mapping theorem for holomorphic maps. Therefore, from a surface with a map to the sphere ramified over at most three points, one obtains a dessin by removing the ramification points from the surface and $\{0,1,\infty\}$ from $\P^1$.

We can give dessins even more structure: it is a classical result that every compact Riemann surface is an algebraic curve, that is, it can be embedded (as a nonsingular curve) in $\P^n(\C)$ for some $n$ (actually $n=3$ is enough), such that its image is the zero set of some complex polynomials. This result can be found in \cite{GG}, assuming the uniformization theorem. The uniformization theorem and its proof can be found in \cite{farkas}. The result can also be found in \cite{Miranda}, if one is willing to assume that Riemann surfaces have enough meromorphic functions.

Every algebraic curve we talk about from now on, unless we say otherwise, is non-singular and projective.

For an algebraic curve $C$, we call $f:C\lra \P^1$ a \textbf{Belyi function} if it is ramified over some subset of $\{0,1,\infty\}$. If $f$ is a Belyi function for $C$, we call $(C,f)$ a \textbf{Belyi pair}. What we have established is that dessins d'enfants are equivalent to Belyi pairs. Since we will use equivalent definitions for dessins d'enfants, we might sometimes abuse notation and call a Belyi pair a dessin d'enfant.

There is one last bit of additional structure we can give dessins d'enfants. It is thanks to Belyi's theorem, which says the following: a curve has a Belyi function if and only if it is defined over the field of algebraic numbers. We will talk about dessins d'enfants for a while before proving it, but we are giving the motivation now. If one has a curve given by some set of equations and a Belyi function on it, all of whose coefficients are algebraic numbers one can take an element of $\gal$ and have it act on the coefficients of the equations and the function. This gives a Belyi pair which isn't necessarily isomorphic to the previous one. Thus, we have a non-trivial action of $\gal$. In fact, this action is faithful in many ways. We will talk a lot about the Galois action after we have finished introducing dessins d'enfants.

We are going to take a moment to talk about the relationship between different dessins. Namely, dessins form a category. When we see dessins as covers of $\P^1\setminus \{0,1,\infty\}$, a morphism between two Belyi pairs $(C_1,f_1)$ and $(C_2,f_2)$ is a morphism (of curves, or equivalently, a holomorphic map) $g:C_1\lra C_2$ such that $f_1=f_2\circ g$. We say that the dessin $(C_1,f_1)$ \textbf{covers} the dessin $(C_2,f_2)$. Two dessins are equivalent, or isomorphic, if they are isomorphic in this category.

\section{Dessins as algebraic curves and as field extensions}

We are going to talk about dessins d'enfants from the point of view of the fields of functions of the algebraic curves involved. We are going to talk about curves over $\C$, but we ask the reader to bear in mind that every statement we make about $\C$-algebras from now on is also valid for any algebraically closed field of characteristic $0$, in particular if one replaces $\C$ with $\QQ$. This is important, since dessins are defined over $\QQ$, and this will ultimately be the field we are interested in.

If we have a Belyi pair $(C,f)$, we can look at its field of functions $\C(C)$. To the morphism $f:C\lra \P^1$ corresponds a field homomorphism $f^*:\C(t)\lra \C(C)$, given by $P\longmapsto P(f)$. This is the same as the field inclusion $\C(f)\subset \C(C)$. Thus, to a dessin we can associate an extension of $\C(t)$.

Conversely, finite extensions of $\C(t)$ are always fields of functions of curves.

\begin{teor}
The following categories are equivalent:
\begin{enumerate}
\item Non-singular projective complex algebraic curves, with non-constant regular functions as morphisms.
\item Extensions of $\C$ of degree of transcendence 1, with $\C$-algebra homomorphisms as morphisms.
\end{enumerate}
The functor takes curves to their function fields.
\end{teor}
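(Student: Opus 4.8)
The plan is to use the standard criterion that a functor is an equivalence exactly when it is fully faithful and essentially surjective on objects. First I would pin down the functor precisely. On objects it sends a curve $C$ to its field of rational functions $\C(C)$, which has transcendence degree $1$ over $\C$ because $C$ is a curve. On morphisms, a non-constant map $g:C_1\lra C_2$ is sent to the pullback $g^*:\C(C_2)\lra \C(C_1)$, $h\longmapsto h\circ g$; this is a $\C$-algebra homomorphism, and it is injective because $g$ is non-constant (its image is dense). Note that $g^*$ reverses the arrow, just as the earlier $f^*:\C(t)\lra\C(C)$ did, so the functor is contravariant and what is really being established is an anti-equivalence, i.e. an equivalence between category (1) and the opposite of category (2); I would make this explicit at the outset. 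Functoriality, $(g\circ g')^*=g'^*\circ g^*$, is then immediate.

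For essential surjectivity I would start with a field $K$ of transcendence degree $1$ over $\C$ (assuming, as one must for the statement to hold, that $K$ is finitely generated) and build a curve with $\C(C)\cong K$. Choosing a transcendental $t\in K$ makes $K/\C(t)$ algebraic and finitely generated, hence finite, and since we are in characteristic $0$ the primitive element theorem gives $K=\C(t)[\alpha]$ with $\alpha$ a root of an irreducible polynomial over $\C(t)$. Clearing denominators turns this relation into a polynomial equation $F(t,\alpha)=0$, hence an affine plane curve whose function field is $K$. Its projective closure may be singular, so the step needing care is to pass to the unique non-singular projective model birational to it (the normalization); this curve still has function field $K$ and lives in category (1). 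I would cite the existence of this smooth projective model, as it is the one genuinely geometric input here.

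Faithfulness is the easy half of full faithfulness: if $g_1^*=g_2^*$, then $g_1$ and $g_2$ agree after pulling back every rational function, and since rational functions separate points on a smooth projective curve this forces $g_1=g_2$. The real work is fullness, which I expect to be the main obstacle. Given a $\C$-algebra homomorphism $\psi:\C(C_2)\lra \C(C_1)$, I would reconstruct a morphism $g:C_1\lra C_2$ by hand using the dictionary between the points of a smooth projective curve and the discrete valuations (places) of its function field. Each point $p\in C_1$ gives a valuation $v_p$ on $\C(C_1)$; since $\psi$ is injective and both fields have transcendence degree $1$, the field $\C(C_1)$ is finite over $\psi(\C(C_2))$, so $v_p$ restricts along $\psi$ to a nontrivial place of $\C(C_2)$, i.e. a point $g(p)\in C_2$. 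This defines $g$ on points, with $g^*=\psi$ by construction.

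The delicate points in that last step are, first, checking that $g$ is an actual morphism (regular at every point) rather than merely a rational map, where I would invoke that a rational map from a smooth curve into a projective variety automatically extends to a morphism; and second, verifying that this assignment genuinely inverts the pullback, so that $\psi\mapsto g\mapsto g^*$ returns $\psi$. Assembling these, full faithfulness together with essential surjectivity yields the claimed (anti-)equivalence, with the function field playing the role of the functor.
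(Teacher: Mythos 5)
Your proposal is correct in outline, but note that it does something the paper itself never does: the paper's ``proof'' of this theorem is a citation to \cite{GG}, recording only how the functor acts on morphisms (the pullback $\phi\mapsto \phi^*$) and, in the converse direction, the concrete coordinate formula $(X_0:\cdots :X_m)\longmapsto (\phi^*(Y_0):\cdots :\phi^*(Y_n))$ reconstructing a map of curves from a homomorphism of function fields. You instead give an actual proof skeleton via the fully-faithful-plus-essentially-surjective criterion: primitive element theorem and passage to the smooth projective model for essential surjectivity, separation of points by rational functions for faithfulness, and the valuation/place dictionary together with the extension theorem for rational maps on smooth curves for fullness. Your route buys two things the paper's citation does not: it is self-contained modulo standard geometric inputs, and it flags two imprecisions in the statement as written, namely that the functor is contravariant (so this is an anti-equivalence), and that essential surjectivity forces the fields in (2) to be finitely generated over $\C$ (the algebraic closure of $\C(t)$ has transcendence degree $1$ but is no curve's function field). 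Conversely, the paper's coordinate formula is what actually gets used later (e.g.\ in defining the Galois action on curves and morphisms), and it silently repairs the one loose joint in your fullness argument: to invoke ``a rational map from a smooth curve into a projective variety extends to a morphism'' you must first exhibit $g$ as a morphism on a dense open subset, which your purely pointwise valuation-theoretic definition does not provide by itself; applying $\psi$ to the coordinate functions of an affine chart of $C_2$ and taking the common domain of regularity is exactly how one produces that dense open set, after which the valuation description pins down the extension at the finitely many remaining points. Your valuation picture is, incidentally, the same mechanism the paper develops immediately after this theorem to define images of valuations under morphisms of function fields, so the two viewpoints mesh well.
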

\begin{proof}
The proof can be found in \cite{GG}. We will just state how the functor acts on morphisms: given a regular map (a rational function) between two algebraic curves $\phi:C\lra C'$, one can consider the $\C$-algebra homomorphism
$$
\f{\phi^*}{\C(C')}{\C(C)}{f}{\phi^*f=f\circ \phi}
$$
Note that such morphisms are always injective, since both algebras are fields. Conversely, if one has an embedded algebraic curve $C'$ with homogeneous coordinates $(Y_0:\cdots :Y_n)$ (which can be seen as functions on the curve) and a homomorphism $\phi^*:\C(C')\lra \C(C)$, it is associated to the map
$$
\f{\phi}{C}{C'}{(X_0:\cdots :X_m)}{(Z_0:\cdots :Z_n)=(\phi^*(Y_0):\cdots :\phi^*(Y_n))}
$$
\end{proof}
\noindent From here follows that, since dessins d'enfants are certain morphisms from curves onto $\P^1$, via this equivalence, they must correspond to some extensions of the field of functions of $\P^1$, which is $\C(t)$. In order to speak about ramification of points in field extensions, we first need a way to speak about points in the context of a field. The way to do this is via the valuations of the field of functions.

\begin{definicion}
Let $K$ be a field. A (discrete) \textbf{valuation} on $K$ is a non-zero map $\nu:K^\times \lra \Z$ such that
\begin{itemize}
\item $\nu(ab)=\nu(a)+\nu(b)\ \forall a,b\in K^\times$
\item $\nu(a+b)\ge \min\{\nu(a),\nu(b)\}\ \forall a,b\in K^\times$
\item If $k$ is a subfield of $K$, and $\nu(k^\times)=0$, we say that $\nu$ is a $k$-valuation.
\end{itemize}
One usually extends a valuation to the whole field by setting $\nu(0)=+\infty$.
\end{definicion}

\noindent From the second property it follows that if $\nu(a)\neq \nu(b)$, then $\nu(a+b)=\min\{\nu(a),\nu(b)\}$, for if $\nu(a)<\nu(b)$ and $\nu(a)<\nu(a+b)$, then $\nu(a)=\nu(a+b-b)<\min\{\nu(a+b),\nu(-b)\}$.

It is easy to assign a $\C$-valuation to a point on a curve. One can take, for each function $f\in \C(C)$, the order at a point $P$ (which is either the order of the zero if $f(P)=0$, or minus the order of the pole if $f$ has a pole at $P$ or $0$ if $f$ neither has a zero nor a pole). It is straightforward to check that $f\mapsto \ord_P(f)$ is a $\C$-valuation. There are other valuations associated to $P$, since $m\cdot \ord_P$ for any $m\in \N$ is also a valuation. If two valuations differ by multiplication by a constant, we say that they are equivalent. If one asks for valuations to be surjective, every $\C$-valuation corresponds to a point:

\begin{prop}
Let $\C(C)$ be the field of functions of an algebraic curve. Then, the following sets are in bijective correspondence:
\begin{itemize}
\item The points in $C$.
\item The surjective $\C$-valuations of $\C(C)$.
\end{itemize}
And the correspondence is given by $P\longmapsto \ord_P$.
\end{prop}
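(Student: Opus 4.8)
The plan is to check, in turn, that the assignment $P\mapsto\ord_P$ really lands among the surjective $\C$-valuations, that it is injective, and finally—this being the real content—that it is surjective. For well-definedness I would use non-singularity of $C$: the local ring $\O_{C,P}$ at each point is a regular local ring of Krull dimension $1$, hence a discrete valuation ring, and its valuation is exactly $\ord_P$. Choosing a local parameter $t$ at $P$ gives a function with $\ord_P(t)=1$, so $\ord_P$ is surjective, and every nonzero constant is a unit of $\O_{C,P}$, so $\ord_P(\C^\times)=0$ and $\ord_P$ is a $\C$-valuation. For injectivity, note that a valuation determines its valuation ring $\{f:\ord_P(f)\ge0\}$, which is precisely $\O_{C,P}$ (the functions regular at $P$), with maximal ideal $\m_P=\{f:\ord_P(f)>0\}$ the functions vanishing at $P$. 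Thus it suffices to separate distinct points by a function regular at both $P$ and $Q$ that vanishes at $P$ but not at $Q$, which then lies in $\m_P$ but not $\m_Q$; regular functions do separate points on our projective curve (place $P,Q$ in a common affine chart and use coordinate functions, or invoke Riemann--Roch to prescribe values).

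The main work is surjectivity: given a surjective $\C$-valuation $\nu$, I must exhibit a point $P$ with $\nu=\ord_P$. First I would reduce to $\P^1$. Pick a non-constant $x\in\C(C)$, giving a finite map $f:C\lra\P^1$ and a finite extension $\C(x)\subset\C(C)$. The restriction of $\nu$ to $\C(x)$ is a $\C$-valuation of $\C(t)$, and these can be classified by hand—either $\nu(x-c)>0$ for a unique $c\in\C$, or else $\nu(x)<0$—so it is a positive multiple of $\ord_a$ for a single point $a\in\P^1$. In particular $\O_{\P^1,a}\subseteq\C(x)$ sits inside the valuation ring $R_\nu=\{g:\nu(g)\ge0\}$. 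Let $B$ be the integral closure of $\O_{\P^1,a}$ in $\C(C)$. Since $R_\nu$ is integrally closed (valuation rings always are) and contains $\O_{\P^1,a}$, it contains $B$. Using non-singularity again, $B$ is a semilocal Dedekind domain with fraction field $\C(C)$, and because $C$ is normal its maximal ideals correspond exactly to the points of $C$ lying over $a$, with the localizations $B_\m$ equal to the local rings $\O_{C,P}$. The valuation rings of $\C(C)$ strictly contained in $\C(C)$ that contain such a Dedekind domain $B$ are precisely the localizations $B_\m$, so $R_\nu=B_\m=\O_{C,P}$ for some $P$ over $a$. As two surjective valuations with the same valuation ring coincide, we conclude $\nu=\ord_P$.

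I expect the delicate points to be exactly the two inputs flagged above: that the $\C$-valuations of $\C(t)$ are accounted for by the points of $\P^1$, and that—thanks to non-singularity—the integral closure $B$ is a Dedekind domain whose localizations are the local rings of the points of $C$ over $a$. Once these are granted, the identification of valuation rings of $\C(C)$ with localizations of a Dedekind domain is the mechanism that closes the argument.
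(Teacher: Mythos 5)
Your argument is correct, but there is nothing in the paper to compare it against step by step: for this proposition the paper gives no proof at all, deferring entirely to Proposition 3.17 of \cite{GG}. What you have written is therefore a genuinely self-contained alternative, following the standard commutative-algebra route: identify a valuation with its valuation ring, classify the $\C$-valuations of $\C(t)$ by hand, and then climb up the finite extension $\C(x)\subset\C(C)$ via integral closure, with non-singularity entering twice --- once to see that each $\O_{C,P}$ is a discrete valuation ring (well-definedness and injectivity), and once to see that the integral closure $B$ of $\O_{\P^1,a}$ in $\C(C)$ is the semilocal ring of the fiber over $a$, whose localizations are exactly the rings $\O_{C,P}$. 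The key mechanism, that a proper valuation ring of $\C(C)$ containing a Dedekind domain $B$ with fraction field $\C(C)$ must be a localization $B_\m$, is exactly right: the center $\m_R\cap B$ cannot be $(0)$, and a DVR admits no proper overrings inside its fraction field. What your route buys over the citation is transparency about where smoothness and normality are actually used; what the citation buys the paper is brevity, consistent with its policy of outsourcing algebro-geometric background to \cite{GG}.

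One step should be made explicit. Your dichotomy for the restricted valuation (``either $\nu(x-c)>0$ for a unique $c$, or else $\nu(x)<0$'') presupposes that $\nu$ restricts to a \emph{non-zero} valuation on $\C(x)$, and this is not automatic from the definitions. It does follow, by the same tool you use later: if $\nu$ vanished identically on $\C(x)^\times$, then $\C(x)\subseteq R_\nu$; since $\C(C)$ is algebraic over $\C(x)$, every element of $\C(C)$ is integral over $\C(x)$, hence over $R_\nu$, and since valuation rings are integrally closed in their fraction field this forces $R_\nu=\C(C)$. But then for every nonzero $f$ both $\nu(f)\ge 0$ and $\nu(f^{-1})\ge 0$, so $\nu\equiv 0$, contradicting surjectivity. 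With that one line inserted, the proof is complete.
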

\begin{proof}
The proof can be found in Proposition 3.17 of \cite{GG}.
\end{proof}

\noindent From now on, by ``valuation'' we will mean ``discrete $\C$-valuation''.

If we have a morphism between two curves $\phi:C\lra C'$, the morphism also maps valuations to valuations. By the chain law, for any function $f\in \C(C')$ and any point $P\in C$,
$$
\ord_P (\phi^*f) =\ord_P (f\circ \phi)=\ord_{\phi(P)}(f)e_P(\phi)
$$
Where $e_P(\phi)$ is the ramification index of $\phi$ at $P$, i.e. the order of the zero of $\phi$ at the point $P$, or the order with which the derivative of $\phi$ vanishes at $P$ plus 1.

This can give us the idea to define a map of valuations: for a valuation $\nu$ of $\C(C)$, one can define
$$
\phi(\nu)\sim\nu\circ \phi^*
$$
Where by $\sim$ we mean equal up to multiplication by a constant. Therefore, for every point $P$, we have
$$
\phi(\ord_P)=e_P(\phi)\ord_{\phi(P)}
$$
Since the valuation $\ord_{\phi(P)}$ is surjective, one can use this to define the ramification of a point by a $\C$-algebra homomorphism, and as we have just seen, it will match our previous definition of ramification.
\begin{definicion}
Let $\phi^*:\C(C')\lra \C(C)$ be a morphism between fields of functions. For every valuation $\nu$ of $\C(C)$, we define its image $\phi(\nu)$ as the unique valuation of $\C(C')$ such that there exists a natural number $e_\nu(\phi)$ with
$$
\nu\circ \phi^*=e_\nu(\phi)\phi(\nu)
$$
We call $e_\nu(\phi)$ the order of ramification of $\phi$ at the point $\nu$.
\end{definicion}
\noindent Note that, since $\phi^*$ is always injective, we can view $\C(C')$ as a subfield of $\C(C)$. If we do this, then the image of a valuation is just its restriction to the smaller field.

Since we know that valuations and points correspond to each other, we have some facts that we already know about points.
\begin{prop}
Let $\phi:C\lra C'$ be a morphism of curves. Then
\begin{enumerate}
\item The number of valuations $\nu$ of $\C(C)$ for which $e_\nu(\phi)>1$ is finite.
\item For every valuation $\nu$ of $\C(C')$, the number
$$
\sum_{\phi(\nu')=\nu} e_{\nu'}(\phi)
$$
is finite, constant and equal to $[\C(C):\phi^*\C(C')]$.
\end{enumerate}
\end{prop}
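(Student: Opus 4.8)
The plan is to pass to the equivalent holomorphic picture, where $\phi:C\lra C'$ is a non-constant holomorphic map of compact Riemann surfaces and, as noted above, $e_\nu(\phi)$ coincides with the local multiplicity $e_P(\phi)$ at the point $P$ corresponding to the surjective valuation $\nu=\ord_P$. I would establish Part 1 from the local normal form together with compactness of $C$, and Part 2 by first showing that the number of sheets $n$ of $\phi$ away from its ramification is a well-defined constant, then that $\sum_{\phi(\nu')=\nu}e_{\nu'}(\phi)=n$ over every fiber, and finally that $n=[\C(C):\phi^*\C(C')]$.

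For Part 1, recall that at each point $P\in C$ there are local charts in which $\phi$ takes the form $z\mapsto z^{e_P(\phi)}$, so $e_P(\phi)>1$ exactly when the derivative of $\phi$ vanishes at $P$ in charts. Since $\phi$ is non-constant and $C$ is connected, this derivative is not identically zero, so its zeros are isolated; compactness of $C$ then forces them to be finite in number, which is exactly the finiteness asserted in (1).

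For the core of Part 2, I would remove from $C$ the finite ramification locus and from $C'$ the finitely many ramification values. The restriction of $\phi$ is then a covering map onto a connected base, so it has a well-defined number of sheets $n$, equal to the number of preimages of any unramified value. To handle an arbitrary value $\nu=\ord_Q$, I pick a small disk $D$ around $Q$; the preimage of the punctured disk $D\setminus\{Q\}$ is a disjoint union of punctured disks, one around each preimage $P$ of $Q$, and on the one around $P$ the map $\phi$ is conjugate to $z\mapsto z^{e_P(\phi)}$ and hence covers $D\setminus\{Q\}$ with $e_P(\phi)$ sheets. Adding these up recovers the total number of sheets $n$ over $D\setminus\{Q\}$, so the fiber over $Q$ is finite and $\sum_{\phi(\nu')=\nu}e_{\nu'}(\phi)=n$ for every $\nu$. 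This yields the finiteness and the constancy in (2) simultaneously.

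It remains to identify $n$ with the field degree $[\C(C):\phi^*\C(C')]$, which I expect to be the main obstacle, since it is the one step genuinely requiring algebraic input. By the primitive element theorem there is a meromorphic function $g$ with $\C(C)=\phi^*\C(C')(g)$, whose minimal polynomial over $\phi^*\C(C')$ has degree $m=[\C(C):\phi^*\C(C')]$. For a value $Q$ outside a suitable finite bad set, the coefficients of this polynomial are regular at $Q$ and its discriminant is non-zero there, so over such $Q$ it specializes to a separable polynomial of degree $m$ with $m$ distinct roots; each preimage $P$ of $Q$ yields a root $g(P)$, and once $g$ separates the fiber these give distinct roots. The delicate point, and the heart of the obstacle, is establishing the reverse bijection — that every root of the specialized polynomial is attained by some preimage, with no degree drop or collapsing of roots over the generic value — so that the generic fiber is in bijection with the $m$ roots and $n=m$. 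This is precisely where the equivalence between the analytic covering degree and the algebraic extension degree gets proved, using separability in characteristic zero to control the generic specialization.
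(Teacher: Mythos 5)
Your first two parts are correct, and they actually go beyond what the paper does: the paper states this proposition with no proof at all, as a translation of ``facts we already know about points,'' and explicitly remarks that it will not prove or use the last part (the equality with the field degree). Your local-normal-form plus compactness argument for (1) and the punctured-disk counting argument for (2) are the standard correct ones, with one small imprecision: to get an honest covering you should restrict $\phi$ to the preimage of $C'$ minus the critical values, not merely delete the ramification points of $C$; your subsequent disk argument handles this in effect, so this is cosmetic.

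The genuine gap is the third step, and you have flagged it yourself: the identification of the sheet number $n$ with $m=[\C(C):\phi^*\C(C')]$ is described as ``the delicate point'' but never carried out, so the proposal does not prove the one assertion that requires real work. Two things are missing. First, $n\le m$: you need the primitive element $g$ to separate the points of a generic fiber, which is not automatic and needs an argument (for instance, produce some function separating one fiber and show its minimal polynomial already has degree at least $n$). Second, $m\le n$: instead of your ``reverse bijection,'' the cleaner route is to observe that for any $g\in\C(C)$ the elementary symmetric functions of the fiber values $g(P_1),\dots,g(P_n)$ are well-defined meromorphic functions on $C'$, so every element of $\C(C)$ satisfies a polynomial of degree $n$ over $\phi^*\C(C')$, which in characteristic zero bounds the extension degree by $n$. (Alternatively one can prove your attained-roots claim via monodromy: the attained roots form a nonempty monodromy-invariant subset of the roots of the specialized polynomial, and irreducibility of the minimal polynomial forces the monodromy to act transitively, since an invariant proper subset would yield a factorization over $\phi^*\C(C')$.) Without both inequalities, $n=m$ is not established. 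Note finally that the paper deliberately sidesteps this entire difficulty: it proves the equality only for Belyi pairs, in Proposition \ref{degree}, by passing to the regular cover, i.e.\ the Galois closure of $\C(C)/\C(f)$, where ``degree of map $=$ order of automorphism group $=$ degree of the Galois extension'' follows from Proposition \ref{regular}, and then dividing degrees. So if you only need the statement for dessins, the paper's Galois-closure argument avoids the analytic--algebraic comparison entirely; if you want the general statement, you must supply the two inequalities above.
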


\noindent We will not prove or use the last part of the proposition, that the degree of the extension is the degree of the map, but we will prove it for dessins later on, in Proposition \ref{degree}.

Back to dessins d'enfants, a finite extension $\C(t)\subset K$ is the same as a morphism $i:\C(t)\lra K$, which corresponds to a morphism of curves $\phi:C\lra \P^1$. Thus, a dessin d'enfant can also be defined as an extension of $\C(t)$ that is ramified at most over $0$, $1$ and $\infty$ (that is, the valuations $\ord_0$, $\ord_1$ and $\ord_\infty$).
\begin{prop}
The category of dessins d'enfants, with coverings as morphisms, is equivalent to the category of finite extensions of $\C(t)$ unramified outside of $\{0,1,\infty\}$, with homomorphisms of extensions as morphisms.
\end{prop}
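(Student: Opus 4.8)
The plan is to derive this equivalence by composing the two correspondences already in hand: the identification of dessins d'enfants with Belyi pairs $(C,f)$, and the (contravariant) equivalence between non-singular projective curves and their fields of functions. A dessin, viewed as a Belyi pair, is exactly a curve $C$ equipped with a non-constant morphism $f:C\lra\P^1$, i.e. an object of the slice category of curves over $\P^1$. First I would set up the assignment on objects: apply the function-field functor to $f$ to obtain the $\C$-algebra homomorphism $\C(t)=\C(\P^1)\fl{f^*}\C(C)$, which is a finite extension of $\C(t)$. Here the requirement that $f$ be non-constant is precisely what guarantees $f^*$ is a genuine finite field extension rather than a map landing in $\C$, so the object assignment is well defined in both directions.

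Second, I would verify that the two ramification conditions correspond. Using the bijection between points and surjective $\C$-valuations together with the formula $\ord_P(\phi^*f)=\ord_{\phi(P)}(f)\,e_P(\phi)$ recorded above, the condition that every ramification value $f(P)$ lies in $\{0,1,\infty\}$ translates exactly into $e_\nu(f^*)=1$ for every valuation $\nu$ of $\C(C)$ lying over a valuation of $\C(t)$ other than $\ord_0,\ord_1,\ord_\infty$. Thus $(C,f)$ is a Belyi pair if and only if $\C(t)\subset\C(C)$ is unramified outside $\{0,1,\infty\}$. Conversely, by the function-field equivalence every finite extension $\C(t)\subset K$ arises from some morphism $\phi:C\lra\P^1$, and the same translation forces $\phi$ to be a Belyi function whenever the extension is unramified outside those three valuations; this yields essential surjectivity.

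Third, I would handle the morphisms. A covering $g:(C_1,f_1)\lra(C_2,f_2)$ is a morphism $g:C_1\lra C_2$ with $f_1=f_2\circ g$; applying the functor sends it to $g^*:\C(C_2)\lra\C(C_1)$ with $f_1^*=g^*\circ f_2^*$, which is precisely a homomorphism of extensions of $\C(t)$. Since the function-field functor is fully faithful, this is a bijection between coverings and homomorphisms of extensions (reversing the direction of arrows, as the functor is contravariant), so the slice structures match and the equivalence of categories follows.

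The bookkeeping in the first and third steps is routine; the one step deserving real care is the second, the translation of ramification. The subtlety is that ``ramified over $\{0,1,\infty\}$'' is a statement about points of $\P^1$ and their fibres, whereas ``unramified outside $\{0,1,\infty\}$'' is phrased through valuations of $\C(t)$, so reconciling them requires simultaneously invoking the point–valuation correspondence and the multiplicativity of ramification indices, and checking that a ramification point of $f$ in the fibre over $p$ is the same datum as a valuation $\nu$ of $\C(C)$ with $e_\nu(f^*)>1$ lying over $\ord_p$. Once this identification is pinned down, the remainder of the equivalence is formal, following from the two equivalences already established.
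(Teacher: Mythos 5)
Your proposal is correct and follows essentially the same route as the paper: the object-level correspondence via the function-field equivalence and the valuation-theoretic translation of ramification is exactly what the paper's preceding discussion establishes (and its proof then cites), and your third step identifying coverings $f_1=f_2\circ g$ with extension homomorphisms $g^*\circ f_2^*=f_1^*$ is precisely the paper's argument. The only difference is expository: you spell out the ramification translation and essential surjectivity that the paper compresses into ``we have already seen the proof of everything, aside from identifying the morphisms.''
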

\begin{proof}
We have already seen the proof of everything, aside from identifying the morphisms. If we have a morphism between two dessins, which we see as Belyi functions $f:C\lra \P^1$ and $f:C':\lra \P^1$, a morphism is a rational function $\phi:C\lra C'$ such that $f'\circ \phi=f$. If we take the functor which maps curves to their fields of functions, this becomes $\phi^*\circ f'^*=f^*$, and if we see $f^*$ and $f'^*$ as inclusions of $\C(t)$ in the respective fields of functions, then this condition precisely means that $\phi^*$ preserves $\C(t)$. In other words, it is a homomorphism of extensions of $\C(t)$.
\end{proof}

\section{Monodromy and automorphisms}\label{monodromy}

In this section, we will see yet more characterizations of dessins d'enfants. It all boils down to the classification of covering spaces from topology. Results from this section can be found in Munkres' Topology \cite{Munkres}, in chapter 13. All coverings we will speak about will be connected.

Recall that, from a continuous map $f:X\lra Y$ such that $f(x_0)=y_0$, there is a homomorphism of the spaces' fundamental groups, called $f_*$, and it is given by $\gamma\longmapsto f\circ \gamma$.

\begin{prop}[Lifting lemma]
Let $p:C\lra X$ be a covering map, where $X$ is a locally path connected topological space. Let $Y$ be a locally path connected and path connected topological space and $f:Y\lra X$ be a continuous map. Choose points $c_0\in C$, $x_0\in X$ and $y_0\in Y$ such that $p(c_0)=x_0$ and $f(y_0)=x_0$. Then, we define a \textbf{lifting} $\widetilde f$ of $f$ to $C$ to be a map $\widetilde f:Y\lra C$ such that the following diagram commutes and $\widetilde f(y_0)=s_0$
\begin{center}\hspace{0.5 cm}
\begindc{\commdiag}[40]
\obj(0,0)[f]{$Y$}
\obj(1,0)[b]{$X$}
\obj(1,1)[d0]{$C$}
\mor{f}{b}{$f$}
\mor{d0}{b}{$p$}
\mor{f}{d0}{$\widetilde f$}[\atleft,\dashArrow]
\enddc
\end{center}
\noindent Such a lifting exists if and only if $f_*(\pi_1(Y,y_0))\subset p_*(\pi_1(C,c_0))$. If the lifting exists, it is unique.
\end{prop}
\begin{proof}
This is lemma 79.1 in Munkres' text \cite{Munkres}.
\end{proof}

\noindent Thus, for a covering with a base point $p:(C,c_0)\lra (X,x_0)$, the group $p_*(\pi_1(X,x_0))$ plays a very important role. This is why we call it \textbf{the subgroup associated to the covering} (note that it depends on the choice of a base point). The other main result is the following: for nice enough spaces, to every subgroup corresponds a covering.

\begin{prop}\label{cubiertapunto1}
Let $X$ be semi-locally simply connected (for example, let $X$ be a manifold), with a fixed base point $x_0$. Then, for every subgroup $H<\pi_1(X,x_0)$, there exists a covering $p:C\lra X$ such that $p_*(\pi_1(C,c_0))=H$
\end{prop}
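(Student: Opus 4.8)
The plan is to construct the covering space $C$ directly as a set of equivalence classes of paths in $X$, following the classical path-space construction, and then to verify that its associated subgroup is exactly $H$.

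First I would fix the base point $x_0$ and consider the set $P$ of all paths $\gamma:[0,1]\lra X$ with $\gamma(0)=x_0$. On $P$ I impose the equivalence relation declaring $\gamma\sim\gamma'$ whenever $\gamma(1)=\gamma'(1)$ and the loop $\gamma\cdot\overline{\gamma'}$ (the concatenation of $\gamma$ with the reverse of $\gamma'$, which is a loop at $x_0$) represents an element of $H$ in $\pi_1(X,x_0)$. The fact that $H$ is a subgroup is exactly what makes this relation reflexive, symmetric and transitive. I then set $C=P/\!\sim$, define $p:C\lra X$ by $p([\gamma])=\gamma(1)$, and take as base point $c_0$ the class of the constant path at $x_0$, so that $p(c_0)=x_0$ automatically.

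The core of the argument is the topology on $C$, and this is where semi-local simple connectedness is used. Around every point of $X$ there is a path-connected open neighborhood $U$ (shrinking if necessary, using that a manifold is locally path connected) such that the map $\pi_1(U)\lra\pi_1(X)$ induced by inclusion is trivial. For such a $U$ and a path $\gamma$ ending in $U$, I define the basic set $B(U,\gamma)=\{[\gamma\cdot\delta]:\delta\text{ a path in }U\text{ with }\delta(0)=\gamma(1)\}$. I would check that these sets form a basis: the triviality of $\pi_1(U)\lra \pi_1(X)$ guarantees that $[\gamma\cdot\delta]$ depends only on the endpoint $\delta(1)$, so $p$ restricts to a bijection $B(U,\gamma)\lra U$; moreover two basic sets either are disjoint or, on their overlap, agree with a basic set for $U\cap U'$. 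With this topology $p$ is continuous and open, and for a neighborhood $U$ as above, $p^{-1}(U)$ decomposes as the disjoint union of the sets $B(U,\gamma)$, each mapped homeomorphically onto $U$; hence $p$ is a covering map.

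It remains to check connectedness and to identify the associated subgroup. Path-connectedness of $C$ follows because any $[\gamma]$ is joined to $c_0$ by the lift of $\gamma$ itself, namely $t\mapsto[\gamma_t]$, where $\gamma_t$ is $\gamma$ restricted to $[0,t]$ and reparametrized. Finally, given a loop $\alpha$ at $x_0$, its unique lift starting at $c_0$ is precisely $t\mapsto[\alpha_t]$, and this lift is a loop (ends at $c_0$) if and only if $[\alpha]$ lies in $H$; thus $p_*(\pi_1(C,c_0))=H$, as required. The main obstacle, and the step deserving the most care, is verifying that the proposed basic sets genuinely form a basis and that $p$ is evenly covering: this is exactly where semi-local simple connectedness is indispensable, since without the triviality of $\pi_1(U)\lra\pi_1(X)$ the restriction $p|_{B(U,\gamma)}$ need not be injective.
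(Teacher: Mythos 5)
Your construction is correct, and it coincides with what the paper itself relies on: the paper gives no independent argument for this proposition but simply cites Theorem 82.1 of Munkres, whose proof is exactly the path-space construction you describe (equivalence classes of paths from $x_0$ modulo the relation $\gamma\sim\gamma'$ iff $\gamma(1)=\gamma'(1)$ and $[\gamma\cdot\overline{\gamma'}]\in H$, with basic open sets $B(U,\gamma)$ over semi-locally simply connected neighborhoods). The only point worth flagging is that, besides semi-local simple connectedness, the argument also needs $X$ to be path connected and locally path connected --- hypotheses you invoke implicitly when shrinking $U$ and when lifting paths --- but these are covered by the paper's ``for example, let $X$ be a manifold'' qualification.
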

\begin{proof}
This is theorem 82.1 in \cite{Munkres}.
\end{proof}

\noindent From the lifting lemma, it follows that the covering associated to a subgroup is unique, up to isomorphism. For two coverings $p_1:(C_1,c_1)\lra (X,x_0)$ and $p_2:(C_2,c_2)\lra (X,x_0)$, we define a morphism between them to be a map $\phi:C_1\lra C_2$ such that $p_1=p_2\circ \phi$, as we did with dessins. A morphism with base points is the same with the additional condition that $\phi(c_1)=c_2$.

\begin{prop}\label{uniquemorphism}
Let $p_1:(C_1,c_1)\lra (X,x_0)$ and $p_2:(C_2,c_2)\lra (X,x_0)$ be two coverings, with associated subgroups $H_1$ and $H_2$, respectively. Then, there exists a morphism with base points from $C_1$ to $C_2$ if and only if $H_1 <H_2$. If it exists, it is unique.
\end{prop}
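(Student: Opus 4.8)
The plan is to read the statement off the two lifting results already established, by applying the Lifting Lemma to the map $p_1$ regarded as a continuous map to be lifted along the covering $p_2$.

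First, the forward implication is purely formal. Suppose a morphism with base points $\phi:C_1\lra C_2$ exists, so that $p_2\circ\phi=p_1$ and $\phi(c_1)=c_2$. Applying $\pi_1$ and using functoriality, $(p_2\circ\phi)_*=(p_2)_*\circ\phi_*$, we obtain
$$
H_1=(p_1)_*(\pi_1(C_1,c_1))=(p_2)_*\bigl(\phi_*(\pi_1(C_1,c_1))\bigr)\subseteq (p_2)_*(\pi_1(C_2,c_2))=H_2,
$$
so $H_1<H_2$.

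For the converse, suppose $H_1<H_2$. I would invoke the Lifting Lemma with $Y=C_1$, with $f=p_1:C_1\lra X$ the map to be lifted, and with $p_2:C_2\lra X$ the covering. Two hypotheses must be checked before the lemma applies: that $X$ is locally path connected, and that $C_1$ is both path connected and locally path connected. Since $X$ is a manifold it is locally path connected; and $C_1$, being a connected covering space of a locally path connected space, is itself locally path connected, hence (being connected) path connected. The hypothesis of the Lifting Lemma is exactly $(p_1)_*(\pi_1(C_1,c_1))\subseteq (p_2)_*(\pi_1(C_2,c_2))$, i.e.\ $H_1\subseteq H_2$, which is our assumption. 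The lemma then produces a lifting $\phi:=\widetilde{p_1}:C_1\lra C_2$ with $p_2\circ\phi=p_1$ and $\phi(c_1)=c_2$, which is precisely a morphism of coverings with base points.

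Uniqueness is then automatic: any base-point-preserving morphism $C_1\lra C_2$ is in particular a lifting of $p_1$ sending $c_1$ to $c_2$, and the Lifting Lemma asserts that such a lifting, when it exists, is unique. As for obstacles, there is essentially no hard step, since the whole content is packaged inside the Lifting Lemma; the only point requiring a moment's care is verifying the connectivity hypotheses on $C_1$, where the standing convention that all coverings are connected, together with local path-connectedness inherited from $X$, does the work.
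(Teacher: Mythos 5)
Your proof is correct and takes essentially the same route as the paper: the paper's proof consists precisely of observing that a base-point-preserving morphism is a lifting of $p_1$ along the covering $p_2$ and then citing the Lifting Lemma. Your version merely spells out the details the paper leaves implicit (the forward implication via functoriality of $\pi_1$, and the connectivity hypotheses on $C_1$ and $X$), which is fine.
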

\begin{proof}
A morphism with base points is the same as a lifting of the map $p_1$ to the covering $p_2$. With this in mind, it follows from the lifting lemma.
\end{proof}

\begin{corol}\label{cubiertapunto2}
Two covering spaces with the same associated subgroup are isomorphic with base points.
\end{corol}

\noindent We want to remove the base point, and to see when two coverings are isomorphic regardless of the base point. If two coverings are isomorphic, then they must be isomorphic with a suitable choice of base points. Now, if we have a covering $p:(C,c_0)\lra (X,x_0)$, and we choose another point $c_1$ such that $p(c_1)=x_0$, we know that $\pi_1(C,c_0)$ and $\pi_1(C,c_1)$ are related in the following way: if we choose a path $\eta$ from $c_0$ to $c_1$, then $\pi_1(C,c_1)=\eta * \pi_1(C,c_0)*\overset{\leftarrow}\eta$, where $*$ denotes path concatenation. We can apply $p_*$ to the previous relation, and $p_*(\eta)$ will be a loop, since $p(c_0)=p(c_1)$. Therefore, $H_1=p_*(\eta) H_2p_*(\eta)^{-1}$.

\begin{prop}\label{morcubierta}
Let $p_1:(C_1,c_1)\lra (X,x_0)$ and $p_2:(C_2,c_2)\lra (X,x_0)$ be two coverings, with associated subgroups $H_1$ and $H_2$. There exists a morphism of coverings $\phi:C_1\lra C_2$ that doesn't preserve the base points if and only if $H_1\subset gH_2g^{-1}$ for some $g\in \pi_1(X,x_0)$.
\end{prop}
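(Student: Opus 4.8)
The plan is to reduce everything to the base-point-preserving case handled in Proposition \ref{uniquemorphism}, using the observation made just above the statement that moving the base point of a covering conjugates its associated subgroup. Concretely, I would package the two directions as follows.

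For the forward direction, suppose a morphism of coverings $\phi:C_1\lra C_2$ exists, and set $c_2'=\phi(c_1)$, a point possibly different from $c_2$. Since $p_1=p_2\circ\phi$, I first note that $p_2(c_2')=p_2(\phi(c_1))=p_1(c_1)=x_0$, so $c_2'$ indeed lies over the base point $x_0$. Thus $\phi$ is a \emph{base-point-preserving} morphism from $(C_1,c_1)$ to $(C_2,c_2')$, and Proposition \ref{uniquemorphism} gives $H_1\subset H_2'$, where $H_2'=p_{2*}(\pi_1(C_2,c_2'))$ is the subgroup associated to $p_2$ with base point $c_2'$. By the base-point-change formula recalled above, choosing a path $\eta$ in $C_2$ from $c_2$ to $c_2'$ yields $H_2'=gH_2g^{-1}$ with $g=p_{2*}(\eta)\in\pi_1(X,x_0)$ (this is a loop because both endpoints of $\eta$ lie over $x_0$). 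Hence $H_1\subset gH_2g^{-1}$.

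For the converse, suppose $H_1\subset gH_2g^{-1}$ for some $g\in\pi_1(X,x_0)$. The key step is to realize the conjugate subgroup $gH_2g^{-1}$ as the subgroup associated to $p_2$ for a suitable second base point. To do this I would pick a loop $\gamma$ at $x_0$ representing $g$, lift it to a path $\eta$ in $C_2$ starting at $c_2$ (using the unique path-lifting property of the covering $p_2$), and set $c_2'$ to be its endpoint. Then $c_2'$ lies over $x_0$ and $p_{2*}(\eta)=g$, so the base-point-change formula gives $H_2'=gH_2g^{-1}$ and therefore $H_1\subset H_2'$. Applying Proposition \ref{uniquemorphism} to the coverings $(C_1,c_1)$ and $(C_2,c_2')$ produces a base-point-preserving morphism $\phi:(C_1,c_1)\lra (C_2,c_2')$, which, forgetting the base points, is exactly the desired morphism of coverings $C_1\lra C_2$.

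The only real care needed — and the step I expect to be the main nuisance rather than a genuine obstacle — is bookkeeping the direction of conjugation so that it matches the convention fixed just before the statement, i.e. making sure the path $\eta$ is taken from $c_2$ to $c_2'$ (and not the reverse), so that $H_2'=gH_2g^{-1}$ rather than $g^{-1}H_2g$. Once this orientation convention is pinned down consistently, both directions follow immediately from Proposition \ref{uniquemorphism} and the lifting lemma, with no further computation required.
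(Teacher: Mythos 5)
Your proof is correct and takes essentially the same route as the paper's: both directions are reduced to the base-point-preserving case of Proposition \ref{uniquemorphism} via base-point-change conjugation, with $c_2'=\phi(c_1)$ in the forward direction and the endpoint of a lifted loop in the converse. The only discrepancy is the $g$ versus $g^{-1}$ bookkeeping you already flag (the paper lifts a loop representing $g^{-1}$), which is immaterial since $g$ ranges over all of $\pi_1(X,x_0)$.
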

\begin{proof}
Suppose there is such a morphism. Then, if $c_2'=\phi(c_1)$, $\phi$ is a morphism from $(C_1,c_1)$ to $(C_2,c_2')$ that does preserve the base points. Therefore, $H_1$ is contained in the subgroup associated to $(C_2,c_2')$. But changing the base point of a covering space gives a conjugate subgroup, so we have what we want: $H_1\subset gH_2g^{-1}$.

Conversely, suppose that $H_1\subset gH_2g^{-1}$. Pick a loop $\eta$ whose class in the fundamental group of $X$ is $g^{-1}$. We can lift this path to $C_2$, starting from $c_2$ (by the lifting lemma). Call $c_2'$ the endpoint of the lifted path. Then, by what we seen before the proposition, the subgroup associated to $(C_2,c_2')$ is $gHg^{-1}$. Therefore, there exists a morphism from $(C_1,c_1)$ to $(C_2,c_2')$, which can be seen as a morphism to $(C_2,c_2)$ that doesn't preserve the base points.
\end{proof}

\begin{corol}
Two covering spaces are isomorphic if and only if their associated subgroups are conjugate.
\end{corol}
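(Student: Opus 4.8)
The plan is to reduce the statement to the base-point version already established, so that Corollary~\ref{cubiertapunto2} and the remarks on how changing base points conjugates the associated subgroup do all the work. The one pitfall I want to avoid is trying to read off conjugacy directly from Proposition~\ref{morcubierta}: an isomorphism yields morphisms in both directions, hence $H_1\subset gH_2g^{-1}$ and $H_2\subset g'H_1g'^{-1}$, but for subgroups of infinite index these two inclusions do not force $H_1$ and $H_2$ to be conjugate (a subgroup can be properly contained in a conjugate of itself). So instead I upgrade any isomorphism to a base-point-preserving one.

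First I would prove the forward implication. Suppose $\phi\colon C_1\lra C_2$ is an isomorphism of coverings. Fix the base point $c_1\in C_1$ and set $c_2'=\phi(c_1)$, a point of $C_2$ lying over $x_0$. Then $\phi$ is an isomorphism of coverings with base points from $(C_1,c_1)$ to $(C_2,c_2')$, so the induced map $\phi_*\colon \pi_1(C_1,c_1)\lra \pi_1(C_2,c_2')$ is a group isomorphism; combined with $p_1=p_2\circ\phi$ this gives $H_1=(p_1)_*\pi_1(C_1,c_1)=(p_2)_*\pi_1(C_2,c_2')$, i.e. $H_1$ is exactly the subgroup associated to $(C_2,c_2')$. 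Since $c_2'$ and $c_2$ both lie over $x_0$, the discussion preceding Proposition~\ref{morcubierta} shows that the subgroup associated to $(C_2,c_2')$ is $gH_2g^{-1}$ for a suitable $g\in\pi_1(X,x_0)$. Hence $H_1=gH_2g^{-1}$, and the subgroups are conjugate.

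For the converse, suppose $H_1=gH_2g^{-1}$ for some $g\in\pi_1(X,x_0)$. Lifting a loop representing $g$ starting at $c_2$ produces (by the lifting lemma) a new point $c_2'$ over $x_0$ whose associated subgroup is $gH_2g^{-1}=H_1$, again by the base-point-change computation before Proposition~\ref{morcubierta}. Thus $(C_1,c_1)$ and $(C_2,c_2')$ have the same associated subgroup, so Corollary~\ref{cubiertapunto2} provides an isomorphism of coverings with base points between them. Forgetting the base points, this is an isomorphism $C_1\cong C_2$, as desired.

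The argument is essentially bookkeeping once the right formulation is chosen; the only genuine subtlety — and the step I would be most careful with — is tracking the direction of conjugation when changing base points (whether one obtains $gH_2g^{-1}$ or $g^{-1}H_2g$), so that the element $g$ produced in the converse matches the one relating $H_1$ and $H_2$.
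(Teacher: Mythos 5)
Your proof is correct, and it takes a genuinely more careful route than the one the paper implicitly suggests. The paper states this corollary immediately after Proposition~\ref{morcubierta} with no proof at all, inviting the reader to apply that proposition in both directions; as you observe, that only yields $H_1\subset gH_2g^{-1}$ and $H_2\subset g'H_1g'^{-1}$, and mutual conjugate-containment does not imply conjugacy for arbitrary subgroups (nor do covering morphisms in both directions immediately assemble into an isomorphism). Your workaround --- upgrading the isomorphism to a base-point-preserving one, so that $H_1$ literally equals the subgroup associated to $(C_2,\phi(c_1))$, and conversely using the base-point-change computation together with Corollary~\ref{cubiertapunto2} --- is the standard classification argument and proves the statement for coverings of arbitrary degree. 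It is worth noting that in the paper's actual use case (dessins, i.e.\ finite-degree coverings, hence finite-index subgroups of $F_2$), the naive route can be repaired: the two conjugate-containments force the indices to be equal, and an inclusion of subgroups of equal finite index is an equality; likewise a self-morphism of a finite-degree connected covering must have degree one. So in the finite-index setting both arguments are available, but yours is the one that establishes the corollary as stated, in full generality. Finally, your worry about the direction of conjugation is harmless: since conjugacy is symmetric under $g\mapsto g^{-1}$, either bookkeeping convention (the paper lifts a loop representing $g^{-1}$ to land on $gH_2g^{-1}$, you lift one representing $g$) leads to the same conclusion.
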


\noindent We are seeing that the subgroups of the fundamental group and the coverings of a space are closely related. Another way of seeing this relation is the \textbf{monodromy action}. Given a covering space $p:C\lra (X,x)$, let $F=p^{-1}(\{x\})$. We can make $\pi_1(X,x)$ act on $F$ on the following way: take a loop $\eta$ in $\pi_1(X,x)$ and $c\in F$. As we did in the proof of the previous proposition, lift it to a path $\widetilde \eta$ in $C$ so that it starts on $c$. Then, call $\eta(c)=\eta(1)$. It is easy to check that this doesn't depend of the homotopy class of $\eta$. Also, it is indeed an action of $\pi_1(X,x)$ on $F$: if we have two loops $\eta$ and $\gamma$, the action of $\eta$ followed by the action of $\gamma$ is clearly the same as the action of $\eta *\gamma$. Thus, we have a right action. It is also transitive, since $C$ is path connected.

Also, note that $\eta(c)=c$ if and only if $\eta$ lifts to a loop $\widetilde \eta$ in $C$. This means in turn that $\eta=p_*(\widetilde \eta)\in p_*(\pi_1(C,c))$. Therefore, the stabilizer of the point $c$ by the monodromy action is the subgroup associated to $(C,c)$. Since we have a transitive action, the fiber $F$ of $x$ is in bijection with $H\backslash\pi_1(X,x)$, where $H$ is the subgroup associated to $(C,c)$. In particular, the degree of the covering, which is the cardinality of $F$, is equal to the index of $H$ in $\pi_1(X,x)$.

This gives two ways to look at a covering: as a conjugacy class of subgroups, and as a transitive $\pi_1(X,x)$-action. We can apply this to dessins: we are looking at covers of $\P^1\setminus \{0,1,\infty\}$, and the fundamental group of $\P^1\setminus \{0,1,\infty\}$ is $F_2$, the free group on two generators. A dessin can also be seen as the conjugacy class of a finite index subgroup of $F_2$. We will fix $1/2\in \P^1$ as the base point and we will pick two canonical generators for this group: $x$, which can be parametrized as $t\mapsto \frac{1}{2}e^{2\pi it}$, and $y$, which is $t\mapsto 1-\frac{1}{2}e^{2\pi it}$. In other words, $x$ goes around $0$ and $y$ goes around $1$.

Dessins are in correspondence with conjugacy classes of subgroups of $F_2$ of finite index. Also, if one takes the monodromy action, a dessin can also be seen as a transitive right action of $F_2$ on a finite set.

From the fact that dessins are equivalent to (right) $F_2$-actions, we can give yet another definition, which consists of, given an action, consider the induced morphism $f:F_2\lra S_F$ on the group of permutations of $F$ (which we see as also acting on the right on $F$). Then, a dessin can be seen as a homomorphism from $F_2$ to $S_n$ such that its image is a transitive subgroup of $S_n$. Dessins are equivalent if these morphisms are conjugate in $S_n$. The subgroup associated to the cover is then the preimage of the stabilizer of any point (since all the stabilizers are conjugate). Given a dessin, we call the image of $F_2$ in a permutation group the \textbf{cartographic group} of the dessin.

With the point of view of actions, morphisms between dessins are as follows: if one has a dessin $(C,f)$ and the transitive action of $F_2$ on the fiber $X$ given by the monodromy, and we have a morphism to another dessin $(C',f')$, we can consider a base point in $C$ and its image in $C'$, which gives an inclusion between a group $H$ associated to $(C,f)$ and a group $H'$ associated to $(C'.f')$. Then, $X\cong H\backslash F_2$, and $H$ is contained in $H'$, so $H'$ is the union of some cosets of $H$. Therefore, the cosets of $H'$ form a partition of $H\backslash F_2$. In the set $X$, we are just quotienting by the relation $a\sim b\Leftrightarrow \exists h\in H':a^h=b$. This quotient is simply $H'\backslash F_2$, so $F_2$ acts on it by the induced action, given by $[a]^g=[a^g]$. Thus, a morphism between actions is a map $\phi:X\lra X'$ that commutes with the action of $F_2$, that is, $\phi(x^g)=\phi(x)^g$. Since we are requiring all actions to be transitive, such a map will be surjective.

Finally, $F_2$ is a special group, since it satisfies a universal property: for every group $G$ and every two elements $\overline x,\overline y\in G$, there exists a unique homomorphism $\pi:F_2\lra G$ such that $\pi(x)=\overline x$ and $\pi(y)=\overline y$. For a subgroup $H<F_2$, we can consider the biggest normal subgroup contained in it, which is called the core, and it is $\cor_{F_2} H=\bigcap_{g\in F_2} g^{-1}Hg$ (which is of finite index, bounded by $[F_2:H]!$), we can consider the map $\pi:F_2\lra F_2/\cor_{F_2}H$. This gives a homomorphism from $F_2$ onto a group $G$ with a distinguished subgroup, $\pi(H)$, that satisfies that $\cor_{G}\pi(H)=1$. Conversely, given a group $G$ with two generators $\overline x,\overline y$, and a subgroup $\overline H$ such that $\cor_{\overline G}\overline H=1$, there is a subgroup $H$ of $F_2$ and a map $\pi$ taking $H$ to $\overline H$ and the generators of $F_2$ to $\overline x,\overline y$.

To sum up:

\begin{prop}
Dessins d'enfants are in correspondence with the following sets:
\begin{enumerate}
\item The finite index subgroups of $F_2$, up to conjugation. A dessin $C_1$ covers another one $C_2$ if and only if their corresponding subgroups $H_1$ and $H_2$ satisfy that $H_1\subset g^{-1}H_2g$ for some $g\in F_2$.
\item Finite groups $G$ with two distinguished generators $\overline x$ and $\overline y$ and a distinguished subgroup $\overline H$, such that $\cor_G \overline H=1$.
\end{enumerate}
\end{prop}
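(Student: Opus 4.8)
The plan is to treat Part 1 as essentially a translation of the covering-space results already in hand, and to concentrate the work on Part 2. For Part 1, recall that a dessin is a connected finite-degree covering of $\P^1\setminus\{0,1,\infty\}$ and that $\pi_1(\P^1\setminus\{0,1,\infty\},1/2)\cong F_2$. Proposition \ref{cubiertapunto1} shows every subgroup of $F_2$ is the subgroup associated to some covering, the corollary to Proposition \ref{morcubierta} shows two coverings are isomorphic exactly when their associated subgroups are conjugate, and the monodromy discussion identifies the degree of the covering with the index $[F_2:H]$. Hence finite-degree coverings up to isomorphism correspond bijectively to conjugacy classes of finite-index subgroups, and the covering relation $H_1\subset g^{-1}H_2g$ is exactly the content of Proposition \ref{morcubierta}; so I would simply assemble these statements, with no new argument needed.

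For Part 2 I would exhibit two mutually inverse constructions. Given a finite-index $H<F_2$, put $N=\cor_{F_2}H$. This $N$ is normal and of finite index: the action of $F_2$ on the cosets of $H$ gives a homomorphism into a finite symmetric group whose kernel is precisely $N$, so $[F_2:N]\le[F_2:H]!$. Set $G=F_2/N$ with quotient map $\pi$, let $\overline{x}=\pi(x)$, $\overline{y}=\pi(y)$ (which generate $G$ since $x,y$ generate $F_2$), and $\overline{H}=\pi(H)$. The essential verification is $\cor_G\overline{H}=1$: since $N\subset g^{-1}Hg$ for every $g$, one has $\overline{g}^{-1}\overline{H}\,\overline{g}=(g^{-1}Hg)/N$, and intersecting over all $g$ gives $\bigl(\bigcap_g g^{-1}Hg\bigr)/N=N/N=1$.

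Conversely, given a finite $G$ generated by $\overline{x},\overline{y}$ with a subgroup $\overline{H}$ satisfying $\cor_G\overline{H}=1$, the universal property of $F_2$ furnishes a unique $\pi:F_2\to G$ with $\pi(x)=\overline{x}$, $\pi(y)=\overline{y}$; it is onto because $\overline{x},\overline{y}$ generate. Put $H=\pi^{-1}(\overline{H})$, of finite index $[G:\overline{H}]$ and containing $\ker\pi$. To see the two constructions invert each other, the key identity is $\cor_{F_2}H=\ker\pi$: using surjectivity of $\pi$ one computes $\cor_{F_2}H=\bigcap_g\pi^{-1}\bigl(\pi(g)^{-1}\overline{H}\pi(g)\bigr)=\pi^{-1}(\cor_G\overline{H})=\ker\pi$. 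Hence starting from a tuple, $F_2/\cor_{F_2}H\cong G$ compatibly with the generators and with $\overline{H}$; and starting from $H$, since $N=\cor_{F_2}H\subset H$ we recover $\pi^{-1}(H/N)=H$.

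The main obstacle I anticipate is not any single computation but the bookkeeping that makes this an honest bijection of equivalence classes: I would check that conjugating $H$ in $F_2$ leaves $N=\cor_{F_2}H$ and hence $G,\overline{x},\overline{y}$ unchanged while conjugating $\overline{H}$ inside $G$, so that the classification of Part 2 matches the ``up to conjugation'' of Part 1. It is worth remarking that $G=F_2/\cor_{F_2}H$ is precisely the cartographic group, $\overline{H}$ a point stabilizer of its monodromy action, and the condition $\cor_G\overline{H}=1$ the faithfulness of that action, which is what ties Part 2 back to the permutation-representation description of a dessin.
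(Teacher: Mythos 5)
Your proposal is correct and takes essentially the same approach as the paper: Part 1 is assembled from the covering-space results (the classification of coverings by conjugacy classes of subgroups and the morphism criterion of Proposition \ref{morcubierta}, together with the monodromy identification of degree and index), and Part 2 rests on the universal property of $F_2$ and the core construction $N=\cor_{F_2}H$, which is exactly how the text argues. The verifications you spell out explicitly (that $\cor_G\overline{H}=1$, that $\cor_{F_2}\pi^{-1}(\overline{H})=\ker\pi$, and the conjugation bookkeeping matching the two ``up to conjugation'' equivalences) are left implicit in the paper, but they do not change the route.
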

\begin{prop}
The category of dessins d'enfants is equivalent to the following categories:
\begin{enumerate}
\item Transitive right actions of $F_2$ on finite sets. A morphism between two sets with $F_2$ actions is a map $\phi:X\lra X'$ such that for all $g\in F_2$ and every $x\in X$, $g(\phi(x))=\phi(g(x))$. 
\item Pairs of permutations $\overline x,\overline y\in S_n$ that generate transitive subgroups or, equivalently, morphisms from $F_2$ to $S_n$ such that $x$ and $y$ are mapped to such generators. A morphism between $\overline x_1,\overline y_1\in S_n$ and $\overline x_2,\overline y_2\in S_m$ is a map $\phi:\{1,\ldots ,n\}\lra \{1,\ldots ,m\}$ such that $\phi\circ \overline x_1=\overline x_2 \circ \phi$, and $\phi\circ \overline y_1=\overline y_2 \circ \phi$.
\end{enumerate}
\end{prop}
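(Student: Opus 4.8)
The plan is to prove the equivalence with category (1) directly, by promoting the monodromy construction of the previous section to a functor, and then to obtain category (2) almost for free from the universal property of $F_2$. Throughout, write $X=\P^1\setminus\{0,1,\infty\}$, fix the base point $x_0=1/2$, and recall that $\pi_1(X,x_0)=F_2$.

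First I would define the functor $M$ from dessins to transitive $F_2$-sets. On objects, $M$ sends a covering $p:C\lra X$ to its fiber $F=p^{-1}(x_0)$ equipped with the right monodromy action; this action is transitive precisely because $C$ is connected, as observed above, so $M(C)$ is indeed a transitive $F_2$-set. On morphisms, a morphism of dessins $g:C_1\lra C_2$ satisfies $p_1=p_2\circ g$, hence carries the fiber $F_1$ into $F_2$, and the restriction $g|_{F_1}$ is $F_2$-equivariant because lifting a loop of $X$ and then applying $g$ produces the same path as applying $g$ first, by uniqueness of path lifting. Thus $M$ is a functor.

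Next I would check that $M$ is fully faithful. For faithfulness, a morphism of coverings is a lift of $p_1$ through $p_2$, so by the uniqueness in Proposition \ref{uniquemorphism} it is determined by the image of a single point; hence two morphisms inducing the same map on fibers coincide. For fullness, given an $F_2$-equivariant map $\psi:F_1\lra F_2$, pick $c_1\in F_1$ and set $c_2=\psi(c_1)$; equivariance forces the stabilizer $H_1$ of $c_1$ to lie in the stabilizer $H_2$ of $c_2$, and since these stabilizers are exactly the associated subgroups, Proposition \ref{uniquemorphism} yields a unique morphism $g:(C_1,c_1)\lra(C_2,c_2)$. Both $M(g)$ and $\psi$ are equivariant and agree at $c_1$, so by transitivity they agree on all of $F_1$, giving $M(g)=\psi$. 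Finally, $M$ is essentially surjective: a transitive $F_2$-set is isomorphic to $H\backslash F_2$ for the stabilizer $H$ of any of its points, $H$ has finite index, and Proposition \ref{cubiertapunto1} together with the compactification constructed above produces a dessin whose fiber realizes this action. Hence $M$ is an equivalence onto category (1).

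The equivalence with category (2) is then a translation via the universal property of $F_2$: a right action of $F_2$ on $\{1,\dots,n\}$ is the same as a homomorphism $F_2\lra S_n$, which by freeness is uniquely specified by the images $\overline x,\overline y$ of the generators, with transitivity of the action corresponding to transitivity of $\langle\overline x,\overline y\rangle$. Because $x$ and $y$ generate $F_2$, a map $\phi$ is $F_2$-equivariant if and only if it commutes with the actions of $x$ and $y$ alone, which is exactly the pair of conditions $\phi\circ\overline x_1=\overline x_2\circ\phi$ and $\phi\circ\overline y_1=\overline y_2\circ\phi$. I expect the main obstacle to be bookkeeping rather than conceptual: carefully matching the right-action conventions so that equivariance translates into the stated containment of stabilizers and into the correct order of composition in category (2), and confirming that the functor is genuinely fully faithful rather than merely a bijection on isomorphism classes.
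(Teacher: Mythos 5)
Your proposal is correct and follows essentially the same route as the paper: the paper states this proposition as a summary (``to sum up'') of the preceding discussion in the monodromy section, where the fiber functor, the identification of stabilizers with associated subgroups, Proposition \ref{uniquemorphism}, and the universal property of $F_2$ are exactly the ingredients you use. Your write-up merely packages that material into an explicit fully-faithful-and-essentially-surjective argument, which is a faithful formalization rather than a different proof.
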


\section{(Children's) Drawings on surfaces}

We can look at the monodromy in a much more geometric way. Let us consider the following triangulation of the sphere: Take the real line, which is a circle that passes through $0$, $1$ and $\infty$, and make these three points vertices and make the segments the real line is divided in by these points into edges. The hemispheres are two triangles with vertices $0$, $1$ and $\infty$. We will color the triangle with positive imaginary part white, and the other one black.

\begin{figure}[h!]
\centering
\includegraphics[scale=0.3]{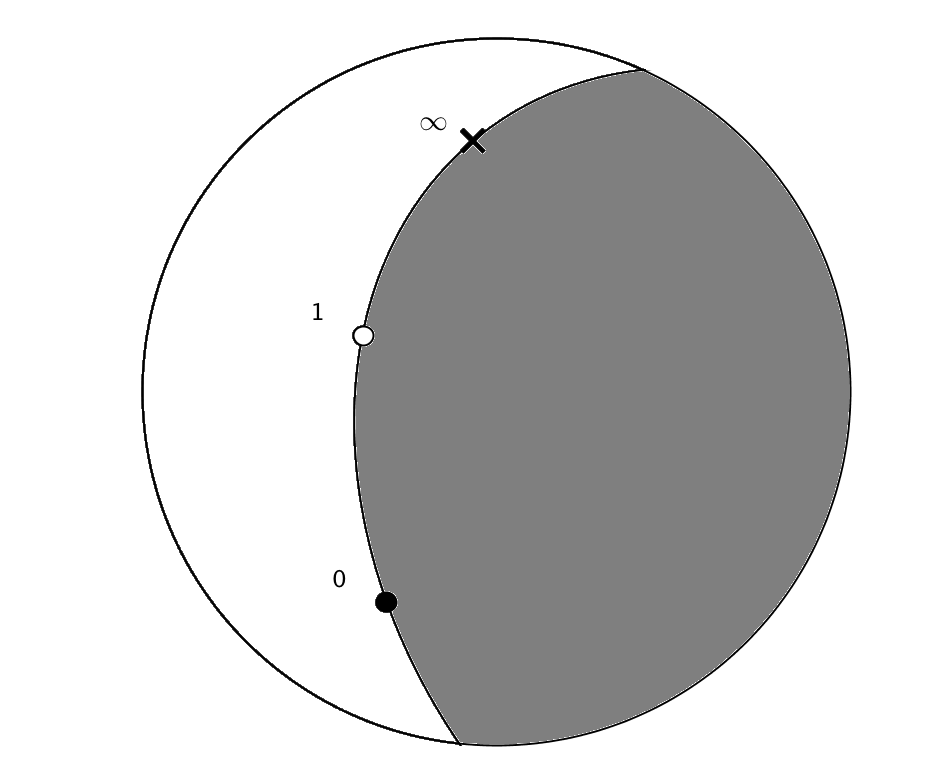}
\caption{The triangulation on the sphere}
\end{figure}

\noindent If we have a dessin d'enfant, the triangulation can be lifted to the covering surface, including the ramification points. We can do this as follows: let $p:C\lra \P^1$ be the covering map. If we take the white triangle and remove its vertices, we can call it $T$ and consider the inclusion $i:T\lra \P^1$. Now, for every point $P_j$ in $p^{-1}(1/2)$, we can lift the inclusion (by the lifting lemma) to a map $\widetilde i_j$ so that $\widetilde i_j(1/2)=P_j$. If we do this for the black triangle as well, we obtain the triangulation of $C$ except for the ramification points. Note that every triangle and every edge are lifted to as many triangles and edges as the degree of the covering. We take the ramification points as vertices, and, since locally the covering looks like $z\mapsto z^e$, each point will have $2e$ edges around it and $e$ triangles of each color.

\begin{figure}[h!]
\centering
\includegraphics[scale=0.3]{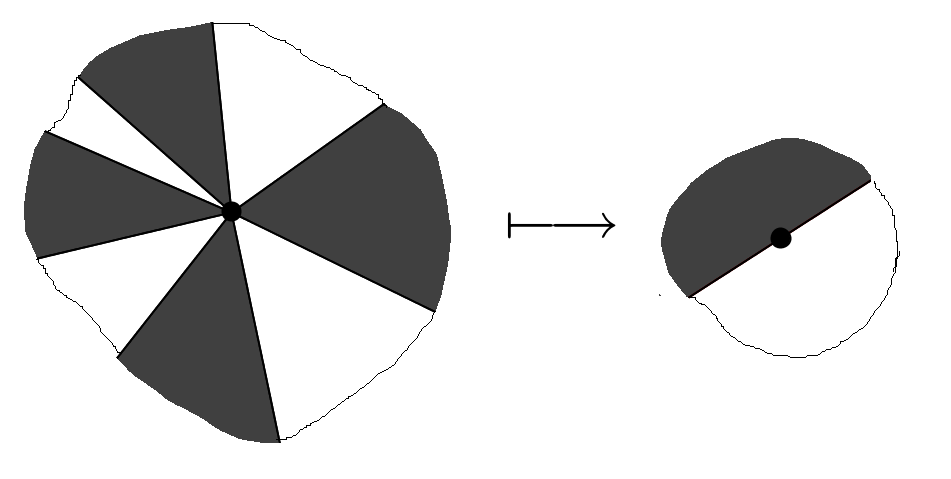}
\caption{A neighborhood of a point with ramification index 4}
\end{figure}

\noindent Thus there is a triangulation on the surface such that every face of it is mapped to a face of the triangulation on $\P^1$. We can color the points on $C$ depending on which image they have. We will say that the preimages of $0$ are black points, the preimages of $1$ are white points and the preimages of $\infty$ are stars. We have a surface with a triangulation with vertices of three kinds, and such that each triangle has a vertex of every kind.

We can go the other way round: Start from a triangulated oriented surface with black vertices, white vertices and stars such that each triangle has a vertex of each kind. Since the surface is oriented, for every triangle there is a notion of the counterclockwise order of its vertices. Color each triangle such that the vertices are ordered black-white-star in counterclockwise order white and every other triangle black. Note that this implies that every edge is shared by a triangle of each color. Now, we can map black vertices to 0, white vertices to 1, stars to $\infty$, white triangles to the white triangle on the sphere, black triangles to the black triangle on the sphere and edges of triangles to the corresponding segment: this gives a covering of the sphere.

We have proven that dessins are equivalent to triangulated orientable surfaces with three-colored vertices. There is yet another way to represent a dessin: From such a surface, if we delete every star vertex and the edges reaching it, we are left with a bicolored graph embedded in a surface. Each star vertex is a vertex of $e$ triangles of each color, and when we delete the vertex, these triangles turn into a face with $2e$ edges. We can also turn this the other way round. If we start with a  bicolored graph embedded in an orientable surface, such that the faces are homeomorphic to disks, we can add a star vertex to each face, join it with each vertex on the face and we have a triangulated surface as before. This is the reason why Grothendieck called them dessins d'enfants (children's drawings): because they can just be seen as a drawing on a surface.

From a Belyi function, one can just take the preimage of the segment $[0,1]$, and this will give the graph on the surface.

 As an example, we can take the cube dessin on the sphere (we are identifying $\C\subset \P^1$ with the plane as usual, so every picture has an outer face and a point at infinity):

\begin{figure}[h!]
\centering
\includegraphics[scale=0.12]{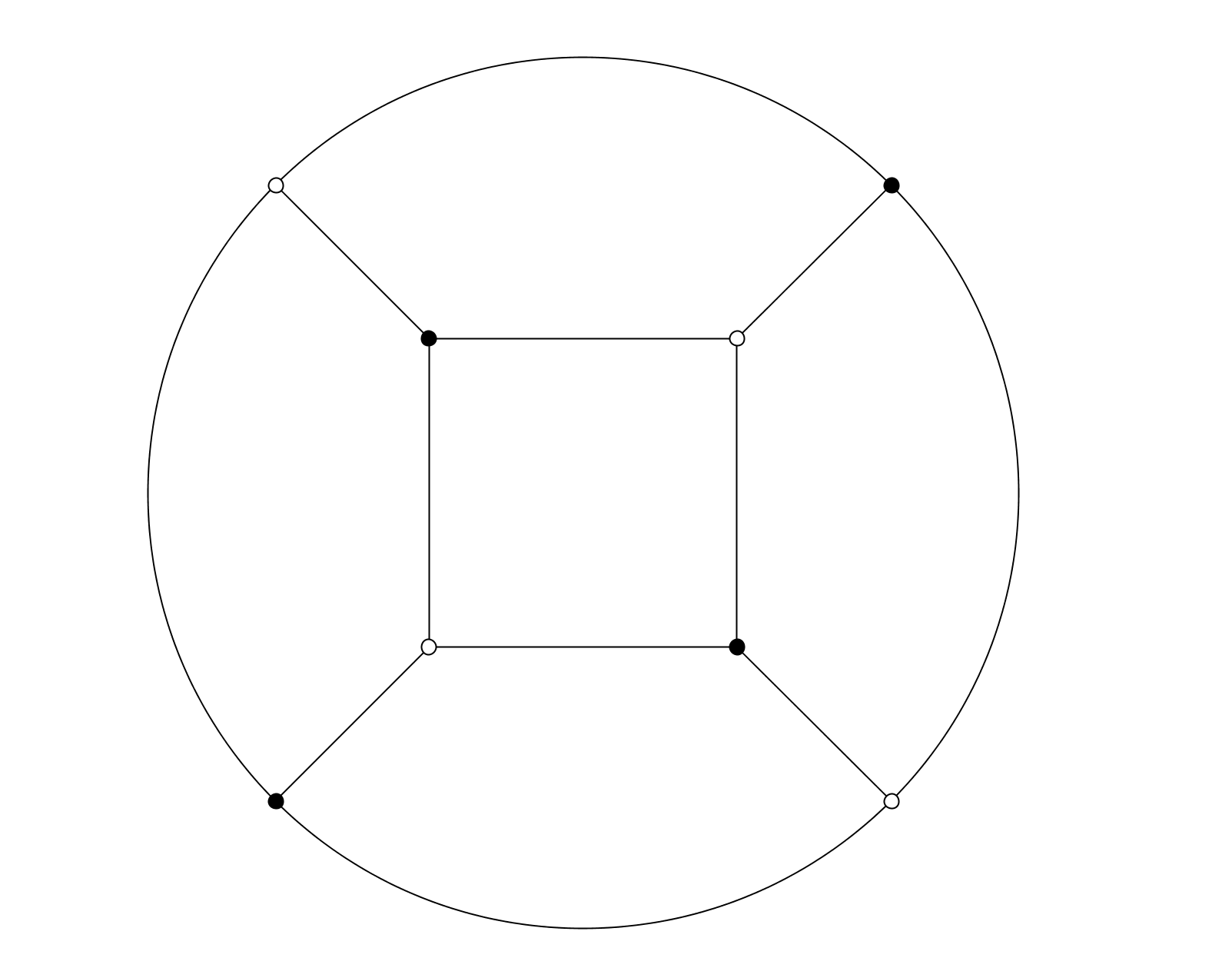}
\includegraphics[scale=0.12]{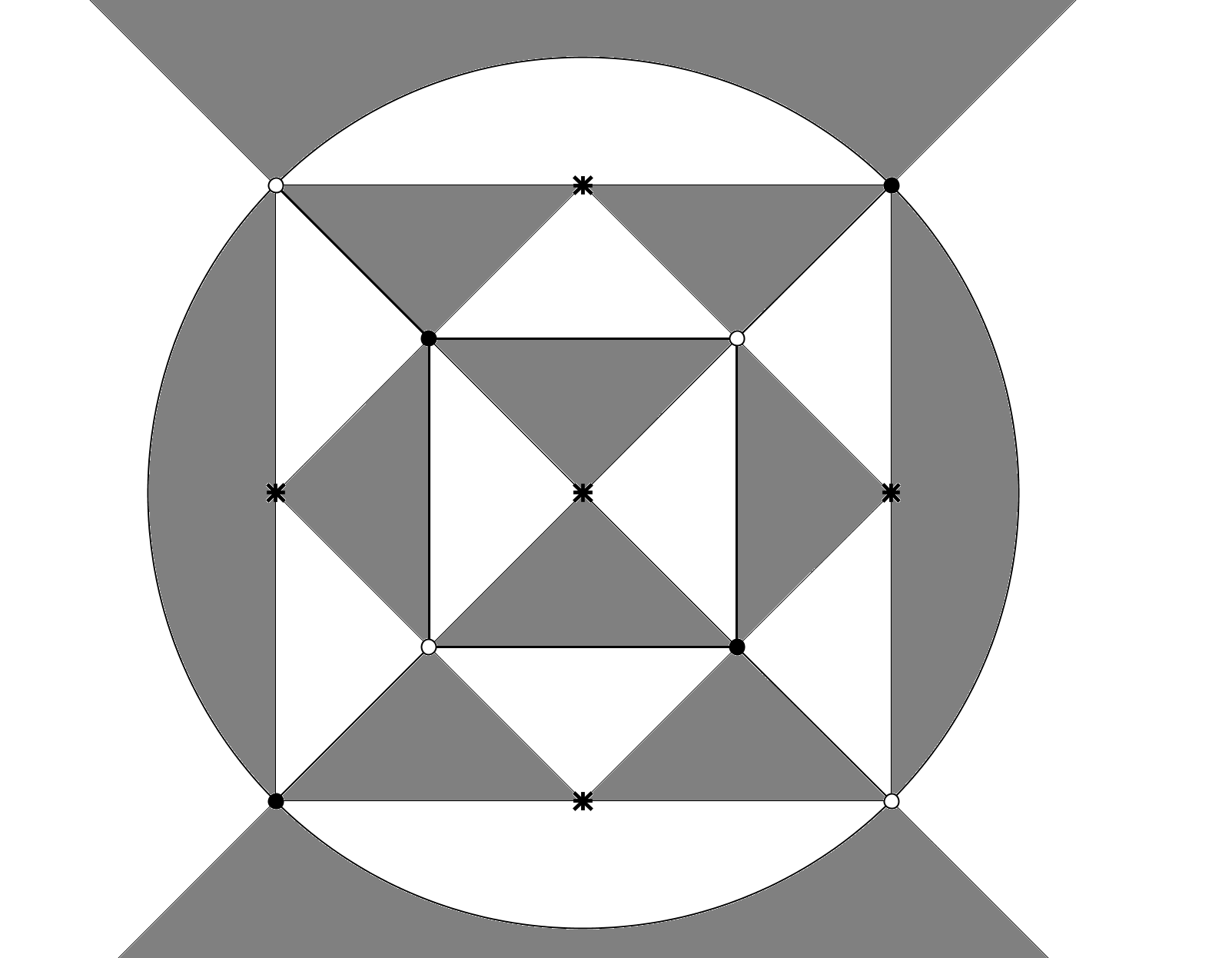}
\caption{The cube with and without the star vertices drawn}
\end{figure}
\noindent Let us take a moment to look at this example from all the points of view we have seen so far. From the triangulated surface, a covering of the sphere is determined. This, in turn, determines a complex structure on the sphere and a holomorphic map. This holomorphic map is given by the equation
$$
f(z)=-\frac{1}{12\sqrt 3}\frac{(z^4-2\sqrt 3 z^2-1)^3}{z^2(z^4+1)^2}=1-\frac{1}{12\sqrt 3}\frac{(z^4+2\sqrt 3 z^2-1)^3}{z^2(z^4+1)^2}
$$
We will not stop to see how the equation for the map is obtained. There are techniques for this that can be found in \cite{Guillot} and \cite{Lando}, chapter 2. We can, however, check that this map has degree 12, and that it is invariant under the Möbius transformations$$
z\mapsto -\frac{\sqrt 2 z+1+i}{(1-i) z-\sqrt 2},
z\mapsto -\frac{\sqrt 2 z+1-i}{(1+i) z-\sqrt 2}$$
These generate the symmetries of the cube, which is a group isomorphic to $A_4$. Since $f$ is invariant under the action of this group, it must factor through the quotient
$$
f:\P^1\lra \P^1/A_4 \lra \P^1
$$
However, since the first arrow has degree 12, and $f$ has degree 12, the second arrow must have degree 1, i.e. it must be an isomorphism of $\P^1$ with itself. So $f$ is the map we are looking for.

Given a Belyi map, the graph embedded in the surface is given by the preimage of $[0,1]$, with the preimage of 0 as black points and the preimage of 1 as white points. In this particular case, we can plot the preimage of $[0,1]$, to obtain the graph shown in Figure \ref{plot}.

\begin{figure}[h!]
\centering
\includegraphics[scale=0.2]{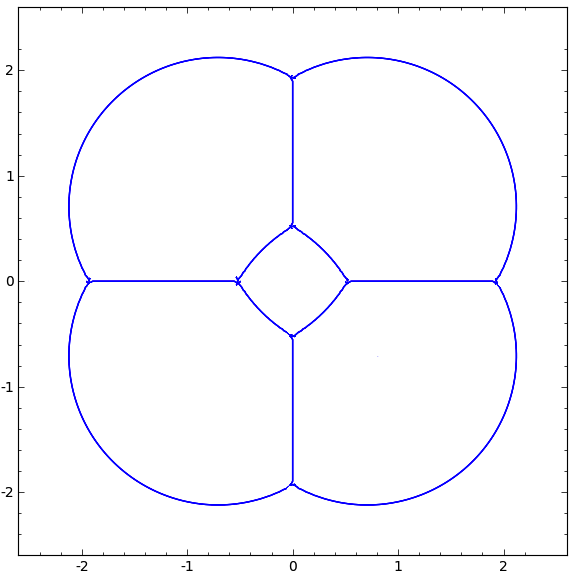}
\caption{The preimage of $[0,1]$ by the map $f$.} \label{plot}
\end{figure}

\noindent The field extension we are looking at is the homomorphism of $\C(t)$ into itself given by substitution of the variable by $f$. Thus, the extension is $\C(f)\subset \C(z)$.

We can also look at the monodromy. We want to know the action of the loops $x$ and $y$ which we described earlier on a fiber of the map. We can take any point in the interior of $[0,1]$. For instance, $1/2$. Now, for a point in the preimage of $1/2$, that is, a point in one of the edges of the dessin, lifting the loop around $0$ is just turning counterclockwise around the black vertex incident to that edge until we meet another edge.

\begin{figure}[h!]
\centering
\includegraphics[scale=0.18]{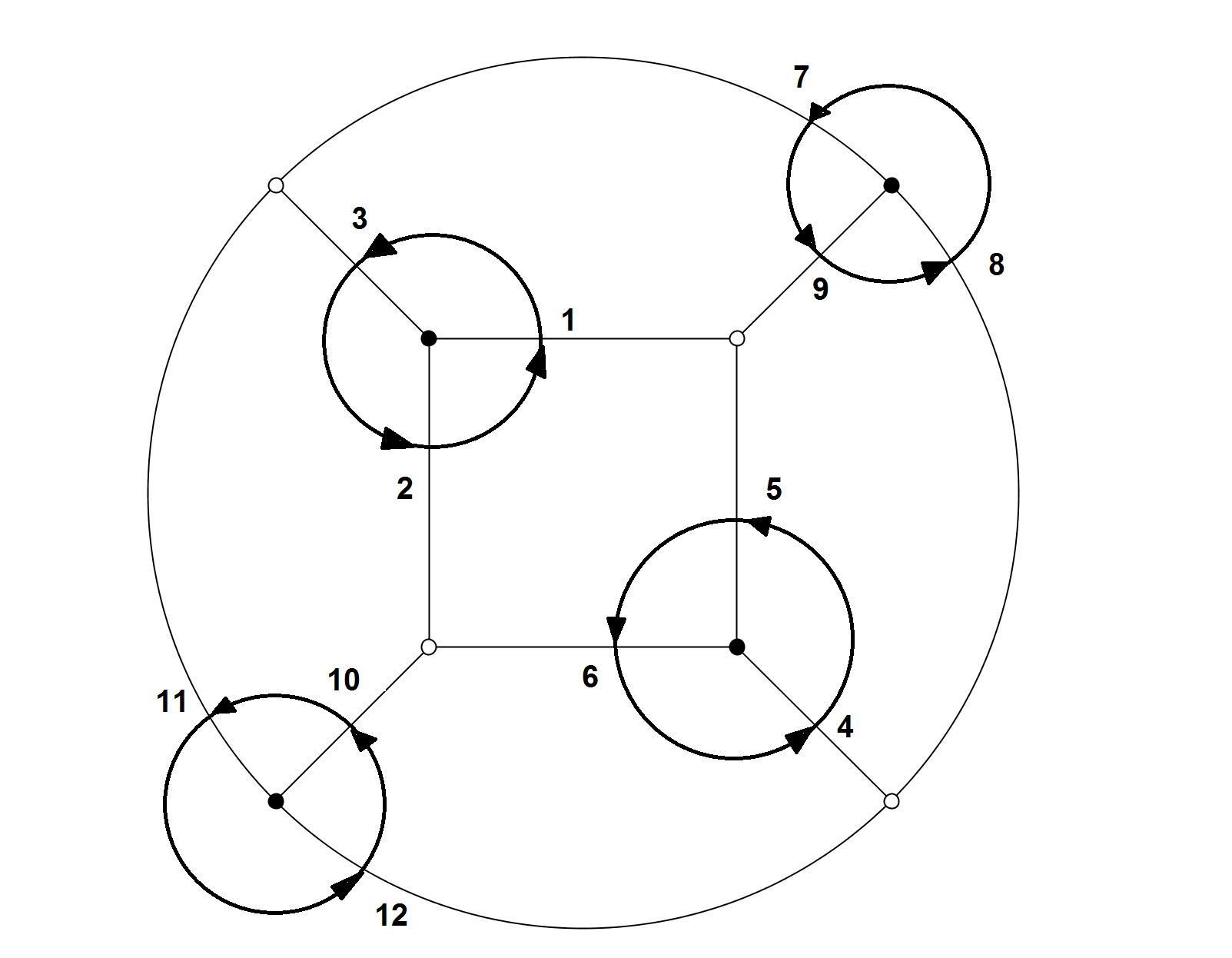}
\caption{The 12 liftings of the path $x$ to the cube.}
\end{figure}
\noindent Thus, every edge is mapped to another edge, and we get a permutation of $12$ elements. We can do the same with the loop around $1$. In this case, the map we obtain is
$$
x\longmapsto (1,3,2)(4,5,6)(7,9,8)(10,11,12)
$$
$$
y\longmapsto (1,5,9)(2,10,6)(3,7,11)(4,12,8)
$$
And also, the subgroup of $F_2$ which is the stabilizer of a point has index 12, since this is the size of the orbit (recall that these are equal!), so it is the preimage of the identity. Therefore, it is the normal subgroup generated by
$$
\{ x^3,y^3,(xy)^2\}
$$
Since these are relations that give $A_4$ with these generators.

Using the cube as an example, we have seen something important: we can know the monodromy action by just looking at the picture! This will turn out to be very useful for us.
\section{Regular dessins}\label{secregular}
The cube dessin was an example of a regular dessin. A regular dessin corresponds with many familiar concepts: a regular (or normal) cover, a normal subgroup, a proper action and a Galois extension. We will prove that all these notions coincide in a dessin d'enfant when we look at it from the various points of view.

\begin{definicion}
An \textbf{automorphism} of a dessin is an invertible morphism from the dessin to itself.
\end{definicion}

\noindent No surprises here. Recall what we mean by a morphism in the different contexts: if we have a Belyi pair $(C,f)$, it is a map $\phi:C\lra C$ such that $f\circ \phi=f$; if we have a field extension, it is a morphism of extensions, i.e., a homomorphism $\phi:\C(C)\lra \C(C)$ such that $\phi|_{\C(t)}=\Id$, and if we have an action, it is a map commuting with the group action. In any of the formulations, it is clear that any morphism from a dessin to itself is invertible, and it is therefore an automorphism. Also, we can speak of the automorphism group in any of these settings, since we have seen that they are all equivalent.
\begin{prop}\label{regular}
Let $(C,f)$ be a Belyi pair, let $\C(t)\subset\C(t)$ be the corresponding field extension, and let $F_2\lra X$ be the corresponding monodromy action. Let $H$ be the subgroup of $F_2$ associated with the action. The following are equivalent:
\begin{enumerate}
\item The order of the automorphism group of the dessin is equal to its degree.
\item $(C,f)$ is a regular cover. That is, there exists a subgroup $G<\Aut (C)$ such that the map is equivalent to $C\lra C/G$.
\item The extension $\C(C)/\C(t)$ is a Galois extension.
\item $H$ is a normal subgroup of $F_2$.
\item The action on $X$ of the cartographic group is proper, i.e. if for any $x\in X$ and $g\in F_2$, we have that $x^g=x$, then $g=1$.
\item The order of the cartographic group is equal to the degree of the dessin.
\end{enumerate}
\noindent When any of these is true, we say that the dessin is a \textbf{regular dessin}.
\end{prop}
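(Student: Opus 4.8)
The plan is to take condition (4) as a hub and prove that each of (1), (5), (6) and (2) is equivalent to it, and finally that (2) and (3) are equivalent. All six conditions are invariant under replacing $H$ by a conjugate (normalizers and cores of conjugate subgroups have the same index), so it is harmless to work with a fixed representative $H$ of the conjugacy class attached to the dessin. Write $n=[F_2:H]=|X|$ for the degree.

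First I would record the two structural computations that drive everything. The automorphism group of the dessin, viewed as the group of $F_2$-equivariant self-maps of the right coset space $X\cong H\backslash F_2$, is $N_{F_2}(H)/H$: such a map is determined by the image $Hc$ of the trivial coset, it is well defined and bijective exactly when $c\in N_{F_2}(H)$, and $c,c'$ give the same automorphism iff $Hc=Hc'$. Hence $|\Aut|=[N_{F_2}(H):H]$, which divides $n$ and equals $n$ precisely when $N_{F_2}(H)=F_2$, i.e. when $H\tl F_2$; this is (1)$\iff$(4). Next, the cartographic group is the image of $F_2$ in $S_X$, hence $F_2/\cor_{F_2}H$, and the stabilizer of the coset $Hx$ is $(x^{-1}Hx)/\cor_{F_2}H$. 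Reading ``$g=1$'' in (5) as ``$g$ acts trivially'', the assertion that every such stabilizer is trivial, and the assertion that $|F_2/\cor_{F_2}H|=[F_2:\cor_{F_2}H]$ equals $n$ (condition (6)), are both equivalent to $H=\cor_{F_2}H$, i.e. to $H\tl F_2$. This settles (5)$\iff$(4) and (6)$\iff$(4).

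For (2)$\iff$(4) I would invoke the classification of covering spaces (the cited results in \cite{Munkres}): a connected cover is regular exactly when its associated subgroup is normal, in which case it is the quotient by its deck group. The only delicate point is the passage between the punctured cover $\phi\colon S\to\P^1\setminus\{0,1,\infty\}$ and the compact Belyi pair $(C,f)$: by the uniqueness part of the compactification proposition, every deck transformation of $\phi$ extends uniquely to an automorphism of $C$ commuting with $f$, and every such automorphism restricts to a deck transformation; so ``$\phi$ is a regular cover'' and ``$f$ is the quotient $C\lra C/G$'' express the same thing, both equivalent to $H\tl F_2$. Finally, (2)$\iff$(3) is the field-theoretic translation: if $f$ is the quotient map $C\lra C/G$ then $\C(t)=\C(C/G)=\C(C)^G$, so by Artin's theorem $\C(C)/\C(t)$ is Galois with group $G$; conversely, if $\C(C)/\C(t)$ is Galois, then $G=\Gal(\C(C)/\C(t))$ acts as dessin automorphisms with fixed field $\C(t)$, whence $C/G\cong\P^1$ and $f$ is the quotient map.

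The main obstacle is the hub identification $\Aut\cong N_{F_2}(H)/H$ together with the bookkeeping that the topological deck group of the punctured cover, the automorphism group of $(C,f)$, and the group of $\C(t)$-automorphisms of $\C(C)$ are literally the same group. Once this correspondence between the three incarnations of $\Aut$ is pinned down (using the uniqueness in the compactification proposition and the categorical equivalences already proved in the previous sections), each of the individual equivalences above is short and essentially formal.
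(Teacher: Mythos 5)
Your proposal is correct, and its group-theoretic core coincides with the paper's: both rest on the two computations $\Aut\cong N_{F_2}(H)/H$ and (cartographic group) $\cong F_2/\cor_{F_2}H$, with stabilizers given by conjugates of $H$ modulo the core, and both prove $(2)\Leftrightarrow(3)$ by the same fixed-field/Artin argument. Where you genuinely diverge is in the logical layout and in the handling of condition $(2)$. The paper does not prove $(2)\Leftrightarrow(4)$ directly: it runs the cycle $1\Rightarrow4\Rightarrow5\Rightarrow6\Rightarrow1$ and attaches $(2)$ and $(3)$ to $(1)$, proving $(1)\Leftrightarrow(2)$ by a Riemann-surface degree count (the finite group $\Aut(C,f)$ acts properly discontinuously, the quotient map $\pi:C\lra C/G$ has degree $|G|$, so if $|G|=\deg f$ the induced map $C/G\lra\P^1$ has degree $1$ and is an isomorphism). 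You instead make $(4)$ the hub and get $(2)\Leftrightarrow(4)$ from the topological classification of covering spaces, transporting deck transformations of the punctured cover to automorphisms of $(C,f)$ via the uniqueness clause of the compactification proposition. Your route makes the normality statement transparent and more uniform (every condition is read off from $H$), and it is arguably more elementary on the complex-analytic side; the paper's route buys a self-contained identification of $\deg\pi$ with $|G|$, which it then reuses (e.g.\ in Proposition \ref{degree}). Note that both arguments still need the same nontrivial input that a finite group of automorphisms of a compact Riemann surface has a Riemann-surface quotient $C/G$ (the paper's footnote), since condition $(2)$ literally asserts $f$ is equivalent to $C\lra C/G$; your sketch leans on this implicitly and it would be worth saying so. Finally, you are right to read ``$g=1$'' in $(5)$ as ``$g$ acts trivially'' (i.e.\ $g=1$ in the cartographic group); the paper's own proof of $4\Rightarrow5$ makes the same reading, so this is a faithful interpretation rather than a discrepancy.
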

\begin{proof}\ \\ \begin{itemize}

\item[1$\Leftrightarrow$ 2)] We have seen the idea for this when we talked about the cube. Let $G$ be the automorphism group of the cover. $G$ is a finite group, since its elements are determined by the image of an unramified point (therefore, its order is at most the degree of the dessin). Since $G$ is a finite group, its action is properly discontinuous, and the quotient by it is a holomorphic map to another Riemann surface\footnote{See, for instance, Proposition 2.21 in \cite{GG}, which proves it for Fuchsian groups, but the same proof applies. Nonetheless, the proof is essentially removing the fixed points and applying Proposition \ref{cubierta}}. $f$ must factor through this map, so there is a map $\overline f$ which fits in the diagram
\begin{center}\hspace{0.5 cm}
\begindc{\commdiag}[40]
\obj(0,1)[f]{$C$}
\obj(0,0)[b]{$C/G$}
\obj(1,1)[d0]{$\P^1$}
\mor{f}{b}{$\pi$}
\mor{b}{d0}{$\overline f$}[\atright, 0]
\mor{f}{d0}{$ f$}
\enddc
\end{center}
\noindent Now we look at the degrees of the maps: $G$ acts properly on the points on which $f$ is not ramified, since an automorphism that fixes one of them must be the identity. Therefore, the degree of $\pi$ equals $|G|$. however, we are assuming that $|G|$ is also the degree of $f$, so the degree of $\overline f$ must be 1. That is, $\overline f$ is an isomorphism and $\pi$ is equivalent to $f$.

Reciprocally, if the map is $C\lra C/G$, then the group $G$ is the group of automorphisms of the cover, since it obviously preserves the map, and therefore its order is at least the order of the fibers, which is the degree of the dessin. We have just seen that the order of the automorphism group is never bigger than the order of the fibers, so we are done.
\item[2 $\Leftrightarrow$ 3)] Suppose that the Belyi map is $C\lra C/G$, for some automorphism group $G$. $\C(C/G)$ is a subfield of $\C(C)$, since $\pi^*$ (with $\pi$ as in the previous part) is a field homomorphism from $\C(C/G)$ to $\C(C)$. The functions which are defined on $C/G$ are the functions on $C$ that factor through the quotient, i.e. the ones that $G$ preserves. Thus, if we take $G^*=\{\phi^*:\phi\in G\}$, which is a subgroup of the Galois group $\Gal(\C(C)/\C(t))$, the field $\C(C/H)$ is the fixed field of $G^*$. The premise is that $\pi:C\lra C/G$ is the Belyi map, or in other words, $\pi^*:\C(C/G)\lra \C(C)$ is the inclusion of $\C(t)$, so $\C(C/G)=\C(t)$. This means that the fixed field of $G^*$ is $\C(t)$, which in turn is the definition that the extension is Galois, and $G^*$ must be the whole Galois group.

Now, suppose the extension $\C(C)/\C(t)$ is Galois. Then, if we call $G^*=\Gal(\C(C)/\C(t))$, we have that the fixed field of $G^*$ is $\C(t)$. Every automorphism in $G^*$ corresponds to an automorphism of $C$ which preserves $t$, or in other words, an automorphism of $(C,f)$. Therefore, if we call $G<\Aut(C)$ the group corresponding to this Galois group, we have, as we had in the previous paragraph, that $\C(t)=\C(C/G)$, so the map is equivalent to $C\lra C/G$.

\item[1 $\Rightarrow$ 4)] We are going to prove that the automorphism group is isomorphic to $N_G(H)/H$.

Suppose we have an action of $F_2$ on a set $X$, and an automorphism $\phi:X\lra X$. Pick an element $x_0\in X$, and let $H$ be its stabilizer. First, we are going to prove that $\phi$ is determined by $\phi(x_0)$: indeed, since the action is transitive, every other $x\in X$ is $x_0^g$ for some $g\in F_2$. Then, (recall that we have defined the monodromy action on the right)
$$
\phi(x)=\phi(x_0^g)=\phi(x_0)^g
$$
If $\phi$ is an automorphism. Now, let $h\in F_2$ be such that $\phi(x_0)=x_0^h$. Then, $\phi$, as we have seen, must be $\phi(x_0^g)=\phi(x_0)^g=x_0^{hg}$, for every $g\in F_2$. In particular, this must be well-defined: if $g_1,g_2$ are such that $x_0^{g_1}=x_0^{g_2}$, we must have
$$
x_0^{hg_1}=\phi(x_0^{g_1})=\phi(x_0^{g_2})=x_0^{hg_2}\Longleftrightarrow x_0^{hg_2g_1^{-1}h^{-1}}=x_0\Llra hg_2g_1^{-1}h^{-1}\in H$$
Since $x_0^{g_1}=x_0^{g_2}$ is equivalent to $g_2g_1^{-1}\in H$, what we are saying is precisely that
$$
hHh^{-1}=H
$$
In other words, $h\in N_G(H)$. Also, $h$ induces the identity if and only if $x_0^h=x_0$, so the automorphism group is in correspondence $N_G(H)/H$. If its size is the degree of the dessin, which is the index of $H$, this means that $N_G(H)=F_2$, which is what we are trying to prove.
\item[4 $\Rightarrow$ 5)] Let $H$, as before, be the stabilizer of $x_0$. Then, the stabilizer of $x_0^g$ is $g^{-1}Hg$. If $H\lhd F_2$, all the stabilizers are the same, and the intersection of all of them is $H$. Then, the morphism $\phi:F_2\lra \mathcal C$ that has the cartographic group as its image has $H$ as its kernel, since it is the intersection of all the stabilizers. Therefore, a permutation stabilizes a point if and only if it is in the image of $H^g$ for some $g$, so it's in the kernel of the action.
\item[5 $\Rightarrow$ 6)] The order of the cartographic group is the product of the order of the orbit, which is the degree of the dessin, by the order of the stabilizer of a point, which is 1, by hypothesis (since an element that fixes a point fixes all of them). Therefore, the order of the cartographic group must be equal to the degree of the dessin.
\item[6 $\Rightarrow$ 1)] We know that the cartographic group is $F_2/\cor_{F_2}(H)$. If the order of this group is the degree of the dessin, which is $[F_2:H]$, this means that $H=\cor_{F_2}(H)$, or in other words, $H$ is normal in $F_2$. We have seen in the proof of $1\Longrightarrow 4$ that the automorphism group is isomorphic to $N_{F_2}(H)/H$. Then, it follows that its order is the degree of the dessin.
\end{itemize}
\end{proof}

\noindent There is one important conclusion in this proof: if a dessin is given by a subgroup $H$ of $F_2$, its automorphism group is the quotient $N_{F_2}(H)/H$. It acts on points of $X$ on the left: if we let $\phi_{h_1},\phi_{h_2}$ be the automorphisms that map $x_0$ to $x_0^{h_1}$ and $x_0^{h_2}$, for some $h_1,h_2\in N_{F_2}(H)$, then
$$
\phi_{h_1}(\phi_{h_2}(x_0))=\phi_{h_1}(x_0^{h_2})=\phi_{h_1}(x_0)^{h_2}=x_0^{h_1h_2}=\phi_{h_1h_2}(x_0)
$$
This is why we say that the automorphism group is $N_{F_2}(H)/H$.

We are going to make an important remark about notation: when we go from the automorphism group of a dessin, which is a subgroup of $F_2$ acting on $H\backslash F_2$ \textbf{on the left}, to a Galois group, we take a contravariant functor. Thus, for two elements $g_1,g_2\in F_2$, or in $N_{F_2}(H)/H$, we have that $(g_1g_2)^*$, that lies in some Galois group, equals
$$
(g_1\circ g_2)^*=g_2^*\circ g_1^*
$$
For this reason, we are going to adopt the convention that Galois groups always act \textbf{on the right}. This means that if $\sigma,\tau \in \Gal(F/K)$, the product $\sigma\tau$ is defined to be $\tau \circ \sigma$. If $f\in F$, we will write $\sigma$ acting on $f$ as $f^\sigma$, so we have that
$$
(f^\sigma)^\tau=f^{\sigma\tau}
$$
There is another conclusion to be drawn from the previous theorem: it follows that for a regular dessin, the degree of the field extension is equal to the degree of the map. In fact, the following is true:

\begin{prop}\label{degree2}
Let $C$ be a curve, and let $f\in \C(C)$. Then, the degree of $f$ equals $[\C(C):\C(f)]$.
\end{prop}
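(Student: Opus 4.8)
The plan is to reduce the general statement to the case of a Galois (regular) extension, where the equality of degrees has already been obtained in the proof of Proposition~\ref{regular}, and then to propagate it through a tower of extensions using the multiplicativity of both the degree of a map and the degree of a field extension.

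First I would set $K=\C(f)$ and $L=\C(C)$. Since $f$ is non-constant it is transcendental over $\C$, so $K\cong\C(t)$ and $L/K$ is a finite extension (both fields have transcendence degree $1$ over $\C$, and $L$ is finitely generated). Here ``the degree of $f$'' means the number of points in a generic fibre of $f\colon C\lra\P^1$, which is well defined and constant: after deleting the finitely many ramification values together with their preimages, $f$ restricts to a covering map, and the number of sheets of a covering is constant.

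Next I would pass to the Galois closure. Let $M$ be the Galois closure of $L/K$, and $G=\Gal(M/K)$. By the equivalence between curves and function fields, $M=\C(\wt C)$ for a curve $\wt C$, and the chain $K\subset L\subset M$ corresponds to a chain of maps $\wt C\fl{\pi} C\fl{f}\P^1$, where $f\circ\pi$ is the map attached to $K\subset M$. Since $M/K$ is Galois, the cover $\wt C\lra\P^1$ is regular, and the argument in the proof of Proposition~\ref{regular} shows that $\deg(f\circ\pi)=|G|=[M:K]$; indeed that argument only uses that $G$ acts on $\wt C$ with $\P^1=\wt C/G$ and acts freely away from the ramification points, together with the fact that $|G|=[M:K]$ for a Galois extension. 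Applying the same reasoning to $G_L=\Gal(M/L)$, for which $M/L$ is automatically Galois and $C=\wt C/G_L$, gives $\deg\pi=|G_L|=[M:L]$.

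Finally I would combine these. Degrees of maps are multiplicative under composition (over the unramified locus the maps are honest covers and the numbers of sheets multiply), so $\deg(f\circ\pi)=\deg(f)\,\deg(\pi)$; and by the tower law $[M:K]=[L:K]\,[M:L]$. Substituting the two Galois computations and cancelling the common factor $\deg\pi=[M:L]\neq 0$ yields $\deg(f)=[L:K]=[\C(C):\C(f)]$. The main obstacle is not any single hard computation but making the reduction rigorous: one must check that the argument of Proposition~\ref{regular} really does not rely on the base being ramified over only three points (it does not, since it invokes only the free action of the Galois group on unramified points and the identity $|G|=[M:K]$), and that the Galois closure of a function-field extension corresponds to a genuine cover of curves fitting into the expected commutative triangle. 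Once these identifications are in place, the degree bookkeeping is routine.
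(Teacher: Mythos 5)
Your proof is correct, but it is worth being precise about its relationship to the text. The paper never actually proves Proposition \ref{degree2}: it cites \cite{GG} for it and then deliberately retreats to the weaker Proposition \ref{degree}, valid only for Belyi pairs, remarking afterwards that it has ``artfully avoided the pain'' of the general case. Your argument is, step for step, the same as the paper's proof of Proposition \ref{degree} --- pass to the Galois closure $M$ of $\C(C)/\C(f)$, realize the tower $\C(f)\subset \C(C)\subset M$ geometrically as maps $\wt C\fl{\pi} C\fl{f}\P^1$, use the regular (Galois) case to get $\deg(f\circ\pi)=[M:\C(f)]$ and $\deg\pi=[M:\C(C)]$, and finish by multiplicativity of degrees of maps and the tower law for field extensions. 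The genuine difference is scope: the paper restricts to Belyi pairs because its identification of the Galois closure with the regular cover (Proposition \ref{autmono}) was developed only inside the category of dessins, where one must check that the Galois closure introduces no new ramification; you instead observe, correctly, that the two ingredients that actually carry the argument --- that the quotient of a compact curve by a finite group of automorphisms has degree equal to the group order because the action is free off a finite set, and that a Galois extension of function fields realizes $\P^1$ (respectively $C$) as such a quotient with $|G|=[M:K]$ --- use nothing about ramification over three points, only the curve/function-field equivalence. What your route buys is the actual statement of Proposition \ref{degree2}, showing that the ``pain'' the paper alludes to is largely illusory; what the paper's restriction buys is that it can quote Proposition \ref{autmono} verbatim and never needs to re-examine which parts of the proof of Proposition \ref{regular} are special to dessins, which is exactly the checking you had to do.
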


\noindent The proof for this fact can be found in \cite{GG}, in chapter 1. However, we will not use it. We will prove the weaker statement that this only applies to Belyi pairs, and it is the only version of this statement we will use. To do it, we can use the regular cover.

\begin{prop}\label{autmono}
Let $(C,f)$ be a Belyi pair, with corresponding group $H<F_2$. Then, the regular cover of $(C,f)$ corresponds to the Galois closure of $\C(C)/\C(f)$ and to the group $\cor_{F_2}H$.
\end{prop}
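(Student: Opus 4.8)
The plan is to show that both objects named in the statement correspond, under the equivalences already established, to one and the same subgroup of $F_2$, namely the core $\cor_{F_2} H$; the whole argument then reduces to matching up two universal properties.

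First I would treat the regular cover. By definition the regular cover of $(C,f)$ is the smallest regular dessin covering it. By Proposition \ref{regular} a dessin is regular exactly when its associated subgroup is normal in $F_2$, and by the covering criterion a dessin with subgroup $N$ covers $(C,f)$ precisely when $N \subset g^{-1} H g$ for some $g \in F_2$. When $N \tl F_2$ this condition collapses to $N \subset H$, so the regular dessins covering $(C,f)$ are exactly those whose (normal) subgroup is contained in $H$. Since a smaller subgroup corresponds to a larger cover, the smallest such regular dessin is the one attached to the largest normal subgroup of $F_2$ contained in $H$. That subgroup is by definition $\cor_{F_2} H = \bigcap_{g \in F_2} g^{-1} H g$, which settles the identification with the core. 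One should also verify the expected universal property: any regular dessin covering $(C,f)$ is itself covered by this cover, which is immediate since any normal $N \subset H$ satisfies $N \subset \cor_{F_2} H$.

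Next I would identify $\cor_{F_2} H$ with the Galois closure, working through the equivalence between dessins and finite extensions of $\C(t) = \C(f)$. Under this equivalence $\C(C)$ is attached to $H$, field inclusions reverse subgroup inclusions, and, by the equivalence of items $3$ and $4$ in Proposition \ref{regular}, an intermediate extension $L/\C(f)$ is Galois exactly when its subgroup is normal in $F_2$. The Galois closure of $\C(C)/\C(f)$ is the smallest extension $L \supset \C(C)$ with $L/\C(f)$ Galois; translating, its subgroup $K$ must satisfy $K \subset H$ (so that $L \supset \C(C)$) and $K \tl F_2$ (so that $L/\C(f)$ is Galois), and ``smallest $L$'' becomes ``largest $K$''. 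This is again the largest normal subgroup of $F_2$ contained in $H$, that is $\cor_{F_2} H$, so the Galois closure and the regular cover correspond to the same subgroup and hence to the same dessin.

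The only real subtlety, and the step I would be most careful with, is keeping the variances straight: the dessin-to-field functor is contravariant, so ``covers'', ``contains'', ``smallest'' and ``largest'' each flip as one passes between the covering picture, the subgroup picture and the field picture. Once these dictionaries are fixed, both the regular cover and the Galois closure are seen to satisfy the identical universal property characterizing $\cor_{F_2} H$ as the maximal normal subgroup of $F_2$ contained in $H$, and the proposition follows.
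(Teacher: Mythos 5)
Your first half --- identifying the regular cover with the dessin of $\cor_{F_2}H$ via the covering criterion and the fact that for normal $N$ the condition $N\subset g^{-1}Hg$ collapses to $N\subset H$ --- is correct, and it is essentially the paper's own closing argument. The gap is in the second half. The Galois closure of $\C(C)/\C(f)$ is, by definition, the smallest Galois extension of $\C(f)$ containing $\C(C)$ among \emph{all} finite field extensions, whereas the dictionary you invoke (subgroups of $F_2$ correspond to extensions) applies only to extensions unramified outside $\{0,1,\infty\}$. So when you write ``translating, its subgroup $K$ must satisfy\dots'' you are implicitly assuming that the Galois closure already lies in the dessin category, i.e.\ that it is unramified outside $\{0,1,\infty\}$. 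Without that, your argument identifies $\cor_{F_2}H$ only with the smallest Galois \emph{dessin}-extension containing $\C(C)$, which a priori could be strictly smaller than needed --- that is, the true Galois closure could fail to be any dessin-extension at all, and the minimum over the subclass could differ from the global minimum. This unramifiedness is exactly what the paper flags as ``the most important part of the proof,'' and it is the one idea your proposal omits.

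The gap is easy to close with pieces you already have. The dessin-extension $L_0$ corresponding to $\cor_{F_2}H$ is Galois over $\C(f)$ (Proposition \ref{regular}) and contains $\C(C)$, so the true Galois closure $\C(C')$ embeds into $L_0$. Since ramification indices are multiplicative in towers, $e_P(f\circ g)=e_{g(P)}(f)\,e_P(g)$, any ramification of $\C(C')/\C(f)$ over a point outside $\{0,1,\infty\}$ would force $L_0/\C(f)$ to be ramified there as well, which it is not. Hence $\C(C')$ is unramified outside $\{0,1,\infty\}$, it therefore corresponds to a (normal, finite-index) subgroup of $F_2$, and your minimality argument then applies verbatim to give $\C(C')=L_0$, completing the identification of all three objects.
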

\begin{proof}
The most important part of the proof is the fact that the Galois closure of an extension isn't ramified at any new points. Take $\C(C')/\C(f)$ to be the Galois closure of $\C(C)/\C(f)$. Also, take the regular cover $(\widetilde C,\widetilde f)$. We wish to see that $\widetilde C= C'$. The field extension $\C(\widetilde C)/\C(f)$ is Galois, so it must contain $\C(C')$. However, if the extension $\C(C')/\C(f)$ was ramified at a point other than $0,1,\infty$, the extension $\C(\widetilde C)/\C(f)$ would be too, but this is not the case.

Therefore, the three objects exist and satisfy universal properties that correspond to each other. Take the Galois closure of $\C(C)/\C(f)$. It corresponds to a regular Belyi pair $(\widetilde C,\widetilde f)$, by Proposition \ref{regular}. If this Belyi pair wasn't minimal, the minimal belyi pair would correspond to a smaller Galois extension. Also, since covers correspond to subgroups of $F_2$ and regular covers to normal subgroups, the maximal normal subgroup, which is $\cor_{F_2} H$, must correspond to the minimal regular cover.
\end{proof}
\begin{prop}\label{degree}
Let $(C,f)$ be a Belyi pair. The degree of $f$ equals $[\C(C):\C(f)]$.
\end{prop}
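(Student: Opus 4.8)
The plan is to reduce the general statement to the regular case, where the equality of degrees is essentially immediate, and then to descend through the Galois closure. Write $H<F_2$ for the subgroup associated with $(C,f)$ and set $n=[F_2:H]$. From the monodromy description, the degree of the covering $f$ is the cardinality of a fibre, which equals the index of its associated subgroup; thus $\deg f=n$, and it suffices to prove $[\C(C):\C(f)]=n$.

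First I would dispatch the regular cover. By Proposition \ref{autmono}, the regular cover $(\wt C,\wt f)$ of $(C,f)$ is the Galois closure of $\C(C)/\C(f)$ and corresponds to the normal subgroup $N=\cor_{F_2}H\lhd F_2$. Since $N$ is normal, $\wt C$ is regular, so by Proposition \ref{regular} the extension $\C(\wt C)/\C(f)$ is Galois, and its Galois group is the automorphism group of the dessin, whose order equals the degree $\deg\wt f=[F_2:N]$. Because for a Galois extension the degree equals the order of the Galois group, this already gives $[\C(\wt C):\C(f)]=[F_2:N]$, i.e.\ the proposition holds for the regular dessin $\wt C$.

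Next I would apply the fundamental theorem of Galois theory to the tower $\C(f)\subset\C(C)\subset\C(\wt C)$, which is legitimate since the Galois closure contains $\C(C)$. The group $G=\Gal(\C(\wt C)/\C(f))\cong F_2/N$ has order $[F_2:N]$, and the intermediate field $\C(C)$ corresponds to the subgroup $K=\Gal(\C(\wt C)/\C(C))$, with $[\C(C):\C(f)]=[G:K]$. The elements of $K$ are precisely the automorphisms of the regular dessin $\wt C$ that fix $\C(C)$, i.e.\ that commute with the covering $\wt C\lra C$. Describing the automorphism group of $\wt C$ as $F_2/N$ acting on the left of the fibre $N\backslash F_2$ by $\phi_h\colon Ng\mapsto Nhg$, and noting that $\wt C\lra C$ is the fibre projection $N\backslash F_2\lra H\backslash F_2$, the automorphism $\phi_h$ descends to the identity on $C$ exactly when $Hhg=Hg$ for all $g$, that is, when $h\in H$. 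Hence $K=H/N$ and
$$[\C(C):\C(f)]=[G:K]=[F_2/N:H/N]=[F_2:H]=n=\deg f.$$

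The main obstacle will be the identification $K=H/N$: one must line up the left action of the automorphism group of the dessin against the right action of the Galois group on the function field, respecting the contravariance flagged just after Proposition \ref{regular}. Since only the index $[G:K]$ enters the final count, the orientation of the action does not affect the conclusion, but this bookkeeping, together with the verification that $\C(C)$ really is the fixed field of $H/N$ rather than merely an extension of the correct degree, is the point that deserves care.
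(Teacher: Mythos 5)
Your proof is correct and takes essentially the same route as the paper's: both pass to the regular cover, identify it with the Galois closure of $\C(C)/\C(f)$ via Proposition \ref{autmono}, invoke Proposition \ref{regular} to settle the regular case, and then descend to $(C,f)$ through the Galois correspondence in the tower $\C(f)\subset \C(C)\subset \C(\wt C)$. The only difference is bookkeeping: the paper compares degrees of the covering maps $\wt C\lra C\lra \P^1$, while you compare the indices $[F_2:H]$ and $[F_2/N:H/N]$ through the monodromy dictionary and the identification of $\Gal(\C(\wt C)/\C(C))$ with the deck group $H/N$ — two equivalent ways of organizing the same count.
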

\begin{proof}
Take its regular closure $(\widetilde C,\widetilde f)$, which we have just seen corresponds to the Galois closure of $\C(C)/\C(f)$. Now, by the Galois correspondence, take the subgroup $H^*<\Gal(\C(\widetilde C)/\C(f))$ such that its fixed field is $\C(C)$. In the proof of \ref{regular}. we saw that the fixed field of $H^*$ corresponded to $\C(\widetilde C/H)$. Now,
$$[\C(C):\C(f)]=\frac{[\C(\widetilde C):\C(f)]}{[\C(\widetilde C):\C(C)]}=\frac{[\C(\widetilde C):\C(f)]}{[\C(\widetilde C):\C(\widetilde C/H)]}$$
Since it's a regular dessin, $[\C(\widetilde C):\C(f)]=\deg \widetilde f$, and since the extension in the denominator is Galois, its degree is $|H|$, which is equal to the degree of the map $C\lra C/H$. Therefore,
$$
[\C(C):\C(f)]=\frac{[\C(\widetilde C):\C(f)]}{[\C(\widetilde C):\C(C)]}=\frac{\deg\left(
\widetilde C\lra \P^1
\right)}{\deg\left(
\widetilde C\lra C
\right)}=\deg\left(C\lra \P^1\right)
$$
\end{proof}
\noindent We have artfully avoided the pain of proving Proposition \ref{degree2} for general curves and maps.
\section{The field $\KK$}\label{thefieldk}
We are going to see dessins d'enfants, instead of as abstract extensions, as subextensions of one given extension $\KK/\C(t)$, very much in the same way as we see coverings of a space as subcoverings of its regular cover. For this section, we will also ask the reader to keep in mind that $\C$ plays no special role, and that it can be replaced by $\QQ$ when the moment comes.

We are going to construct the field that is the union of all the extensions of $\C(t)$ unramified outside of $\{0,1,\infty\}$. To do this, consider the set $\mathcal H=\{H<F_2:[F_2:H]<\infty \}$. As we know, we can make each of these subgroups correspond to a dessin d'enfant, in fact, a dessin d'enfant with a base point (since, recall from Proposition \ref{cubiertapunto1} and Corollary \ref{cubiertapunto2}, dessins d'enfants with a base point are in bijective correspondence with subgroups of the fundamental group). Therefore, we can make each subgroup correspond with an extension of $\C(t)$, although some of them may be isomorphic. The set $\mathcal H$ is also ordered, by inclusion. We actually know, by Proposition \ref{uniquemorphism}, that for every two subgroups such that $H_1\subset H_2$, there exists a unique morphism between the corresponding dessins preserving the base points. We can then define the following partially ordered set $\mathcal H^*$: it has one extension of $\C(t)$ for every subgroup $H$ of $F_2$, which we will call $\KK^H/\C(t)$, and, whenever $H_1< H_2$, it has the corresponding morphism between extensions $i_{H_2H_1}:\KK^{H_2}\lra \KK^{H_1}$ which, as we have just said, is unique.

The set $\mathcal H^*$ with these morphisms is actually a small category, since the compositions of the homomorphisms corresponding to inclusions $H_1\subset H_2$ and $H_2\subset H_3$ is the homomorphism corresponding to the inclusion $H_1\subset H_3$. It is also a direct system: for every two subgroups $H_1$ and $H_2$, we can take the subgroup $H_1\cap H_2$, and the morphisms $i_{H_1H_1\cap H_2}:\KK^{H_1}\lra \KK^{H_1\cap H_2}$.

Using these fields and homomorphisms, their union is well-defined. If one is familiar with the notion of a direct limit, then the union is just the direct limit of this direct system of fields, and direct limits of direct systems exists in the category of fields. We will nonetheless prove this explicitly.
\begin{prop}\label{KK}
There exists a field extension $\KK/\C(t)$ such that every extension corresponding to a dessin d'enfant is contained in it, and it is minimal in the sense that any other extension satisfying this property contains an extension isomorphic to $\KK/\C(t)$ as a subextension. 
\end{prop}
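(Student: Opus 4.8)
The plan is to construct $\KK$ as the direct limit of the system $\left(\KK^H, i_{H_2H_1}\right)$ indexed by $\mathcal H$. Concretely, I would set
$$
\KK=\left(\bigsqcup_{H\in\mathcal H}\KK^H\right)\Big/\sim,
$$
where $a\in\KK^{H_1}$ and $b\in\KK^{H_2}$ are declared equivalent whenever there is a finite-index subgroup $H_3\subseteq H_1\cap H_2$ with $i_{H_1H_3}(a)=i_{H_2H_3}(b)$. The first thing to verify is that this is an equivalence relation; the only nontrivial point is transitivity, and this is exactly where directedness of $\mathcal H$ enters. Given $a\sim b$ witnessed by $H_3$ and $b\sim c$ witnessed by $H_4$, one pushes everything into $\KK^{H_5}$ for any finite-index $H_5\subseteq H_3\cap H_4$, using that $H_3\cap H_4$ again has finite index and that the transition maps compose coherently, $i_{H_3H_5}\circ i_{H_1H_3}=i_{H_1H_5}$, which is precisely the fact that $\mathcal H^*$ is a category.

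Next I would define the field operations representative-wise: to combine the classes of $a\in\KK^{H_1}$ and $b\in\KK^{H_2}$, push both into a common $\KK^{H_3}$ with $H_3\subseteq H_1\cap H_2$ and operate there. Directedness guarantees such an $H_3$ exists, and a short diagram chase shows the result is independent of the representatives and of the chosen $H_3$. The field axioms are inherited from the individual $\KK^H$, because any finite collection of classes can be represented simultaneously inside a single $\KK^{H_3}$, where the axioms already hold. The one point worth isolating is the existence of inverses: if the class of $a$ is nonzero, then any representative $i_{H_1H_3}(a)$ is nonzero in the field $\KK^{H_3}$, so its inverse there represents an inverse in $\KK$. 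This is the only step that genuinely uses that each $\KK^H$ is a field rather than merely a ring.

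It then remains to produce the embeddings and prove minimality. For each $H$, the map $\KK^H\lra\KK$ sending an element to its class is a homomorphism of extensions of $\C(t)$, and it is injective because every transition map $i_{H_2H_1}$ is an injective homomorphism of fields, so no nonzero element can become zero; hence every dessin's extension sits inside $\KK$. For minimality, suppose $L/\C(t)$ is any extension admitting compatible embeddings $j_H:\KK^H\lra L$, meaning $j_{H_1}=j_{H_2}\circ i_{H_2H_1}$ whenever $H_1\subseteq H_2$. The universal property of the colimit yields a unique $\C(t)$-homomorphism $\KK\lra L$ extending the $j_H$, obtained on a class by applying the relevant $j_H$ to a representative, coherence making this independent of the choice. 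Being a nonzero homomorphism of fields, it is automatically injective, so $L$ contains a copy of $\KK/\C(t)$.

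The main obstacle is not any single deep step but the bookkeeping of coherence: each assertion — transitivity of $\sim$, well-definedness of the operations, existence of the universal map — reduces to the two facts that $\mathcal H$ is directed downward under intersection and that the transition maps compose correctly, and the cleanest write-up invokes these uniformly rather than re-deriving them each time. If one is willing to cite the existence of direct limits in the category of fields, essentially all of this is automatic; the point of spelling it out is to confirm that the direct limit of fields is again a field, which is the only place where the field structure is truly needed.
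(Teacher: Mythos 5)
Your construction of $\KK$ as a direct limit of the fields $\KK^H$ coincides with the paper's, and that half of your argument (equivalence relation, well-defined operations, field axioms, injective embeddings $i_H$) is fine. The genuine gap is in the minimality step. The proposition's hypothesis on a competing field $L$ is only that every extension $\KK^H/\C(t)$ is contained in $L$ — that is, for each $H$ separately there exists \emph{some} embedding of extensions $\KK^H\lra L$, with no compatibility among these embeddings. You instead assume that $L$ comes equipped with a coherent family $j_H:\KK^H\lra L$ commuting with all the transition maps, which is exactly the hypothesis needed to invoke the universal property of the colimit, and it is not given. A non-Galois extension can sit inside $L$ in several different ways (even with different images), and an arbitrary, independently chosen family of embeddings need not commute with the maps $i_{H_2H_1}$; until coherence is arranged, the recipe ``apply $j_H$ to a representative'' is not well defined on $\KK$.

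This is precisely where the paper does real work in the last paragraph of its proof: it first restricts attention to the regular dessins, whose extensions are Galois over $\C(t)$ and therefore have a \emph{unique} copy as a subfield of $L$; it then chooses, by induction along this countable directed family, compatible valuations (base points) on those subfields; by the uniqueness of base-point-preserving morphisms (Proposition \ref{uniquemorphism}), the resulting embeddings automatically commute with the transition maps; finally, the embeddings for non-Galois $H$ are obtained by restriction from the Galois ones. Only after this coherent family exists can one run the colimit argument you describe. So your write-up establishes the universal property of $\KK$ as a direct limit, but not the minimality statement as the proposition phrases it; constructing the coherent system of embeddings from the bare containment hypothesis is the missing, and essential, step.
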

\begin{proof}
Consider the direct system $\mathcal H^*$. We will prove that there is a field $\KK$, with a morphism of extensions for each $H$, $i_H:\KK^H\lra \KK$, such that, for any two extensions $\KK^{H_1}$, $\KK^{H_2}$ such that one is contained in the other, the following diagram is commutative:
\begin{center}\hspace{0.5 cm}
\begindc{\commdiag}[40]
\obj(1,1)[a]{$\KK$}
\obj(1,0)[b]{$\KK^{H_1}$}
\obj(0,0)[c]{$\KK^{H_2}$}
\mor{b}{a}{$i_{H_1}$}[\atright,0]
\mor{c}{b}{$i_{H_2H_1}$}[\atright,0]
\mor{c}{a}{$i_{H_2}$}
\enddc
\end{center}
\noindent Also, we require $\KK$ to be the minimal field with this property (this is the definition of direct limit). The way to construct it is the following: consider
$$
\KK=\frac{\bigsqcup_{H<F_2}\KK^H}{\sim}
$$
Where $\bigsqcup$ stands for the disjoint union. In this union, we are going to write $x_H$ to express that $x_H\in H$. $\sim$ is the relation given by:
$$
x_{H_1}\sim y_{H_2} \Llra i_{H_1H_1\cap H_2}(x_{H_1})=i_{H_2H_1\cap H_2}(y_{H_2})
$$
It is easy to check that this is an equivalence relation, using the fact that the fields form a directed set\footnote{Constructing the direct limit when the objects are not fields requires modifications of this construction, but the one we are using works for this particular case with fields, and in which every morphism is injective.}. We will use square brackets to denote the equivalence classes for this relation. This quotient is a field, with the following operations:
$$
[x_{H_1}]+[x_{H_2}]=[i_{H_1H_1\cap H_2}(x_{H_1})+_{\KK^{H_1\cap H_2}}i_{H_2H_1\cap H_2}(x_{H_2})]
$$ $$
[x_{H_1}]\cdot[x_{H_2}]=[i_{H_1H_1\cap H_2}(x_{H_1})\cdot_{\KK^{H_1\cap H_2}}i_{H_2H_1\cap H_2}(x_{H_2})]
$$
Where $+_{\KK^H}$ and $\cdot_{\KK^H}$ denote the sum and product in the given field. It is easy to check that $\KK$ is a field, it is just a lot of properties to check (starting with the fact that the operations are well defined).

From any field $\KK^H$ there is a homomorphism to $\KK$, given by
$$
x\longmapsto i_H(x)=[x_H]
$$
And it is actually a homomorphism of $\C(t)$-algebras. The fact that these homomorphisms commute with $i_{H_2H_1}$ is as easy to check, since
$$
i_{H_1}(i_{H_2H_1}(x))=[i_{H_2H_1}(x)_{H_1}]=[x_{H_2}]=i_{H_2}(x)
$$
Where the middle equality comes just from the definition of the equivalence relation.

Suppose now that some other field $\KK'$ satisfied these properties with respect to the system $\mathcal H^*$: let $i'_H:\KK^H\lra \KK'$ be homomorphisms of extensions such that, for every $H_1\subset H_2$, $i'_{H_2}=i'_{H_1}\circ i_{H_2H_2}$. Define $i:\KK\lra \KK'$ to be $i([x_H])=i'_H(x)$. This is well defined thanks to the fact that the $i'_H$ commute with the inclusions between fields, and also $i \circ i_H=i'_H$, which is the property we wanted. Thus, $\KK$ is the minimal field extension satisfying this property.

Now, if a field extension $\KK'/\C(t)$ contains every dessin d'enfant as a subextension, let us see that it must contain the system $\mathcal H^*$. First of all, such a field must contain every extension corresponding to a regular dessin. Since these are Galois extensions, there is only one copy of each, and let us call them $\KK^{H'}$. We are going to prove that we can pick a base point (that is, a valuation), in every extension, in such a way that, whenever $\KK^{H_1'}\subset \KK^{H_2'}$ (which will happen whenever $H_2\subset H_1$, since these are Galois extensions, and there is only one isomorphic copy in each field), the restriction of the valuation in $\KK^{H_2'}$ to $\KK^{H_1'}$ is the base point of $\KK^{H_1'}$. This is clear using induction: if we have defined the valuation in a finite extension $F\subset \KK'$, we can extend it to another finite extension $F'\supset F$, and thus to every finite subextension of $F'$. Since this process can always continue, we can take the valuation defined in the union of all these fields $\KK^{H'}$, since they are a countable collection. Now, for every Galois extension, we can define $i'_H:\KK^H\lra \KK'$ so that its image is $\KK^{H'}$ and it maps the base point to the base point (since there is a unique homomorphism that does this). It is clear, using that there is only one homomorphism that preserves base points, that we will have $i'_{H_2}=i'_{H_1}\circ i_{H_2H_2}$ as before, and that we can extend these maps $i_H$ to extensions that are not Galois, since they are contained in Galois extensions, and we can obtain the maps $i_H$ by restriction.
\end{proof}

\noindent We have constructed a field that contains all dessins d'enfants, since these correspond to all the subgroups $H$. Actually, every finite subextension of this field is a dessin d'enfant.

\begin{prop}
Let $\KK/K/\C(t)$ be a subextension of $\KK/\C(t)$ such that $[K:\C(t)]$ is finite. Then, there is some $H<F_2$ of finite order such that $K=\KK^H$.
\end{prop}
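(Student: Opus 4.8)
The plan is to exploit that $\KK$ is the direct limit of the fields $\KK^H$ built in Proposition \ref{KK}: a finite extension is finitely generated, so $K$ will be trapped inside a single $\KK^{H_0}$, after which I would pass to a regular (Galois) cover and read $K$ off from the Galois correspondence. (Here ``finite order'' is to be read as ``finite index,'' in keeping with the rest of the construction.)

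First I would note that $K/\C(t)$, being finite, admits a finite $\C(t)$-basis $a_1,\dots,a_r$. By the explicit description of $\KK$ in Proposition \ref{KK}, every element is of the form $i_H(x)$ for some finite-index $H<F_2$ and some $x\in\KK^H$, so each $a_j$ lies in $i_{H_j}(\KK^{H_j})$ for some finite-index $H_j$. The set $\mathcal H$ is directed downward by intersection, and $H_0:=H_1\cap\dots\cap H_r$ is still of finite index; using the relations $i_{H_j}=i_{H_0}\circ i_{H_j H_0}$ coming from the direct system, all the $a_j$ lie in $i_{H_0}(\KK^{H_0})$. Identifying each $\KK^H$ with its image in $\KK$, this gives $\C(t)\subset K\subset\KK^{H_0}$.

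Next I would replace the top field by a Galois one. Set $N:=\cor_{F_2}H_0=\bigcap_{g\in F_2}g^{-1}H_0 g$, a normal subgroup of finite index (at most $[F_2:H_0]!$). Because $N$ is normal, the dessin attached to $N$ is regular, so Proposition \ref{regular} gives that $\KK^N/\C(t)$ is Galois with $\Gal(\KK^N/\C(t))\cong F_2/N$; and since $N\subset H_0$, the map $i_{H_0 N}$ places $\KK^{H_0}$, hence $K$, as an intermediate field $\C(t)\subset K\subset\KK^N$. Now the Galois correspondence for $\KK^N/\C(t)$ matches subgroups $S<F_2/N$ with intermediate fields $K=(\KK^N)^S$; writing $S=H/N$ for the unique subgroup $N\subset H\subset F_2$ with $H/N=S$, the subgroup $H$ has finite index $[F_2:H]=[F_2/N:S]$. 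It remains to identify $(\KK^N)^{H/N}$ with $\KK^H$, which is exactly what the analysis in Proposition \ref{regular} provides: for the regular cover with deck group $F_2/N$, the intermediate cover with monodromy set $X_H=H\backslash F_2\cong(H/N)\backslash(F_2/N)$ is the quotient of $X_N$ by the left deck action of $H/N$, and on function fields this quotient is the fixed field $(\KK^N)^{H/N}$. Hence $K=\KK^H$ with $H$ of finite index.

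The main obstacle is this last identification: I must make sure that the abstract field $\KK^H$ produced by the direct system sits inside $\KK^N$ as the Galois-theoretic fixed field of $H/N$, rather than as some unrelated isomorphic copy. No genuinely new argument is needed — it is a matter of matching the right monodromy action of $F_2$ with the left deck action of $N_{F_2}(N)/N=F_2/N$ and invoking the correspondence already extracted from Proposition \ref{regular} — but the bookkeeping between the \emph{dessin} and \emph{field} dictionaries, together with the left/right conventions, is where the care must go.
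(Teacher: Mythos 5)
Your proof is correct, and your reading of ``finite order'' as ``finite index'' is indeed what the statement intends; however, you take a genuinely different route from the paper. The paper's own proof is much shorter: by the primitive element theorem $K=\C(t)(f)$ for a single $f$; among the finite-index subgroups $H$ with $f\in\KK^H$ there is a maximal one, because $\KK^{H}\cap \KK^{H'}=\KK^{HH'}$; and then the paper appeals to the fact that a subextension of the extension attached to a dessin is again attached to a dessin (``subextensions correspond to subcovers''), so that $\C(t)(f)$ equals some $\KK^{H''}$ with $H''\supset H$, which by maximality of $H$ must be $\KK^H$ itself. Your argument replaces that pivotal ``subextensions are subcovers'' step --- which the paper does not prove in detail, and which ultimately rests on the multiplicativity of ramification indices in a tower --- by passing to the core $N=\cor_{F_2}H_0$ and invoking the finite Galois correspondence for $\KK^N/\C(t)$ (Galois since $N$ is normal, by Proposition \ref{regular}), plus the identification of the fixed field $(\KK^N)^{H/N}$ with $\KK^H$, which is exactly the mechanism the paper establishes in the proof of Proposition \ref{regular} and reuses in Proposition \ref{degree}. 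The trade-off: the paper's proof is brief but leans on an assertion left to the reader; yours is longer and requires the base-point bookkeeping you flag, but every ingredient is explicitly proven elsewhere in the paper. That bookkeeping does close up correctly: the orbits of the left $H/N$-action on $N\backslash F_2$ are precisely the fibers of $N\backslash F_2\to H\backslash F_2$, and the base coset $N$ maps to the base coset $H$, so the quotient dessin with its base point is the canonical (not merely an isomorphic) copy of the dessin of $H$, and its function field inside $\KK$ is the canonical image of $\KK^H$, as your final step needs.
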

\begin{proof}
Every element of $\KK$ belongs to some extension $\KK^H$, and in particular it is algebraic over $\C(t)$. Then, $K/\C(t)$ is an algebraic finite extension, so it is generated by one element $f$. This $f$ lies in some $\KK^H$, by the definition of $\KK$. There exists one maximal $H$ such that $f$ lies in $\KK^H$, since $\KK^{H}\cap \KK^{H'}=\KK^{HH'}$ (where $HH'$ stands for the subgroup generated by $H$ and $H'$ in case they are not normal). If we pick this $H$, it must follow that $\C(t)(f)=\KK^H$, since $\C(t)(f)$ is a subextension of $\KK^H$, but subextensions of the extension corresponding to a dessin correspond to subcovers. Therefore, $\C(t)(f)/\C(t)$ corresponds to a dessin d'enfant, and therefore it equals $\KK^H$, since this was the minimal one. 
\end{proof}

\noindent We are interested in the Galois group $\Gal(\KK/\C(t))$. In order to talk about this group, we are going to stop for a bit to talk about Galois theory of infinite extensions. The reader familiar with the Galois correspondence in infinite extensions may skip the next section.

\subsection{Galois theory of infinite extensions}

Suppose we have an algebraic Galois extension $E/K$, that is, one such that the Galois group $\Gal(E/K)$ fixes only $K$. We intend to look at the relation between $\Gal(E/K)$ and $\Gal(F/K)$ when $F$ ranges across the finite Galois subextensions of $\Gal(F/K)$.

First of all, the fields $K$ which are finite Galois extensions of $K$ form a direct system with the morphisms given by inclusion, and therefore, by the Galois correspondence, the Galois groups $\Gal(F/K)$ form an inverse system, by which we mean that, whenever $F\subset F'$, restriction gives an epimorphism
$$
\pi_{F'F}:\Gal(F'/K)\lra \Gal(F/K)
$$
And also, the composition of two restriction maps is another restriction. This system has an inverse limit. Let us define it just in case: if $G$ is the limit of this system, it means that for every finite Galois subextension $F$ there exists a homomorphism $\pi_F:G\lra \Gal(F/K)$, such that whenever $F\subset F'$, $\pi_{F}=\pi_{F'F}\circ \pi_{F'}$, and the group $G$ is universal with respect to this property: if for another group $H$, there exist homomorphisms $\rho_F:H\lra \Gal(F/K)$ such that $\rho_{F}=\pi_{F'F}\circ \rho_{F'}$, then there is a unique homomorphism $\rho:H\lra G$ such that $\rho_F=\pi_F\circ \rho$ for every $F$.

We can write this in a diagram to make it easier to remember:
\begin{center}\hspace{0.5 cm}
\begindc{\commdiag}[4]
\obj(0,25)[f]{$H$}
\obj(0,10)[b]{$\wh G$}
\obj(-10,0)[d]{$\Gal(F'/K)$}
\obj(10,0)[e]{$\Gal(F/K)$}
\mor{f}{b}{$\rho$}[\atleft,1]
\mor{f}{d}{$\rho_F$}[\atright,0]
\mor{f}{e}{$\rho_{F'}$}
\mor{b}{d}{$\pi_F$}
\mor{b}{e}{$\pi_{F'}$}[\atright,0]
\mor{d}{e}{$\pi_{F'F}$}[\atright,0]
\enddc
\end{center}

\noindent Note the analogy with our construction of the direct limit of fields (to obtain an equivalent diagram for direct limits of fields, one just needs to reverse all the arrows).

If we look at the Galois group $\Gal(E/K)$, it certainly has homomorphisms $\Gal(E/K)\lra \Gal(F/K)$, since these are induced by restriction. This suggests that it might be isomorphic to this inverse limit.

Let us prove first that inverse limits of groups exist. The proof will yield an explicit way to see an inverse limit.

\begin{prop}
Let $\left(\{G_i:i\in I\},\{\phi_{ij}:i,j\in I,i\le j\}\right)$ be an inverse system of groups, where $I$ is a directed set, $G_i$ are the groups and $\phi_{ij}$ denote the homomorphisms between them. Then, its inverse limit exists and it can be constructed as the subgroup of $\prod_i G_i$ made of the elements $(a_i)_I$ such that
$$
\pi_{ii'}(a_i)=a_{i'}\forall i\le i'$$
\end{prop}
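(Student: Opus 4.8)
The plan is to produce the candidate limit explicitly and then check that it satisfies the universal property. Concretely, I would set
\[
G = \left\{ (a_i)_{i\in I} \in \prod_{i\in I} G_i \ : \ \phi_{ij}(a_j) = a_i \text{ for all } i\le j \right\},
\]
the subset of ``coherent'' tuples (recall the maps run $\phi_{ij}\colon G_j\to G_i$ for $i\le j$), and define $\pi_i\colon G\to G_i$ as the restriction to $G$ of the $i$-th coordinate projection of $\prod_i G_i$. The proof then splits into three routine verifications: that $G$ is a subgroup, that the $\pi_i$ are compatible homomorphisms, and that $(G,\{\pi_i\})$ is universal.

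First I would check that $G$ is a subgroup of the direct product. The identity tuple lies in $G$ because each $\phi_{ij}$ is a homomorphism and hence sends $1_{G_j}$ to $1_{G_i}$. If $(a_i)_i$ and $(b_i)_i$ are coherent, then for $i\le j$ one has $\phi_{ij}(a_jb_j)=\phi_{ij}(a_j)\phi_{ij}(b_j)=a_ib_i$ and $\phi_{ij}(a_j^{-1})=a_i^{-1}$, so $G$ is closed under the product and inversion inherited from $\prod_i G_i$. The maps $\pi_i$ are homomorphisms because coordinate projections are, and the defining condition of $G$ says exactly that $\phi_{ij}\circ\pi_j=\pi_i$ whenever $i\le j$.

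Next I would verify the universal property. Suppose $H$ is a group with homomorphisms $\rho_i\colon H\to G_i$ satisfying $\rho_i=\phi_{ij}\circ\rho_j$ for all $i\le j$. Define $\rho\colon H\to G$ by $\rho(h)=(\rho_i(h))_{i\in I}$. The key point to check is that $\rho(h)$ actually lies in $G$: for $i\le j$ we have $\phi_{ij}(\rho_j(h))=\rho_i(h)$, which is precisely the coherence condition, and this is where the compatibility hypothesis on the $\rho_i$ is used. That $\rho$ is a homomorphism is immediate since each coordinate $\rho_i$ is one, and $\pi_i\circ\rho=\rho_i$ holds by construction. For uniqueness, any homomorphism $\rho'$ with $\pi_i\circ\rho'=\rho_i$ must have $i$-th coordinate equal to $\rho_i$ for every $i$, hence $\rho'=\rho$.

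I do not expect a genuine obstacle: the argument is entirely formal, and the only place demanding a moment's care is confirming that the tuple $(\rho_i(h))_i$ satisfies the coherence relations, so that $\rho$ indeed lands in $G$. It is worth remarking that directedness of $I$ plays no role in this \emph{existence} proof — it is needed later, for instance to ensure the projections $\pi_i$ are surjective — so the construction of the limit as the coherent subgroup of the product is valid for an arbitrary index system.
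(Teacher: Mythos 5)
Your proof is correct and follows exactly the approach the paper intends: the paper's own ``proof'' merely names the coordinate projections and defers the verification to the reader (by analogy with its construction of the direct limit of fields in Proposition \ref{KK}), whereas you carry out in full the subgroup check, the compatibility of the projections, and the universal property. Your closing remark that directedness of $I$ is not needed for existence (only, e.g., for surjectivity of the projections onto the images) is accurate and a worthwhile observation, though not required by the statement.
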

\begin{proof}
The map from the direct limit to the groups is the projection on the coordinates, and the proof is analogous to the proof of Proposition \ref{KK}, so we will leave it to the reader.
\end{proof}

\noindent We will denote inverse limits by $\lim_{\leftarrow}$.

\begin{prop}
Let $E/K$ be an algebraic Galois extension. Then,
$$
\Gal(E/K)=\lim_{\underset{F/K\mathrm{\ finite\ and\ Galois}}\longleftarrow}\Gal(F/K)
$$
\end{prop}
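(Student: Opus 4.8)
The plan is to construct the natural comparison map and show it is an isomorphism, exactly mirroring the proof of Proposition \ref{KK} but with all arrows reversed. The restriction homomorphisms $\pi_F:\Gal(E/K)\lra \Gal(F/K)$ are defined for every finite Galois subextension $F/K$, and they are compatible with the inverse system: whenever $F\subset F'$ we have $\pi_F=\pi_{F'F}\circ \pi_{F'}$, since restricting an automorphism of $E$ first to $F'$ and then to $F$ is the same as restricting it directly to $F$. By the universal property of the inverse limit, these maps assemble into a unique homomorphism $\Phi:\Gal(E/K)\lra \lim_{\leftarrow}\Gal(F/K)$, and the task is to prove that $\Phi$ is bijective.

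The key fact I would establish first --- and which I expect to be the crux of the argument --- is that every element $\alpha\in E$ lies in some finite Galois subextension $F/K$ with $F\subset E$. This uses that $E/K$ is algebraic, normal and separable: $\alpha$ has a minimal polynomial over $K$ with finitely many roots, all of which lie in $E$ by normality, and the subfield $F$ they generate is then a finite Galois extension of $K$ contained in $E$. Consequently $E=\bigcup_F F$, the union running over all finite Galois subextensions, and this is what makes both injectivity and surjectivity work.

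For injectivity, suppose $\sigma\in\Gal(E/K)$ satisfies $\pi_F(\sigma)=\Id$ for every finite Galois $F$. Given any $\alpha\in E$, choose such an $F$ with $\alpha\in F$; then $\sigma(\alpha)=\pi_F(\sigma)(\alpha)=\alpha$. Since $\alpha$ was arbitrary, $\sigma=\Id$, so $\ker\Phi$ is trivial.

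For surjectivity, take a compatible family $(\sigma_F)\in\lim_{\leftarrow}\Gal(F/K)$, so that $\sigma_{F'}|_F=\sigma_F$ whenever $F\subset F'$. I would define $\sigma:E\lra E$ by $\sigma(\alpha)=\sigma_F(\alpha)$ for any finite Galois $F$ containing $\alpha$; this is well defined because any two such fields $F,F'$ sit inside their compositum $FF'$ (again finite Galois over $K$ and contained in $E$), on which the two prescriptions agree by compatibility. One then checks that $\sigma$ is a $K$-algebra homomorphism by evaluating any sum or product inside a single $F$ containing all the elements involved, and that $\sigma$ is an automorphism because its inverse is furnished by the family $(\sigma_F^{-1})$. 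Thus $\sigma\in\Gal(E/K)$ and $\Phi(\sigma)=(\sigma_F)$, which completes the proof.
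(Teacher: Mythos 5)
Your proposal is correct and follows essentially the same route as the paper: both construct the map into the inverse limit from the restriction homomorphisms, prove injectivity from the fact that every element of $E$ lies in a finite Galois subextension, and prove surjectivity by assembling a compatible family into an automorphism of $E$. The only difference is that you spell out the details (normality giving the finite Galois subfield, well-definedness via the compositum) that the paper leaves as ``obvious.''
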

\begin{proof}
Take the map induced by the restrictions, from $\Gal(E/K)$ to $\prod \Gal(F/K)$. The image of this map clearly lies inside the inverse limit, since the restriction to finite subextensions is compatible with the restrictions from one subextension to another. Also, it is injective: if $\sigma\in \Gal(E/K)$ is mapped to the identity, then it acts as the identity on every finite subextension. Therefore, since any element belongs to some finite Galois extension, it is the identity on $E$. For surjectivity, one only needs to check that an automorphism for each finite subextension that commutes with restrictions obviously defines an automorphism of $E/K$.
\end{proof}

\noindent The groups that arise this way are called profinite.

\begin{definicion}
A \textbf{profinite group} is a group that is an inverse limit of finite groups.
\end{definicion} 

\noindent Clearly, Galois groups of algebraic extensions fall in this category. Profinite groups can be given additional structure, namely a topology.

\begin{definicion}
Let $E/K$ be an algebraic Galois extension. The \textbf{Krull topology} on $\Gal(E/K)$ is the topology that has a basis consisting of the cosets of the groups $\Gal(E/F)$, where $F$ ranges across all finite Galois extensions of $F$.

Equivalently, it is the weakest topology that makes all the restriction homomorphisms $\Gal(E/K)\lra \Gal(F/K)$ continuous, when the finite groups $\Gal(F/K)$ are given the discrete topology.
\end{definicion}

\noindent It is clear that both definitions coincide: the preimages of points by a restriction homomorphism are open sets by definition, and also the cosets of any group of the form $\Gal(E/F)$ are preimages of subsets of $\Gal(E/\overline F)$, where $\overline F$ is the Galois closure of $F/K$.

It is also clear that the definition makes the Galois group a topological group, since multiplication by a group element maps the basis to itself.

Also, if we have a map $\pi_F:\Gal(E/K)\lra \Gal(F/K)$ for each Galois extension $F$, then every map is continuous if and only if the corresponding map$$\prod \pi_F:\Gal(E/K)\lra \prod_F\Gal(F/K)$$is continuous when the product on the right has the product topology. Since the latter map is inclusion, the Krull topology is the topology induced by the product topology\footnote{Recall that the product topology on $\prod X_i$ is the topology generated by sets of the form $\prod U_i$, where all the $U_i$'s are open sets of $X_i$ and all but finitely many of them are equal to $X_i$.}.

We can give nice characterizations of open and closed subgroups, which we will do now.

\begin{prop}
Let $G=\lim_{\leftarrow} G_i$ be a profinite group. A subgroup $H$ of $G$ is open if and only it is closed and of finite index.
\end{prop}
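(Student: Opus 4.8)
The plan is to prove both directions of the equivalence, relying throughout on the fact that a profinite group $G=\lim_{\leftarrow} G_i$ is compact (being a closed subgroup of the compact product $\prod G_i$, by Tychonoff) and that the basic open sets are cosets of the kernels $\Gal(E/F)$, which are themselves open normal subgroups of finite index.

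First I would prove the easy direction: if $H$ is closed and of finite index, then $H$ is open. The key observation is that the finitely many cosets of $H$ partition $G$, and each coset $gH$ is closed (being a translate of the closed set $H$ by the homeomorphism $x\mapsto gx$). Therefore the complement of $H$, which is the finite union $\bigcup_{g\notin H} gH$ of these closed cosets, is itself closed. Hence $H$ is open. This direction uses nothing but the topological group structure and does not even need compactness.

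The converse is where the real content lies, and I expect it to be the main obstacle: I must show that an open subgroup $H$ is automatically of finite index (openness also forces closedness, handled below). The idea is to use compactness. Since $H$ is open, every coset $gH$ is open as well, and the collection $\{gH : g\in G\}$ forms an open cover of $G$ by pairwise disjoint sets. By compactness this cover has a finite subcover; but since the cosets are disjoint, a finite subcover can only exist if there are finitely many cosets to begin with. Thus $[G:H]<\infty$. The crucial input here is that compactness of the profinite group, which I would justify by presenting $G$ as a closed subgroup of $\prod G_i$ with each $G_i$ finite (hence compact) and invoking Tychonoff's theorem, together with the fact that $G$ is defined by the closed conditions $\pi_{ii'}(a_i)=a_{i'}$.

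Finally, to complete the converse I would show that an open subgroup of finite index is closed, which follows by exactly the argument from the first paragraph run in reverse: the complement $\bigcup_{g\notin H} gH$ is a finite union of open cosets, hence open, so $H$ is closed. Assembling the three implications — closed and finite index $\Rightarrow$ open, open $\Rightarrow$ finite index (via compactness), and open and finite index $\Rightarrow$ closed — yields the equivalence. The one subtlety to be careful about is not to assume finite index prematurely when proving it from openness; the whole force of the statement is that openness alone, with no a priori index bound, already forces finiteness of the index, and this is precisely what compactness buys.
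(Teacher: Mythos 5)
Your proof is correct, but for the substantive direction it takes a genuinely different route from the paper's. Where you prove that an open subgroup has finite index via compactness --- presenting $G$ as a closed subgroup of $\prod_i G_i$ (Tychonoff plus the closed conditions $\pi_{ii'}(a_i)=a_{i'}$) and observing that the pairwise disjoint open cover $\{gH\}$ can admit a finite subcover only if there are finitely many cosets --- the paper argues directly from the product topology: an open $H$ must contain a basic neighborhood of the identity of the form $G\cap\prod_i U_i$ with $U_i=G_i$ for all but finitely many $i$, hence it contains the open subgroup $G\cap\bigl(\prod_{U_i=G_i}G_i\times\prod_{U_i\neq G_i}\{1\}\bigr)$, whose index is bounded by $\prod_{U_i\neq G_i}|G_i|<\infty$. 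Your argument is more conceptual and more general (it shows that an open subgroup of \emph{any} compact topological group has finite index), but it needs compactness of $G$, which the paper never establishes and which costs you Tychonoff; the paper's argument stays entirely inside the inverse-limit presentation, avoids compactness altogether, and produces an explicit bound on the index. The other implications (open $\Rightarrow$ closed, and closed of finite index $\Rightarrow$ open) are the same coset-union argument in both treatments; note only that for open $\Rightarrow$ closed you do not actually need the finite index you had just established, since an arbitrary union of open cosets is open --- your phrasing invokes finiteness there, which is harmless but superfluous.
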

\begin{proof}
Let us see $G$ as a subspace of the product $\prod_i G_i$. If $H$ is open, it must contain some neighborhood of the identity, which is of the form $\prod_i U_i$, where $U_i=G_i$ for all but finitely many of the $i$'s. Therefore the index of $H$ is at most the product of the orders of the groups $G_i$ for which $U_i\subsetneq G_i$, which is finite. Also, every coset of $H$ is an open set, since $G$ is a topological group, and the union of the ones different from $H$ is open, and it is the complement of $H$, so $H$ is closed.

Reciprocally, if $H$ is closed and of finite index, its cosets are closed and the union of the ones different from $H$ is the complement of $H$ and a closed set.
\end{proof}

\begin{prop}\label{open}
Let $G$ be a profinite group as in the previous proposition. A subgroup $H$ of $G$ is closed if and only if it is an intersection of open subgroups.
\end{prop}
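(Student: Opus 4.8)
The plan is to prove both implications separately, with the $(\Leftarrow)$ direction following immediately from the previous proposition and the $(\Rightarrow)$ direction resting on a separation argument using the open normal subgroups of $G$.

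For the easy direction, suppose $H = \bigcap_{i\in I} U_i$ where each $U_i$ is an open subgroup of $G$. By the previous proposition, an open subgroup is in particular closed, so $H$ is an intersection of closed sets and is therefore closed. No further work is needed here.

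For the converse, suppose $H$ is closed, and let me show that $H$ equals the intersection of all open subgroups containing it. The inclusion of $H$ into this intersection is trivial, so the content is the reverse inclusion: given $g \in G \setminus H$, I want to produce a single open subgroup that contains $H$ but not $g$. The key input is that the open normal subgroups of $G$ form a fundamental system of neighborhoods of the identity. Realizing $G$ as a subgroup of $\prod_i G_i$ with each $G_i$ finite and discrete, the kernels of the projections of $G$ onto finitely many coordinates are open normal subgroups, and by the description of the product topology these give a neighborhood basis at $1$. Since $G \setminus H$ is open and contains $g$, translating a basic neighborhood of $1$ yields an open normal subgroup $N$ with $gN \subseteq G \setminus H$, that is, $gN \cap H = \emptyset$. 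Now set $HN$; it is a subgroup because $N$ is normal, it is open because it contains the open subgroup $N$, and it clearly contains $H$. Finally $g \notin HN$: if we had $g = hn$ with $h \in H$ and $n \in N$, then $h = gn^{-1} \in gN \cap H$, contradicting $gN \cap H = \emptyset$. Thus $HN$ is an open subgroup containing $H$ and avoiding $g$, which proves $\bigcap_{U \supseteq H,\, U \text{ open}} U \subseteq H$ and completes the argument.

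The main obstacle is precisely this separation step, and it reduces entirely to the fact that open normal subgroups form a neighborhood basis of the identity in a profinite group; once that is in hand the construction of $HN$ and the verification that it separates $g$ from $H$ are purely formal. I would make sure to state the neighborhood-basis fact explicitly, since it is the only nontrivial topological ingredient and everything else is routine group theory.
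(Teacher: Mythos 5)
Your proof is correct and takes essentially the same approach as the paper: the paper's separating subgroup $U$ is exactly an open \emph{normal} subgroup (the kernel of a coordinate projection, obtained from a basic neighborhood of $g$ disjoint from the closed set $H$), and its subgroup $UH$ is your $HN$. Your verification that $g\notin HN$ (writing $g=hn$ forces $h=gn^{-1}\in gN\cap H$) is simply a cleaner rendering of the paper's coordinate-wise argument, so there is nothing to add.
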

\begin{proof}
The ``if'' part is clear: open subgroups are also closed, so their intersection is also closed.

Suppose now we have a closed subgroup $H$. We have the obvious inclusion
$$
H\subset \bigcap_{U=\mathring U,H<U} U
$$
So what we have to prove is the reverse inclusion: that for every $g\notin H$, there is some open subgroup $U$ containing $H$ such that $g\notin U$. Let $g\notin H$. Since $H$ is closed, there is a neighborhood of $g$ that doesn't intersect $H$. As usual, it will be of the form $V=\prod V_i$, where almost all the $V_i$'s equal $G_i$. Let $J$ be the set of indices for which $V_i\neq G_i$. We can assume that $V_i=\{g_i\}$ when $i\notin J$, where $g_i$ is the projection of $g$ onto $G_i$. We can do this since this set will also be open and contained in the previous one.

Then, we can consider the open subgroup
$$U=G\cap \prod_{i\notin J} G_i \times \prod_{i\in J} \{1\}$$
It is the preimage of 1 in $\prod_{i\in J} G_i$, so if we take the minimum of $J$, and call it $i_0$ (we are using that the set of indices is a directed set), this group is the same as
$$
U=G\cap \prod_{i\neq i_0} G_i \times \{1_{G_{i_0}}\}
$$
Look at the subgroup $UH$. It is open, since it is a union of cosets of the open group $U$, and it is indeed a subgroup, since $U$ is normal. We claim that it doesn't contain $g$. If $g\in UH$, then $V\cap H$ would be non-empty: There would have to be some element $h\in H$ whose component in $G_{i_0}$ were equal to $g_{i_0}$, and this element would belong to $V$.
\end{proof}

\noindent The Galois correspondence holds then the same as for finite Galois extensions, only one needs to replace subgroups of a finite Galois group by closed subgroups.

\begin{prop}
Open subgroups of $\Gal(E/K)$ are in correspondence with finite subextensions $F/K$, via the mutually inverse maps
$$
H<\Gal(E/K) \longmapsto E^H=\{a\in E:\sigma(a)=a\forall \sigma\in H\}
$$
$$
F/K\longmapsto \Gal(E/F)<\Gal(E/K)
$$
\end{prop}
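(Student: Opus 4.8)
The plan is to show that the two displayed maps are well-defined and mutually inverse, reducing every assertion to the classical finite Galois correspondence (available because each finite subextension sits inside a finite Galois one) together with the Krull topology and the characterization of open subgroups proved above. Establishing that the maps are inverse to each other automatically gives the bijection, so I would not argue injectivity and surjectivity separately.

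First I would check well-definedness. For a finite subextension $F/K$, I want $\Gal(E/F)$ to be open. Since $F/K$ is finite it has a Galois closure $\overline F$ inside $E$, which is finite and Galois over $K$, so $\Gal(E/\overline F)$ is one of the basic open sets of the Krull topology; as $\Gal(E/\overline F)\subset\Gal(E/F)$ and any subgroup containing an open subgroup is a union of (open) cosets of it, $\Gal(E/F)$ is open. Conversely, for an open subgroup $H$, openness forces $H$ to contain a basic neighbourhood of the identity, i.e. $\Gal(E/F_0)\subset H$ for some finite Galois $F_0/K$; then $E^H\subset E^{\Gal(E/F_0)}=F_0$, so $E^H$ is an intermediate field of the finite extension $F_0/K$ and hence finite over $K$.

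Next I would dispatch the easy composite. For finite $F/K$ the extension $E/F$ is again Galois, since normality and separability pass from $E/K$ to $E/F$, so by the defining property of Galois extensions $E^{\Gal(E/F)}=F$. The genuinely topological identity is $\Gal(E/E^H)=H$ for $H$ open, where I expect the real work. The inclusion $H\subset\Gal(E/E^H)$ is immediate. For the reverse inclusion I would fix, as above, a finite Galois $F/K$ with $\Gal(E/F)\subset H$, and set $L=E^H$, which by the previous step satisfies $L\subset F$. Restriction to $F$ gives a surjection $\Gal(E/K)\twoheadrightarrow\Gal(F/K)$ with kernel $\Gal(E/F)$; since $\Gal(E/F)$ lies in both $H$ and $\Gal(E/L)$, the lattice isomorphism theorem lets me compare these two subgroups through their images in the finite group $\Gal(F/K)$. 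The image of $H$ is $\Gal(F/L)$, because an element of $H$ fixes a given $a\in F$ iff its restriction does, so the fixed field of the image is $F\cap E^H=L$, and the finite correspondence for $F/K$ then identifies the image as $\Gal(F/L)$; the image of $\Gal(E/L)$ is likewise $\Gal(F/L)$, using that $F/L$ is normal so that restriction $\Gal(E/L)\to\Gal(F/L)$ is onto. Equal images together with a common kernel force $H=\Gal(E/E^H)$.

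The main obstacle is precisely this last inclusion $\Gal(E/E^H)\subset H$: it can fail for subgroups that are merely closed rather than open, so the argument must genuinely exploit openness to produce the finite Galois layer $F$ with $\Gal(E/F)\subset H$ and thereby reduce to the finite theory. The one background fact I would record explicitly before invoking the finite correspondence is that intermediate extensions of a Galois extension are again Galois and that $F/L$ is normal; both follow from $E/K$ being normal and separable.
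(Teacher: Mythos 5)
Your proof is correct, and it reaches the two key identities by a different mechanism than the paper, even though both arguments pivot on the same reduction: openness of $H$ yields a finite Galois layer $F/K$ with $\Gal(E/F)\subset H$. The paper works inside the embedding $\Gal(E/K)\subset\prod_{F'}\Gal(F'/K)$ and proves the numerical identities $[\Gal(E/K):\Gal(E/F)]=[F:K]$ and $[\Gal(E/K):H]=[E^H:K]$; it then deduces $H=\Gal(E/E^H)$ and $F=E^{\Gal(E/F)}$ from the obvious inclusions, because the finite indices (respectively degrees) coincide. You avoid all counting: writing $L=E^H$, you exhibit $H$ and $\Gal(E/L)$ as subgroups containing the common kernel $\Gal(E/F)$ of restriction to $\Gal(F/K)$ and having the same image $\Gal(F/L)$ there, so the correspondence theorem for groups forces equality; and you obtain $E^{\Gal(E/F)}=F$ from the fact that a Galois extension remains Galois over every intermediate field --- a fact the paper uses only implicitly (in the step asserting that the fixed field of $H$ is contained in the finite Galois layer) and otherwise replaces by degree bookkeeping. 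Your route can even be streamlined: since $H\subset\Gal(E/L)$, the image of $\Gal(E/L)$ in $\Gal(F/K)$ automatically contains the image of $H$, so you never need surjectivity of the restriction $\Gal(E/L)\lra\Gal(F/L)$, hence no appeal to the extension theorem at that point.

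One claim in your closing paragraph is false, although it does not affect the proof: the inclusion $\Gal(E/E^H)\subset H$ does \emph{not} fail for closed subgroups. It holds for every closed $H$ --- that is precisely the next proposition in the paper. For a closed but non-open $H$, what fails is only the finiteness of $E^H$ (and with it your finite-layer argument), not the inclusion. The inclusion genuinely fails only for non-closed subgroups: for instance, a proper dense subgroup $H$ has $E^H=K$, since each stabilizer $\{\sigma:\sigma(a)=a\}$ is open and closed, so $\Gal(E/E^H)=\Gal(E/K)\supsetneq H$. Thus openness is exploited in your argument to manufacture the finite Galois layer, not to rescue an inclusion that closedness alone would already guarantee.
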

\begin{proof}
Suppose we have a finite Galois subextension $F/K$. An element of $\Gal(E/K)$ belongs to $\Gal(E/F)$ if and only if it maps to the identity in $\Gal(F/K)$, i.e. if it belongs to $\prod_{F'\neq F} \Gal(F'/K)\times \{1_{\Gal(F/K)}\}$. In particular, $\Gal(F/K)= \frac{\Gal(E/K)}{\Gal(E/F)}$, and $[F:K]=[\Gal(E/K):\Gal(E/F)]$.

Now, if $F/K$ is not a Galois extension, we can take its Galois closure $\overline F$ and do the same thing again. Then, $\Gal(E/F)$ will be the preimage of $\Gal(\overline F/F)<\Gal(\overline F/K)$ by the projection. In particular, the identity involving the index holds, by the Galois correspondence for finite extensions.

Suppose now we have an open subgroup $H$. Then, it contains some neighborhood of the identity in $\prod \Gal(F/K)$, of the form $\prod U_K$. Let $\overline K$ be the Galois closure of the fields for which $U_K\neq \Gal(F/K)$. It is a finite extension, and this neighborhood contains$$\Gal(E/K) \cap \prod_{F'\neq \overline F}\Gal(F'/K) \times \{1_{\Gal(\overline F/K)}\}=\Gal(E/\overline F)$$Therefore, the fixed field of $H$ is contained in $\overline F$, and $H$ maps onto a subgroup of $\Gal(\overline F/K)$. $E^H$ is the fixed field of this subgroup and its degree over $K$ is equal to $[\Gal(E/K):H]$, by the Galois correspondence for finite extensions.

The following are clear:
$$
H<\Gal(E/E^H);
F\subset F^{\Gal(E/F)}
$$
But they must be equal, since both groups in the first case have the same index in $\Gal(E/K)$, and both extensions have the same degree in the second case. 
\end{proof}

\begin{prop}
Closed subgroups of $\Gal(E/K)$ are in correspondence with subextensions $F/K$, via the mutually inverse maps
$$
H<\Gal(E/K) \longmapsto E^H=\{a\in E:\sigma(a)=a\forall \sigma\in H\}
$$
$$
F/K\longmapsto \Gal(E/F)<\Gal(E/K)
$$
\end{prop}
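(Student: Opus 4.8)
The plan is to deduce the closed-subgroup correspondence from the already-established correspondence for open subgroups, using Proposition \ref{open}, that a subgroup is closed precisely when it is an intersection of open subgroups. The two displayed maps clearly reverse inclusions, and for any intermediate field $F$ the group $\Gal(E/F)$ is closed: it equals $\bigcap_{x\in F}\{\sigma:\sigma(x)=x\}$, and since each $x$ lies in a finite Galois subextension $L_x/K$, the stabilizer of $x$ is the preimage of a subset of the discrete group $\Gal(L_x/K)$ under the continuous restriction map, hence open. So both maps land where they should, and it remains to check that the two composites are the identity.

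First I would show $\Gal(E/E^H)=H$ for every closed subgroup $H$. The inclusion $H\subseteq\Gal(E/E^H)$ is immediate. For the converse I would write $H=\bigcap_U U$ as the intersection of the open subgroups $U\supseteq H$ afforded by Proposition \ref{open}. Each such $U$ corresponds, under the open-subgroup correspondence, to a finite subextension $E^U$ with $\Gal(E/E^U)=U$; since $H\subseteq U$ we have $E^U\subseteq E^H$, whence $\Gal(E/E^H)\subseteq\Gal(E/E^U)=U$. Intersecting over all such $U$ gives $\Gal(E/E^H)\subseteq\bigcap_U U=H$. This half is a purely formal consequence of the open case.

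Then I would prove $E^{\Gal(E/F)}=F$ for every intermediate field $F$; this is the substantive step. The inclusion $F\subseteq E^{\Gal(E/F)}$ is clear, so I would fix $x\in E\setminus F$ and produce $\sigma\in\Gal(E/F)$ with $\sigma(x)\neq x$. Let $L/K$ be a finite Galois subextension with $x\in L$, and set $M=L\cap F$, a finite subextension with $L/M$ Galois and $x\notin M$. Finite Galois theory yields $\tau\in\Gal(L/M)$ with $\tau(x)\neq x$, and via the translation isomorphism $\Gal(FL/F)\cong\Gal(L/M)$ this $\tau$ lifts to an $F$-automorphism $\rho$ of the finite extension $FL$ with $\rho(x)\neq x$. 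The final move is to extend $\rho$ to an automorphism $\sigma$ of all of $E$.

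I expect this last extension to be the main obstacle. It rests on $E/F$ being normal—which follows from normality of $E/K$, since an irreducible polynomial over $F$ with a root in $E$ divides one over $K$ that splits in $E$—together with the isomorphism extension theorem: an $F$-embedding of the subfield $FL$ into the normal algebraic extension $E$ extends to an automorphism of $E$. Granting this, $\sigma$ fixes $F$ and satisfies $\sigma(x)=\rho(x)\neq x$, so $x\notin E^{\Gal(E/F)}$, giving $E^{\Gal(E/F)}=F$. Combining the two composite computations shows the maps are mutually inverse, inclusion-reversing bijections between closed subgroups of $\Gal(E/K)$ and subextensions $F/K$.
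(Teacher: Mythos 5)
Your proof is correct, and for the composite $\Gal(E/E^H)=H$ it coincides with the paper's argument: both write the closed subgroup $H$ as an intersection of open subgroups via Proposition \ref{open} and squeeze $\Gal(E/E^H)$ inside each of them using the open correspondence. The genuine difference is in the other composite, $E^{\Gal(E/F)}=F$. The paper stays inside the profinite formalism: it writes $F$ as the union of its finite subextensions $F_i$ and reduces to the open case through the identity $E^{\bigcap_i H_i}=\left\langle\bigcup_i E^{H_i}\right\rangle$; but the nontrivial inclusion of that identity is disposed of with the single phrase that it ``comes from the fact that $E/K$ is Galois'', which, under the paper's fixed-field definition of Galois, is precisely the assertion that $E$ is Galois over the intermediate field $\left\langle\bigcup_i E^{H_i}\right\rangle$ --- i.e.\ the hard direction itself (it can be repaired, for instance, by a compactness argument: the stabilizer of a fixed element is open, hence contains a finite subintersection of the $H_i$, and the finite case follows from the open correspondence). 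You instead prove the hard direction directly by classical field theory: for $x\notin F$ you produce $\tau\in\Gal(L/L\cap F)$ moving $x$ by the finite Galois correspondence, lift it to the compositum $FL$ by the translation isomorphism, and extend it to all of $E$ by the embedding-extension theorem together with normality of $E/F$. So your route supplies exactly the content the paper leaves implicit, at the cost of importing Zorn's lemma (for the extension step) and the inheritance of normality.

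One point you should patch: you take normality of $E/K$ for granted. The paper defines ``Galois'' by the fixed-field property $E^{\Gal(E/K)}=K$, not as normal plus separable, so normality needs the standard orbit argument: for $x\in E$, the orbit $\{x_1,\dots,x_n\}$ of $x$ under $\Gal(E/K)$ is finite (its elements are roots of the minimal polynomial of $x$ over $K$), the monic polynomial $\prod_i(T-x_i)$ is $\Gal(E/K)$-invariant and hence has coefficients in $E^{\Gal(E/K)}=K$, and it therefore equals the minimal polynomial of $x$, which consequently splits in $E$ with distinct roots. With that observation (or by citing the paper's standing convention that every element of $E$ lies in a finite Galois subextension), your extension step is fully justified and the proof is complete.
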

\begin{proof}
Note that closed subgroups are the intersection of the open subgroups containing them, by proposition \ref{open}, and that subextensions are the union of the finite subextensions contained in them. For a set of extensions $\{F_i/K\}$, let us denote the field they generate by $\left\langle\bigcup_i F_i\right\rangle$. It is clear that, for a set of finite extensions $\{F_i\}$,
$$
\bigcap_i \Gal\left(E/F_i\right)=\Gal\left(E/\left\langle \bigcup_i F_i\right\rangle\right)
$$
If some $\sigma\in \Gal(E/K)$ fixes every field $F_i$, it will fix the field they generate. Reciprocally, if it fixes the field they generate, it will fix each one.

Also, if we have some open subgroups $\{H_i\}$,
$$
\Gal\left(E/\left\langle \bigcup_i E^{H_i}\right\rangle\right)=\bigcap_i \Gal\left(E/E^{H_i}\right)=\bigcap_i H_i
$$
Let us prove that $\Gal(E/E^{\bigcap H_i})=\bigcap H_i$. The inclusion $\Gal(E/E^{\bigcap H_i})\supset\bigcap H_i$ is obvious. Now, take some element $\sigma \in \Gal(E/K)\setminus \bigcap H_i$. There must exist some $i_0$ such that $\sigma \notin H_{i_0}$, so $\sigma$ won't fix $E^{H_{i_0}}\subset E^{\bigcap H_i}$. Therefore, $\sigma \notin \Gal(E/E^{\bigcap H_i})$. Using this, we have that
$$
\left\langle \bigcup_i E^{H_i}\right\rangle=E^{\Gal(E/\langle \bigcup_i E^{H_i}\rangle)}=E^{\bigcap_i H_i}
$$
Where the first equality comes from the fact that $E/K$ is Galois. Using these equalities, we can prove that the maps in the Galois correspondence are mutually inverse. Let $H$ be a closed subgroup of $\Gal(E/K)$. It is then equal to the intersection of the open subgroups containing them, $H_i$. It follows that
$$
\Gal(E/E^H)=\Gal(E/E^{\bigcap_i H_i})=\bigcap_i H_i=H
$$
Also, if we have some extension $F/K$, it is the union of the finite extensions $F_i$ it contains. Therefore,
$$
E^{\Gal(E/\langle\bigcup_i  F_i\rangle)}=E^{\bigcap_i \Gal(E/F_i)}=\left\langle 
\bigcup_i E^{\Gal(E/F_i)}
\right\rangle=\left\langle\bigcup_i  F_i\right\rangle=F
$$
\end{proof}

\noindent Thus we have the Galois correspondence for any algebraic extension. We are just going to add one remark, which is that, the same as for finite extensions, if $F/K$ is a subextension and $\sigma\in \Gal(E/K)$, then $\Gal(E/\sigma(F))=\sigma\Gal(E/F)\sigma^{-1}$.

This theorem on Galois theory is explained in Brian Osserman's notes \cite{ucdavis}, and for more information on profinite groups, one can look in \cite{profinite}.

\subsection{Back to $\KK$}

We are going to use the results in the previous section to prove that the Galois group of $\KK/\C(t)$ is the inverse limit of Galois groups of the finite Galois subextensions, which correspond to the finite quotients $F_2/N$. This inverse limit is called the profinite completion.

\begin{definicion}
Let $G$ be a group. The set of its finite quotients $G/N$ with the projections amongst them form an inverse system, and the inverse limit of this system is a profinite group, which we denote $\wh G$, and it is the \textbf{profinite completion} of $G$.
\end{definicion}

\noindent Since the inverse limit can be seen as embedded in a product, we can see its elements as sequences, whose elements belong to the quotients of $G$.

As an example, what is the profinite completion of $\Z$? It is made up of sequences of the form
$$
(a_2,a_3,a_4,\ldots )
$$
Where $a_n\in \zn{n}$, and, whenever $m|n$, $a_n\equiv a_m (\mod m)$. So it has elements like
$$
(1_2,1_3,1_4,\ldots)
$$
But also many other elements, like
$$
(0_2,2_3,0_4,3_5,2_6,4_7,0_8,5_9,\ldots)
$$
That may never become constant. The profinite completion of a group is, in general, a much larger group (for example, $\wh \Z$ is uncountable\footnote{There are no countably infinite profinite groups!}). The group $\wh \Z$ arises for example as the Galois group of $\overline{\mathbb F_p}/\mathbb F_p$.

Back to $\KK$: we are going to see that $\Gal(\KK/\C(t))=\Fd$. We know that this Galois group is the inverse limit of the Galois groups of finite Galois subextensions. These Galois groups are the finite quotients of the form $F_2/N$, and also, let's see that the restriction maps are equal to the projections in $F_2$.

Suppose we have some finite index $N\tl F_2$, In Proposition \ref{regular} we said that $\Gal(\KK^N,\C(t))$ is (canonically equivalent to) the automorphism group of the dessin corresponding to $H$, and that this in turn is the group $N_{F_2}(N)/N=F_2/N$. Therefore, the maps $\pi_N:\Gal(\KK,\C(t))	\lra \Gal(\KK^N,\C(t))$ map onto $F_2/N$.

Let us now prove that, if $N_2<N_1$ and both are (finite index) normal subgroups of $F_2$,
$$\overline \pi_{N_1}=\pi_{N_2N_1}\circ \o\pi_{N_2}$$
Where $\pi_{N_1N_2}$ is the canonical projection from $F_2/N_2$ onto $F_2/N_1$, and $\o \pi_{N_i}$ is the projection from $\Gal(\KK,\C(t))$ onto $\Gal(\KK^{N_i},\C(t))$. Let's see this. Let $(C_i,f_i)$ be the dessin corresponding to $N_i$, and let $p:C_2\lra C_1$ be the cover between them, that is, the map that maps the base point to the base point and such that $f_2=f_1\circ p$. We are going to prove that the projection $F_2/N_2\lra F_2/N_1$ corresponds to restricting maps. Indeed, let $g\in F_2$, and call $g_i$ the unique automorphism of $(C_i,f_i)$ that maps the base point $x_i$ to $x_i^g$. Then,
$$
p\circ g_2=g_1\circ p
$$
Both are covering maps from $C_2$ to $C_1$, and to prove that they are equal, we just have to see where the base point maps to, by the unique lifting property. Since the monodromy action commutes with the covering maps,
$$
p(g_2(x_2))=p(x_2^g)=p(x_2)^g=g_1(p(x_2))
$$
If in the equality $p\circ g_2=g_1\circ p$ we take the corresponding maps in function fields, what we get is
$$
g_2^*\circ i=i\circ g_1^*
$$
In other words, $g_1^*$ is just the restriction of $g_2^*$ to the smaller extension $\KK^{N_1}/\C(t)$. Now, if we have some $\sigma\in \Gal(\KK/\C(t))$, we can find some $g_2$ such that $g_2^*=\pi_{N_2}(\sigma)$. Then, $g_1^*=\pi_{H_1H_2}(g_2^*)$, and the equality is just saying that
$$
\pi_{N_2N_1}(\o \pi_{N_2}(\sigma))=\pi_{N_1N_2}(g_2^*)=g_1^*=g_2^*\circ i=\sigma|_{\KK^{N_1}}=\o\pi_{N_1}(\sigma)
$$
So we have proven,what we wanted, that the maps $\pi_N:\Gal(\KK/\C(t))\lra  F_2/N$ given by restriction commute with the projections between these quotients. Therefore, we have proven that $\Gal(\KK/\C(t))=\Fd$.

We can now use the Galois correspondence in $\KK/\C(t)$ to map extensions, or dessins d'enfants, to open subgroups of $\Fd$. Suppose we have some open subgroup $U\tl \Fd$. The fixed field of $U$ is $\KK^H$, for some $H<F_2$. If we take some finite index normal subgroup $N$ contained in $H$, then $U=\Gal(\KK/\KK^H)$ equals the preimage by the restriction of $\Gal(\KK^N/\KK^H)=H/N$. Also, this preimage does not depend on the choice of $N$. Therefore, we can use $\wh H$ to name open subgroups of $\Fd$, since every open subgroup arises in this way (as the preimage of $H/N$, where $H<F_2$ and $N$ is any finite index normal subgroup of $F_2$ contained in $H$).
\begin{corol}
Dessins d'enfants with a base point are in correspondence with open subgroups of $\Fd$.

An open subgroup $\wh H$ has fixed field $\KK^H$ (and hence the notation). 
\end{corol}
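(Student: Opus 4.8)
The plan is to chain together two correspondences that are already in hand and then to make the symbol $\wh H$ precise. First, recall from the covering-space discussion (Proposition \ref{cubiertapunto1} and Corollary \ref{cubiertapunto2}) that dessins d'enfants with a base point are in bijection with the finite index subgroups $H<F_2$, and that to such an $H$ we attached the finite subextension $\KK^H/\C(t)$ inside the field $\KK$ constructed in Proposition \ref{KK}. Second, we have just shown that $\Gal(\KK/\C(t))=\Fd$, and the Galois correspondence for the infinite extension $\KK/\C(t)$ sends a finite subextension $F/\C(t)$ to the open subgroup $\Gal(\KK/F)<\Fd$. Composing these, a dessin with a base point corresponds to the open subgroup $\Gal(\KK/\KK^H)$ of $\Fd$, and my first step is simply to record this composite.

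Next I would identify $\Gal(\KK/\KK^H)$ with the subgroup I want to call $\wh H$ and check it is well defined. Given $H<F_2$ of finite index, choose a finite index normal subgroup $N\tl F_2$ with $N<H$, for instance $N=\cor_{F_2}H$, which is of finite index. Since $\KK^N$ is the regular dessin attached to $N$, the finite Galois correspondence inside it gives $\Gal(\KK^N/\KK^H)=H/N$, and an automorphism of $\KK$ fixes $\KK^H$ if and only if its restriction to $\KK^N$ lands in $H/N$. Hence $\Gal(\KK/\KK^H)=\pi_N^{-1}(H/N)$, the preimage of $H/N$ under the restriction map $\pi_N:\Fd\lra F_2/N$. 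I would then verify that this preimage does not depend on $N$: for a second choice $N'$ one passes to $N\cap N'$ and uses that the restriction maps are compatible with the canonical projections $F_2/(N\cap N')\lra F_2/N$, which is exactly the compatibility established when identifying $\Gal(\KK/\C(t))$ with $\Fd$. This common preimage is the closure of the image of $H$ in $\Fd$, justifying the notation $\wh H$ and the assertion that its fixed field is $\KK^H$.

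Finally I would argue bijectivity. Injectivity is immediate from the Galois correspondence, since distinct finite subextensions $\KK^{H_1}\neq\KK^{H_2}$ yield distinct open subgroups $\wh{H_1}\neq\wh{H_2}$. For surjectivity, take any open subgroup $U<\Fd$; being open it is of finite index, so its fixed field $F$ is a finite subextension of $\KK/\C(t)$. By the preceding proposition every such $F$ is of the form $\KK^H$ for some finite index $H<F_2$, hence corresponds to a dessin, and then $U=\Gal(\KK/F)=\wh H$. This exhausts all open subgroups and closes the correspondence.

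The only genuinely delicate point is the well-definedness in the second step, namely that the preimage of $H/N$ is independent of the auxiliary normal subgroup $N$; everything else is a formal consequence of the two correspondences. That independence rests entirely on the compatibility of the restriction homomorphisms with the projections between the finite quotients $F_2/N$, which is precisely the content already verified in establishing $\Gal(\KK/\C(t))=\Fd$, so no new work is required beyond invoking it.
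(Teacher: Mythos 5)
Your proposal is correct and takes essentially the same route as the paper: it chains the covering-space correspondence (based dessins $\leftrightarrow$ finite index subgroups of $F_2$) with the infinite Galois correspondence for $\KK/\C(t)$, identifies $\Gal(\KK/\KK^H)$ as the preimage of $H/N$ under the restriction $\Fd\lra F_2/N$, and gets surjectivity from the proposition that every finite subextension of $\KK/\C(t)$ equals some $\KK^H$. This is precisely the content of the paragraph preceding the corollary in the paper, which merely runs the argument starting from an open subgroup instead of from a dessin.
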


\noindent $F_2$ is embedded in $\Fd$, by means of the projections from $F_2$ to its quotients. Therefore, we can consider the image of finite index subgroups $H<F_2$. We are going to see that their closure in the Krull topology is $\wh H$. Take the map $i:F_2\lra \Fd$, and consider $i(H)$. Since closed subgroups correspond to subextensions, $\o{i(H)}=\Gal(\KK/\KK^{i(H)})$. Now, the fixed field of $i(H)$ is precisely $\KK^H$, so its closure is $\wh H$.

From now on, we will omit the hat from the groups $\wh H$ and call them $H$, since they are both strongly related, and it will be clear from the context whether we are talking about a subgroup of $F_2$ or an open subgroup of $\Fd$. Most of the time from now on, we will refer to subgroups of $\Fd$.

Let us consider dessins d'enfants without a base point: they are in bijection with conjugacy classes of subgroups of $F_2$. Since two subgroups $H_1$ and $H_2$ are conjugate in $F_2$ if and only if they are conjugate in $F_2/\cor_{F_2}(H_1\cap H_2)$, which is a finite quotient, they will be conjugate if and only if they are also conjugate in $\Fd$. Also, it is immediate to check, as it is for finite extensions, that for some $\sigma\in \Gal$ and some field $F\subset \K$, $\Gal(\KK/F^\sigma)=\Gal(\KK/F)^\sigma$. Therefore, we have the following.

\begin{corol}
Dessins d'enfants without a base point are in correspondence with open subgroups of $\Fd$, up to conjugation.
\end{corol}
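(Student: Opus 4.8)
The plan is to obtain this corollary by quotienting the previous one---the bijection between dessins with a base point and open subgroups of $\Fd$, given by $H\mapsto\wh H$---by the equivalence relation of forgetting the base point, and to check that on the subgroup side this relation is exactly $\Fd$-conjugacy. I would first recall two facts already available: by the monodromy results of Section \ref{monodromy}, two coverings (dessins with a base point) determine the same base-point-free dessin precisely when their associated finite index subgroups are conjugate in $F_2$; and by the preceding corollary, $H\mapsto\wh H$ is a bijection from finite index subgroups of $F_2$ onto open subgroups of $\Fd$.

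The core step is to verify that this bijection carries the $F_2$-conjugacy relation to the $\Fd$-conjugacy relation: $\wh{H_1}$ and $\wh{H_2}$ are conjugate in $\Fd$ if and only if $H_1$ and $H_2$ are conjugate in $F_2$. The implication from $F_2$ to $\Fd$ is immediate because $F_2$ embeds in $\Fd$. For the converse I would use the fact noted just before the statement: setting $N=\cor_{F_2}(H_1\cap H_2)$, a finite index normal subgroup contained in both $H_i$, conjugacy of $H_1,H_2$ in $F_2$ is equivalent to conjugacy of $H_1/N,H_2/N$ in the finite quotient $F_2/N$. Since the projection $\Fd\to F_2/N$ sends each $\wh{H_i}$ onto $H_i/N$, an $\Fd$-conjugation between $\wh{H_1}$ and $\wh{H_2}$ projects to a conjugation between $H_1/N$ and $H_2/N$, giving what we want.

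It then remains to see that conjugacy classes match up exactly. Any $\Fd$-conjugate of an open subgroup $\wh H$ is again open, hence of the form $\wh{H'}$ for some finite index $H'<F_2$ by the preceding corollary, and the core step forces $H'$ to be $F_2$-conjugate to $H$; thus the $\Fd$-conjugacy class of $\wh H$ is precisely $\{\wh{H'}:H'\sim_{F_2}H\}$, and the induced map on classes is a well-defined bijection. The remark that $\Gal(\KK/F^\sigma)=\Gal(\KK/F)^\sigma$ is what identifies a base-point-moving isomorphism of coverings with conjugation of the corresponding open subgroup, so the geometric and profinite pictures agree. The only real content is the core step, and since it reduces to the stated finite-quotient lemma, the main difficulty is purely organizational: keeping the two conjugation relations---by $F_2$ on one side and by $\Fd$ on the other---aligned throughout.
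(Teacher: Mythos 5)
Your proposal is correct and takes essentially the same route as the paper: the paper's own argument (the paragraph preceding the corollary) reduces conjugacy in $\Fd$ versus conjugacy in $F_2$ to conjugacy in the finite quotient $F_2/\cor_{F_2}(H_1\cap H_2)$, exactly your core step, and combines this with the preceding corollary and the compatibility $\Gal(\KK/F^\sigma)=\Gal(\KK/F)^\sigma$. You simply make explicit the bookkeeping of how conjugacy classes transfer between $F_2$, the finite quotient, and $\Fd$, which the paper leaves implicit.
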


\chapter{The Galois action on dessins d'enfants and Belyi's theorem}
\section{The Galois action}\label{galaction}

We are going to see how the Galois group of a field can act on a curve. We are going to use it first for the Galois group of the complex numbers $\Gal(\C/\Q)$, but in the next section we will prove Belyi's theorem, that states that every dessin d'enfant is defined over $\QQ$, so we will look at $\gal$. Throughout this section, $K$ will be any algebraically closed field of characteristic 0.

Recall that we said in section \ref{secregular} that Galois groups would always act on the right. For coherence, we are going to keep this convention for $\Gal(K/\Q)$, so the action of some $\sigma \in \Gal(K/\Q)$ on an element $a\in K$ is given by $a^\sigma$, and it follows that $(a^\sigma)^\tau=a^{\sigma\tau}$.

Suppose we have a curve $C$ defined over $K$, with field of rational functions $K(C)$, and some $\sigma \in \Gal(K/\Q)$. We are first going to define the action on the field of functions. We define $K(C)^\sigma$ to be the $K$-algebra given by $K\otimes_{\sigma^{-1}} K(C)$.

This tensor product, as usual, is generated by all the elements of the form $\{k\otimes_{\sigma^{-1}} f:k\in K,f\in K(C)\}$, but we have the relation
$$
k\otimes_{\sigma^{-1}} f=1\otimes_{\sigma^{-1}} k^{\sigma^{-1}}f
$$
Since we also have that $1\otimes_{\sigma^{-1}}f_1+1\otimes_{\sigma^{-1}}f_2=1\otimes_{\sigma^{-1}}(f_1+f_2)$, all the elements in the tensor product can be written in the form $1\otimes_{\sigma^{-1}} f$.

In this tensor product, the product by elements of $K$ is given by
$$
k(1\otimes_{\sigma^{-1}} f)=k\otimes_{\sigma^{-1}}f=1\otimes_{\sigma^{-1}} k^{\sigma^{-1}}f
$$
In other words, if $i$ stands for the embedding of $K$ in $K(C)$, the embedding of $K$ in $K(C)^\sigma$ is $i\circ \sigma^{-1}$. Note that $\sigma$ induces a field isomorphism, from $K(C)$ to $K(C)^\sigma$, which we will also call $\sigma$, given by $f^\sigma=1\otimes_{\sigma^{-1}} f$. However, this field isomorphism might not be a $K$-algebra isomorphism: if $k\in K$ and $f\in K(C)$, then $(kf)^\sigma=1\otimes_{\sigma^{-1}} kf=k^\sigma\otimes_{\sigma^{-1}} f=k^\sigma f^\sigma$.

Suppose $K(C)=K[x](y_1,\ldots, y_n)/(f_1,\ldots ,f_m)$. We are going to prove that$$K(C)^\sigma\cong K[x](y_1,\ldots, y_n)/(f_1^\sigma,\ldots ,f_m^\sigma)$$Here, if $f=\sum a_nx^n$, then we define $f^\sigma=\sum a_n^\sigma x^n$. Also, the sign $\cong$ stands for an isomorphism of $K$-algebras. We will prove it in the case where $K(C)=K[x](y)/(f)$, since the proof is the same, but the notation is clearer. If we call $x^\sigma=1\otimes_{\sigma^{-1}} x$, and $y^\sigma=1\otimes_{\sigma^{-1}} y$,
$$
f^\sigma(x^\sigma,y^\sigma)=\sum_{m,n} a_{mn}^\sigma (1\otimes_{\sigma^{-1}} x)^m(1\otimes_{\sigma^{-1}} y)^n=\sum_{m,n} a_{mn}^\sigma (1\otimes_{\sigma^{-1}} x^ny^m)=\sum_{m,n} 1\otimes_{\sigma^{-1}} a_{mn}x^my^n=0
$$
Therefore, $K(C)^\sigma\cong K[x](y)/(f^\sigma)$. In particular, $K(C)^\sigma$ has transcendence degree 1 over $K$.

This gives a right action of $\Gal(K/\Q)$ on $K$-algebras, by which we mean that for two Galois elements $\sigma,\tau\in \Gal(K/\Q)$, there is a natural isomorphism of algebras between $(K(C)^\sigma)^\tau$ and $K(C)^{\sigma\tau}$, given by
$$
\f{\cong}
{(K(C)^\sigma)^\tau=K\otimes_{\tau^{-1}} K \otimes_{\sigma^{-1}} K(C)}
{K(C)^{\sigma\tau}=K\otimes_{\tau^{-1}\sigma^{-1}}K(C)}
{1\otimes_{\tau^{-1}}1 \otimes_{\sigma^{-1}} a}{1\otimes_{\tau^{-1}\sigma^{-1}}a}
$$
It is obviously a field isomorphism, and also, if $k\in K$,
$$
k(1\otimes_{\tau^{-1}}1 \otimes_{\sigma^{-1}} a)=k\otimes_{\tau^{-1}}1 \otimes_{\sigma^{-1}} a=1\otimes_{\tau^{-1}}k^{\tau^{-1}} \otimes_{\sigma^{-1}} a=$$ $$=1\otimes_{\tau^{-1}}1 \otimes_{\sigma^{-1}} k^{\tau^{-1}\sigma^{-1}}a\cong 1\otimes_{\tau^{-1}\sigma^{-1}}k^{\tau^{-1}\sigma^{-1}}a=k(1\otimes_{\tau^{-1}\sigma^{-1}}a)
$$
If we call $\sigma$ the field isomorphism from $K(C)$ to $K(C)^\sigma$, and we do the same for $\tau$, we get that $\tau \circ \sigma=\sigma\tau$, modulo this natural isomorphism. This means that for an element $f\in K(C)$,
$$
(f^\sigma)^\tau=1\otimes_{\tau^{-1}}1 \otimes_{\sigma^{-1}} f\mapsto 1\otimes_{\tau^{-1}\sigma^{-1}}f=f^{\sigma\tau}
$$
In other words, $(f^\sigma)^\tau=f^{\sigma\tau}$, by means of this natural isomorphism.
\begin{definicion}
Let $C$ be a curve defined over $K$. For $\sigma\in \Gal(K/\Q)$, we define $C^\sigma$ to be the curve that has $K(C)^\sigma$ as its field of rational functions. Or, equivalently, if $C=V(f_1,\ldots ,f_m)$, then $C^\sigma=V(f_1^\sigma,\ldots f_m^\sigma)$.
\end{definicion}

\noindent This definition also gives a right action of $\Gal(K/\Q)$ on the curves, since for two elements $\sigma,\tau\in \Gal(K/\Q)$, and a curve $C$,$$K((C^\sigma)^\tau)=K(C^\sigma)^\tau=(K(C)^\sigma)^\tau=K(C)^{\sigma\tau}=K(C^{\sigma\tau})$$If a curve is transformed into another one by the action of the Galois group, we say that they are Galois conjugate.

The action of the Galois group is not trivial, and a curve can be Galois conjugate to a non-isomorphic one. As an example, take the plane curve given by the equation $y^2=x(x-1)(x-\sqrt 2)$. It is Galois conjugate to the curve with equation $y^2=x(x-1)(x+\sqrt 2)$, and these are not isomorphic (since the theory of elliptic curves says that the only curves isomorphic to $y^2=x(x-1)(x-\lambda)$ of the form $y^2=x(x-1)(x-\lambda')$ are the ones with $\lambda'\in \left\{\lambda,\frac{1}{\lambda},1-\lambda,1-\frac{1}{\lambda},\frac{1}{1-\lambda},\frac{\lambda}{1-\lambda} \right\}$).

We want to define this action also on dessins d'enfants. In order to do this, we will first define the action on points and morphisms, which will allow us to see that the conjugate of a dessin is another dessin, and to find some invariants of the action.

We have a field isomorphism between $K(C)$ and $K(C)^\sigma$, so we can use it to map valuations from one field to the other, and then use the correspondence between valuations and points. Take a valuation $\nu$ of $K(C)$. This valuation is mapped to a valuation $\nu^\sigma$ of $K(C^\sigma)$, by$$\nu^\sigma(f^\sigma)=\nu(f)$$This is the Galois action on the points of the curves. It is straightforward to check that this defines a $K$-valuation, using that $\sigma$ is a field isomorphism. Note that, for two Galois automorphisms $\sigma $ and $\tau$,
$$
(\nu^\sigma)^\tau((f^\sigma)^\tau)=\nu^\sigma(f^\sigma)=\nu(f)=\nu^{\sigma\tau}(f^{\sigma\tau})=\nu^{\sigma\tau}((f^\sigma)^\tau)
$$
Therefore, $(\nu^\sigma)^\tau=\nu^{\sigma\tau}$. Note that this definition is also $\nu^\sigma=(\sigma^{-1})^*(\nu)$.

If we have a morphism between two curves, $\phi:C\lra C'$, we can define $ \phi^\sigma $ as well. We proceed like this: we take the morphism between the function fields $\phi^*:K(C)\lra K(C')$, and we define $\phi^{*\sigma}=\sigma \circ \phi^*\circ \sigma^{-1}$, that is,
$$\phi^{*\sigma}(f^\sigma)=(\phi^*(f))^\sigma$$
From the definition, it is clear both that $({\phi}^{*\sigma})^\tau=\phi^{*\sigma\tau}$, and that the map $\phi^{*\sigma}$ is a $K$-algebra homomorphism.

We can now define the morphism $\phi^\sigma$ on points of $C^\sigma$, as $(\phi^{*\sigma})^*$. Let us write this out explicitly: If we have a point in $C$ corresponding to a valuation $\nu$ of $K(C)$ and a function $f\in K(C')$,
$$
(\phi^\sigma(\nu^\sigma))(f^\sigma)=\nu^\sigma(\phi^{*\sigma}(f^\sigma))=\nu^\sigma((\phi^*(f))^\sigma)=\nu(\phi^*(f))=(\phi(\nu))(f)
$$
Since $(\phi(\nu))^\sigma(f^\sigma)=(\phi(\nu))(f)$, it follows that
$$
\phi^\sigma(\nu^\sigma)=(\phi(\nu))^\sigma
$$
And, of course, the same thing happens if we change points for valuations.

So, if a point $P$, which corresponds to some valuation $\nu$, is mapped by $\sigma$ to another point $P^\sigma$, with valuation $\nu^\sigma$, then
$$
\phi^\sigma(P^\sigma)=(\phi(P))^\sigma
$$
Of course, from the definition $\phi^\sigma=(\phi^{*\sigma})^*$ follows that $(\phi^\sigma)^\tau=\phi^{\sigma\tau}$.

\noindent Suppose we have $C=\P^1$. Its field of functions is $K(t)$, and if we take an automorphism $\sigma\in \Gal(K/\Q)$, the field $K(t)^\sigma$ is canonically isomorphic as a $K$-algebra to $K(t)$, by
$$
\f{\psi}{K(t)^\sigma}{K(t)}{1\otimes_{\sigma^{-1}}\frac{f(t)}{g(t)}}{\frac{f^\sigma(t)}{g^\sigma(t)}}
$$
It is straightforward to check that this is indeed a $K$-algebra isomorphism. We can then identify $K(t)^\sigma$ with $\psi(K(t)^\sigma)=K(t)$, so that $\psi\circ\sigma$ is just applying $\sigma$ to the coefficients of functions (note that this action is the same as the one we have defined for polynomials). Since we can identify $K(t)^\sigma$ with $K(t)$ via this isomorphism, we can also identify $\P^1$ with $(\P^1)^\sigma$. Now, let us see where the points are mapped. Suppose we have the point $P$ with coordinates $(1:p)$. This is the unique point where the function $x_1/x_0-p=t-p$ vanishes. By the identity
$$
1=\ord_P \left(t-p\right)=\ord_{P^\sigma}(\left(t-p\right)^\sigma)=\ord_{P^\sigma}\left(t-p^\sigma\right) 
$$
Where the last equality is the identification $\psi$ we have just defined. Since $t-p^\sigma$ has order $1$ at $P^\sigma$, this must mean that the coordinates of $P^\sigma$ are $(1:p^\sigma)$. Thus, $\sigma$ acts on $\P^1$ by acting on the coordinates.

Now, suppose we have a curve $C\subset \P^n$, with coordinates $(x_0:\cdots x_n)$, and a point $P=(p_0:\cdots :p_n)\in C$. Then,
$$
\left(\frac{x_i}{x_j}\right)^\sigma(P^\sigma)=\left(\left(\frac{x_i}{x_j}\right)(P)\right)^\sigma=\left(\frac{p_i}{p_j}\right)^\sigma=\frac{p_i^\sigma}{p_j^\sigma}
$$
Which means that $\sigma$ acts on $C$ by acting on the coordinates.

We could really just forget about the previous definitions and take the action on the coordinates as definition, but this way, we have clearly proven that the action doesn't depend on how a curve is embedded in $\P^n$.

Let us sum up all the definitions, the conclusions and the notation.

\begin{prop}[The Galois action] Suppose we have two curves $C$ and $C'$ defined over $K$, a point $P\in C$, a morphism $\phi:C\lra C'$ and some $\sigma\in \Gal(K/\Q)$.
\begin{itemize}
\item The field $K(C)^\sigma$ is $K\otimes_{\sigma^{-1}} K(C)$.
\item There is a field isomorphism $\sigma:K(C)\lra K(C)^\sigma$
\item To a morphism of algebras $\phi^*:K(C)\lra K(C')$ corresponds a morphism $\phi^{*\sigma}:K(C)^\sigma\lra K(C')^\sigma$, given by $\phi^{*\sigma}(f^\sigma)=(\phi^*(f))^\sigma$.
\item The curve $C^\sigma$ is such that $K(C^\sigma)=K(C)^\sigma$.
\item The points of $C^\sigma$ can be defined by $\ord_{P^\sigma}(f^\sigma)=\ord_P(f)$.
\item To a morphism of curves $\phi:C\lra C'$ corresponds a morphism $\phi^\sigma:C^\sigma\lra C'^\sigma$ such that $\phi^\sigma(P^\sigma)=\phi(P)^\sigma$.
\item $\phi^{\sigma*}=\phi^{*\sigma}$.
\end{itemize}
\end{prop}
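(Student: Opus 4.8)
The plan is to recognize that this proposition is a recapitulation: every item has already been introduced or verified in the preceding discussion, so the work consists only of confirming that each assertion is either a definition or a short check, and assembling them coherently. Items one and four require nothing, being the very definitions $K(C)^\sigma = K\otimes_{\sigma^{-1}} K(C)$ and ``$C^\sigma$ is the curve whose function field is $K(C)^\sigma$'' (a legitimate definition precisely because we computed $K(C)^\sigma \cong K[x](y)/(f^\sigma)$, of transcendence degree $1$ over $K$). The remaining five items are well-definedness and naturality statements, and I would treat them in order, keeping in mind the one recurring subtlety: the difference between a field isomorphism and a $K$-algebra isomorphism.

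At the level of function fields, item two asserts the field isomorphism $\sigma\colon K(C)\lra K(C)^\sigma$, $f\mapsto 1\otimes_{\sigma^{-1}} f$. I would check surjectivity by rewriting every tensor as $1\otimes_{\sigma^{-1}} f$ via the relation $k\otimes_{\sigma^{-1}} f = 1\otimes_{\sigma^{-1}} k^{\sigma^{-1}}f$ together with additivity; it is a ring homomorphism because the tensor product is additive and multiplicative, and injective because both source and target are fields. For item three, one sets $\phi^{*\sigma} = \sigma\circ\phi^*\circ\sigma^{-1}$, which is automatically a ring homomorphism as a composite and is moreover $K$-linear because the scalar twist by $\sigma^{-1}$ coming from the source copy of $\sigma$ is undone by the target copy, using that $\phi^*$ is itself a $K$-algebra map; the asserted formula $\phi^{*\sigma}(f^\sigma) = (\phi^*f)^\sigma$ is then immediate. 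Item seven, $\phi^{\sigma*} = \phi^{*\sigma}$, is just unwinding notation: since $\phi^\sigma$ is \emph{defined} as $(\phi^{*\sigma})^*$, the anti-equivalence between curves and function fields returns $\phi^{*\sigma}$ as the induced map on function fields.

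At the geometric level I would invoke the point–valuation correspondence. For item five, the rule $\nu^\sigma(f^\sigma) = \nu(f)$ transports a surjective $K$-valuation $\nu$ across the field isomorphism $\sigma$ to another surjective $K$-valuation (surjective because $\sigma$ is a bijection, and a $K$-valuation because $\sigma^{-1}$ preserves $K^\times$), which the correspondence identifies with the point $P^\sigma$. For item six, setting $\phi^\sigma = (\phi^{*\sigma})^*$ and computing, for a valuation $\nu$ and a function $f\in K(C')$,
$$
\phi^\sigma(\nu^\sigma)(f^\sigma) = \nu^\sigma(\phi^{*\sigma}(f^\sigma)) = \nu^\sigma\big((\phi^*f)^\sigma\big) = \nu(\phi^*f) = \phi(\nu)(f) = \phi(\nu)^\sigma(f^\sigma),
$$
forces $\phi^\sigma(P^\sigma) = \phi(P)^\sigma$, as claimed.

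I expect no genuine obstacle here, since each computation was already carried out above; the only point that deserves sustained attention is $K$-linearity. Every map in sight is visibly a field homomorphism, but one must track the scalar twist recorded by the subscript $\sigma^{-1}$ on each tensor factor to confirm that the $K$-algebra structure is respected — this is exactly what guarantees that $C^\sigma$ is a well-defined curve over $K$ and that $\phi^\sigma$ is a morphism over $K$, rather than merely a map of abstract fields. Once this bookkeeping is checked in the one place it first arises, namely item three, the remaining items follow formally and the proposition is complete.
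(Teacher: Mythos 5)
Your proposal is correct and follows essentially the same route as the paper: the paper offers no separate proof for this proposition because it is a summary of the constructions carried out immediately before it, and your item-by-item verification (the tensor-product definition, the field isomorphism $f\mapsto 1\otimes_{\sigma^{-1}}f$, the conjugated map $\phi^{*\sigma}=\sigma\circ\phi^*\circ\sigma^{-1}$, the transport of valuations, and the computation $\phi^\sigma(\nu^\sigma)(f^\sigma)=\nu(\phi^*f)=\phi(\nu)^\sigma(f^\sigma)$) reproduces those constructions with the same formulas. Your added attention to $K$-linearity of $\phi^{*\sigma}$ (the scalar twist by $\sigma^{-1}$ being undone because $\phi^*$ is a $K$-algebra map) is a point the paper asserts without detail, so spelling it out is a welcome refinement rather than a departure.
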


\noindent We can now define the Galois action on dessins d'enfants. We see them as extensions $K(f)\subset K(C)$. The Galois action maps $K(C)$ to another algebra $K(C)^\sigma$, and the subfield $K(f)$ is mapped to some other subfield $K(f)^\sigma$, which is isomorphic to $K(t)$ (it is the subfield generated by $K^\sigma$ and $f^\sigma$). The dessin d'enfant conjugate  by $\sigma$ to $(C,f)$ is defined to be $(C^\sigma,f^\sigma)$, i.e. the extension $K(f)^\sigma\subset K(C)^\sigma$.

If the curve $C$ is embedded, then we know the Galois action on the curve is given by applying $\sigma$ to the coefficients of the equations. Also, since, for any function $f=\frac{\sum a_{ \alpha} \overline x^{ \alpha}}{\sum b_{ \beta} \overline x^{ \beta}}$ (where $\alpha$ and $\beta$ are multiindices), and a point with coordinates $\overline p$,
$$
\frac{\sum a_{ \alpha}^\sigma  (\overline p^\sigma )^{ \alpha}}{\sum b_{ \beta}^\sigma ( \overline p^\sigma )^{ \beta}}=\left(\frac{\sum a_{ \alpha} \overline p^{ \alpha}}{\sum b_{ \beta} \overline p^{ \beta}}\right)^\sigma
=f(P)^\sigma=
f^\sigma(P^\sigma)=
\frac{\sum c_{ \alpha} (\overline p^\sigma )^{ \alpha}}{\sum d_{ \beta} (\overline p^\sigma )^{ \beta}}
$$
We must have that $c_\alpha=a_\alpha^\sigma $ and $d_\beta=b_\beta^\sigma $, so applying $\sigma$ to a function is applying $\sigma$ to its coefficients.

Therefore, a definition for the Galois action on a Belyi pair can be just applying the automorphism to the coefficients of the equations of the curve and the Belyi function.

We must prove that the function $f^\sigma$ is unramified outside of $0$, $1$ and $\infty$. Take a point $P^\sigma\in C^\sigma$. If $f(P)=0$, then $f^\sigma(P^\sigma)=0^\sigma=0$, so the fiber of $0$ is mapped to the fiber of $0$. Also, $\ord_{P^\sigma} f^\sigma=\ord_P f$, so the ramification index is preserved. The same thing happens to the points over 1 and $\infty$. Also, if $f^\sigma(P^\sigma)\notin \{0,1,\infty\}$, then $f(P)\notin \{0,1,\infty\}$, so $f$ is unramified at $P$. Therefore,
$$\ord_{P^\sigma} (f^\sigma-f^\sigma(P^\sigma))=\ord_{P^\sigma} (f^\sigma-(f(P))^\sigma)=\ord_P(f-f(P))=1$$
So the function $f^\sigma$ is unramified over every point outside of $\{0,1,\infty\}$. Therefore, it is a Belyi function.

If $(C,f)$ is a Belyi pair, then $\ord_{P^\sigma}f^\sigma=\ord_P f$, so not only the ramification values are preserved, but also the set of ramification indices over each point. This is an example of a \textbf{Galois invariant}, a property of a dessin that is preserved under the Galois action. If we look at the bicolored graph, the index of ramification at a black or white point is the number of edges it has attached, and for a star vertex, it is half the number of edges on a face. Therefore, this means that a dessin d'enfant has the same number of points of each order and of faces of each number of sides as any of its Galois conjugates.

Since, by the Euler formula, the Euler characteristic is equal the number of vertices minus the number of edges plus the number of faces in any triangulation, this means that the genus of the underlying curve is another Galois invariant.

There is yet another invariant preserved by the Galois action, which is the automorphism group: the automorphism group of a dessin given by an extension $K(C)/K(t)$ is $\Gal(K(C)/K(t))$, and $\sigma\in \Gal(K/\Q)$ acts on the homomorphisms in this Galois group. It is clear that the following map is an isomorphism:
$$
\f{\cdot^\sigma}{\Gal(K(C)/K(t))}{\Gal(K(C)^\sigma/K(t))}{\phi}{g^\sigma=\sigma\circ g \circ \sigma^{-1}}
$$
Since its inverse is given by the corresponding map for $\sigma^{-1}$.

Also, a Galois automorphism $\sigma$ will map Galois extensions of $K(t)$ to Galois extensions (since they preserve the degree and the Galois group). It is clear then that it maps the Galois closure of an extension to the Galois closure of its image: in the language of Belyi covers, the Galois group preserves the regular cover of another cover. Now, since we know that the monodromy group of a dessin given by a finite index subgroup $H$ of $F_2$ (or an open subgroup of $\Fd$) is given by the quotient of $F_2$ by the kernel of the action, which is $\cor_{F_2} H$. Therefore, it is also the automorphism group of the regular dessin given by $\cor_{F_2} H$, its regular cover. Since regular covers are preserved, their automorphism groups are also preserved, and these are the cartographic groups, we conclude that the cartographic group of a dessin is isomorphic to the cartographic group of a conjugate dessin. However, the dessins need not be isomorphic! This is because the Galois group doesn't necessarily map the canonical generators to the canonical generators, and the choice of these generators is what determines the dessin.

Let us sum up the Galois invariants so far:

\begin{prop}
The Galois action on dessins preserves the following:
\begin{itemize}
\item The degree of the covers.
\item The ramification indices of points.
\item The genus of the underlying curves.
\item The automorphism group, up to isomorphism.
\item The monodromy group, up to isomorphism.
\end{itemize}

\end{prop}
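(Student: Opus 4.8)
The plan is to assemble the proposition from the verifications already carried out in the preceding discussion, all of which flow from the single identity $\ord_{P^\sigma}(f^\sigma)=\ord_P(f)$ that defines the Galois action on points. First I would treat the ramification indices: since $\sigma$ induces a field isomorphism $K(C)\to K(C)^\sigma$ carrying the valuation $\nu=\ord_P$ to $\nu^\sigma=\ord_{P^\sigma}$, and since for a Belyi function $f^\sigma(P^\sigma)=f(P)^\sigma$, the fibre over each of $0,1,\infty$ is carried to the fibre over the same value, with each local index $\ord_{P^\sigma}(f^\sigma-f^\sigma(P^\sigma))$ equal to $\ord_P(f-f(P))$. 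Hence the multiset of ramification indices over each of $0,1,\infty$ is preserved. The degree is then immediate: it equals $[K(C):K(f)]$ by Proposition \ref{degree}, and $\sigma$ is a field isomorphism taking $K(f)$ onto $K(f)^\sigma$, so the extension degree is unchanged; alternatively one just sums the preserved ramification indices over a single value.

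Next I would deduce preservation of the genus from the preservation of the ramification data. Reading the dessin as a triangulated surface, the number of black vertices, white vertices and stars of each given index, and therefore the total number of vertices, edges and faces of the triangulation, is determined entirely by the multisets of ramification indices just shown to be invariant (a black or white vertex of index $e$ has $e$ incident edges, a star of index $e$ bounds a face with $2e$ edges). By Euler's formula the Euler characteristic, hence the genus, is a function of these counts alone, so it too is a Galois invariant.

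For the automorphism group I would invoke the explicit isomorphism already recorded, namely $g\mapsto \sigma\circ g\circ\sigma^{-1}$ from $\Gal(K(C)/K(t))$ to $\Gal(K(C)^\sigma/K(t))$, whose inverse is the analogous map for $\sigma^{-1}$; since the automorphism group of a dessin is exactly this Galois group, it is preserved up to isomorphism. For the monodromy group the key point is that $\sigma$ carries Galois extensions to Galois extensions, and hence the Galois closure of $K(C)/K(f)$ to the Galois closure of its image. By Proposition \ref{autmono} the regular cover of a dessin corresponds to this Galois closure, so regular covers are mapped to regular covers; the monodromy (cartographic) group is the automorphism group of the regular cover $\cor_{F_2}H$, and by the previous case that automorphism group is preserved up to isomorphism.

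The genuinely substantive step, and the one I expect to be the main obstacle, is the monodromy claim, because it is the only bullet that is not a direct consequence of the order identity: it requires both that passing to the Galois closure introduce no ramification outside $\{0,1,\infty\}$ and that this closure be compatible with the $\Gal(K/\Q)$-action. Both facts are supplied by Proposition \ref{autmono} together with the identification of the monodromy group as $\Aut$ of the regular cover. Finally I would stress that preservation of all of these invariants does \emph{not} force a dessin to be isomorphic to its conjugate, since $\sigma$ need not send the canonical generators $x,y$ to the canonical generators; this is precisely why the Galois action on dessins can be nontrivial.
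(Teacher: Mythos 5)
Your proposal is correct and follows essentially the same route as the paper: the order identity $\ord_{P^\sigma}(f^\sigma)=\ord_P(f)$ gives the ramification data, the Euler formula gives the genus, conjugation $g\mapsto\sigma\circ g\circ\sigma^{-1}$ gives the automorphism group, and Proposition \ref{autmono} (regular cover $=$ Galois closure $=$ $\cor_{F_2}H$) gives the monodromy group, with the same closing caveat that none of this forces the conjugate dessin to be isomorphic to the original. The only cosmetic difference is that you route the degree through Proposition \ref{degree} and the invariance of the extension degree, which the paper leaves implicit; this is a fine and slightly more explicit justification.
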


\noindent This action is what makes dessins d'enfants interesting, since it can be used to study the group $\gal$ (since dessins d'enfants are defined over the algebraic field), for example by embedding it into other groups, as in proposition \ref{out}. This can be done because the action is faithful, i.e. for any $\sigma\in \gal$, there is a dessin $(C,f)$ that isn't fixed by $\sigma$. In fact, there is a dessin of a given genus \cite{genus} and also a regular dessin \cite{jule03} such that they are not fixed by the action. For now, we will just prove that the action is not trivial.
\begin{prop}
Take the curve $C$ with equation $y^2=x(x-1)(x-\sqrt 2)$, and the function $f(x,y)=x^2(2-x^2)$. This function is a Belyi function, and if $\sigma\in \gal$ is such that $\sigma(\sqrt 2)=-\sqrt  2$, then the Belyi pair $(C^\sigma,f^\sigma)$ is not equivalent to $(C,f)$.
\end{prop}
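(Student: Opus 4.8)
The plan is to verify the two assertions separately: that $f$ is a Belyi function, and that no equivalence of Belyi pairs $(C,f)\cong(C^\sigma,f^\sigma)$ can exist.

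For the first assertion, I would factor $f$ through the $x$-coordinate map. Write $f=g\circ x$, where $x:C\lra\P^1$ is the degree-$2$ hyperelliptic projection and $g:\P^1\lra\P^1$ is $g(u)=u^2(2-u^2)$, a map of degree $4$; thus $\deg f=8$. Since $e_P(f)=e_P(x)\,e_{x(P)}(g)$, a point $P\in C$ is a critical point of $f$ exactly when $x$ ramifies at $P$ or $g$ ramifies at $x(P)$. The map $x$ is branched over $x\in\{0,1,\sqrt2,\infty\}$ (the three distinct roots of the cubic, which also shows $C$ is a smooth genus-$1$ curve, together with the point at infinity), and $g$ ramifies where $g'(u)=4u(1-u^2)=0$, i.e. over $u\in\{0,1,-1,\infty\}$. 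I would then tabulate the images: $g(0)=0$, $g(1)=g(-1)=1$, $g(\sqrt2)=2(2-2)=0$ and $g(\infty)=\infty$. Hence every critical value of $f$ lies in $\{0,1,\infty\}$, so $f$ is unramified outside $\{0,1,\infty\}$ and is a Belyi function.

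For the second assertion, I would first observe that $f=x^2(2-x^2)$ has coefficients in $\Q$, which $\sigma$ fixes, so $f^\sigma=f$ (the same formula, now read on $C^\sigma$). Applying $\sigma$ to the coefficients of the defining equation, $C^\sigma$ is the curve $y^2=x(x-1)(x+\sqrt2)$. Now an equivalence of Belyi pairs $(C,f)\to(C^\sigma,f^\sigma)$ is in particular an isomorphism of the underlying curves $C\cong C^\sigma$, so it suffices to show these two elliptic curves are not isomorphic. Both are of Legendre type, with parameters $\lambda=\sqrt2$ and $\lambda'=-\sqrt2$, and by the elliptic-curve fact recalled above they are isomorphic only if $-\sqrt2$ lies in the orbit $\{\lambda,1/\lambda,1-\lambda,1-1/\lambda,1/(1-\lambda),\lambda/(1-\lambda)\}$ of $\lambda=\sqrt2$. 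I would check directly that $-\sqrt2$ is not among these six values (each comparison reduces to a contradiction after rationalizing, e.g. $1/(1-\sqrt2)=-1-\sqrt2$). Therefore $C\not\cong C^\sigma$, so no isomorphism of curves exists, and a fortiori no equivalence of Belyi pairs does.

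The only genuinely computational step is the ramification bookkeeping for $f$; the non-equivalence then follows essentially for free, since the underlying curves are already non-isomorphic and curve isomorphism is a necessary condition for equivalence of Belyi pairs. The point worth care is confirming that $f$ has no critical values beyond those coming from the branch locus of $x$ and the critical locus of $g$, which is precisely the content of the multiplicativity $e_P(f)=e_P(x)\,e_{x(P)}(g)$.
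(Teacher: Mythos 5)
Your proposal is correct and follows essentially the same route as the paper: both verify that all critical values of $f$ land in $\{0,1,\infty\}$ (the paper by listing the ramification points directly, you by the cleaner chain-rule bookkeeping through $f=g\circ x$), and both conclude non-equivalence from the fact that $C$ and $C^\sigma$ are non-isomorphic elliptic curves, using the Legendre-parameter orbit $\left\{\lambda,\frac{1}{\lambda},1-\lambda,1-\frac{1}{\lambda},\frac{1}{1-\lambda},\frac{\lambda}{1-\lambda}\right\}$. The paper supplements this with drawings of the two dessins, but that is illustrative rather than logically necessary.
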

\begin{proof}
It is straightforward to see that $f$ is a Belyi function: its ramification points are $\{(0,0),(\sqrt 2,0),(1,0),\\\left(-1,\pm \sqrt{-2-2\sqrt 2} \right),(\sqrt 2,0),(0:1:0)\}$, and their images are contained in $\{0,1,\infty\}$.

The curve $C^\sigma$ is not isomorphic to $C$, since this is the example we have seen before, so the dessins cannot be isomorphic. In fact, if we draw them, we obtain:

\begin{center}
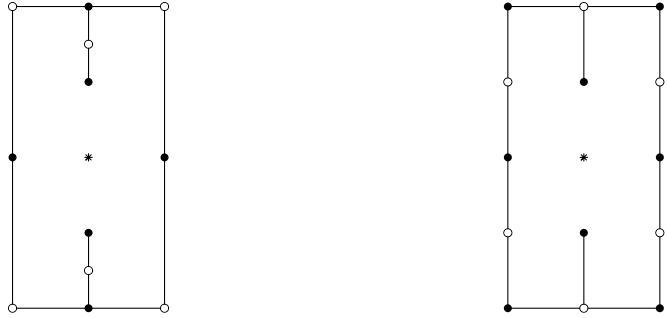

\begin{tikzpicture}[line cap=round,line join=round,>=triangle 45,x=2cm,y=2cm]
\clip(-1.7,-1.5) rectangle (1.5,1.5);
\draw (0.5,1)--(0.5,-1)--(-0.5,-1)--(-0.5,1)--(0.5,1);
\draw (0,-1)--(0,-0.5);
\draw (0,1)--(0,0.5);
\begin{scriptsize}
\draw (0,0)-- ++(-1.0pt,-1.0pt) -- ++(2.0pt,2.0pt) ++(-2.0pt,0) -- ++(2.0pt,-2.0pt) ++ (-1.0pt,-0.4pt) -- ++(0,2.8pt) ++(-1.4pt,-1.4pt)-- ++(2.8pt,0);
\puntoblanco{(-0.5,1)}
\puntoblanco{(-0.5,-1)}
\puntoblanco{(0.5,1)}
\puntoblanco{(0.5,-1)}
\puntonegro{(0.5,0)}
\puntonegro{(-0.5,0)}
\puntonegro{(0,1)}
\puntonegro{(0,-1)}
\puntonegro{(0,0.5)}
\puntoblanco{(0,0.75)}
\puntonegro{(0,-0.5)}
\puntoblanco{(0,-0.75)}
\end{scriptsize}
\end{tikzpicture}
\begin{tikzpicture}[line cap=round,line join=round,>=triangle 45,x=2cm,y=2cm]
\clip(-1.7,-1.5) rectangle (1.5,1.5);
\draw (0.5,1)--(0.5,-1)--(-0.5,-1)--(-0.5,1)--(0.5,1);
\draw (0,-1)--(0,-0.5);
\draw (0,1)--(0,0.5);
\begin{scriptsize}
\draw (0,0)-- ++(-1.0pt,-1.0pt) -- ++(2.0pt,2.0pt) ++(-2.0pt,0) -- ++(2.0pt,-2.0pt) ++ (-1.0pt,-0.4pt) -- ++(0,2.8pt) ++(-1.4pt,-1.4pt)-- ++(2.8pt,0);
\puntonegro{(-0.5,1)}
\puntonegro{(-0.5,-1)}
\puntonegro{(0.5,1)}
\puntonegro{(0.5,-1)}
\puntonegro{(0.5,0)}
\puntonegro{(-0.5,0)}
\puntoblanco{(0,1)}
\puntoblanco{(0,-1)}
\puntonegro{(0,0.5)}
\puntonegro{(0,-0.5)}
\puntoblanco{(0.5,0.5)}
\puntoblanco{(0.5,-0.5)}
\puntoblanco{(-0.5,0.5)}
\puntoblanco{(-0.5,-0.5)}
\end{scriptsize}
\end{tikzpicture}
\captionof{figure}{To the left, the dessin $(C,f)$, and to the right, the dessin $(C^\sigma,f^\sigma)$. Opposing edges of the rectangles are identified.}
\end{center}
\noindent (Recall that an elliptic curve is a torus!). These drawings are clearly not isomorphic. For details on how to obtain the drawings, the reader can look at the method used in section \ref{dibujitos} for a similar example.
\end{proof}

\section{Belyi's theorem}
This is the main theorem about dessins d'enfants.

\begin{teor}[Belyi's Theorem]
Let $C$ be a complex algebraic curve. The following are equivalent:

\begin{enumerate}
\item $C$ is defined over $\QQ$, i.e. $C$ can be given by equations with coefficients in $\QQ$.
\item $C$ has a Belyi map.
\end{enumerate}
\end{teor}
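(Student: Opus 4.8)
The plan is to prove the two implications separately, treating $2\Rightarrow 1$ by a specialization/countability argument and $1\Rightarrow 2$ by Belyi's explicit construction, which is the substantive half.

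For $2\Rightarrow 1$, suppose $(C,f)$ is a Belyi pair of degree $d$ over $\C$. The soft input I would use is that there are only finitely many dessins of degree $d$, since a dessin of degree $d$ is a pair of permutations in $S_d$ generating a transitive subgroup, taken up to simultaneous conjugation. I would fix a projective model with coefficients $a_1,\dots,a_N\in\C$ and set $K=\QQ(a_1,\dots,a_N)$. If $\mathrm{trdeg}_{\QQ}K=0$ we are done, so assume it is positive and spread the model out into a family of Belyi pairs over a dense open subset $U$ of an irreducible $\QQ$-variety $V$ with $\QQ(V)=K$, so that $(C,f)$ is the generic fibre. Because $U(\C)$ is connected, the monodromy, i.e. the dessin, is locally constant across the family, so every fibre carries the same dessin as the generic one. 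I would then specialise at any $v\in U(\QQ)$ (these are dense), obtaining a Belyi pair $(\mathcal C_v,\mathcal F_v)$ defined over $\QQ$ with the same dessin as $(C,f)$. Since a dessin determines a Belyi pair over $\C$ up to isomorphism (the rigidity established in Part 1), and both $(C,f)$ and the base change to $\C$ of $(\mathcal C_v,\mathcal F_v)$ realise that dessin, they are isomorphic over $\C$; hence $(C,f)$ is isomorphic to a pair defined over $\QQ$, proving $1$.

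For $1\Rightarrow 2$ I would run Belyi's construction. Start with any nonconstant $g\in\QQ(C)$, giving $g\colon C\lra\P^1$ whose branch locus $B$ is a finite subset of $\P^1(\QQ)$, the critical values being algebraic as roots of a discriminant with coefficients in $\QQ$. First I would push $B$ into $\Q\cup\{\infty\}$: if some branch point is irrational, let $n>1$ be the largest degree over $\Q$ of a branch point, pick a minimal polynomial $m$ of one of them, and replace $g$ by $m\circ g$. The branch locus of $m\circ g$ is $m(B)$ together with the critical values of $m$; the conjugate branch points of degree $n$ are roots of $m$ and map to $0$, while the new critical values come from the zeros of $m'$, which has degree $n-1$, hence lie in extensions of degree at most $n-1$. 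Thus the relevant complexity strictly drops, and iterating lands all branch points in $\Q\cup\{\infty\}$. Next, after a Möbius change sending three of them to $0,1,\infty$, I would eliminate any further rational branch point $\lambda=\frac{m}{m+n}\in(0,1)$ by composing with $\beta_{m,n}(x)=\frac{(m+n)^{m+n}}{m^m n^n}x^m(1-x)^n$, whose derivative is $c\,x^{m-1}(1-x)^{n-1}\bigl(m-(m+n)x\bigr)$, so that it maps $\{0,\lambda,1,\infty\}$ into $\{0,1,\infty\}$ and is unramified elsewhere; each such step lowers the number of branch points until only $\{0,1,\infty\}$ remain. The composite $f$ of all these maps with $g$ is defined over $\QQ$ and ramified only over $\{0,1,\infty\}$, i.e. a Belyi map.

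I expect the main obstacle to be the $1\Rightarrow 2$ direction, and within it the termination of the first reduction: one must choose the inductive measure (the maximal degree over $\Q$ of a branch point, ties broken by the number of Galois orbits attaining it) so that composing with $m$ provably decreases it, using $\deg m'=\deg m-1$. The $2\Rightarrow 1$ direction is comparatively soft, its only delicate point being that the dessin is constant in the family, which rests on the covering being a topological fibre bundle over the connected base $U(\C)$.
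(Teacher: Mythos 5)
Your proposal is correct, and it splits into one direction that matches the paper and one that genuinely departs from it. For $1\Rightarrow 2$ you run exactly the paper's argument (Belyi's reduction): first compose with minimal polynomials to push the branch locus into $\Q\cup\{\infty\}$, then compose with the polynomials $\frac{(m+n)^{m+n}}{m^m n^n}x^m(1-x)^n$ to collapse it to $\{0,1,\infty\}$. The only difference is bookkeeping in the first step: the paper composes with the product of the minimal polynomials of \emph{all} irrational branch values at once, so that the degree of that auxiliary polynomial strictly drops at each iteration, whereas you treat one Galois orbit at a time and therefore need the lexicographic measure (maximal degree, number of orbits attaining it); both terminate, for the same reason $\deg m'=\deg m-1$. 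For $2\Rightarrow 1$ your route is different in substance. The paper never forms a family: it uses the finiteness of Galois conjugates of a Belyi pair to produce $\sigma\in\Gal(\C/\Q)$ fixing the pair while sending the transcendental coefficients $\pi_1,\ldots,\pi_n$ to fresh algebraically independent numbers, encodes the isomorphism $(C,f)\cong(C^\sigma,f^\sigma)$ in the four polynomial identities of Lemma \ref{isomor}, and then specializes only the $\sigma$-side coefficients to algebraic numbers so that the identities, hence the isomorphism, survive. You instead spread $(C,f)$ out over a geometrically irreducible $\QQ$-variety, use constancy of the monodromy over the connected base $U(\C)$, specialize at a $\QQ$-point, and conclude by rigidity (a dessin determines its Belyi pair, from Part 1). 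Your argument is shorter and conceptually cleaner — and note it never actually uses the finiteness of dessins of bounded degree that you cite, only constancy plus rigidity — but it leans on spreading-out and finite-étale-implies-covering machinery (in particular, shrinking $U$ so every fibre really is a Belyi pair) that the paper deliberately avoids; conversely, the paper's elementary specialization lemmas are reused later, e.g. to prove that morphisms between dessins defined over $\QQ$ are themselves defined over $\QQ$, which is an extra payoff of its longer route.
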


\noindent We will devote the rest of this section to proving Belyi's Theorem. We will follow closely the proof in \cite{GG}.

\subsection{Curves defined over the algebraic numbers have Belyi maps}\label{belyi1}
Let's prove the first implication. Suppose we have a curve defined over $\QQ$. The process we are going to follow is: take a function on the curve $f$, look at its ramification values, and then compose it with functions $g_i$ from $\P^1$ to $\P^1$ that will make the function $g_k\circ g_{k-1} \circ \cdots \circ f$ have less and less ramification values, until we reach our objective, which is only three of them.

First of all, we will prove that we can make the ramification values rational.

\begin{lema}
Let a curve $C\subset \P^n$ be given by equations $f_1,\ldots f_m$. Take coordinates $(x_0:\cdots :x_n)$, and the affine part of the curve given by $x_0=1$. Fix a point $P=(p_1,\ldots ,p_n)\in C$, and consider the matrix $Df=\left(\frac{\partial f_i}{x_j}|_P\right)_{ij}$. Since the curve is non-singular, the matrix has rank $n-1$.

A function of the form $a_1x_1+\cdots +a_nx_n$ will be ramified at $P$ if and only if $(a_1,\ldots ,a_n)$ lies in the span of the rows of $Df$.
\end{lema}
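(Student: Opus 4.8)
The plan is to read off ramification of the linear function $g = a_1 x_1 + \cdots + a_n x_n$ at $P$ from the vanishing of its derivative along the curve, and then to translate that vanishing into a statement of linear algebra. Since $C$ is non-singular at $P$, near $P$ I can choose a local holomorphic parametrization $t \mapsto x(t) = (x_1(t),\ldots,x_n(t))$ with $x(0) = P$ and $v := x'(0) \neq 0$; this $v$ spans the tangent line to $C$ at $P$. Differentiating each defining relation $f_i(x(t)) = 0$ at $t = 0$ gives $\sum_j \frac{\partial f_i}{\partial x_j}\big|_P\, v_j = 0$ for every $i$, i.e. $Df\cdot v = 0$, so $v \in \ker Df$. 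As $Df$ has rank $n-1$, its kernel is one-dimensional, whence $\ker Df = \langle v\rangle$ is exactly the tangent line.

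Next I would compute the derivative of $g$ restricted to $C$. Because $P$ lies in the affine chart $x_0 = 1$, the function $g$ is regular at $P$ and $g(P)$ is finite, so being ramified at $P$ means precisely that $t \mapsto g(x(t)) = \sum_i a_i x_i(t)$ has vanishing derivative at $t = 0$. That derivative is $\sum_i a_i x_i'(0) = a\cdot v$, where $a = (a_1,\ldots,a_n)$ and $\cdot$ is the standard symmetric bilinear pairing on $\C^n$. Hence $g$ is ramified at $P$ if and only if $a\cdot v = 0$, that is, if and only if $a$ is orthogonal to the line $\ker Df$.

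It then remains to identify $(\ker Df)^\perp$ with the span of the rows of $Df$. For the standard (non-degenerate) bilinear form, a vector $w$ lies in $\ker Df$ exactly when it is orthogonal to every row of $Df$, so $\ker Df = (\mathrm{rowspan}\, Df)^\perp$; taking orthogonal complements and using $(W^\perp)^\perp = W$ in finite dimension yields $(\ker Df)^\perp = \mathrm{rowspan}\, Df$. Combined with the previous paragraph, this gives that $g$ is ramified at $P$ if and only if $a \in \mathrm{rowspan}\, Df$, which is the claim.

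The main point to handle carefully is the passage from the analytic notion of ramification used earlier in the text (order of vanishing of the derivative) to the linear condition $a\cdot v = 0$: one must make sure the chosen parametrization is a genuine local coordinate, so that $v\neq 0$ and the tangent line is correctly recovered as $\ker Df$. The remaining ingredient is only the elementary fact that orthogonality with respect to the standard bilinear form interchanges the kernel and the row space of a matrix; note this uses the symmetric pairing rather than a Hermitian inner product, which is harmless here since all I need is $(W^\perp)^\perp = W$.
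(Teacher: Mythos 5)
Your proof is correct, but it takes a genuinely different route from the paper's. You identify the tangent line at $P$ as $\ker Df$ via a local holomorphic parametrization, characterize ramification of $g=\sum a_i x_i$ as the orthogonality condition $a\cdot v=0$, and then convert this to membership in the row span using $(W^\perp)^\perp=W$ for the standard bilinear pairing; this gives both implications simultaneously and makes the geometric content (ramified $\Leftrightarrow$ $dg$ annihilates the tangent line) explicit. The paper instead argues algebraically: after translating $P$ to the origin, for $a$ in the row span it forms the combination $f=\lambda_1 f_1+\cdots+\lambda_m f_m$, which vanishes identically on $C$ and equals $g$ plus terms of degree at least $2$, so $\ord_P(g)=1$ would force $\ord_P(f)=1$, a contradiction; for the converse it notes that $a$ together with the rows of $Df$ spans $\C^n$, so if $g$ were also ramified then every linear function would be ramified at $P$, contradicting the fact that a non-singular curve admits an unramified (linear) function at each point. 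Both arguments rest on smoothness at the same spot — your parametrization with $v\neq 0$ and the paper's unramified coordinate function are the same use of the implicit function theorem — but yours buys a symmetric, one-shot argument through the tangent space, while the paper's stays within order-of-vanishing computations on the curve, consistent with the valuation-theoretic language it uses elsewhere, and never needs to introduce the tangent vector or the duality between kernel and row space.
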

\begin{proof}
Taking a translation of affine space, we can assume that the point is the origin. Suppose $(a_1,\ldots ,a_n)=(\lambda_1,\ldots ,\lambda_m)Df$. Then, the polynomial $f=\lambda_1f_1+\cdots +\lambda_mf_m$ is of the form $a_1x_1+\cdots a_nx_n$ plus parts of degree greater than 1. Now, if $\ord_P (a_1x_1+\cdots a_nx_n)=1$, then $\ord_P(f)$ would be $1$, since the rest of the terms in $f$ have order at least 2. However, $f$ is identically $0$ on the curve, and therefore its order cannot be 1. So our function ramifies at $P$.

Now, take a function $a_1x_1+\cdots a_nx_n$ that doesn't lie in the span of the rows of $Df$. This function along with the rows of $Df$ span the whole space. Therefore, if this function were ramified, the curve wouldn't be non-singular, since a non-singular curve has an unramified function at every point.
\end{proof}

\noindent Therefore, if we have a curve that is defined over the algebraic numbers, we can take a coordinate function, and it will be ramified at the points where it lies in the span of $Df$. This can be expressed by saying that the ramified points are the points where some polynomials vanish, namely the minors of size $n$ in the matrix made up of $Df$ plus a row at the bottom for the coordinate function, plus the polynomials that define the function. Therefore, the coordinates of the ramified points will be algebraic, i.e. the coordinate function will be ramified only over the algebraic numbers and $\infty$, since it might be ramified also at the points that don't lie in the affine part.

\begin{lema}
Let $C$ be a curve defined over the algebraic numbers. Then, there is a map $f\in \C(C)$ that ramifies only over values in $\Q\cup \{\infty\}$.
\end{lema}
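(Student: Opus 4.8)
The plan is to start from the function produced by the previous lemma---a coordinate function $f_0\in\C(C)$ all of whose branch values lie in $\QQ\cup\{\infty\}$---and to compose it repeatedly with polynomials in $\Q[x]$, viewed as maps $\P^1\lra\P^1$, until every finite branch value has become rational. This is the classical Belyi trick. The only bookkeeping tool needed is the multiplicativity of ramification indices: for a composite $g=p\circ f$ one has $e_P(g)=e_P(f)\,e_{f(P)}(p)$, so $P$ is a ramification point of $g$ precisely when $f$ ramifies at $P$ or $p$ ramifies at $f(P)$. Consequently the set of branch values of $g$ is contained in $p\bigl(\{\text{branch values of }f\}\bigr)\cup\{\text{branch values of }p\}$, and I can control the new branch values by controlling the images $p(v)$ of the old ones and the branch values of $p$ itself.

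The reduction step is as follows. Let $d$ be the maximal degree over $\Q$ of a finite branch value of the current function, and suppose $d\ge 2$. Choose one Galois orbit $O=\{\alpha_1,\dots,\alpha_d\}$ of branch values of degree $d$, and let $m\in\Q[x]$ be its minimal polynomial, of degree $d$. I claim that composing with $m$ strictly decreases, in the lexicographic order, the pair $(d,\,N_d)$, where $N_d$ is the number of Galois orbits of branch values of degree exactly $d$. Indeed: (i) each $\alpha_i$ is a root of $m$, so $m(\alpha_i)=0\in\Q$ and the orbit $O$ is sent to a rational value; (ii) any other branch value $\beta$ of degree $e<d$ maps to $m(\beta)\in\Q(\beta)$, whose degree divides $e$ and is therefore $<d$; (iii) the branch values of $m$ are the values $m(\gamma)$ with $m'(\gamma)=0$, and since $\deg m'=d-1$ each such $\gamma$, and hence $m(\gamma)$, has degree at most $d-1$ over $\Q$; (iv) a remaining degree-$d$ orbit $O'$ is carried to the single Galois orbit $m(O')$, since $m$ has rational coefficients and so commutes with the Galois action, and this image has degree at most $d$. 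Thus no branch value of degree exceeding $d$ is ever created, no new orbit of degree exactly $d$ appears, and the orbit $O$ has been eliminated, so $N_d$ drops by at least one (and the harmless value $\infty$, which $m$ contributes, stays in $\Q\cup\{\infty\}$).

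Iterating, after finitely many compositions $N_d$ reaches zero, at which point the maximal degree has fallen below $d$; repeating, the maximal degree eventually reaches $1$, meaning every finite branch value is rational. The resulting function is a composite $p_k\circ\cdots\circ p_1\circ f_0$ with each $p_i\in\Q[x]$, still non-constant and lying in $\C(C)$, ramified only over $\Q\cup\{\infty\}$, as required. I expect the delicate point to be steps (iii) and (iv): guaranteeing that composition introduces no branch value of degree $\ge d$. This rests on the elementary but essential fact that $m'$ has degree one less than $m$, together with the Galois-equivariance of $m\in\Q[x]$. Making a genuine lexicographic quantity decrease---rather than composing with the product of all minimal polynomials at once, in which case the branch values of that large polynomial could themselves have very high degree and the naive invariant would blow up---is the heart of the argument.
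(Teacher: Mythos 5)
Your proof is correct, and it is a genuine variant of the paper's argument rather than a restatement of it. Both proofs rest on exactly the same two facts --- a polynomial $m\in\Q[x]$ kills its own roots and sends Galois conjugates to Galois conjugates, and $\deg m'=\deg m-1$ --- but the induction is organized differently. The paper composes in one stroke with the minimal polynomial $g$ of the \emph{entire} set of finite branch values, and its decreasing quantity is the degree of the composing polynomial itself: the new irrational branch values lie among the images $g(c)$ of the roots $c$ of $g'$, and grouping these roots by the irreducible factors of $g'$ shows that the next composing polynomial has degree at most $\deg g'<\deg g$. You instead compose with the minimal polynomial of a \emph{single} orbit of maximal degree $d$ and track the lexicographic pair $(d,N_d)$. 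Your closing warning is accurate but is aimed at your own invariant, not at the paper's argument: with the all-at-once composition the maximal degree of a branch value can indeed jump (critical values of the large polynomial $g$ can have degree up to $\deg g-1$, far above $d$), and this is precisely why the paper measures progress by $\deg g$ rather than by the degrees of the branch values. The trade-off is that the paper's scheme needs the bookkeeping with the irreducible factorization of $g'$ but finishes in fewer rounds, while yours needs only degree counts and orbit counts at the cost of more composition steps; both terminate by well-ordering. One cosmetic point: for your orbit $O$ to literally have $d$ elements the branch-value set should be Galois-stable, so either replace it at the outset by its Galois closure (harmless, since ramifying over a set implies ramifying over any larger set, and a rational polynomial carries a Galois-stable set to a Galois-stable set, its own critical values included), or read ``orbit'' as ``conjugacy class within the branch-value set''; under either reading your steps (ii)--(iv) go through verbatim.
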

\begin{proof}
Take a function $f$, as we have just done, that is ramified only over algebraic values. Let $\{b_1,\ldots ,b_n\}$ be the ramification values of the function. Now, let $g$ be the smallest degree polynomial in $\Q[T]$ such that $g(b_i)=0$ for all the $b_i$'s, which means that $P$ will be the product of the minimal polynomials of the $b_i$'s, removing repetitions.

Let us look at the function $g\circ f$. By the chain rule, for a point $P$, $\ord_P(g\circ f)=\ord_{f(P)}(g)e_P(f)$, and therefore the ramification values of $g\circ f$ are the ramification values of $g$ plus the image by $g$ of the ramification points of $f$. The latter set is contained in $\{0,\infty\}$, since we have constructed it this way. Let $\{b_1',\ldots ,b_m'\}$ be the ramification values of $g$, which are $\{g(c_i):g'(c_i)=0\}$. We can take their minimal polynomial again, call it $g_1$, and consider the function $g_1\circ g\circ f$. We are going to see that the degree of $g_1$ is strictly smaller than the degree of $g$.

Let $g'=h_1^{\alpha_1}\cdots h_k^{\alpha_k}$ be the decomposition of $g'$ into irreducible polynomials. The $c_i$'s are then the roots of these polynomials, in fact, we can number them $c_1^1,\ldots,c_{d_1}^1,c_1^2,\ldots ,c_{d_k}^k$, so that $c_{j}^i$ are the roots of $h_i$. Now, for a fixed $i$, the $g(c_j^i)=b_{j}^i$'s must be the roots of the same polynomial $\widetilde h_k$ of degree at most $\deg h_i$. Let us check this: $\Q(g(c_j^i))\subset \Q(c_j^i)$, and the degree of the minimal polynomial is $[\Q(g(c_j^i)):\Q]$, so $\deg \widetilde h_i\le \deg h_i$. Also, the minimal polynomial is the same for all of them, since we can take elements of the Galois group $\sigma_j$, such that $c_j^i=(c_1^i)^{\sigma_j}$ (since they have the same minimal polynomial). Therefore, if $\widetilde h_i(c_1^i)=0$, then for every $j$ we will have that $\widetilde h_i(c_j^i)=\widetilde h_i((c_1^i)^{\sigma_j})=(\widetilde h_i(c_1^i))^{\sigma_j}=0$.

Therefore, $g_2=\widetilde h_1\cdots \widetilde h_k$, and $\deg \widetilde h_i\le \deg h_i$, so $\deg g_2\le \deg g'<\deg g$. We can iterate this process (take now the minimal polynomial of the ramification values of $g_2$), and we will eventually reach a point where the degree is $0$, and all the ramification points will be rational (there might be many of them, since we haven't kept track of $g_1(0)$, although we know it is rational, and the same for $g_2(0)$, and so on).
\end{proof}

\noindent We can now finish the proof of this part of the theorem.

\begin{lema}
Let $\{q_1,\ldots,  q_m\}\subset \Q$. There is a polynomial $f$ that maps all these points to $0$ and ramifies only over $\{0,1,\infty\}$.
\end{lema}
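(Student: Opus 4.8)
The plan is to prove this by induction on $m$, constructing $f$ as a composition of affine changes of coordinate together with the \emph{Belyi polynomials}
$$B_{a,b}(x)=\frac{(a+b)^{a+b}}{a^ab^b}\,x^a(1-x)^b\qquad(a,b\ge 1\text{ integers}).$$
A direct computation records everything we need about these: $B_{a,b}(0)=B_{a,b}(1)=0$; the logarithmic derivative shows its only finite critical point is $x=\tfrac{a}{a+b}$, where the chosen normalization gives $B_{a,b}\!\left(\tfrac{a}{a+b}\right)=1$; and as a map of degree $a+b$ it sends $\infty$ to $\infty$. Hence $B_{a,b}$ ramifies only over $\{0,1,\infty\}$, it carries $\{0,1,\infty\}$ into $\{0,1,\infty\}$, and (being nonnegative on $[0,1]$ with maximum value $1$) it maps $[0,1]$ into itself. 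I also use that an affine $L(x)=\alpha x+\beta\in\Q[x]$ with $\alpha\neq 0$ is an isomorphism of the line, so it contributes no critical values at all.

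For the base cases, if $m=1$ then $f(x)=x-q_1$ already works. If $m=2$, an affine $L$ sends $\{q_1,q_2\}$ to $\{0,1\}$ and then $B_{1,1}(x)=4x(1-x)$ sends both $0$ and $1$ to $0$, so $f=B_{1,1}\circ L$ works. For the inductive step $m\ge 3$ I would first normalize: let $q_{\min}$ and $q_{\max}$ be the smallest and largest of the $q_i$ and apply $L(x)=\frac{x-q_{\min}}{q_{\max}-q_{\min}}$, so the image set lies in $[0,1]$ and contains $0$, $1$, and at least one rational $\mu$ strictly between them. Writing $\mu=\frac{a}{a+b}$ with $a,b\ge 1$, the polynomial $B_{a,b}$ sends $0,1,\mu$ all into $\{0,1\}$ while keeping every image inside $[0,1]$, so $B_{a,b}\bigl(L(\{q_1,\dots,q_m\})\bigr)$ is a set of rationals of cardinality at most $m-1$. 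By the induction hypothesis there is a polynomial $g$, ramifying only over $\{0,1,\infty\}$, that maps this smaller set to $0$, and I would set $f=g\circ B_{a,b}\circ L$.

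It remains to verify $f$. Since $g$ kills the smaller set and $B_{a,b}\circ L$ carries each $q_i$ into it, we get $f(q_i)=0$ for all $i$. For the ramification I would use the chain-rule bookkeeping already established (the critical values of a composite $g\circ\phi$ are the critical values of $g$ together with the $g$-images of the critical values of $\phi$): $L$ has no critical values, so $B_{a,b}\circ L$ has critical values exactly $\{0,1,\infty\}$; composing with $g$ then gives the images $g(0),g(1),g(\infty)$ together with the critical values of $g$. As the smaller set contains $0$ and $1$ and $g$ maps it to $0$, we have $g(0)=g(1)=0$, while $g(\infty)=\infty$ and the critical values of $g$ lie in $\{0,1,\infty\}$ by induction; hence $f$ ramifies only over $\{0,1,\infty\}$.

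The one delicate point — and the reason the $\min/\max$ normalization is essential — is guaranteeing that one of the points to be eliminated becomes a rational number strictly inside $(0,1)$, so that it is precisely the rational critical point $\tfrac{a}{a+b}$ of an honest polynomial $B_{a,b}$ with positive integer exponents. Sending the two extreme points to $0$ and $1$ forces every remaining point into the open interval and keeps all subsequent images inside $[0,1]$; this is exactly what makes the number of distinct points drop by at least one at each stage, so the induction terminates.
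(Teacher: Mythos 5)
Your proof is correct and is essentially the paper's own argument: induction on $m$, an affine change of coordinates placing three of the points at $0$, $1$, and a rational $\mu=\tfrac{a}{a+b}\in(0,1)$, followed by the Belyi polynomial $\tfrac{(a+b)^{a+b}}{a^ab^b}x^a(1-x)^b$ to drop the count by one. The only cosmetic differences are that the paper normalizes the three smallest points $q_1<q_2<q_3$ rather than $q_{\min},q_{\max}$ (either choice works, so the min/max normalization is not as ``essential'' as you suggest), and your explicit chain-rule bookkeeping for the critical values of $g\circ B_{a,b}\circ L$ makes the iteration in the paper's last sentence rigorous.
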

\begin{proof}
We proceed by induction. If $m=2$, then the polynomial $-\frac{4}{(q_1-q_2)^2}(x-q_1)(x-q_2)$ ramifies only at $\frac{q_1+q_2}{2}$ and its value there is $1$, so we are done.

Suppose now that $q_1<q_2<\cdots <q_m$. Take an affine transformation that maps $q_1$ to $0$ and $q_3$ to $1$. Let the image of $q_2$ be $\frac{m}{m+n}$. If we take the polynomial $P(x)=\frac{(m+n)^{m+n}}{m^mn^n}x^m(1-x)^n$, one can check that its only ramification points are $0,1,\frac{m}{m+n}$ and $\infty$. Their images are, respectively, $0,0,1,\infty$, so the affine transformation composed with this polynomial maps $\{q_1,\ldots ,q_m\}$ to a set of $m-1$ points, which contains its ramification values. We can iterate this process until we reach $2$ points, and we are done.
\end{proof}

\noindent This ends the proof of the first part of Belyi's theorem. We have an easy corollary now: the Galois group acts faithfully on dessins.

\begin{prop}\label{fiel}
The Galois group acts faithfully on dessins of genus 1.
\end{prop}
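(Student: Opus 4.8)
The plan is to use the $j$-invariant of elliptic curves, which is a complete isomorphism invariant for genus-one curves and which transforms compatibly with the Galois action. Given a nontrivial $\sigma\in\gal$, I will produce a genus-one curve whose $j$-invariant is moved by $\sigma$; equipping it with any Belyi map — which exists by the first implication of Belyi's theorem, just proved in subsection \ref{belyi1} — yields a genus-one dessin that $\sigma$ fails to fix. This is the natural generalization of the explicit example $y^2=x(x-1)(x-\sqrt 2)$ treated in the previous proposition.

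Concretely, I first recall that an elliptic curve in Legendre form $y^2=x(x-1)(x-\lambda)$, with $\lambda\in\QQ\setminus\{0,1\}$, has
$$
j(\lambda)=256\,\frac{(\lambda^2-\lambda+1)^3}{\lambda^2(\lambda-1)^2},
$$
and two such curves are isomorphic if and only if they have the same $j$-invariant. Since $j$ is a rational function with coefficients in $\Q$, and $\sigma$ acts on the curve by acting on the coefficients of its defining equation (as established above), the conjugate curve is $C^\sigma:y^2=x(x-1)(x-\lambda^\sigma)$, whose $j$-invariant is $j(\lambda^\sigma)=j(\lambda)^\sigma$.

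Next, because $\sigma\neq\Id$, the very definition of $\gal$ furnishes a finite $\alpha\in\QQ$ with $\alpha^\sigma\neq\alpha$. The map $j:\P^1\lra\P^1$ is a nonconstant morphism, hence surjective on $\QQ$-points, so there is some $\lambda_0\in\QQ$ with $j(\lambda_0)=\alpha$. Since $j$ takes the value $\infty$ exactly on $\{0,1,\infty\}$ and $\alpha$ is finite, any such $\lambda_0$ is finite and avoids $\{0,1\}$, so the curve $C:y^2=x(x-1)(x-\lambda_0)$ is a genuine (smooth) genus-one curve defined over $\QQ$. By the first implication of Belyi's theorem it carries a Belyi map $f$, and $(C,f)$ is a genus-one dessin.

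Finally I compare $(C,f)$ with its conjugate $(C^\sigma,f^\sigma)$. The underlying curve $C^\sigma$ has $j$-invariant $j(\lambda_0)^\sigma=\alpha^\sigma\neq\alpha$, so $C^\sigma$ is not isomorphic to $C$. Since any isomorphism of dessins restricts to an isomorphism of the underlying curves, $(C^\sigma,f^\sigma)$ cannot be isomorphic to $(C,f)$; that is, $\sigma$ does not fix this dessin. As $\sigma$ was an arbitrary nontrivial element of $\gal$, the action on genus-one dessins is faithful. There is no serious obstacle here: the only points needing care are the surjectivity of $j$ onto $\QQ$-points together with the automatic avoidance of $\lambda_0\in\{0,1\}$, and the appeal to the already-established half of Belyi's theorem to manufacture the Belyi map.
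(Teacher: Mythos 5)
Your proof is correct and follows essentially the same route as the paper's own argument: pick $j_0\in\QQ$ moved by $\sigma$, use surjectivity of the $j$-invariant to obtain a Legendre parameter $\lambda_0\in\QQ\setminus\{0,1\}$, equip the curve with a Belyi map via the already-proved half of Belyi's theorem, and conclude from $j(\lambda_0)^\sigma\neq j(\lambda_0)$ that the conjugate dessin is not isomorphic to the original. The only difference is that you spell out why $\lambda_0$ automatically avoids $\{0,1\}$, a detail the paper states without justification.
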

\begin{proof}
Recall from the classification of elliptic curves that every elliptic curve can be written in the form $y^2=x(x-1)(x-\lambda)$ for $\lambda\in \C\setminus \{0,1\}$, and two such curves, for $\lambda$ and $\lambda'$, are isomorphic if and only if $\lambda'\in \left\{\lambda,\frac{1}{\lambda},1-\lambda,1-\frac{1}{\lambda},\frac{1}{1-\lambda},\frac{\lambda}{1-\lambda} \right\}$, or, equivalently, if their $j$-invariants are equal, where
$$
j(\lambda)=256\frac{(1-\lambda(1-\lambda))^3}{\lambda^2(1-\lambda)^2}
$$
Using this and Belyi's theorem, it is easy to produce dessins that change under the Galois action. Suppose we are given some $\sigma\in \gal\setminus\{\Id\}$. Pick some $j_0\in \QQ$ such that $\sigma(j_0)\neq j_0$. Since $j$ is surjective, there must be some $\lambda \in \QQ$ such that $j(\lambda)=j_0$ (and it will be different from $0$ and 1).

Now, in virtue of Belyi's Theorem, we can pick a Belyi function $f$ on the curve $C$ with equation $y^2=x(x-1)(x-\lambda)$. The conjugate Belyi pair will be defined on the curve $C^\sigma$ with equation $y^2=x(x-1)(x-\lambda^\sigma)$, which is not isomorphic to the first curve, since $j(\lambda^\sigma)=(j(\lambda))^\sigma=j_0^\sigma\neq j_0$. Therefore, the dessins must be non-isomorphic.
\end{proof}
\subsection{Curves with Belyi functions are defined over $\QQ$}

The key property we need to prove the reverse implication is the fact that curves with Belyi functions have finitely many Galois conjugates.

Suppose we have a Belyi pair $(C,f)$. The action of $\Gal(\C/\Q)$ on it gives other Belyi pairs $(C^\sigma,f^\sigma)$, which have the same degree as the original one. The number of Belyi pairs of a given degree is finite, since the number of possible monodromy actions on a fixed number of points is clearly finite. Therefore, if a curve has a Belyi pair, it has finitely many distinct Galois conjugates.

It is in fact true that if a curve has finitely many conjugates, then it is defined over $\QQ$, that is, it can be given by equations with coefficients in $\QQ$. The proof can be found in chapter 3 of \cite{GG}.

We are going to prove this in the special case of Belyi pairs. Now, in order to do this, we need to talk about specializations.

Suppose  we have a finitely generated field $K\subset \C$ over the rational numbers. We can pick a maximal transcendental set within $K$, which we can call $\pi_1,\ldots ,\pi_n$, and then, by the primitive element theorem, the extension will be of the form $\Q(\pi_1,\ldots ,\pi_n,u)/\Q$, where  $u$ is algebraic over $\Q(\pi_1,\ldots ,\pi_n)$. Let $m_u$ be the minimal polynomial of $u$ over $\Q(\pi_1,,\ldots ,\pi_n)$. We can clear denominators in $m_u$ so that $m_u\in \Q[\pi_1,\ldots ,\pi_n][X]$. If the resulting polynomial isn't monic, we can multiply $u$ by some element of $\C(\pi_1,\ldots ,\pi_n)$ to obtain a different $u$ for which $m_u$ is monic. From now on, we will assume that $m_u$ is monic.

A \textbf{specialization} of $(\pi_1,\ldots ,\pi_n)$ is just a set of complex numbers $(\eta_1,\ldots ,\eta_n)$. Note that, since the numbers are algebraically independent, there is a $\Q$-algebra homomorphism
$$
s:\Q[\pi_1,\ldots ,\pi_n]\lra\C
$$
Such that $s(\pi_i)=\eta_i$. Now, to extend this homomorphism to $\Q[\pi_1,\ldots ,\pi_n,u]$, it is needed that if we take the minimal polynomial $m_u$ of $u$, and we apply $s$ to its coefficients, to obtain a polynomial $m_u^s$, that $m_u^s(s(u))=0$.

In fact, this is sufficient: since $\Q[\pi_1,\ldots ,\pi_n,u]\cong \Q[\pi_1,\ldots ,\pi_n][X]/(m_u)$, a necessary and sufficient condition for a homomorphism from $\Q[\pi_1,\ldots ,\pi_n,X]$ into $\C$ to factor through this quotient is for the kernel to contain $m_u$.

Therefore, any specialization $s$ of $(\pi_1,\ldots ,\pi_n)$ gives some homomorphism into $\C$ provided that we can find a root of $m_u^s$. To guarantee this, we choose the numbers $\eta_i$ close to $\pi_i$.

We define the \textbf{distance} of a specialization $(\eta_1,\ldots ,\eta_n)$ of $(\pi_1,\ldots ,\pi_n)$ to be the maximum of $|\pi_i-\eta_i|$ (recall that every number here is a complex number!).

\begin{lema}\label{special}
Let $(\pi_1,\ldots ,\pi_n;u)$ be complex numbers such that $\pi_1,\ldots ,\pi_n$ are algebraically independent over $\Q$ and $u$ is algebraic over $\Q(\pi_1,\ldots ,\pi_n)$. Let $m_u$ be the minimal polynomial of $u$ over $\Q[\pi_1,\ldots ,\pi_n]$.

Let $\delta>0$ be a number such that it is smaller than $|u_i-u_j|/2$, where $u_i,u_j$ are any distinct roots of $m_u$.

Then, there exists an $\eps$ such that for every specialization $(\eta_1,\ldots ,\eta_n)$ of $(\pi_1,\ldots ,\pi_n)$ of distance smaller than $\eps$, the polynomial $m_u^s$ has exactly one root $u_s$ such that $|u-u_s|<\delta$.
\end{lema}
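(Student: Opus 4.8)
The plan is to treat this as a continuity-of-roots statement and make it precise with Rouché's theorem. The coefficients of $m_u$ lie in $\Q[\pi_1,\ldots,\pi_n]$, so each coefficient is a fixed polynomial evaluated at $(\pi_1,\ldots,\pi_n)$; the specialized polynomial $m_u^s$ has as its coefficients those same polynomials evaluated at $(\eta_1,\ldots,\eta_n)$. Because we have arranged $m_u$ to be monic, its leading coefficient is $1$ and is unchanged under specialization, so $m_u^s$ is again monic of the same degree. In particular the coefficients of $m_u^s$, viewed as functions of the specialization point $(\eta_1,\ldots,\eta_n)\in\C^n$, are continuous (indeed polynomial), and as $(\eta_1,\ldots,\eta_n)\to(\pi_1,\ldots,\pi_n)$ they converge to the corresponding coefficients of $m_u$.

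First I would record two consequences of the hypotheses. Since $m_u$ is the minimal polynomial of $u$ over the characteristic-zero field $\Q(\pi_1,\ldots,\pi_n)$, it is irreducible and hence separable, so all its roots are simple; in particular $u$ is a simple root. Second, the choice $\delta<|u_i-u_j|/2$ for distinct roots guarantees that the only root of $m_u$ in the closed disc $\{\,|z-u|\le\delta\,\}$ is $u$ itself, since any other root $u_j$ satisfies $|u-u_j|>2\delta$. Consequently $m_u$ has no zero on the circle $\gamma=\{\,|z-u|=\delta\,\}$, and by compactness $\mu:=\min_{z\in\gamma}|m_u(z)|>0$.

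Next I would exploit uniform convergence on $\gamma$. Since $\gamma$ is compact and the map $(\eta,z)\mapsto m_u^s(z)$ is continuous in both variables (each coefficient being a polynomial in $\eta$, and $z$ ranging over a bounded set), there is $\eps>0$ such that whenever the specialization has distance less than $\eps$ we have $\sup_{z\in\gamma}|m_u^s(z)-m_u(z)|<\mu\le|m_u(z)|$ for every $z\in\gamma$. This is precisely the Rouché inequality on $\gamma$, so $m_u^s$ and $m_u$ have the same number of zeros (counted with multiplicity) inside $\gamma$. That number is $1$ for $m_u$, so $m_u^s$ has exactly one root $u_s$ with $|u-u_s|<\delta$, which is the claim.

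A remark on where the real content sits: everything rests on the nonvanishing of $m_u$ on $\gamma$ together with separability, which is why both the bound on $\delta$ and the characteristic-zero hypothesis are needed. The only point demanding care — and the step I expect to be the main obstacle to phrase cleanly — is the uniformity: one must produce a single $\eps$ that works for all $z\in\gamma$ simultaneously, which is why I frame the convergence $m_u^s\to m_u$ as uniform on the compact set $\gamma$ rather than merely pointwise. Once that uniformity is secured, Rouché closes the argument at once.
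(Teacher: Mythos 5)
Your proof is correct and takes essentially the same approach as the paper: the paper's own argument simply observes that the coefficients of $m_u^s$ depend continuously (polynomially) on the specialization and then cites the fact that the roots of a polynomial depend continuously on its coefficients, and your Rouch\'e argument is precisely the standard proof of that cited fact. You merely supply the details the paper leaves implicit --- separability of $m_u$, nonvanishing of $m_u$ on the circle $|z-u|=\delta$, and the uniformity of the coefficient bound on that compact set.
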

\begin{proof}
We just need to note that the coefficients of $m_u$ are polynomials on $\pi_i$, and therefore continuous functions. Also, the roots of a polynomial depend continuously of its coefficients, so choosing close enough coefficients will yield roots of the polynomial $m_u^s$ that are close enough to the roots of $m_u$.
\end{proof}

\noindent Specializations are the way we will change curves defined over transcendental fields by curves defined over $\QQ$, by using the fact that $\QQ$ is dense in $\C$.

We want to prove that our curve $C$ that has a Belyi function is isomorphic to another one whose coefficients are algebraic numbers. We are going to see how an isomorphism can be reduced to some polynomial equalities.

Take two Belyi pairs $(C,f)$ and $(C',f')$. An isomorphism between them is equivalent, as we know, to an isomorphism of $\C(t)$-algebras
$$
\Phi:\C(C)\lra \C(C')
$$
By the primitive element theorem, $\C(C)$ is generated over $\C(t)$ by some $x$. Let $F\in \C(t)[X]$ be its minimal polynomial. Then,
$$
\C(C)\cong \frac{\C(t)[X]}{(F)}
$$
And, analogously, there is some $x'$ that generates $\C(C')$ and some $F'\in \C(t)[X]$ such that
$$
\C(C')\cong \frac{\C(t)[X]}{(F')}
$$
A $\C(t)$-algebra homomorphism from $\C(C)$ to $\C(C')$ is then determined by the image of $x$. The image of $x$ will be some polynomial $\Phi(x)=P_1(x')$, where $P_1\in \C(t)$. The map then takes $P(x)$ to $P(P_1(x'))$, for any $P\in \C(t)[X]$. For the map to be well-defined, the necessary and sufficient condition is that $F(x)$ maps to $0$, that is, that $F(P_1(X))$ equals $0$ in $\C(t)[X]/(F')$. We can write this only in terms of polynomials, by saying that there exists $H_1\in \C(t)[X]$ such that
$$
F(P_1(X))=H_1(X)F'(X)
$$
For an isomorphism, we need two mutually inverse morphisms, so there must be another morphism $\Phi'$ given by another polynomial $P_2$ so that $x'$ will map to $P_2(x)$, and now there must exist a polynomial $H_2$ such that
$$
F'(P_2(X))=H_2(X)F(X)
$$
The final requirement is that the maps are mutually inverse. This means that $\Phi'(\Phi(x))=x$. In other words, there exists some $G_1\in \C(t)[X]$ such that
$$
P_1(P_2(X))=X+G_1(X)F(X)
$$
And also, $\Phi\circ \Phi'=\Id$ is equivalent to there existing a polynomial $G_2$ such that
$$
P_2(P_1(X))=X+G_2(X)F'(X)
$$
These 4 equations are the way to express an isomorphism between covers, as we have just seen.
\begin{lema}\label{isomor}
Let $\C(C)/\C(t)\cong \C(t)[X]/(F)$ and $\C(C)/\C(t)\cong \C(t)[X]/(F')$ be the extensions corresponding to two covers. There exists an isomorphism between both covers if and only if there exist $P_1,P_2,H_1,H_2,G_1,G_2\in \C(t)[X]$ such that the following identities hold:
\begin{align}\label{isomoreq1}
F(P_1(X))&=H_1(X)F'(X)\\
F'(P_2(X))&=H_2(X)F(X)\\
P_1(P_2(X))&=X+G_1(X)F(X)\\
P_2(P_1(X))&=X+G_2(X)F'(X)\label{isomoreq2}
\end{align}
\end{lema}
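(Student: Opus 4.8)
The plan is to translate the geometric statement ``the two covers are isomorphic'' into the concrete polynomial identities by working entirely on the level of function fields. By the equivalence between covers and finite extensions of $\C(t)$ established earlier, an isomorphism of covers $C\cong C'$ is the same datum as a $\C(t)$-algebra isomorphism $\Phi:\C(C)\to\C(C')$, and under the presentations $\C(C)\cong\C(t)[X]/(F)$ and $\C(C')\cong\C(t)[X]/(F')$ every such map becomes a statement about polynomials. Both directions of the biconditional then amount to unwinding what it means for such a $\Phi$ to exist and be invertible, which is essentially the computation carried out in the paragraph preceding the statement; I would simply organize it cleanly.

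For the forward direction I would start from an isomorphism $\Phi:\C(t)[X]/(F)\to\C(t)[X]/(F')$. Since the source is generated over $\C(t)$ by the class of $X$, the map is determined by $\Phi(X)$; choosing a polynomial representative $P_1\in\C(t)[X]$ of this image, the only constraint for $\Phi$ to be well defined is that $F(P_1)$ vanish in the quotient by $F'$, i.e.\ $F'\mid F(P_1)$, which is exactly equation~\eqref{isomoreq1} with quotient $H_1$. The same reasoning applied to $\Phi^{-1}$ produces a polynomial $P_2$ together with the second identity and its quotient $H_2$. Finally, evaluating $\Phi^{-1}\circ\Phi=\Id$ and $\Phi\circ\Phi^{-1}=\Id$ on the generator gives $P_1(P_2(X))\equiv X\pmod F$ and $P_2(P_1(X))\equiv X\pmod{F'}$, which are the last two identities with quotients $G_1$ and $G_2$.

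For the converse I would define $\Phi$ by sending the class of $X$ to the class of $P_1(X)$ and $\Phi'$ by sending $X$ to $P_2(X)$; these are automatically $\C(t)$-algebra homomorphisms, since they fix $\C(t)$ and are prescribed on a single generator. Equation~\eqref{isomoreq1} says $F(P_1)\equiv0\pmod{F'}$ and hence guarantees that $\Phi$ is well defined, the second equation does the same for $\Phi'$, and the remaining two identities say precisely that $\Phi'\circ\Phi$ and $\Phi\circ\Phi'$ fix the generator, hence act as the identity everywhere. Thus $\Phi$ is an isomorphism of $\C(t)$-algebras, which under the equivalence of categories is an isomorphism of covers.

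There is no serious obstacle here; the content is bookkeeping, and the only points deserving a word of care are two. First, the primitive element theorem (valid since these are finite extensions in characteristic $0$) is what lets us write each function field as $\C(t)[X]/(F)$ with a single generator, so that a homomorphism is pinned down by the image of $X$. Second, a $\C(t)$-algebra map out of such a quotient is unconstrained beyond killing $F$, so that well-definedness really is captured by the single divisibility in the first (resp.\ second) equation and no further relations are needed. The direction of the contravariant function-field functor is immaterial, since we are only asserting the existence of an isomorphism.
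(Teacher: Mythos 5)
Your proposal is correct and follows essentially the same route as the paper: both translate an isomorphism of covers into a $\C(t)$-algebra isomorphism of the quotients $\C(t)[X]/(F)$ and $\C(t)[X]/(F')$ via the primitive element theorem, observe that such a map is determined by the image $P_1$ (resp.\ $P_2$) of the generator with well-definedness equivalent to the divisibility identities \eqref{isomoreq1} and its counterpart, and encode mutual invertibility by the two congruences $P_1(P_2(X))\equiv X \pmod{F}$ and $P_2(P_1(X))\equiv X\pmod{F'}$. The only difference is organizational: you separate the two implications explicitly, whereas the paper derives the four identities in a single running argument.
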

\noindent Let us now prove Belyi's theorem. Suppose be are given a Belyi pair $(C,f)$ with its corresponding extension $\C(C)/\C(t)\cong \C(t)[X]/(F)$. Let $K=\Q(\pi_1,\ldots ,\pi_n,u)$ be the field generated by the coefficients of $F$, where $\pi_1,\ldots ,\pi_n$ are algebraically independent and $u$ is algebraic over the field generated by the rest of them. The Belyi pair has finitely many Galois conjugates. Therefore, there are many Galois automorphisms $\sigma$ that fix the cover and map $(\pi_1,\ldots ,\pi_n)$ to some complex numbers $\pi_i^\sigma=\pi_{n+i}$ such that the set $(\pi_1,\ldots ,\pi_{2n})$ is an algebraically independent set.

Since the cover is fixed by $\sigma$, there exists an isomorphism
$$
\Phi:\C(C)\lra \C(C^\sigma)\cong \C(t)[X]/(F^\sigma)
$$
By lemma \ref{isomor}, this means that there exist polynomials such that
$$F(P_1(X))=H_1(X)F^\sigma(X)$$
$$F^\sigma(P_2(X))=H_2(X)F(X)$$
$$P_1(P_2(X))=X+G_1(X)F(X)$$
$$P_2(P_1(X))=X+G_2(X)F^\sigma(X)$$
Take the field $K_2$ generated by the coefficients of all of these polynomials. We can add some elements $\pi_{2n+1},\ldots ,\pi_{d}$ to our list $\pi_1,\ldots ,\pi_{2n}$ so that they are a maximal algebraically independent set within $K_2$. Then, there exists some $v$ such that $K_2=\Q(\pi_1,\ldots ,\pi_d,v)$.

Now we are going to take a specialization of the above formulas, in order to define an isomorphism between $\C(C)$ and some other curve defined over $\QQ$. We have seen in lemma \ref{special} that there exists some $\eps$ such that every specialization of $(\pi_1,\ldots ,\pi_d)$ of distance smaller than $\eps$ can be extended to a $\Q$-algebra homomorphism
$$
s:\Q[\pi_1,\ldots ,\pi_d,v] \lra \C
$$
Since $\QQ$ is dense in $\C$, we can take a specialization with distance smaller than $\eps$ given by
$$
\left(
\eta_1=\pi_1,\ldots ,\eta_n=\pi_n,q_{n+1},\ldots ,q_d\right)
$$\nopagebreak With $q_i\in \QQ$.

Now, $s$ is a homomorphism of $\Q$-algebras, and it can of course be extended to $\Q[\pi_1,\ldots ,\pi_d,v][t,X]$, with image in $\C[t,X]$.

We want to extend $s$ to the polynomials $F^\sigma,H_1,H_2,G_1,G_2$, and we can do this if we are careful: their coefficients, if we see them as rational functions in $t$ and $X$, are elements of $K_2$, and thus rational functions in $\pi_1,\ldots ,\pi_d,v$, whose denominators might vanish in the specialization. However, the numbers$\left(
\eta_1=\pi_1,\ldots ,\eta_n=\pi_n,\right.$ $\left.q_{n+1},\ldots ,q_d\right)$ that will make one of this denominators vanish will lie in some closed set of $\C$. Therefore, since we have a whole ball around each $\pi_i$ to choose from, we can still pick elements that won't make these denominators vanish.

Also, if we see $F^\sigma,H_1,H_2,G_1,G_2$ as polynomials in $X$, their coefficients are rational functions in $\C(t)$, whose denominators might also vanish. However, we can do the same thing, since the specializations that make this happen lie in a closed set of $\C^d$.

Therefore, there exists some distance $\eps$, such that for any specialization of this distance, the image of the polynomials $F^\sigma,H_1,H_2,G_1,G_2$ will be well-defined, since it will lie in the subring of
$$
\Q(\pi_1,\ldots ,\pi_d,v,t,x)
$$
In which $s$ can be defined (rational functions whose denominators don't vanish). We can also take the specialization so that $\pi_{n+1},\ldots,\pi_d$ are algebraic. Also, $F^s$ will equal $s$, for $u$ can only be mapped to one possible $u$, provided $\eps$ is small enough, so $s(u)=u$. Since $s$ is a $\Q$-algebra homomorphism, and $F^s=F$, we will have that
$$F(P_1^s(X))=H_1^s(X)(F^\sigma)^s(X)$$
$$(F^\sigma)^s(P_2^s(X))=H_2^s(X)F(X)$$
$$P_1^s(P_2^s(X))=X+G_1^s(X)F(X)$$
$$P_2^s(P_1^s(X))=X+G_2^s(X()F^\sigma)^s(X)$$
In other words, if we use proposition \ref{isomor}, we obtain that the extension $\C(C)/\C(t)$ is isomorphic to the extension $\C(t)[X]/((F^\sigma)^s)$, and the coefficients of $(F^\sigma)^s$ are algebraic.

We can finally prove that we can define dessins over $\QQ$ by using the fact that every function field over $\QQ$, which is algebraically closed, corresponds to a smooth projective curve (this can be proven, for example, using the Riemann-Roch Theorem, as in \cite{Miranda}).

Also, we can prove that if we have two dessins $\QQ(C_1)/\QQ(t)$ and $\QQ(C_2)/\QQ(t)$ such that, when their scalars are extended to $\C$, they give isomorphic dessins, then the initial dessins are also isomorphic. The way to do this is to prove, also using specialization, that morphisms between covers can also be defined over $\QQ$. 

\begin{lema}
Suppose we have two extensions corresponding to dessins d'enfants defined over $\QQ$, which we will call $\C(C_1)=\C(t,x_1)=\C(t)[X]/(F_1)$, and $\C(C_2)=\C(t,x_2)=\C(t)[X]/(F_2)$, where $F_1,F_2\in \QQ(t)[X]$. Every morphism from $\C(C_1)$ to $\C(C_2)$ can be defined over $\QQ$, that is, it maps $x_1$ to $P(x_2)$, where $P\in \QQ(t)[X]$. 
\end{lema}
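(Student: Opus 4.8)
The plan is to encode a morphism purely in terms of polynomial data, exactly as was done for isomorphisms, and then to exploit the fact that there are only finitely many morphisms between two fixed dessins in order to descend the data to $\QQ$. As in the discussion preceding Lemma \ref{isomor}, a $\C(t)$-algebra homomorphism $\Phi:\C(C_1)\lra \C(C_2)$ is determined by the image $\Phi(x_1)=P(x_2)$ of the generator, where $P\in \C(t)[X]$, and it is well defined precisely when there is some $H\in \C(t)[X]$ with
\[
F_1(P(X))=H(X)F_2(X).
\]
Since $P(x_2)$ depends only on $P$ modulo $F_2$, I would normalise $P$ so that $\deg_X P<\deg_X F_2$; with this convention $P$ is uniquely determined by $\Phi$.

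The key observation is that the whole configuration is fixed by $\Gal(\C/\QQ)$. Because $F_1,F_2\in \QQ(t)[X]$ have their coefficients in $\QQ(t)$, we have $F_1^\sigma=F_1$ and $F_2^\sigma=F_2$ for every $\sigma\in \Gal(\C/\QQ)$. Applying $\sigma$ to the coefficients of the displayed identity (which is legitimate, since $\sigma$ acts on $\C(t)$ as a field automorphism fixing $t$, via the identification explained in Section \ref{galaction}) yields
\[
F_1(P^\sigma(X))=H^\sigma(X)F_2(X),
\]
so $P^\sigma$ defines another morphism $\C(C_1)\lra \C(C_2)$, again normalised since $\deg_X P^\sigma=\deg_X P$. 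Now there are only finitely many such morphisms: any $\Phi$ sends $x_1$ to a root of $F_1$ in the field $\C(C_2)$, and $F_1$ has at most $\deg_X F_1$ roots there. Hence the orbit $\{P^\sigma:\sigma\in \Gal(\C/\QQ)\}$ is finite.

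It remains to see that a finite orbit forces $P\in \QQ(t)[X]$. Writing $P=\sum_i p_i(t)X^i$ with $p_i\in \C(t)$, finiteness of the orbit of $P$ gives finiteness of the orbit of each coefficient $p_i$ under the action of $\Gal(\C/\QQ)$ on $\C(t)$. Writing each $p_i$ as a quotient of coprime polynomials with monic denominator makes this normal form unique, so a finite orbit of $p_i$ means a finite orbit of each of the finitely many complex numbers occurring as its coefficients; and a complex number with finite $\Gal(\C/\QQ)$-orbit is a root of the monic polynomial whose roots are that orbit, whose coefficients are fixed by $\Gal(\C/\QQ)$ and hence lie in $\QQ$. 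Since $\QQ$ is algebraically closed, such a number already lies in $\QQ$, so $p_i\in \QQ(t)$ and therefore $P\in \QQ(t)[X]$, as desired.

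The main obstacle, and the only place requiring genuine care, is the passage from ``finite orbit'' to ``defined over $\QQ$'': one must normalise $P$ (and each $p_i$) so that the $\Gal(\C/\QQ)$-action really proceeds through a finite set, and so that being unmoved by the group lands one in $\QQ(t)$ rather than in some proper finite extension. Alternatively, the same conclusion can be reached by the specialisation technique used in the proof of Belyi's theorem: the identity $F_1(P(X))=H(X)F_2(X)$ involves only finitely many transcendentals among the coefficients of $P$ and $H$, while those of $F_1,F_2$ are already algebraic, so a specialisation of distance smaller than the $\eps$ of Lemma \ref{special} preserves the identity and, combined with the finiteness of morphisms, shows $P$ itself is algebraic.
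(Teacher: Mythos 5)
Your proof is correct, but it reaches the conclusion by a genuinely different mechanism than the paper. The paper's own proof is the specialisation argument that you only sketch at the very end: it puts the coefficients of $P$ and $H$ into a finitely generated field $\Q(\pi_1,\ldots,\pi_n,u)$, chooses a specialisation of small enough distance (Lemma \ref{special}) sending the transcendentals to algebraic numbers while leaving $F_1,F_2$ untouched, deduces $F_1(P^s(X))=H^s(X)F_2(X)$, and then uses the finiteness of the roots of $F_1$ in $\C(C_2)$ together with a continuity argument ($P^s(x_2)$ stays in a neighbourhood of $P(x_2)$ containing no other root) to force $P^s(x_2)=P(x_2)$. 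Your primary argument replaces specialisation and continuity by Galois descent: applying $\sigma\in\Gal(\C/\QQ)$ to the identity $F_1(P(X))=H(X)F_2(X)$ and using $F_i^\sigma=F_i$ shows that the normalised $P$ has a finite orbit (both proofs exploit the same finiteness, namely that morphisms correspond to roots of $F_1$ in the field $\C(C_2)$), and you then descend coefficientwise, using that a complex number with finite orbit under $\Aut(\C/\QQ)$ lies in $\QQ$. Your normalisations ($\deg_X P<\deg_X F_2$; coprime numerator and monic denominator for each $p_i$) are exactly what is needed to make the orbit map well defined and $\sigma$-equivariant, so the argument goes through. The one point worth flagging is that your final step rests on the fact that the fixed field of $\Aut(\C/\QQ)$ is $\QQ$ (equivalently, that every number transcendental over $\QQ$ has infinite orbit); this is standard, but it requires extending automorphisms across a transcendence basis and then to the algebraic closure, i.e. the Steinitz extension theorem and the axiom of choice. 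Avoiding precisely this kind of abstract input is the reason the paper works with specialisations and the density of $\QQ$ in $\C$ throughout this chapter, rather than quoting the general ``finitely many conjugates implies defined over $\QQ$'' descent. So your route is the cleaner and more conceptual one, the paper's is the more self-contained one given the tools it has built, and your closing remark that the specialisation technique also applies is, in fact, a description of the paper's own proof.
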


\begin{proof}

We have seen that a morphism $\Phi$ from $\C(C_1)$ to $\C(C_2)$ is equivalent to a polynomial $P\in \C(t)[X]$, such that $\Phi(x_1)=P(X)$, for which there exists another polynomial $G\in \C(t)[X]$ such that
$$x
F_1(P(X))=G(X)F_2(X)
$$
We are going to specialize this identity  so that $P\in \QQ(t)[X]$. As usual, let $(\pi_1,\ldots ,\pi_n,u)$ generate the field that contains the coefficients of all the polynomials involved. We can choose a specialization of small enough distance so that $u$ is mapped by the $\Q$-algebra homomorphism to one unique root of the image of its minimal polynomial. Also, if the distance is small enough, as before, no denominators will become $0$ in the coefficients of the polynomials, and also the coefficients of $F_1$ and $F_2$ will remain unchanged. If we apply the resulting homomorphism $s$, we are left with
$$
F_1(P^s(X))=G^s(X)F_2(X)
$$
So $P^s(x_2)=\Phi(x_1)$ will be a root of $F_1$ in $\C(C_2)$. However, there are only finitely many such roots, (since $\C(C_2)$ is a field). And if we make the distance of the specialization smaller, $P^s(x_2)$ will lie in some neighborhood of $P(x_2)$\footnote{Since $P^s(x_2)$ can be given by a finite set of complex numbers (the coefficients in the rational functions in $t$ that are the coefficients of $P^s$), we can give these values the topology of $\C^N$. Anyway, it is clear that the function mapping the specialization to $P^s(x_2)$ is continuous, and that finite sets in $\C^N$ are closed.}, that is a root of $F_1$, that will contain no other roots of $F_1$. Therefore, $P^s(x_2)$ will equal $P(x_2)$, and the morphism will be defined over the algebraic numbers. $G$ will also be defined over the algebraic numbers, since it is $F_1(P)/F_2$.
\end{proof}

\noindent Therefore, morphisms of the dessins d'enfants over $\QQ$ correspond bijectively to morphisms of dessins d'enfants over $\C$. In particular, if we have two extensions $\QQ(C_1)/\QQ(t)$ and $\QQ(C_2)/\QQ(t)$ such that, when the scalars are extended to $\C$, they give the same dessin d'enfant, they must be isomorphic.

Also, we have our directed set of extensions of $\QQ(t)$, that enables us to construct the field $\KK$, which is the extension that contains all the dessins d'enfants, just as in the complex case (like we did in section \ref{thefieldk}). Since morphisms between dessins are defined over $\QQ$, we have that taking $\QQ$ as the base field doesn't change the Galois group of the extensions corresponding to dessins d'enfants. Therefore, since $\Gal(\KK/\QQ(t)$ is the limit of the Galois groups of Galois subextensions, we have that
$$
\Gal(\KK/\QQ(t))\cong \lim_{\substack{(C,t)\text{ Belyi pairs}\\ \longleftarrow}} \Gal(\QQ(C)/\QQ(t))\cong \lim_{\substack{(C,t)\text{ Belyi pairs}\\ \longleftarrow}} \Gal(\C(C)/\C(t))=\Fd
$$
\section{The action of $\mathrm{Gal}(\overline{\mathbb Q}/\mathbb Q)$ on $\Fd$}
Now that we know that dessins are defined over $\QQ$, we can talk about the action of $\gal$ (recall section \ref{galaction}). Recall that, for every dessin $\QQ(C)/\QQ(t)$, and every $\sigma\in \gal$, there is a conjugate dessin $\QQ(C^\sigma)/\QQ(t)$, and furthermore, the Galois group also acts on morphisms between dessins, and it does so in a functorial way: if we have two morphisms $\phi,\psi$, then
$$
(\phi\circ\psi)^\sigma=\phi^\sigma \circ \psi^\sigma
$$
If these morphisms are automorphisms of an extension, then , it is clear that the Galois action will give an isomorphism from $\Aut(C,f)$ to $\Aut(C^\sigma,f^\sigma)$ (from now on, we will avoid calling these automorphism groups $\Gal(\QQ(C)/\QQ(t))$, to avoid confusion with $\gal$).

Since the groups $\Aut(C,f)$ are quotients of $\Fd$, it is interesting to consider whether we can define an action of $\gal$ not just on its quotients, but on the whole group $\Fd$ (this should have been easy to guess, given this section's title). Our objective now will be to prove this fact.

There is something more that we want from this section: if we look at the fields $\Q(t)\subset \QQ(t)\subset \KK$, we see that the extension $\QQ(t)/\Q(t)$ is clearly Galois (since it is basically the same as the extension $\QQ/\Q$), and its Galois group clearly is isomorphic to $\gal$. Therefore,
$$
\gal=\Gal(\QQ(t)/\Q(t))\cong\frac{\Gal(\KK/\Q(t))}{\Gal(\KK/\QQ(t))}=\frac{\Gal(\KK/\Q(t))}{\Fd}
$$
In other words, we have the following exact sequence:
$$
1\lra \Fd \lra \Gal(\KK/\Q(t))\lra \gal \lra 1
$$
For every exact sequence $1\lra N\lra G \lra H\lra 1$, there is an induced map $H\lra \Out (N)$. Recall that, for a group $G$, $\Inn(G)$ is the set of automorphisms given by conjugation in the group, and $\Out(G)=\frac{\Aut(G)}{\Inn(G)}$. Thus there is a map $\gal \lra \Out(\Fd)$.

An element of $\Out(\Fd)$ permutes the conjugacy classes of subgroups of $\Fd$, since it a well-defined automorphism up to conjugation. This permutation is actually the Galois action we have defined on dessins: Suppose we have some $\wh \sigma\in \Gal(\KK/\Q(t))$ such that its class modulo $\Fd$ is $\sigma \in \gal$. A dessin $\QQ(C)/\QQ(t)$ is mapped to $\QQ(C)^{\wh\sigma}/\QQ(t)$. If $\QQ(C)=\QQ(t,x)\cong\QQ(t)[X]/(F)$, then $F^{\sigma}(x^{\wh\sigma})=(F(x))^{\wh\sigma}=0$, so its image, $\QQ(t,x^{\wh\sigma})$, is isomorphic, as an extension, to $\QQ(t)[X]/(F^{\sigma})$, which is the Galois action we have already defined. Therefore, the subgroup ${\wh\sigma}^{-1}H {\wh\sigma}<\Fd<\Gal(\KK/\Q(t))$ is the subgroup $H^\sigma$, up to conjugation.

\begin{prop}\label{out}
The homomorphism $\gal \lra \Out(\Fd)$ we have just defined is injective.
\end{prop}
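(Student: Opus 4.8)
The plan is to show that the kernel of $\gal\lra\Out(\Fd)$ is trivial by proving that any $\sigma$ in it must fix every dessin d'enfant, and then invoking the faithfulness of the Galois action already established. First I would unwind what it means for $\sigma\in\gal$ to lie in the kernel. Choosing a lift $\wh\sigma\in\Gal(\KK/\Q(t))$ of $\sigma$, the image of $\sigma$ in $\Out(\Fd)$ is, by construction of the map induced by the exact sequence $1\lra\Fd\lra\Gal(\KK/\Q(t))\lra\gal\lra 1$, the class of the automorphism $g\mapsto\wh\sigma^{-1}g\wh\sigma$ of $\Fd$. Thus $\sigma$ lies in the kernel precisely when this conjugation is an inner automorphism of $\Fd$, i.e. when there exists $w\in\Fd$ with $\wh\sigma^{-1}g\wh\sigma=w^{-1}gw$ for all $g\in\Fd$.

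Next I would use the remark established just before the statement, namely that for every open subgroup $H<\Fd$ the subgroup $\wh\sigma^{-1}H\wh\sigma$ is equal, up to conjugacy, to the subgroup $H^\sigma$ corresponding to the dessin conjugate by $\sigma$ to the one given by $H$. If $\sigma$ lies in the kernel, the identity $\wh\sigma^{-1}g\wh\sigma=w^{-1}gw$ gives $\wh\sigma^{-1}H\wh\sigma=w^{-1}Hw$ for every such $H$, so $\wh\sigma^{-1}H\wh\sigma$ is conjugate to $H$ inside $\Fd$. Combining the two facts, $H^\sigma$ is conjugate to $H$ for every open subgroup $H$. Since dessins d'enfants without a base point are in bijection with conjugacy classes of open subgroups of $\Fd$, this says exactly that $\sigma$ fixes every dessin d'enfant.

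Finally, I would conclude by faithfulness. Proposition \ref{fiel} shows that the Galois action on dessins is faithful, in fact already on those of genus $1$, so the only element of $\gal$ fixing every dessin is the identity; hence $\sigma=\Id$ and the map $\gal\lra\Out(\Fd)$ has trivial kernel, i.e. it is injective. The genuine content here has already been absorbed into the preparatory remark that identifies conjugation by $\wh\sigma$ with the Galois action on subgroups, together with Belyi's theorem (entering through Proposition \ref{fiel}); the only remaining step is the essentially formal observation that inner automorphisms preserve conjugacy classes of subgroups. I therefore do not expect a serious obstacle, beyond keeping the two conjugations, by $\wh\sigma$ and by $w$, correctly aligned so that the composition indeed lands every subgroup in its own conjugacy class.
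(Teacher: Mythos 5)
Your proposal is correct and follows exactly the paper's own argument: identify the kernel condition with conjugation by a lift $\wh\sigma$ being inner, use the preceding remark that this conjugation realizes the Galois action on (conjugacy classes of) open subgroups, and conclude by the faithfulness of the action on dessins from Proposition \ref{fiel}. The paper states this very tersely; you have merely spelled out the same steps in more detail, so there is nothing to add.
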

\begin{proof}
Every $\sigma\in \gal$ is mapped to an element different from the identity. If some element weren't, its image would be an inner automorphism of $\Fd$, and in particular, it would leave every conjugacy class of subgroups unchanged. However, we have seen that the Galois group acts faithfully on dessins (proposition \ref{fiel}), so $\sigma$ can't be mapped to the identity of $\Out(\Fd)$.
\end{proof}

\noindent We are going to prove now that we can find a homomorphism $\gal\lra \Aut(\Fd)$, and that actually, the extension we wrote before is split, so
$$
\Gal(\KK/\Q(t))=\gal \ltimes \Fd
$$
Here's some basic facts from split extensions: an extension $N\lra G\overset{f}\lra H$ is \textbf{split} if there exists a homomorphism $s:H\lra G$ such that it is a section, i.e. $f\circ s=\Id_H$. This happens if and only if $G$ is isomorphic to the semidirect product of $N$ and $H$, where $H$ is identified with its image in $G$ by $s$.

\begin{teor}\label{split}
The sequence
$$
1\lra \Fd \lra \gall \lra \gal\lra 1
$$
splits.
\end{teor}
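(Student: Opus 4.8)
The plan is to realize the splitting concretely as the decomposition group of a valuation attached to the $\Q$-rational base point $t=1/2$. The guiding heuristic is that a section of a fundamental exact sequence of this type ought to come from a rational point of $\P^1\setminus\{0,1,\infty\}$, and $1/2$ is such a point; all the Belyi maps are unramified there, which will make the point behave like a genuine rational base point for the whole tower.

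First I would recall that $\KK/\Q(t)$ is Galois (this is implicit in writing $\gall$, and one checks it as usual: $\KK$ contains $\QQ$ and is stable under the Galois action on dessins, so it is normal over $\Q(t)$). I then fix, once and for all, a valuation $w$ on $\KK$ extending $\ord_{1/2}$ on $\Q(t)$, and define the candidate section to be the decomposition group
$$
D_w=\{\, g\in\gall : w\circ g=w \,\}.
$$
The claim is that the restriction to $D_w$ of the projection $\gall\lra\gal$ is an isomorphism; granting this, the inclusion $s:D_w\hookrightarrow\gall$ is the desired splitting, since composing it with the projection recovers $\Id_{\gal}$.

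Injectivity of $D_w\lra\gal$ amounts to $D_w\cap\Fd=1$, and this is exactly where the choice of $1/2$ is used. Since $1/2\notin\{0,1,\infty\}$, every dessin $\KK^H/\QQ(t)$ is unramified over $1/2$, so the inertia subgroup of $w$ inside $\Fd=\Gal(\KK/\QQ(t))$ is trivial; moreover the residue field of $\ord_{1/2}$ on $\QQ(t)$ is $\QQ$, which is algebraically closed, so there is no residue extension and the entire geometric decomposition group $D_w\cap\Fd$ collapses onto the (trivial) inertia. For surjectivity I would use that $1/2$ is $\Q$-rational, hence $\ord_{1/2}$ extends uniquely from $\Q(t)$ to $\QQ(t)$; consequently each $\sigma\in\gal$ fixes $w|_{\QQ(t)}$. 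Lifting $\sigma$ arbitrarily to some $g\in\gall$, the valuation $w\circ g^{-1}$ again restricts to $\ord_{1/2}$ on $\QQ(t)$ (because $\sigma$ fixes that valuation), so it lies over the same valuation as $w$; since $\Fd$ acts transitively on the extensions of $\ord_{1/2}$ to $\KK$, I can correct $g$ by an element of $\Fd$, which does not alter its image $\sigma$ in $\gal$, so that the corrected element stabilizes $w$ and therefore lands in $D_w$.

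The main obstacle is the valuation-theoretic input for the infinite extension $\KK/\QQ(t)$: namely the transitivity of the profinite Galois group on the extensions of $\ord_{1/2}$, and the fact that the decomposition--inertia formalism (trivial inertia at an unramified point, trivial residue extension over an algebraically closed residue field) transfers verbatim from the finite case. I would establish these by passing to the finite layers $\KK^N/\QQ(t)$ with $N\tl F_2$ of finite index, where the statements are standard, and then taking the inverse limit, using that both transitivity and triviality of inertia survive the limit. Everything else is then formal infinite Galois theory of the kind already developed in the section on infinite extensions.
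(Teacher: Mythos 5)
Your proof is correct, and at bottom it rests on the same geometric germ as the paper's: everything is rigidified by the fiber over the rational point $1/2$. Indeed, the paper's compatible system of base points $\{P_H\}$, one in each $\KK^H$ and respected by all inclusions, is precisely a choice of valuation $w$ on $\KK$ extending $\ord_{1/2}$, and the image of the section $\sigma\mapsto\wt\sigma=\phi_\sigma\circ\sigma$ that the paper constructs is precisely your decomposition group $D_w$, since the defining property of $\phi_\sigma$ is that it carries the base-point family back to itself (equivalently $\wh P\circ\wt\sigma=\wh P$). Where the two arguments genuinely diverge is in the mechanism. The paper builds the section one element at a time, using uniqueness of base-point-preserving morphisms of dessins to define $\phi_\sigma$, and then must verify multiplicativity $\wt{\tau\sigma}=\wt\tau\,\wt\sigma$ by a fairly long diagram chase; injectivity and surjectivity onto $\gal$ are then immediate. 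You instead take the candidate complement to be a stabilizer subgroup, so closure under multiplication is free, and you shift all the work into valuation theory: $D_w\cap\Fd=1$ follows from trivial inertia (every dessin is unramified over $1/2$) together with trivial residue extension (the residue field of $\ord_{1/2}$ on $\QQ(t)$ is $\QQ$, algebraically closed), and surjectivity follows from the uniqueness of the extension of $\ord_{1/2}$ to $\QQ(t)$ plus transitivity of $\Fd$ on its extensions to $\KK$; both ingredients do survive the passage to the profinite limit by exactly the finite-layer arguments you indicate, since an inverse limit of nonempty finite sets over a directed system is nonempty. What your route buys is brevity and a standard conceptual frame (decomposition groups in a Galois extension of function fields). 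What the paper's route buys is self-containedness --- it never invokes decomposition/inertia theory, only the unique-lifting property it has already established --- and, more importantly for this thesis, the explicit isomorphisms $\phi_\sigma$ and the explicit action $H\mapsto H^\sigma$ on open subgroups, which are reused later, for instance in Proposition \ref{accionrara} and in the construction of the complement $A$ in the proof that dessins with one face are defined over their field of moduli.
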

\begin{proof}
Let's try to define the map to $\Aut(\Fd)$ now. Take some $\sigma\in \gal$. Clearly, $\sigma$ gives an isomorphism between $\Gal(\KK/\QQ(t))$ and $\Gal(\KK^\sigma/\QQ(t))$. However, both extensions are isomorphic, so what we need is a canonical way to identify them, so that $\sigma$ is an automorphism, and the resulting function $\gal \times \Gal(\KK/\QQ(t))\lra \Gal(\KK/\QQ(t))$ is indeed an action.

The canonical way is given by using the base points in dessins. In $\KK$, every subextension corresponds to a dessin with a base point. If we have some $\KK^H\subset \KK$, there is a base point in the corresponding curve $C$, which we will call $P_H$. We could call it $P_C$, but one curve can have different dessins defined on it, or the same one with different base points, so it is nicer to associate it with the subgroup.

Recall that we defined the inclusions between subfields of $\KK$ so that they would preserve the base point (since morphisms correspond to inclusions between subgroups of $F_2$, and there is an inclusion if and only if there is a covering preserving base points, see proposition \ref{uniquemorphism}), so, if $i:\KK^{H_1}\lra \KK^{H_2}$ is one such inclusion, then $i^*(P_{H_2})=P_{H_1}$. Also, this means that we can define a valuation $\wh P$ in $\KK$, such that its restriction to each $\KK^H$ is $P_H$.

Take some open subgroup $H<\Fd$ and $\KK^H$. if we consider the dessin given by $\left(\KK^H\right)^\sigma$, with the base point $(P_H)^\sigma$, then there is one unique subfield of $\KK$ such that the extensions are isomorphic taking the base points into account. That is, there exists a unique extension, which we will call $\KK^{H^\sigma}$, and a unique isomorphism $\phi_{\sigma,H}:(\KK^H)^\sigma\lra \KK^{H^\sigma}$ such that $\phi_{\sigma,H}^*(P_{H^\sigma})=P_H^\sigma$. When we have defined the action of $\gal$ on $\Fd$, we will have that $\K^{H^\sigma}$ is indeed the fixed field of $H^\sigma$, but for now consider it just as notation.

Thus we have, for each open subgroup $H<\Fd$, an isomorphism (of extensions) from $(\KK^H)^\sigma$ to some $\KK^{H^\sigma}$. We have to prove that this induces a morphism
$$
\phi_\sigma:\KK^\sigma \lra \KK
$$
Let us check that it is. Suppose we have two open subgroups $H_1<H_2$, and the corresponding homomorphism $i_{H_2H_1}:\KK^{H_2}\lra \KK^{H_1}$. Since $\KK^{H_i^\sigma}$ are isomorphic extensions to $(\KK^{H_i})^\sigma$, and their base points are carried over by these isomorphisms, there will be a unique inclusion $i_{H_2^\sigma H_1^\sigma}$ between them preserving the base point. What we need to check for $\phi_\sigma$ to be defined that the following diagram (of homomorphisms of $\C(t)$-algebras) is commutative:
\begin{center}\hspace{0.5 cm}
\begindc{\commdiag}[60]
\obj(0,0)[a]{$\left(\KK^{H_2}\right)^\sigma$}
\obj(0,1)[b]{$\left(\KK^{H_1}\right)^\sigma$}
\obj(1,0)[c]{$\KK^{H_2^\sigma}$}
\obj(1,1)[d]{$\KK^{H_1^\sigma}$}
\mor{a}{b}{$i_{H_2H_2}^\sigma$}
\mor{c}{d}{$i_{H_2^\sigma H_1^\sigma}$}[\atright,0]
\mor{a}{c}{$\phi_{\sigma,H_2}$}[\atright,0]
\mor{b}{d}{$\phi_{\sigma,H_1}$}
\enddc
\end{center}
\noindent It is indeed: there is at most one morphism from $(\KK^{H_2})^\sigma$ to $\KK^{H_1^\sigma}$ such that the restriction of the valuation corresponding to $P_{H_1^\sigma}$ is $P_{H_2}^\sigma$. However,
$$
(\phi_{\sigma,H_1}\circ i^\sigma_{H_2H_1})^*(P_{H_1^\sigma})=i^{\sigma*}_{H_2H_1}\left(\phi_{\sigma,H_1}^*(P_{H_1^\sigma})\right)=i^{\sigma*}_{H_2H_1}(P_{H_1}^\sigma)=\left(i^*_{H_1H_2}(P_{H_1})\right)^\sigma=P_{H_2}^\sigma
$$
And also,
$$
(i_{H_2^\sigma H_1^\sigma}\circ \phi_{\sigma,H_2})^*(P_{H_1^\sigma})=\phi_{\sigma,H_2}^*\left(
(i_{H_2^\sigma H_1^\sigma}^*(P_{H_1^\sigma})
\right)=
\phi_{\sigma,H_2}^*\left(
P_{H_2^\sigma}
\right)=P_{H_2}^\sigma
$$
So the diagram commutes.

Therefore, we have a morphism
$$
\phi_\sigma:\KK^\sigma \lra \KK
$$
And, for each $\sigma\in \gal$, there is some automorphism $\wt \sigma\in \Gal(\KK/\Q(t))$ (it doesn't fix $\QQ(t)$, because $\sigma$ doesn't give homomorphisms of extensions of $\QQ(t)$) defined by
$$
\wt\sigma=\phi_\sigma \circ \sigma
$$
What we want to prove now is that
$$
\wt{}:\gal\lra \Gal(\KK/\QQ(t))
$$
is a group homomorphism. It is clearly a section of the quotient $\Gal(\KK/\Q(t))\lra \gal$, since it maps extensions $\KK^H/\QQ(t)$ to extensions that are isomorphic to $(\KK^H)^\sigma/\QQ(t)$, because $\phi_\sigma$ is an isomorphism of $\QQ(t)$-algebras.

Let $\sigma,\tau\in \gal$. Let us prove the following: (recall that the Galois group acts \textbf{on the right})
$$\wt{\tau\sigma}=\wt{\sigma\circ\tau}=\wt\sigma \circ\wt\tau=\wt \tau\wt \sigma
$$
We only need to check this in finite extensions, since their union is $\KK$. Let $\KK^H$ be one such extension. What we are trying to prove is
$$
\phi_{\tau\sigma} \circ \sigma \circ \tau=\phi_{\sigma}\circ \sigma \circ \phi_\tau \circ \tau
$$
As we said, it suffices to prove this when restricted to $\KK^H$.

Let us sketch what we are trying to prove:
\begin{center}\hspace{0.5 cm}
\begindc{\commdiag}[60]
\obj(0,1)[a]{$\KK^H$}
\obj(1,1)[b]{$\left(\KK^H\right)^\tau$}
\obj(2,1)[c]{$\left(\KK^H\right)^{\tau\sigma}$}
\obj(3,1)[d]{$\KK^{H^{\tau\sigma}}$}
\obj(1,0)[e]{$\KK^{H^\tau}$}
\obj(2,0)[f]{$\left(\KK^{H^\tau}\right)^\sigma$}
\obj(3,0)[g]{$\KK^{\left(H^\tau\right)^\sigma}$}
\mor{a}{b}{$\tau$}
\mor{b}{c}{$\sigma$}
\mor{c}{d}{$\phi_{\tau\sigma}$}
\mor{b}{e}{$\phi_\tau$}
\mor{e}{f}{$\sigma$}
\mor{f}{g}{$\phi_{\sigma}$}
\enddc
\end{center}
\noindent Recall that for any $\QQ$-algebra $A$, $(A^\tau)^\sigma=A^{\tau\sigma}$, so $\sigma\left((\KK^H)^\tau\right)$ actually equals $(\KK^H)^{\tau\sigma}$, and $\phi_{\tau\sigma}$ is defined on it. We need to prove that the objects at the far right of the diagram are the same, and that the diagram commutes when we see them as the same object.

First of all, $\phi_\tau$ is an isomorphism of $\QQ(t)$-algebras, and $(\KK^H)^\tau$ is isomorphic as a $\QQ(t)$-algebra to $\KK^{H^\tau}$. Also, $\phi_\tau^*(P_{H^\tau})=P_H^\tau$.

Since both algebras are isomorphic, their images by $\sigma$ also are isomorphic, and the isomorphism between them is actually $\phi_\tau^\sigma$. Let us trace back the point $P_H$: we have that $\sigma^*((P_H^\tau)^\sigma)=\sigma^*(P_H^{\tau\sigma})=P_H^{\tau}$, by definition, and, on the lower part of the diagram, $\sigma^*(P_{H^\tau}^\sigma)=P_{H^\tau}$. Also,
$$
\phi_\tau^{\sigma*}(P_{H^\tau}^\sigma)=\phi_\tau^{*\sigma}(P_{H^\tau}^\sigma)=\left(\phi_\tau(P_{H^\tau})\right)^\sigma=(P_H^\tau)^\sigma=P_H^{\tau\sigma}
$$
So $\phi_\tau^\sigma$ is an isomorphism between the objects in the third column which carries one base point to the other.

We have that $(\KK^H)^{\tau\sigma}$ and $\left(\KK^{H^\tau}\right)^\sigma$ are extensions isomorphic by an isomorphism carrying $P_{H^\tau}^\sigma$ to $P_H^{\tau\sigma}$. Now, by definition of $\phi_\sigma$, $$\phi_\sigma^*(P_{(H^\tau)^\sigma})=P_{H^\tau}^\sigma$$Also, by definition of $\phi_{\tau\sigma}$,
$$
\phi_{\tau\sigma}^*(P_{H^{\tau\sigma}})=P_H^{\tau\sigma}
$$
The whole situation can be summed up in the following diagram, which states what the images of points are:
\begin{center}\hspace{0.5 cm}
\begindc{\commdiag}[60]
\obj(0,1)[a]{$P_H$}
\obj(1,1)[b]{$P_H^\tau$}
\obj(2,1)[c]{$P_H^{\tau\sigma}$}
\obj(3,1)[d]{$P_{H^{\tau\sigma}}$}
\obj(1,0)[e]{$P_{H^\tau}$}
\obj(2,0)[f]{$P_{H^\tau}^\sigma$}
\obj(3,0)[g]{$P_{\left(H^\tau\right)^\sigma}$}
\mor{b}{a}{$\tau^*$}[\atleft,\aplicationarrow]
\mor{c}{b}{$\sigma^*$}[\atleft,7]
\mor{d}{c}{$\phi_{\tau\sigma}^*$}[\atleft,\aplicationarrow]
\mor{e}{b}{$\phi_\tau^*$}[\atleft,\aplicationarrow]
\mor{f}{e}{$\sigma^*$}[\atleft,\aplicationarrow]
\mor{g}{f}{$\phi_{\sigma}^*$}[\atleft,\aplicationarrow]
\mor{f}{c}{$\phi_\tau^{\sigma*}$}[\atleft,\aplicationarrow]
\enddc
\end{center}
\noindent We can see that $\KK^{H^{\tau\sigma}}$ and $\KK^{(H^\tau)^\sigma}$ are isomorphic (as extensions), by an isomorphism preserving their base points, namely
$$
\phi_\sigma \circ \phi_\tau^\sigma \circ \phi_{\tau\sigma}^{-1}
$$
Therefore, they are equal, and $H^{\tau\sigma}=(H^\tau)^\sigma$. This gives an action of $\gal$ on the open subgroups of $\Fd$, which is a lifting of the one we already have on their conjugacy classes.

We also have that the square
\begin{center}\hspace{0.5 cm}
\begindc{\commdiag}[60]
\obj(1,1)[b]{$\left(\KK^H\right)^\tau$}
\obj(2,1)[c]{$\left(\KK^H\right)^{\tau\sigma}$}
\obj(1,0)[e]{$\KK^{H^\tau}$}
\obj(2,0)[f]{$\left(\KK^{H^\tau}\right)^\sigma$}
\mor{b}{c}{$\sigma$}
\mor{b}{e}{$\phi_\tau$}
\mor{e}{f}{$\sigma$}
\mor{c}{f}{$\phi_\tau^\sigma$}
\enddc
\end{center}
\noindent Commutes by definition.

Also, the diagram
\begin{center}\hspace{0.5 cm}
\begindc{\commdiag}[30]
\obj(4,2)[c]{$\left(\KK^H\right)^{\tau\sigma}$}
\obj(6,1)[d]{$\KK^{H^{\tau\sigma}}$}
\obj(4,0)[f]{$\left(\KK^{H^\tau}\right)^\sigma$}
\mor{c}{d}{$\phi_{\tau\sigma}$}
\mor{c}{f}{$\phi_\tau^\sigma$}
\mor{f}{d}{$\phi_{\sigma}$}
\enddc
\end{center}
\noindent Commutes because all the morphisms involved are isomorphisms of $\QQ(t)$-algebras, and they preserve the base points (this is written two diagrams above).

So finally, we have proven that
$$
\wt{\tau\sigma}=\tilde\tau\tilde \sigma
$$
Which is what we wanted to prove.
\end{proof}

\noindent Now that we have the splitting that makes $\gall=\Fd\rtimes \gal$, the lifting to $\Aut(\Fd)$ is just
$$
\sigma \longmapsto \left(
g \mapsto g^\sigma=\wt\sigma^{-1} \tau \wt\sigma
\right)
$$
If we take an open subgroup $H<\Fd$, and we apply this automorphism to it, we get that
$$
h\in H\Llra h|_{\KK^H}=\Id \Llra \wt\sigma h \wt\sigma^{-1}|_{\wt\sigma(\KK^H)}=\Id
$$
Since $\wt\sigma(\KK^H)=\KK^{H^\sigma}$, it follows that the subgroup that fixes $\KK^{H^\sigma}$ is $H^\sigma$, which is the reason why we called it that way in the first place.
\subsection{The field of moduli and fields of definition}
We are going to describe two special fields related to a dessin. Take a Belyi pair $(C,f)$, such that the curve is $V(g_1,\ldots ,g_m)\subset \P^n$. Take the field generated by the coefficients of the $g_i$'s and $f$. These are all algebraic, so this field is a number field $K$. We call it  field of definition.
\begin{definicion}
A \textbf{field of definition} of a Belyi pair $(C,f)$, or a dessin d'enfant, is a number field $K$ such that both the curve $C$ and the Belyi function $f$ can be defined with coefficients in $K$.
\end{definicion}
\noindent A dessin can have many fields of definition: first of all, if some $K$ is a field of definition, every field containing it is also a field of definition. Nonetheless, it is not even true that there must exist a smallest field of definition.
\begin{definicion}
Let $(C,f)$ be a Belyi pair, with a corresponding extension $\KK^H/\QQ(t)$. A \textbf{model} for $(C,f)$ over a number field $K$ is a set of equations $g_1,\ldots ,g_m$ over $K$ such that $C\cong V(g_1,\ldots g_m)$ and such that in these coordinates, $f$ has equations in $K$.
\end{definicion}

\noindent Suppose we are given a finite extension $A/\Q(t)$ of $\Q(t)$ contained in $\KK$. Then, this extension can be split into two parts: if we consider $\Q(t)\subset A\cap \QQ(t) \subset A$, we see that $A\cap \QQ(t)$ is $K(t)$ for some number field $K$. The second part is an finite extension of $K(t)$ that doesn't intersect $\QQ(t)$, so it is of the form
$$
K(t,x)\cong K(t)[X]/(F)
$$
Where $F\in K(t)[X]$. Now, if we take the field $\QQ A$, that is, the largest field containing both, we are going to see that we get the field
$$
\QQ(t,x)\cong\QQ(t)[X]/(F)
$$
The extensions $\QQ(t,x)/\QQ(t)$ and $K(t,x)/K(t)$ must have the same degree, for
$$
[\QQ(t,x):\QQ(t)]=
\left[
\Gal(\KK/\QQ(t)):\Gal(\KK/\QQ(t,x))
\right]=
\left[
\Gal(\KK/\QQ(t)):\Gal(\KK/\QQ(t))\cap \Gal(\KK/K(t,x))
\right]
$$
And by the second isomorphism theorem, this equals
$$
[\QQ(t,x):\QQ(t)]=
\left[
\Gal(\KK/\QQ(t))\Gal(\KK/K(t,x)):\Gal(\KK/K(t,x))
\right]=$$ $$=
\left[
\Gal(\KK/K(t)):\Gal(\KK/K(t,x))
\right]=[K(t,x):K(t)]
$$
Therefore, the polynomial $F$ is also irreducible in $\QQ(t)[X]$, so it is also the minimal polynomial of $x$ over $\QQ(t)$. Therefore, as we wanted, $\QQ(t,x)\cong \QQ(t)[X]/(F)$.

Suppose now that $(C,f)$ is given by some equations defined over $K$, so that the extension is $\QQ(t,x_1,\ldots ,x_n)\cong\QQ(t)[X_1,\ldots X_n]/(g_1,\ldots ,g_m)$. We can consider the field $K(t)[X_1,\ldots ,X_n]/(g_1,\ldots ,g_m)$. For the same reasons as before, the intersection of this field with $\QQ(t)$ will be $K(t)$, and the field generated by it and $\QQ(t)$ will be the previous field. Therefore, we can see a model also as a field extension.

\begin{lema}
A model for a dessin given by an extension $\KK/F/\QQ(t)$ over a number field $K$ is equivalent to a finite subextension $\KK/\wt F/\Q(t)$ such that $\wt F\cap \Q(t)=K(t)$ and $\QQ(t)\wt F=F$.
\end{lema}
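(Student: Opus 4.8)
The plan is to prove the two directions of the asserted equivalence separately, the forward one being essentially contained in the discussion preceding the statement. Given a model, i.e.\ equations $g_1,\ldots,g_m$ over $K$ together with an expression of $f$ over $K$, so that $F=\QQ(t,x_1,\ldots,x_n)\cong\QQ(t)[X_1,\ldots,X_n]/(g_1,\ldots,g_m)$, I would take $\wt F$ to be the subfield of $F$ generated over $K(t)$ by the coordinate functions $x_1,\ldots,x_n$, namely $\wt F\cong K(t)[X_1,\ldots,X_n]/(g_1,\ldots,g_m)$. This is finite over $\Q(t)$ since $K/\Q$ and $\wt F/K(t)$ are finite, it satisfies $\QQ(t)\wt F=F$ by construction, and $\wt F\cap\QQ(t)=K(t)$ for the same reason given just before the statement; thus the forward direction produces the required subextension.

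For the converse I would start from a finite subextension $\KK/\wt F/\Q(t)$ with $\wt F\cap\QQ(t)=K(t)$ and $\QQ(t)\wt F=F$, and manufacture equations over $K$. By the primitive element theorem $\wt F=K(t)(x)\cong K(t)[X]/(F_0)$ for some irreducible $F_0\in K(t)[X]$; after clearing denominators in $t$ I may assume $F_0\in K[t,X]$, and this polynomial cuts out an affine curve over $K$ whose projective closure, once normalised, is a smooth projective curve $C_0$ over $K$ with function field $\wt F$. The inclusion $K(t)\subset\wt F$ is a $K$-algebra homomorphism, hence corresponds to a morphism $f_0\colon C_0\lra\P^1$ defined over $K$. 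It remains to check that extending scalars from $K$ to $\QQ$ carries $(C_0,f_0)$ to the original Belyi pair $(C,f)$: the hypothesis $\QQ(t)\wt F=F$ says precisely that $\QQ(t)[X]/(F_0)\cong F$, so the same equations define $C$ over $\QQ$ and the same formula defines $f$, exhibiting $K$ as a field of definition and $(g_1,\ldots,g_m)$ as a model.

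The main obstacle is justifying that passing from $\wt F$ to $\QQ(t)\wt F$ does not collapse degrees, i.e.\ that $F_0$ remains irreducible over $\QQ(t)$ and $\QQ(t)\otimes_{K(t)}\wt F$ is a field equal to $F$, equivalently $[\wt F:K(t)]=[F:\QQ(t)]$. This is exactly the linear disjointness of $\wt F$ and $\QQ(t)$ over $K(t)$, and it is forced by $\wt F\cap\QQ(t)=K(t)$ together with the fact that everything lies inside the Galois extension $\KK/\Q(t)$: translating into closed subgroups of $\Gal(\KK/\Q(t))$ and using the second isomorphism theorem, the product $\Gal(\KK/\wt F)\,\Gal(\KK/\QQ(t))$ has fixed field $\wt F\cap\QQ(t)=K(t)$, hence equals $\Gal(\KK/K(t))$ and the indices match. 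This is precisely the computation already performed before the statement, so I would simply invoke it. A last routine point is that $f_0$ is genuinely a Belyi map over $K$; this is immediate because ramification is preserved under the base change $K\subset\QQ$ and $f$ is a Belyi map by hypothesis.
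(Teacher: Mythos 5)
Your proposal is correct and takes essentially the same approach as the paper: the paper's proof is exactly the discussion preceding the lemma, establishing both directions through the same key degree computation (linear disjointness of $\wt F$ and $\QQ(t)$ over $K(t)$, obtained from the second isomorphism theorem applied to $\Gal(\KK/\wt F)\,\Gal(\KK/\QQ(t))$ inside $\Gal(\KK/\Q(t))$), which is precisely what you invoke. The only difference is one of explicitness: in the converse direction you spell out the geometric construction of the curve over $K$ (primitive element, clearing denominators, projective closure and normalization, then base change to $\QQ$), whereas the paper is content to identify a model directly with the subextension $K(t)[X_1,\ldots ,X_n]/(g_1,\ldots ,g_m)$ and leave that realization implicit.
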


\noindent Let us use the Galois correspondence to swap extensions for groups. Recall that $\Fd$ is naturally included in $\gall$.

\begin{lema}\label{fielddef}
A model for a dessin given by an open subgroup $H<\Fd$ is equivalent to an open subgroup $\wt H$ of $\gall=\gal \ltimes \Fd$ such that $\Fd\wt H=\Gal(\QQ/K) \ltimes \Fd$ and $\wt H\cap \Fd=H$.
\end{lema}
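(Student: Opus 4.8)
The plan is to apply the Galois correspondence for the infinite extension $\KK/\Q(t)$, developed above, to the characterization of a model as a subextension $\KK/\wt F/\Q(t)$ given in the previous lemma. Write $F=\KK^H$, and for a candidate model set $\wt H=\Gal(\KK/\wt F)$. Since the correspondence between closed subgroups of $\gall=\Gal(\KK/\Q(t))$ and subextensions of $\KK/\Q(t)$ is an inclusion-reversing bijection under which open subgroups correspond exactly to finite subextensions, giving a finite $\wt F$ is the same as giving an open $\wt H<\gall$, and it remains only to rewrite the two field-theoretic conditions $\QQ(t)\wt F=F$ and $\wt F\cap \QQ(t)=K(t)$ as conditions on $\wt H$.

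For the first condition I would use that a compositum of fields corresponds to the intersection of the associated groups: by the identity $\Gal(\KK/\langle F_1\cup F_2\rangle)=\Gal(\KK/F_1)\cap\Gal(\KK/F_2)$ proved for the Galois correspondence, the compositum $\QQ(t)\wt F$ has associated group $\Gal(\KK/\QQ(t))\cap \wt H=\Fd\cap \wt H$. Hence $\QQ(t)\wt F=F$ is equivalent to $\Fd\cap \wt H=H$, which is the second condition in the statement.

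For the other condition I would dually use that an intersection of fields corresponds to the (closed) subgroup generated by the associated groups. As $\Fd=\Gal(\KK/\QQ(t))$ is normal in $\gall$, the subgroup generated by $\Fd$ and $\wt H$ is the product $\Fd\wt H$, which is a union of cosets of the open subgroup $\wt H$ and hence itself open (and so closed); thus $\Gal(\KK/(\wt F\cap \QQ(t)))=\Fd\wt H$, with no closure needed. It therefore suffices to identify $\Gal(\KK/K(t))$ with $\Gal(\QQ/K)\ltimes \Fd$ inside the splitting $\gall=\gal\ltimes \Fd$ of Theorem \ref{split}: then $\wt F\cap \QQ(t)=K(t)$ becomes $\Fd\wt H=\Gal(\QQ/K)\ltimes \Fd$, which is exactly the first condition in the statement.

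The one point demanding care, and the main obstacle, is this last identification. The inclusion $\Fd\subseteq \Gal(\KK/K(t))$ is clear since $\Fd$ fixes $\QQ(t)\supseteq K(t)$; the content is that the section $\wt{\cdot}$ carries $\Gal(\QQ/K)$ into $\Gal(\KK/K(t))$. This holds because $\wt\sigma=\phi_\sigma\circ\sigma$ with $\phi_\sigma$ an isomorphism of $\QQ(t)$-algebras, so $\phi_\sigma$ fixes $K(t)\subseteq \QQ(t)$, while $\sigma$ fixes $K(t)$ whenever $\sigma|_K=\Id$; hence $\wt\sigma$ fixes $K(t)$. Both $\wt{\Gal(\QQ/K)}\,\Fd$ and $\Gal(\KK/K(t))$ then surject onto $\Gal(\QQ/K)$ under the restriction $\gall\lra \gal$ with kernel $\Fd$, forcing equality and yielding the semidirect-product decomposition. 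With this in hand the two rewritten conditions are precisely those of the statement, and the assignment $\wt F\leftrightarrow \wt H$ is the desired bijection between models over $K$ and open subgroups $\wt H$ with $\Fd\wt H=\Gal(\QQ/K)\ltimes \Fd$ and $\wt H\cap \Fd=H$.
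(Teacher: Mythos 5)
Your proof is correct and takes essentially the same route as the paper: the paper's own argument is exactly the translation, via the infinite Galois correspondence, of the two field-theoretic conditions of the preceding lemma ($\QQ(t)\wt F=F$ and $\wt F\cap \QQ(t)=K(t)$) into the conditions $\wt H\cap \Fd=H$ and $\Fd\wt H=\Gal(\QQ/K)\ltimes\Fd$, presented there simply as the equivalence of the two lattice diagrams. Your write-up only makes explicit what the paper leaves to the diagrams, namely the compositum/intersection dictionary, the observation that $\Fd\wt H$ is open and hence closed, and the identification $\Gal(\KK/K(t))=\Gal(\QQ/K)\ltimes\Fd$ coming from the section of Theorem \ref{split}.
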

\noindent This comes from the equivalence of these diagrams:
\begin{center}\hspace{0.5 cm}
\begindc{\commdiag}[30]
\obj(0,3)[e]{$\Q(t)$}
\obj(4,2)[a1]{$\Gal(\QQ/K)\ltimes \Fd$}
\obj(0,2)[a]{$K(t)$}
\obj(-1,0)[b]{$\QQ(t)$}
\obj(1,1)[c]{$\wt F$}
\obj(0,-1)[d]{$F$}
\mor{a}{b}{}[\atleft,\solidline]
\mor{a}{c}{}[\atleft,\solidline]
\mor{b}{d}{}[\atleft,\solidline]
\mor{c}{d}{}[\atleft,\solidline]
\obj(4,3)[e1]{$\gall$}
\obj(3,0)[b1]{$\Fd$}
\obj(5,1)[c1]{$\wt H$}
\obj(4,-1)[d1]{$H$}
\mor{a1}{b1}{}[\atleft,\solidline]
\mor{a1}{c1}{}[\atleft,\solidline]
\mor{b1}{d1}{}[\atleft,\solidline]
\mor{c1}{d1}{}[\atleft,\solidline]
\mor{a1}{e1}{}[\atleft,\solidline]
\mor{a}{e}{}[\atleft,\solidline]
\enddc
\end{center}
\noindent Note that
$$
\Gal(\QQ/K)=\frac{\wt H \Fd}{\Fd}
$$
Is equivalent to the fact that the image of $\wt H$ by the projection onto $\gal$ is $\Gal(\QQ/K)$. Also, by the second isomorphism theorem,
$$
\frac{\wt H}{\wt H\cap\Fd}\cong \frac{\wt H \Fd}{\Fd}=\Gal(\QQ/K)
$$
Let us look now at another field related to a dessin: the field of moduli. Take a dessin, given by a subgroup $H$. Consider the subgroup $\mathrm{St} H<\gal$ made up of the automorphisms that fix the dessin (up to isomorphism, that is, to conjugation in $\Fd$). If $K$ is any field of definition of $H$, this subgroup obviously contains $\Gal(\QQ/K)$. It is therefore a closed subgroup of $\gal$, and it has a fixed field defined by it. This field is called the field of moduli of the dessin.

\begin{definicion}
The \textbf{field of moduli} of a dessin is the fixed field of the subgroup of $\gal$ that leaves it invariant.

If a dessin is given by a Belyi pair $(C,f)$, we denote it by $\M(C,f)$.
\end{definicion}

\noindent It is clear that any field of definition contains the field of moduli: If $K$ is a field of definition, any element of $\Gal(\QQ/K)$ will fix the dessin, and therefore the subgroup that fixes the dessin contains $\Gal(\QQ/K)$. Since inclusions are reversed by the Galois correspondence, the field of moduli is contained in any such $K$.

Suppose a dessin's field of moduli was indeed a field of definition. Then, it would be the minimal field of definition. Now, the question is: when is a dessin defined over its field of moduli? Actually, not every dessin is defined over its field of moduli. There is an example of this phenomenon in \cite{rabbit} and on section \ref{notdefined}. Nonetheless, some dessins are defined over their field of moduli, such as regular dessins and dessins with no automorphisms. We will see this in short time.

\noindent Let us give an interpretation of the field of moduli in terms of subgroups of $\gall$, as we did with the field of definition.

As with any semidirect product, we can write the elements of $\gall=\gal \ltimes \Fd$ in a unique way as $\wt \sigma g$, which we can write as $(\sigma,g)\in \gal\times \Fd$ (we are not saying that the lifting of $\gal$ is unique, only that given this lifting, the way of writing the elements as pairs is unique), and the product is given by

$$
(\sigma_1,g_1)(\sigma_2,g_2)=\wt\sigma_1g_1\wt\sigma_2g_2=\wt \sigma_1 \wt \sigma_2 \wt \sigma_2^{-1} g_1\wt\sigma_2g_2=\left(\sigma_1\sigma_2,g_1^{\sigma_2}g_2\right)
$$
Now, suppose some $\sigma$ fixes a dessin given by an open subgroup $H$. This means that $H^\sigma=gHg^{-1}$ for some $g\in \Fd$. If we see $H$ as a subgroup of $\gall$, this means that, for every $h\in H$,
$$
(\sigma,g)^{-1}(1,h)(\sigma,g)=\left(\sigma^{-1},(g^{-1})^{\sigma^{-1}}\right)(1,h)(\sigma,g)=
(1,g^{-1}h^{\sigma}g)
\in 1\ltimes H
$$
So we see that $\sigma\in \mathrm{St} H$ if and only if there is some $g\in \Fd$ such that $(\sigma,g)\in N_{\gall}(H)$.

To shorten the notation, we will call $G=\gall$.

Therefore, the field of moduli can be seen as the fixed field of the projection onto $\gal$ of the group $N_{G}(H)$.

\begin{lema}
The field of moduli of a dessin corresponding to an open subgroup $H<\Fd$ is the projection onto $\gal$ of $N_{G}(H)$.
\end{lema}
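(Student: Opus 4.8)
The plan is to reduce the statement to the equivalence already isolated in the paragraph preceding the lemma: for $\sigma\in\gal$, one has $\sigma\in\mathrm{St}\,H$ if and only if $(\sigma,g)\in N_{G}(H)$ for some $g\in\Fd$. Writing $p:G=\gal\ltimes\Fd\lra\gal$ for the projection onto the first factor, this equivalence says precisely that $\mathrm{St}\,H=p\bigl(N_{G}(H)\bigr)$. Since the field of moduli is by definition the fixed field of $\mathrm{St}\,H$, it is then the fixed field of $p\bigl(N_{G}(H)\bigr)$, which is the assertion.

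To establish the equivalence cleanly I would argue by double inclusion using the conjugation formula already computed, $(\sigma,g)^{-1}(1,h)(\sigma,g)=(1,g^{-1}h^\sigma g)$. The key observation is that conjugating an element $(1,h)$ of $1\ltimes H$ by $(\sigma,g)$ always returns to $1\ltimes\Fd$, the first coordinate collapsing back to $1$; hence the condition $(\sigma,g)\in N_{G}(H)$ is a condition entirely inside $\Fd$, namely $g^{-1}H^\sigma g=H$, i.e. $H^\sigma=gHg^{-1}$. (Because $H$ and $H^\sigma$ are open subgroups of $\Fd$ of the same index — the Galois action preserves the degree of the cover — verifying the single inclusion $g^{-1}H^\sigma g\subseteq H$ already forces equality.) Conversely, $\sigma\in\mathrm{St}\,H$ means exactly that the $\Fd$-conjugacy class of $H$ is fixed, i.e. $H^\sigma=gHg^{-1}$ for some $g\in\Fd$, and running the formula backwards exhibits $(\sigma,g)\in N_{G}(H)$. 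The two directions together give $\mathrm{St}\,H=p\bigl(N_{G}(H)\bigr)$.

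There is essentially no hard analytic content here, since the group computation has already been carried out; the one point that deserves care is that $p\bigl(N_{G}(H)\bigr)$ be a \emph{closed} subgroup of $\gal$, so that the Galois correspondence legitimately attaches a fixed field to it. This is automatic: we already know $\mathrm{St}\,H$ is closed in $\gal$, and it coincides with $p\bigl(N_{G}(H)\bigr)$. Alternatively, one may argue directly that $G$ is profinite, hence compact, that $N_{G}(H)$ is closed as the normalizer of the closed subgroup $H$, and that the continuous image of a compact set under $p$ is compact and therefore closed. With closedness in hand, the fixed fields of $\mathrm{St}\,H$ and of $p\bigl(N_{G}(H)\bigr)$ agree, which completes the proof.
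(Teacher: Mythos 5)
Your proof is correct and follows essentially the same route as the paper, which obtains the lemma directly from the semidirect-product computation $(\sigma,g)^{-1}(1,h)(\sigma,g)=(1,g^{-1}h^{\sigma}g)$ and the resulting equivalence $\sigma\in\mathrm{St}\,H \Longleftrightarrow \exists\, g\in\Fd \text{ with } (\sigma,g)\in N_{G}(H)$. The two extra points you supply—that the single inclusion $g^{-1}H^{\sigma}g\subseteq H$ forces equality because conjugates and Galois conjugates of $H$ have the same finite index, and that $p\bigl(N_{G}(H)\bigr)$ is closed so the Galois correspondence applies—are sound refinements of details the paper treats implicitly (closedness of $\mathrm{St}\,H$ was already noted when the field of moduli was defined).
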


\noindent Take any model $\wt H$ for a dessin given by a subgroup $H$. Since $\Fd$ is normal in $G$, for any $h\in \wt H$, $h Hh^{-1}\subset \Fd$, but also, it is contained in $\wt H$. Therefore, $h H \wt h^{-1}\subset  \Fd \cap \wt H=H$, so $\wt H\subset N_G(H)$.

If we have a dessin given by a Belyi pair $(C,f)$ and a subgroup $H$, let us call $\M(H)=\Gal(\QQ/\M(C,f))$. By the second isomorphism theorem,
$$
\M(H)=\frac{N_G(H)}{N_G(H)\cap \Fd}=\frac{N_G(H)}{N_{\Fd}(H)}\cong \frac{N_G(H)\Fd}{\Fd}
$$
\begin{prop}\label{model}
A dessin given by a subgroup $H$ is defined over its field of moduli if and only if the following exact sequence splits:
$$
1\lra \frac{N_{\Fd}(H)}{H} \lra \frac{N_G(H)}{H}\lra \M(H)\lra 1
$$
\end{prop}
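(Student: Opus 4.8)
The plan is to translate the arithmetic statement ``defined over the field of moduli'' into the purely group-theoretic statement ``the sequence splits'', using the dictionary between models and subgroups of $G=\gall$ set up in Lemma \ref{fielddef} together with the two lemmas preceding this proposition. By definition the dessin is defined over its field of moduli $M$ exactly when it admits a model over $M$, that is, by Lemma \ref{fielddef}, an open subgroup $\wt H<G$ with $\wt H\cap\Fd=H$ whose image under the projection $G\lra\gal$ equals $\Gal(\QQ/M)=\M(H)$. The remark immediately preceding the proposition shows that every model satisfies $\wt H\subset N_G(H)$, so I only need to search for such a $\wt H$ in the interval $H\subset\wt H\subset N_G(H)$.

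First I would pass to the quotient $N_G(H)/H$, which makes sense since $H\lhd N_G(H)$. Writing $N_{\Fd}(H)=N_G(H)\cap\Fd$ and recalling from the previous lemma that $\M(H)\cong N_G(H)/N_{\Fd}(H)$, the displayed sequence is exact, with $N_{\Fd}(H)/H$ normal because $\Fd\lhd G$. The heart of the argument is to verify that $\wt H\longmapsto\wt H/H$ is an inclusion-preserving bijection between models of the dessin over $M$ and complements of $N_{\Fd}(H)/H$ in $N_G(H)/H$. Indeed, since $\wt H\subset N_G(H)$ we have $\wt H\cap\Fd=\wt H\cap N_{\Fd}(H)$, so the condition $\wt H\cap\Fd=H$ becomes
$$
(\wt H/H)\cap(N_{\Fd}(H)/H)=1,
$$
while the condition that $\wt H$ project onto $\M(H)$ becomes the condition that $\wt H/H$ project onto $\M(H)$. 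A subgroup meeting the kernel trivially and surjecting onto the quotient is precisely a complement, and conversely the preimage in $N_G(H)$ of a complement is a model over $M$.

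It remains to record that a complement and a splitting section are the same data: given a complement $S$, the restriction of the projection $S\lra\M(H)$ is an isomorphism (trivial kernel, surjective) and its inverse is the desired section, while conversely a section has a complement as its image. Combining these observations, a model over $M$ exists if and only if the sequence splits, which is the assertion.

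The step I expect to be most delicate is not the algebra but the topology: one must ensure the bijection respects the profinite structure, so that ``model'' (an open subgroup, a continuous section) matches ``splitting'' in the category of profinite groups. Here I would use that $M$ is a number field, so $\M(H)=\Gal(\QQ/M)$ is open in $\gal$, and that the automorphism group $N_{\Fd}(H)/H$ of the dessin is finite; the latter forces any complement $S$ to have finite index in $N_G(H)/H$, hence to be open, so its preimage $\wt H$ is an open subgroup of $G$ and a genuine model, while continuity of the resulting section follows since a continuous bijective homomorphism of profinite groups is a homeomorphism.
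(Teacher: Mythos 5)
Your proof is correct and follows essentially the same route as the paper's: both use Lemma \ref{fielddef} together with the fact that every model is contained in $N_G(H)$ to identify models over the field of moduli with complements of $N_{\Fd}(H)/H$ in $N_G(H)/H$, and then observe that complements are the same data as splittings. Your closing paragraph on the profinite topology addresses a point the paper silently ignores; the only caveat is that ``finite index implies open'' requires the complement to be closed, which does hold in your setting because the complement is either an open model modulo $H$ or the image of a continuous section of compact groups, and closed plus finite index implies open by the paper's own proposition on profinite groups.
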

\begin{proof}
Recall that a sequence $1\lra N\lra G \lra H\lra 1$ splits if and only if there is a subgroup $\wt H<G$ such that $\wt H\cap N=1$ and $\wt HN=G$. A sketch of the proof goes as follows: if there is such a group, it maps bijectively onto the quotient $H$, so the inverse of this map is the desired section. Conversely, if there is a section, its image will be the subgroup $\wt H$.

Therefore, we are saying that a field is defined over its field of moduli if and only if there is a subgroup $\overline{\wt H}<N_G(H)/H$ such that
$$
\o{\wt H}\cap \frac{N_{\Fd}(H)}{H}=1;\o{\wt H}\frac{N_{\Fd}(H)}{H}= \frac{N_G(H)}{H}
$$
If we call $\wt H$ the preimage of this group by the quotient by $H$, then $\wt H<N_G(\Fd)$, and
$$
\wt H \cap N_{\Fd}(H)=H;\wt HN_{\Fd}(H)=N_G(H)
$$
Therefore, if we look back to lemma \ref{fielddef}, we have that $\wt H$ is a model for $H$. The field of definition of this model is the one with Galois group $\wt H/(\wt H\cap \Fd)$. Since $\wt H<N_G(H)$, then $\wt H\cap \Fd\subset N_{\Fd}(H)$, so $\wt H \cap \Fd=H$, and therefore the Galois group of the model's field of definition is $\wt H/H\cong \M(H)$ (since $\wt H$ maps bijectively onto the fourth term in the exact sequence). In conclusion, if the sequence splits, there is a model defined over the field of moduli.

Reciprocally, if we have a model $\wt H$ such that $\wt H/(\Fd\cap \wt H)\cong \M(H)$, this means that $\wt H \Fd=N_G(H)\Fd$. Since $\wt H\subset N_G(H)$ for any model, this means that $\wt H N_{\Fd}(H)=N_G(\wt H)$. Also, since $\wt H$ is a model,
$$
\wt H\cap N_{\Fd}(H)=\wt H \cap \Fd=H
$$
So a model defined over the field of moduli splits the exact sequence.
\end{proof}

\noindent As a corollary, we see that both regular dessins and dessins without automorphisms are defined over their fields of moduli.

\begin{corol}
Regular dessins and dessins without (nontrivial) automorphisms are defined over their fields of moduli.
\end{corol}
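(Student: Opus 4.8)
The plan is to deduce both statements from Proposition \ref{model}, which reduces ``defined over the field of moduli'' to the splitting of the exact sequence
$$
1\lra \frac{N_{\Fd}(H)}{H} \lra \frac{N_G(H)}{H}\lra \M(H)\lra 1,
$$
where $H<\Fd$ is the open subgroup defining the dessin and $G=\gall$. In each of the two cases I will produce a section of this sequence directly, rather than invoking any general descent machinery.

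For a dessin without nontrivial automorphisms, recall from the discussion after Proposition \ref{regular} that the automorphism group is $N_{\Fd}(H)/H$. The hypothesis says this group is trivial, i.e. $N_{\Fd}(H)=H$, so the kernel of the sequence vanishes. Hence the projection $N_G(H)/H\lra \M(H)$ is an isomorphism, and the sequence splits trivially (its inverse is the required section). By Proposition \ref{model} the dessin is defined over its field of moduli.

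For a regular dessin, $H\tl \Fd$ by Proposition \ref{regular}. The key claim is that for every $\sigma\in \M(H)$ the canonical lift $\wt\sigma$ already lies in $N_G(H)$. Indeed, $\sigma\in \M(H)$ means that $H^\sigma$ is conjugate to $H$ in $\Fd$; and $H^\sigma=\wt\sigma^{-1}H\wt\sigma$ is the image of $H$ under conjugation by $\wt\sigma$, which is an automorphism of $\Fd$ because $\Fd\tl G$. An automorphism sends the normal subgroup $H$ to a normal subgroup, so $H^\sigma\tl \Fd$; being simultaneously normal in $\Fd$ and $\Fd$-conjugate to the normal subgroup $H$ forces $H^\sigma=H$. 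Thus $\wt\sigma$ normalizes $H$. The assignment $\sigma\mapsto \wt\sigma H$ is then a well-defined map $\M(H)\lra N_G(H)/H$; it is a homomorphism because $\wt{}$ is a homomorphism by Theorem \ref{split}, and composing it with the projection to $\gal$ recovers the identity on $\M(H)$. Hence it is a section, the sequence splits, and Proposition \ref{model} again yields the conclusion.

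The main obstacle is the regular case, and specifically the claim that the canonical lift $\wt\sigma$ normalizes $H$ rather than merely carrying it to an $\Fd$-conjugate. This is exactly where normality of $H$ is used: for a general dessin one only obtains some $(\sigma,g)\in N_G(H)$ with $g\in \Fd$ depending on $\sigma$, and choosing these $g$ compatibly in $\sigma$ is an obstruction that need not vanish. Normality collapses the $\Fd$-conjugacy class of $H$ to the single subgroup $H$, so no choice of $g$ is needed and the section is forced upon us by the splitting $\wt{}$ of Theorem \ref{split}.
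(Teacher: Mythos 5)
Your proof is correct and follows essentially the same route as the paper: both cases are reduced to the splitting criterion of Proposition \ref{model}, with the no-automorphism case splitting trivially and the regular case split by the canonical lifts coming from Theorem \ref{split} (the paper phrases this as taking $\wt H=\M(H)\ltimes H$). If anything, your write-up is more careful than the paper's, since you explicitly verify the point the paper leaves implicit — that for $\sigma\in\M(H)$ the lift $\wt\sigma$ actually normalizes $H$, because $H^\sigma$ is a normal subgroup of $\Fd$ that is $\Fd$-conjugate to the normal subgroup $H$ and hence equal to it.
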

\begin{proof}
Take a dessin given by $H<\Fd$. Recall, from the proof of proposition \ref{regular}, that a dessin's automorphism group is $N_{\Fd}(H)$. In the case where a dessin has no automorphisms, this group is 1, so the exact sequence in the last lemma becomes
$$
1\lra 1\lra N_G(H)/H\lra \M(H)\lra 1
$$
So it is automatically split.

On the other hand, if a dessin is regular, $N_{\Fd}(H)=H$, so the sequence is
$$
1\lra \Fd/H\lra N_G(H)/H \lra \M(H)\lra 1
$$
In this case, $N_G(H)$ contains $\Fd$, so the sequence will be split because $\gall=\gal \ltimes  \Fd$. If we take $\wt H=\M(H)\ltimes H$, it will be the section we are looking for.
\end{proof}
\noindent These two results can be seen in Wolfart's survey \cite{abc}, where he uses Theorem 1 in Weil's paper \cite{weil}. This Theorem, in our case, states the following.
\begin{prop}
A Belyi pair $(C,f)$ is defined over its field of moduli $\M(C,f)$ if and only if for every $\sigma\in \Gal(\QQ/\M(C,f))$, there is an isomorphism
$$
\phi_\sigma:(C,f)\lra (C^\sigma,f^\sigma)
$$
Such that, for every $\sigma, \tau\in \Gal(\QQ/\M(C,f))$, the following holds:
$$
\phi_{\tau\sigma}=\phi_\sigma^\tau\circ \phi_\tau
$$
\end{prop}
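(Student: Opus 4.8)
The plan is to recognize this proposition as Weil's descent criterion specialized to dessins, and to reduce it to the splitting criterion already proved in Proposition \ref{model}, rather than redoing Weil's descent from scratch. Throughout I keep the dictionary of the previous sections: the dessin $(C,f)$ corresponds to an open subgroup $H<\Fd$, its conjugate $(C^\sigma,f^\sigma)$ corresponds to $H^\sigma$, its automorphism group is $N_{\Fd}(H)/H$, and $\Gal(\QQ/\M(C,f))=\M(H)$ is the image in $\gal$ of $N_G(H)$, where $G=\gall=\gal\ltimes\Fd$.

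First I would set up the dictionary between isomorphisms and group elements. For $\sigma\in\M(H)$ the very definition of the field of moduli guarantees that $(C,f)$ and $(C^\sigma,f^\sigma)$ are isomorphic, so some $\phi_\sigma$ always exists; the content of the proposition is whether the $\phi_\sigma$ can be chosen \emph{compatibly}. As recalled just before Proposition \ref{model}, an isomorphism $(C,f)\lra(C^\sigma,f^\sigma)$ is exactly the datum of a coset $n_\sigma H\in N_G(H)/H$ whose image in $\gal$ is $\sigma$: the element $n_\sigma=(\sigma,g_\sigma)$ records the conjugation $g_\sigma\in\Fd$ carrying $H$ to $H^\sigma$, and two such $g_\sigma$ yield the same isomorphism precisely when they differ by $N_{\Fd}(H)$, i.e.\ when the cosets $n_\sigma H$ agree. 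Thus a family $(\phi_\sigma)_{\sigma\in\M(H)}$ is the same as a set-theoretic section $s\colon\M(H)\lra N_G(H)/H$ of the projection appearing in Proposition \ref{model}.

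The heart of the argument is then to check that the cocycle condition $\phi_{\tau\sigma}=\phi_\sigma^\tau\circ\phi_\tau$ translates into $s$ being a \emph{group homomorphism}. Here one composes $\phi_\tau\colon(C,f)\lra(C^\tau,f^\tau)$ with the Galois twist $\phi_\sigma^\tau\colon(C^\tau,f^\tau)\lra(C^{\sigma\tau},f^{\sigma\tau})$ and reads the result off in $N_G(H)/H$, using $(f^\sigma)^\tau=f^{\sigma\tau}$, $(\phi\circ\psi)^\sigma=\phi^\sigma\circ\psi^\sigma$, and the semidirect-product law $(\sigma_1,g_1)(\sigma_2,g_2)=(\sigma_1\sigma_2,g_1^{\sigma_2}g_2)$: the twisted composite corresponds to the product of the group elements attached to $\phi_\sigma$ and $\phi_\tau$, so the cocycle relation becomes multiplicativity of $s$. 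I expect this bookkeeping to be the main obstacle. One must keep the right-action convention for $\gal$, the placement of the indices $\tau\sigma$ versus $\sigma\tau$, and the side on which $N_{\Fd}(H)$ acts all mutually consistent so that the relation matches $s(\alpha\beta)=s(\alpha)s(\beta)$ on the nose, and confirm that the ambiguity in each individual $\phi_\sigma$ is killed exactly upon passing to the quotient by $H$.

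Finally I would assemble the equivalences. A homomorphic section $s$ of $N_G(H)/H\lra\M(H)$ is by definition a splitting of the exact sequence $1\lra N_{\Fd}(H)/H\lra N_G(H)/H\lra\M(H)\lra1$, and Proposition \ref{model} states that this sequence splits if and only if $(C,f)$ is defined over its field of moduli; chaining the two equivalences yields the statement. As a consistency check for the forward direction, a model over $\M(C,f)$ makes $(C^\sigma,f^\sigma)=(C,f)$ for every $\sigma\in\Gal(\QQ/\M(C,f))$, so the identity furnishes trivially cocycle-compatible descent data for that model, and transporting it along a fixed isomorphism to the given $(C,f)$ produces the required family $(\phi_\sigma)$ directly.
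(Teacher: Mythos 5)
Your proposal is correct and is essentially the paper's own proof: the paper likewise sets up the dictionary $\phi_\sigma \leftrightarrow (\sigma,g_\sigma)$ with $H^\sigma=g_\sigma Hg_\sigma^{-1}$, identifies the compatibility condition $\phi_{\tau\sigma}=\phi_\sigma^\tau\circ\phi_\tau$ with multiplicativity $g_{\tau\sigma}=g_\tau^\sigma g_\sigma$ via the semidirect-product law, and concludes by the splitting criterion of Proposition \ref{model}. One slip to fix: two valid conjugators $g_\sigma$ always differ by $N_{\Fd}(H)$, but they induce the \emph{same} isomorphism precisely when they differ by $H$ (as your own clause ``the cosets $n_\sigma H$ agree'' states, and as the rest of your argument correctly uses); the residual ambiguity $N_{\Fd}(H)/H$ is exactly $\Aut(C,f)$.
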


\noindent If we look at this proposition closely, we can see that the condition is the same as the splitting of our exact sequence. Suppose we have a splitting for a subgroup $H$, with a model $\wt H$. Then, for every $\sigma\in \M(H)$, there is some $g_\sigma\in \Fd$ such that $(\sigma,g_\sigma)\in \wt H$. Recall that this $g_\sigma$ had to verify that $H^\sigma=g_\sigma Hg_\sigma^{-1}$. Therefore,
$$g_\sigma\left(\KK^{H^\sigma}\right)=
g_\sigma\left(\KK^{g_\sigma H g_\sigma^{-1}}\right)=\KK^H$$
So $g_\sigma$ gives an isomorphism from $\KK^{H^\sigma}$ to $\KK^H$. Also, the choice of $g_\sigma$ doesn't change this isomorphism: take another $g_\sigma'$. Then, $(g_\sigma',\sigma)^{-1}(g_\sigma,\sigma)\in \wt H\cap \Fd=H$, so, for some $h\in \Fd$,
$$
(1,h)=\left(\sigma^{-1},({g_{\sigma}'}^{-1})^{\sigma^{-1}}\right)(\sigma,g_\sigma)=(1,g_\sigma'^{-1}g_\sigma )
$$
So $g_\sigma\in g_\sigma' H$, or equivalently, $g_\sigma\in g_\sigma'Hg_\sigma'^{-1}g_\sigma'=H^{\sigma}g_\sigma'$. Therefore, $g_\sigma g_\sigma'^{-1}\in \Gal(\KK/\KK^{H^\sigma})$, so their actions on the field $\KK^{H^\sigma}$ are the same, i.e. for any $f\in\KK^{H^\sigma}$, $f^{g_\sigma}=f^{g_\sigma'}$.

If we define $\phi_\sigma$ to be the morphism from $\QQ(C)$ to $\QQ(C^\sigma)$ such that $\phi_\sigma^*=g_\sigma$, then the condition is satisfied, as we can see. $\wt H$ is a subgroup, so
$$
(\tau\sigma,g_{\tau\sigma})=(\tau,g_\tau)(\sigma,g_\sigma)=(\tau\sigma,g_\tau^\sigma g_\sigma)
$$
So $g_{\tau\sigma}= g_\tau^\sigma g_\sigma=g_\sigma\circ g_\tau^\sigma $. If we make $\phi_\sigma=g_\sigma^*$, and the same for $\tau$, we have that
$$
\phi_{\tau\sigma}=\phi_\sigma^\tau\circ \phi_\tau
$$
Reciprocally, suppose such $\phi_{\sigma}$ exist. Then, we can follow this reasoning backwards to pick every $g_\sigma$ for each $\sigma\in \M(H)$, such that $\phi_\sigma^*=g_\sigma|_{\QQ(C^\sigma)}$. Then, the set
$$
\{(\sigma,g_\sigma):\sigma\in \M(H), \phi_\sigma^*=g_\sigma|_{\QQ(C^\sigma)}\}\subset G
$$
Will be a subgroup, because of the compatibility condition, and thus there will be a splitting of the sequence. 

\section{Dessins with one face}

In this section we are going to prove the following.

\begin{teor}
Dessins d'enfants with one face are defined over their field of moduli.
\end{teor}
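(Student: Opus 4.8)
The plan is to verify the splitting criterion of Proposition \ref{model}. Writing the dessin as an open subgroup $H<\Fd$ with field of moduli $\M(C,f)$ and $G=\gall$, I must show that
\[
1\lra N_{\Fd}(H)/H\lra N_G(H)/H\lra \M(H)\lra 1
\]
splits. The one-face hypothesis means that $f$ has a single pole $P_\infty$, totally ramified of index equal to the degree $n$; combinatorially, the monodromy $z=(xy)^{-1}$ around $\infty$ acts on the fibre $H\backslash\Fd$ as a single $n$-cycle, so $P_\infty$ is the \emph{unique} preimage of the $\Q$-rational branch point $\infty\in\P^1$.

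First I would pin down the automorphism group $A:=N_{\Fd}(H)/H$. Every automorphism fixes the unique pole $P_\infty$ and commutes with the monodromy action, hence with $z$; since $z$ is an $n$-cycle, its centraliser in the symmetric group of the fibre is cyclic of order $n$, so $A$ is cyclic of order $m\mid n$, generated by the deck transformation induced by $z^{n/m}$. Because a finite-order automorphism fixing a point acts faithfully on the tangent line $T_{P_\infty}C$, the rotation of the $n$ sectors around $P_\infty$ identifies $A$ canonically with the group $\mu_m$ of $m$-th roots of unity in $\QQ^\times$. This identification is moreover $\M(H)$-equivariant for the cyclotomic action: if $\alpha$ acts on $T_{P_\infty}C$ by $\zeta_m$, then $\alpha^\sigma$ acts on $T_{P_\infty^\sigma}C^\sigma$ by $\zeta_m^\sigma=\zeta_m^{\chi(\sigma)}$, so $A\cong\mu_m$ as $\M(H)$-modules, precisely because $\infty$ is rational.

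The extension above is then an extension of $\M(H)=\Gal(\QQ/\M(C,f))$ by the cyclotomic module $\mu_m$, and its splitting is governed by a class in $H^2(\M(H),\mu_m)$. Here I would feed in the Kummer sequence $1\to\mu_m\to\QQ^\times\xrightarrow{\,m\,}\QQ^\times\to1$: since $H^1(\M(H),\QQ^\times)=0$ by Hilbert 90, the connecting map embeds $H^2(\M(H),\mu_m)$ into $\mathrm{Br}(\M(C,f))[m]$, and under this embedding our class becomes exactly the obstruction to descending the cotangent line $L=T^*_{P_\infty}C$, equipped with the descent data $\psi_\sigma=(d\phi_\sigma)^*$ (the $\phi_\sigma\colon(C,f)\to(C^\sigma,f^\sigma)$ existing because $\sigma$ fixes the dessin), to a line over $\M(C,f)$. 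Thus it suffices to prove that $L$ descends.

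The hard part is exactly this vanishing, and it is where the totally ramified rational point does the work. One structural fact comes for free: $1/f$ has a zero of order exactly $n$ at $P_\infty$, so it gives a canonical, descent-stable generator of $L^{\otimes n}=\m_{P_\infty}^{\,n}/\m_{P_\infty}^{\,n+1}$, whence $L^{\otimes n}$ descends; but since the class is already $m$-torsion ($A$ has exponent $m\mid n$) this alone is not enough. The decisive idea is that marking a tangent vector $v\in T_{P_\infty}C$ kills every automorphism, because $A\hookrightarrow\QQ^\times$ acts faithfully on it; the rigidified object $(C,f,v)$ then has no nontrivial automorphisms and so is defined over its field of moduli by the case already proved. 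I would therefore try to descend $(C,f)$ by selecting, Galois-compatibly, one vector from the $\mu_n$-torsor of $v$ with $v^{\otimes n}$ the canonical generator of $(L^\vee)^{\otimes n}$, and then pushing forward along the forgetful map $(C,f,v)\mapsto(C,f)$; equivalently, showing that the descent-stable rational point $P_\infty$ neutralises the $\mathrm{Br}$-class. I expect this last reduction — certifying that the single totally ramified rational branch point genuinely splits the sequence, rather than merely trivialising $L^{\otimes n}$ — to be the main obstacle, the identification $A\cong\mu_m$ being the structurally routine part.
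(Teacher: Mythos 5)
Your reduction is set up correctly and, up to a point, parallels the paper: you invoke Proposition \ref{model}, you identify $N_{\Fd}(H)/H$ as a cyclic group of order $m\mid n$ generated by a power of $z$ (this is exactly what the one-face hypothesis $\langle z\rangle H=\Fd$ gives), and you identify its $\M(H)$-module structure as cyclotomic via the action on the (co)tangent line at the unique point over $\infty$ --- the paper proves precisely this in Proposition \ref{accionrara}. But there is a genuine gap at the decisive step, and you flag it yourself: nothing in the proposal actually proves that the class in $H^2(\M(H),\mu_m)$ vanishes. The Hilbert 90/Kummer embedding into $\mathrm{Br}(\M(C,f))[m]$ only converts the problem; it does not solve it. Worse, the proposed rigidification is circular: choosing a tangent vector $v$ with $v^{\otimes n}$ equal to the canonical generator determined by $1/f$, ``Galois-compatibly,'' is exactly the problem of trivializing a $\mu_n$-torsor compatibly with the descent data $\phi_\sigma$, and by Kummer theory $H^1(\GK,\mu_n)\cong K^\times/(K^\times)^n\neq 0$, so there is no free lunch here --- the obstruction you would meet is the same extension class restated. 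Likewise, the rigidified object $(C,f,v)$ is not a dessin, so the corollary about dessins without automorphisms cannot be quoted for it; one would first need an isomorphism $(C,f,v)\lra(C^\sigma,f^\sigma,v^\sigma)$ to exist, which is again the point at issue.

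What the paper supplies, and your proposal lacks, is an ambient \emph{split} group in which the extension sits. By Theorem \ref{split}, $\Gal(\KK/\Q(t))=\gal\ltimes\Fd$, and the stabilizer $S_\infty$ of the canonical point $\wh P_\infty$ over $\infty$ is itself a split extension $S_\infty\cong\gal\ltimes\ZZZ$, where $\ZZZ=\zz$ with the cyclotomic action. The one-face hypothesis guarantees that $U=N_G(H)\cap S_\infty$ still surjects onto $\M(H)$ (every coset of $N_{\Fd}(H)$ meets $\zz$), with $U\cap\ZZZ=m\ZZZ$. The paper's Lemma \ref{cohom} then shows each finite-level extension $1\lra m\ZZZ/ml\ZZZ\lra U/ml\ZZZ\lra\GK\lra 1$ splits, not by a geometric descent of a line, but because its pushforward along $\mu_l\cong m\ZZZ/ml\ZZZ\hookrightarrow\ZZZ/ml\ZZZ\cong\mu_{ml}$ is a subextension of the \emph{visibly split} group $\Gal(\QQ/K)\ltimes\ZZZ$ modulo $ml\ZZZ$, hence trivial; injectivity of the pushforward $i_*$ on $H^2$ (your same Hilbert 90/Kummer argument) then kills the original class. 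A second application of Hilbert 90, at the level of $H^1$, adjusts the finite-level sections so that they are compatible in the inverse limit, producing the profinite section $T$ and hence the model $\wh H=TH$. So the missing idea is not more cohomology but the group-theoretic input that the normalizer extension embeds (after intersecting with $S_\infty$) into an extension that is split by construction; without some such input, your Brauer class has no reason to die, and indeed for dessins with more than one face it generally does not (Section \ref{notdefined}).
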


\noindent This is a slight generalization of the theorem that dessins that are trees embedded in $\P^1$, i.e. dessins with one face and genus 0, are defined over their field of moduli. This result can be found in \cite{Lando}, with a different proof than the one we are giving here. In the book, it says that the theorem was first published in \cite{couv}, but it appeared earlier in some unpublished notes in Russian by Shabat for a seminar on dessins d'enfants. I have not read these notes, but apparently they used Galois cohomology, so his ideas are probably similar to the ones presented here.

Before we start with this, we are going to look at how the Galois group acts on ramified points.

Let us call $\wh P$ the valuation of $\KK$ which restricts to $P_H$ on every subfield $\KK^H$. On each dessin, $P_H$ lies on an edge (since we chose it to be a preimage of $1/2$, but it really doesn't matter). This edge lies in some white triangle, that is, we take the black and the white vertices that are its endpoints, and out of the two faces that contain the edge, we choose the one that has a star vertex that makes the vertices of the triangle be in counterclockwise order black-white-star, so that the triangle will be white. If, for each dessin, we choose these points as canonical preimages of $0$, $1$ and $\infty$, it is clear that, for example, for 0, we will obtain points $\{Q_H\}_H$ compatible with the covers between dessins, so the family of points gives rise to a valuation in $\KK$, which we can call $\wh P_0$. Similarly, we obtain $\wh P_1$ and $\wh P_\infty$.

We are going to look at the stabilizer of $\wh P_\infty$ in $\Fd$, by the action of $\Fd$ on regular dessins. Note that, in every dessin, the monodromy action of $z=(xy)^{-1}$ is a rotation around the face which contains $P_\infty$. Therefore, this stabilizer contains $\langle z\rangle$, the (closed) subgroup generated by $z$. Let us prove that this is the whole stabilizer.

Suppose that $g\in \Fd$ fixes $\wh P_\infty$. Take a regular dessin given by $H\tl \Fd$. The automorphism of $D$ given by $g$ maps $P_H$ to some point $g(P_H)$, which must lie on one of the edges of the face that contains $P_\infty$ for it to fix $P_\infty$. Thus, $g(P_H)=z^m(P_H)$ for some $m$. Since the automorphism group of a dessin acts faithfully, it follows that $g \equiv z^m \mod H$. Therefore, $g$ is a power of $z$ in every quotient $\Fd/H$, so it belongs to the closed group that $z$ generates.

Note that this group is isomorphic to $\wh \Z$, so we can write things like $z^n$, where $n\in \wh \Z$. Also, $\wh \Z$ has a natural ring structure.

\begin{prop}\label{accionrara}
Let $\sigma \in \gal$, and let it act on $\Fd$ on the way we have defined in theorem \ref{split}. Then, there exist $g_x,g_y,g_z\in \Fd$ such that
$$
x^\sigma=g_x^{-1} x^\alpha g_x;y^\sigma=g_y^{-1} y^\alpha g_y;z^\sigma=g_z^{-1} z^\alpha g_z
$$
Where $\alpha\in \wh \Z$ is such that, for every root of unity $\xi\in \QQ$, $\xi^\sigma=\xi^\alpha$.
\end{prop}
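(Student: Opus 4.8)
The plan is to prove the statement for $x$ and then invoke the symmetry among $0,1,\infty$ (equivalently among $x,y,z$) to obtain $g_y,g_z$. The starting point is the observation made just before the proposition for $z$ and $\wh P_\infty$, which applies verbatim to $x$ and $\wh P_0$: the closed subgroup $\langle x\rangle\cong\wh\Z$ is exactly the stabilizer of the base black vertex $\wh P_0$ under the action of $\Fd$ on regular dessins, because the monodromy of $x$ is a rotation about a preimage of $0$ and the automorphism group of each regular dessin acts faithfully and transitively on its black vertices. More generally, the stabilizer of any valuation of $\KK$ restricting to a preimage of $0$ on every dessin is a conjugate of $\langle x\rangle$.

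First I would locate $x^\sigma$ up to conjugacy. Since $x^\sigma=\wt\sigma^{-1}x\wt\sigma$ and $x$ fixes $\wh P_0$, the element $x^\sigma$ fixes $\wh P_0^{\wt\sigma}$. As $\sigma$ preserves ramification over $0$, this is again a valuation restricting to a preimage of $0$ on each dessin, so by transitivity there is some $g_x\in\Fd$ with $\wh P_0^{\wt\sigma}=\wh P_0^{g_x^{-1}}$, whose stabilizer is $g_x^{-1}\langle x\rangle g_x$. Hence $x^\sigma=g_x^{-1}x^{u}g_x$ for some $u\in\wh\Z$, and it only remains to identify $u=\alpha$.

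The heart of the argument is a computation in the cyclic Kummer dessins $D_n$ given by $\QQ(t^{1/n})/\QQ(t)$, i.e.\ the Belyi map $w\mapsto w^n$ with $w=t^{1/n}$, which is regular with group $\Z/n$. Here the restriction of $x$ is the local monodromy $w\mapsto\zeta_n w$ for the standard primitive root $\zeta_n$, and $x$ has order exactly $n$ on $D_n$. Writing $\wt\sigma(w)=\eta\,w$ for the $n$-th root of unity $\eta$ forced by the base-point normalization, and using that $\wt\sigma$ acts as $\sigma$ on $\QQ$ while $x$ fixes $\QQ$, I would compute the action of $\wt\sigma^{-1}x\wt\sigma$ on $w$ in three steps. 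The two factors coming from $\eta$ (one from $\wt\sigma^{-1}$, one from $\wt\sigma$) cancel, and what survives is $\zeta_n^\sigma=\zeta_n^\alpha$; thus $x^\sigma$ acts on $w$ as $w\mapsto\zeta_n^\alpha w$, that is, $x^\sigma|_{D_n}=(x|_{D_n})^\alpha$.

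To finish, I would combine the two pieces: restricting the identity $x^\sigma=g_x^{-1}x^{u}g_x$ to the abelian quotient $D_n$ kills the conjugation, giving $x^{u}|_{D_n}=x^\sigma|_{D_n}=x^\alpha|_{D_n}$. Since $x$ has order $n$ on $D_n$, this forces $u\equiv\alpha\pmod n$; as this holds for every $n$ and $\langle x\rangle\cong\wh\Z=\varprojlim\Z/n$ is recovered from the $D_n$, we conclude $u=\alpha$ and hence $x^\sigma=g_x^{-1}x^\alpha g_x$. The identical argument with $1$ and $\infty$ in place of $0$ produces $g_y$ and $g_z$. The main obstacle I anticipate is the careful bookkeeping of base points in the Kummer computation, namely checking that the normalizing roots of unity $\eta$ genuinely cancel and that $\wh P_0^{\wt\sigma}$ lies in the $\Fd$-orbit of $\wh P_0$, since it is precisely there that the cyclotomic character is extracted cleanly from the otherwise base-point-dependent action.
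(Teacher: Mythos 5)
Your proposal is correct, and while its first half coincides with the paper's, the second half takes a genuinely different route. Both proofs begin the same way: the stabilizer of $\wh P_0$ in $\Fd$ is $\langle x\rangle$ (the paper's argument for $\wh P_\infty$ and $\zz$ transfers verbatim), $x^\sigma$ fixes $\wh P_0^{\wt\sigma}$, and transitivity yields $x^\sigma=g_x^{-1}x^{u}g_x$ for some $u\in\wh\Z$. The divergence is in identifying $u$. The paper works locally on an arbitrary Galois-invariant regular dessin: the map that $x$ induces on the cotangent space $\m_{P_0}/\m_{P_0}^2$ is multiplication by a primitive root of unity $\xi$, and comparing the eigenvalue $\xi^\sigma$ of $x^\sigma$ at $P_0^{\wt\sigma}$ with the eigenvalue $\xi^u$ coming from conjugation forces $u=\alpha$ level by level. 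You instead restrict to the cyclic Kummer dessins $t=w^n$, i.e.\ the abelian quotients $\Fd\lra\Z/n\Z$: there conjugation by $g_x$ is invisible, $x$ acts as $w\mapsto\zeta_n w$, and your three-step computation is correct under the paper's right-action conventions (the normalizing root of unity $\eta$ does cancel, for any choice of $\eta$), giving $x^\sigma|_{D_n}=(x|_{D_n})^\alpha$, hence $u\equiv\alpha \bmod n$ for every $n$ and $u=\alpha$ in $\wh\Z$. Your route is more elementary --- no discrete valuation rings or cotangent spaces --- and it makes the cyclotomic character appear transparently as the action on ratios of $n$-th roots of $t$; the key insight it exploits is that $u$ is already a single globally defined element of $\wh\Z$ after step one, so abelian quotients suffice to detect it. What the paper's local argument buys is that it never needs transitivity of $\Fd$ on compatible families of preimages of $0$ in $\KK$, which is the one point you gloss over: it holds, but requires the standard compactness argument (the transporters at each finite level are nonempty closed sets forming an inverse system, hence have nonempty intersection). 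Two small touch-ups: the fact that $\wt\sigma$ preserves $\QQ(w)=\KK^{H_n}$ follows from $H_n$ being normal in $\Fd$ and the Kummer dessin being defined over $\Q$ (so $H_n^\sigma=H_n$), rather than from base-point normalization, which only pins down the irrelevant $\eta$; and for $z$ you need not build a third Kummer family, since on $D_n$ itself $z$ acts as $(x|_{D_n})^{-1}$, which is again a faithful generator, so the same quotients detect the exponent of $z^\sigma$.
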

\begin{proof}
Take some open $H\tl \Fd$. We can assume, by passing to a smaller group that is contained in $H$, that it is invariant under the Galois action, by taking
$$
\bigcap_{g\in G} H^g
$$
This is clearly a normal subgroup and it is Galois invariant. There are also a finite number of different groups in the intersection, so it is of finite index. We are going to call this subgroup $H$, and note that its field of moduli is $\Q$, and, since it is regular, its field of definition is also $\Q$.

Call $P_0$ the restriction of $\wh P_0$ to the corresponding Galois extension, and $P$ for the restriction of $\wh P$. We know that $x(P_0)=P_0$, where $x$ is understood as an automorphism of the dessin. We have that
$$
x^\sigma(P_0^{\wt \sigma})=(x(P_0))^{\wt \sigma}=P_0^{\wt \sigma}
$$
So $x^\sigma$ fixes $P_0^{\wt \sigma}$. Let $g$ be such that $P_0^{\wt \sigma}=g(P_0)$, for some $g\in \Fd/H$. This $g$ exists because the automorphism group acts transitively. Then, $(x^\sigma)^g$ fixes $P_0$, so $(x^\sigma)^g$ lies in the projection of the stabilizer of $P_0$, which is $\langle x\rangle$. Therefore, for some $\alpha$, $x^\sigma=(x^\alpha)^{g^{-1}}$.

The same thing happens for $y$ and $z$, taking the points corresponding to $y$ and $z$. Now, let see what $\alpha$ is, and that is indeed the same for $x$, $y$ and $z$.

Consider the point $P_0$ and its valuation ring $\O_{P_0}$. Since the curve is smooth, it is a DVR, so the space $\m_{P_0}/\m_{P_0}^2$ has dimension 1. Since the Belyi map on a neighborhood of $P_0$ looks like $z\longmapsto z^m$, where $m$ is the order of $x$ in $\Fd/H$, it follows that the map that $x$ induces in the cotangent space $\m_{P_0}/\m_{P_0}^2$ is given by $f \longmapsto \xi f$, where $\xi$ is a primitive $m$-th root of unity.

\noindent Take a function $f$ that has valuation $1$ in $P_0$, that is, if we call $\m_{P_0}$ the ideal of functions vanishing in $P_0$, $f$ belongs to $\m_{P_0}$ but not to $\m_{P_0}^2$. If we apply $x$, we get another function $f^x$, that is also in $\m_{P_0}$, because $x$ fixes $P_0$, and $f^x=\xi f \mod \m_P^2$.

If we apply $\sigma$ to everything, we have that $f^{\wt \sigma} \in \m_{P_0^{\wt \sigma}}\setminus \m_{P_0^{\wt \sigma}}^2$, and, if $\xi'$ is such that $(f^{\wt \sigma})^{x^\sigma}\equiv \xi' f^{\wt \sigma} \mod \m_{P_0^{\wt \sigma}}^2$, then
$$(f^{\wt \sigma})^{x^\sigma}
 +\m_{P_0^{\wt \sigma}}^2=
 \left(f^x +\m_{P_0}^2\right)^{\wt \sigma}=\left({\xi f} +\m_{P_0}^2\right)^{\wt \sigma}=\xi^\sigma f^{\wt \sigma}+\m_{P_0^{\wt \sigma}}^2
$$
So $\xi'=\xi^\sigma$. Therefore, the map that $x^\sigma$ induces on the cotangent space at $P_0^{\wt \sigma}$ is $f\mapsto \xi^\sigma f$.

Now, we know that there is some $g$ such that $x^\sigma=(x^\alpha)^{g^{-1}}$, and $P_0^{\wt \sigma}=g(P_0)$. Then,
$$
\xi^\sigma f^{\wt \sigma}+\m_{P_0^{\wt \sigma}}^2=(f^{\wt \sigma})^{x^\sigma}
 +\m_{P_0^{\wt \sigma}}^2=(f^{\wt \sigma})^{(x^\alpha)^{g^{-1}}}
 +\m_{g(P_0)}^2=(f^{\wt \sigma})^{g(x^\alpha){g^{-1}}}
 +\m_{g(P_0)}^2=(f^{\wt \sigma})^{gx^\alpha g^{-1}}
 +(\m_{P_0}^2)^{g^{-1}}=$$
$$=\left(((f^{\wt \sigma})^g)^{x^\alpha}+\m_{P_0}^2\right)^{g^{-1}} =\left(\xi^\alpha(f^{\wt \sigma})^g+\m_{P_0}^2 \right)^{g^{-1}}=\xi^\alpha f^{\wt \sigma}+\m_{g(P_0)}^2
$$
Therefore, $\xi^\sigma=\xi^\alpha$, so $\alpha$ has the value we claimed.

If we take every $H$ that is normal in $\Fd$ and is Galois invariant, we get two families $\{g_H\}_H$ and $\{\alpha_H\}$ such that $x^\sigma\equiv (x^{\alpha_H})^{g_H} \mod H$. It is clear that these families will give rise to an element of $\Fd$ and an element of $\wh \Z$, since it will follow from the fact that $x^\sigma$ is an element of $\Fd$.
\end{proof}

\noindent In view of the previous proposition, let us define the (right) $\gal$-module $\wh\Z(1)$. As a group, it is $\wh \Z$, and the action of some $\sigma\in \gal$ is multiplication by $\alpha_\sigma\in \wh\Z$, where $\alpha_\sigma$ is such that if we take a root of unity $\xi\in \QQ$, $\xi^\sigma=\xi^{\alpha_\sigma}$.

Let us look at the group $S_\infty=\{g\in G:g(\wh P_\infty)=\wh P_\infty\}$. This is similar to what we did in theorem \ref{split}. There, we considered the subgroup that fixed one point, and it turned out to be a section of $\gall\lra \gal$. Now, this group will turn out to be the semidirect product $\gal \ltimes \wh \Z(1)$, where $\wh \Z(1)$ is the subgroup of $\Fd$ generated by $z$. Since the stabilizer of $P_\infty$ in $\Fd$ is $\langle z\rangle$, $S_\infty\cap\Fd=\langle z\rangle$. Note that its image in $\gal$ is the whole group: if $g\in G$ maps $z$ to $(z^\alpha)^{g_z}$, then $gg_z^{-1}$ will belong to $S_\infty$.

Let us prove that this group is a semidirect product. Consider the group
$$
A=\{g\in G:z^g\in \langle z\rangle,\exists u\in [\Fd,\Fd]\text{ such that } x^g\in \langle x\rangle^u \}
$$
We are going to prove that $A\cap \Fd=1$ and $A\Fd=G$. In particular, it will follow that $A\cap \langle z\rangle=1$ and $A\langle z\rangle=S_\infty$.

$A$ is clearly a subgroup of $G$, so no comments there. Suppose $g\in A\cap \Fd$. Then, $z^g$ must equal $z$, since $z$ is not conjugate to any of its powers: it is not conjugate in the quotient $\Fd/\langle \langle x\rangle \rangle \cong \wh \Z$, so it can't be conjugate in $\Fd$. Therefore, $g$ commutes with $z$. Since the centralizer of $z$ in $\Fd$ is $\zz$ (see \cite{freepro}), it follows that $g\in \langle z\rangle$. But then, we must have, for some $\alpha,\beta\in \wh\Z$ and $u\in [\Fd,\Fd]$, $x^g=x^{z^\beta}=(x^\alpha)^u$. This, as before, implies that $uz^{-\beta}\in \langle x\rangle$, so $u$ must equal $x^\gamma z^\beta$. For $u$ to be in $[\Fd,\Fd]$, $\beta$ and $\gamma$ must equal $0$, and $\alpha$ must equal 1, so $g=1$.

Let us see that $A\Fd=G$. Take some $\sigma\in\gal$. We know that there is some $\alpha\in \wh\Z$ and some $g_z,g_x\in \Fd$ such that
$$
z^\sigma=(z^\alpha)^{g_z};x^\sigma=(x^\alpha)^{g_x}
$$
Take $\sigma'=\sigma g_z^{-1} z^{\beta}$. Then,
$$
z^{\sigma'}=z^\alpha;x^{\sigma'}=(x^\alpha)^{g_xg_z^{-1}z^{\beta}}=(x^\alpha)^{x^\gamma g_xg_z^{-1}z^{\beta}}
$$
It is clear that we can pick some $\gamma,\beta\in \wh\Z$ so that
$$
x^{\gamma}g_xg_z^{-1}z^{\beta}\in [\Fd,\Fd]
$$
So, for this $\beta$, $\sigma'g_z^{-1} z^{\beta}$ belongs to $A$. It follows that $A$ intersects every class in $G/\Fd=\gal$, so $A\Fd=G$, as we wanted. It follows that $A$ gives a section of
$$
1\lra \Fd \lra G\lra \gal\lra 1
$$
In particular, $A\cong \gal$. If we look at the group $S_\infty=A\langle z\rangle$, we see that it is a semidirect product and, by proposition \ref{accionrara}, it is isomorphic to the semidirect product $\gal\rtimes \ZZZ$.

We are going to use a lemma to prove that every dessin with one face is defined over its field of moduli. We will prove it at the end of the section.

\begin{lema}\label{cohom}
Let $U$ be an open subgroup of $\Gal(\QQ/\Q)\ltimes \wh\Z(1)$. Then, there exist a subgroup $T<U$, a number field $K$ and an integer $m$ such that
\begin{itemize}
\item $T\cap m\ZZZ=1$
\item $T(m\ZZZ)=U$
\item $T\cong\Gal(\QQ/K)$
\end{itemize}
\end{lema}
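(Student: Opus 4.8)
Write $\Gamma=\gal\ltimes\ZZZ$ and let $M=\ZZZ$ be the normal factor, on which $\gal$ acts through the cyclotomic character $\chi\colon\gal\to\wh\Z^\times$. The plan is to reduce the statement to the splitting of a single group extension and then to kill the splitting obstruction by a Mittag-Leffler argument in Galois cohomology. First I would pin down $m$ and $K$. Since $U$ is open in $\Gamma$ it has finite index, so $A:=U\cap M$ is open in $M\cong\wh\Z$; as the open subgroups of $\wh\Z$ are exactly the $m\wh\Z$, we get $A=m\ZZZ$ for a unique positive integer $m$. Because $\chi$ has values in $\wh\Z^\times$, the subgroup $m\ZZZ$ is $\gal$-stable, hence normal in $\Gamma$ and in $U$. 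The projection $\pi\colon\Gamma\to\gal$ is open, so $\pi(U)$ is an open subgroup of $\gal$, i.e. $\pi(U)=\GK$ for a number field $K$. The three required properties are then exactly equivalent to the existence of a complement $T$ to $A$ in $U$: given such a $T$, the map $\pi|_T\colon T\to\GK$ is onto (as $T(m\ZZZ)=U$) and injective (as $T\cap M\subseteq T\cap m\ZZZ=1$), hence an isomorphism, so $T\cong\GK$ automatically. Thus everything reduces to showing that
$$1\lra A\lra U\overset{\pi}\lra \GK\lra 1$$
splits.

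Next I would translate this into a lifting problem. Passing to the finite quotient $\Gamma\to\Gamma/A=\gal\ltimes(M/A)=\gal\ltimes\mu_m$, the image $V$ of $U$ satisfies $V\cap(M/A)=1$ and $\pi(V)=\GK$, so $\pi|_V$ is an isomorphism and $V$ is the graph of a continuous cocycle $\bar c\in Z^1(\GK,\mu_m)$. Since $A\subseteq U$, the preimage of $V$ is $U$, so a complement $T<U$ is the same datum as a cocycle $b\in Z^1(\GK,M)$ whose reduction modulo $A$ equals $\bar c$. Such a $b$ exists as soon as the class $[\bar c]\in H^1(\GK,\mu_m)$ lies in the image of $H^1(\GK,M)\to H^1(\GK,M/A)$ (having lifted the class one corrects by the lift of a coboundary to hit $\bar c$ on the nose). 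Writing the defining sequence of $M/A$ as $0\to\ZZZ\overset{m}\to\ZZZ\to\mu_m\to 0$, this means exactly that the connecting map $\delta\colon H^1(\GK,\mu_m)\to H^2(\GK,\ZZZ)$ kills $[\bar c]$.

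The vanishing comes from a surjectivity statement for the Tate module. Because $\ZZZ=\varprojlim_n\mu_n$ with surjective transition maps and the groups $H^0(\GK,\mu_n)=\mu_n(K)$ are finite (so the relevant $\varprojlim^1$ vanishes), one has $H^1(\GK,\ZZZ)=\varprojlim_n H^1(\GK,\mu_n)$. By Hilbert's Theorem~90 and the Kummer sequence, $H^1(\GK,\mu_n)=K^\times/(K^\times)^n$, and for $m\mid n$ the transition map is the natural surjection $K^\times/(K^\times)^n\twoheadrightarrow K^\times/(K^\times)^m$. An inverse system of groups with surjective bonding maps has its limit mapping onto every term, so $H^1(\GK,\ZZZ)\to H^1(\GK,\mu_m)$ is surjective; in particular $[\bar c]$ lifts, $\delta[\bar c]=0$, and the sequence splits. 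Concretely the lift is just a compatible system of $n$-th roots of a Kummer generator of the subextension cut out by $U$, and these exist because $\QQ^\times$ is divisible; that is the geometric reason the obstruction disappears.

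The main obstacle is precisely this last vanishing. The obstruction group is genuinely nonzero, since $H^2(\GK,\ZZZ)\cong\varprojlim_n\mathrm{Br}(K)[n]$ does not vanish for a number field $K$, so one cannot split by a blanket cohomological triviality. What saves the argument is that $[\bar c]$ is not arbitrary but comes from an honest finite subextension, whose Kummer generator admits a full compatible tower of roots inside $\QQ$; cohomologically this is the surjectivity of the reduction map out of $H^1(\GK,\ZZZ)$, and in Mittag-Leffler terms it is the surjectivity of $\varprojlim$ over a system with surjective transition maps. The remaining verifications are routine: normality of $m\ZZZ$ in $\Gamma$, openness of $\pi$, the identification of $V$ with the graph of $\bar c$, and the passage from a lift of the cohomology class to an actual lift of the cocycle.
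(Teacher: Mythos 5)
Your argument is correct, but it takes a genuinely different route from the paper's. You reformulate the splitting of $1\lra m\ZZZ\lra U\lra \GK\lra 1$ in one stroke: complements of $m\ZZZ$ in $U$ are exactly the graphs of continuous cocycles $\GK\lra \ZZZ$ lifting the graph cocycle $\bar c\in Z^1(\GK,\mu_m)$ of $U/m\ZZZ$, so everything reduces to the surjectivity of $H^1(\GK,\ZZZ)\lra H^1(\GK,\mu_m)$, which you get from $H^1(\GK,\ZZZ)\cong\varprojlim_n H^1(\GK,\mu_n)\cong\varprojlim_n K^\times/(K^\times)^n$ (Hilbert 90 plus ${\varprojlim}^1$-vanishing, legitimate because the groups $\mu_n(K)$ are finite) together with the surjectivity of the transition maps. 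The paper never leaves finite coefficients: it first splits each finite quotient $U/ml\ZZZ$ by showing that its class in $H^2(\GK,\mu_l)$ dies under the pushforward $i_*$ into $H^2(\GK,\mu_{ml})$ --- the pushed-out extension being the visibly split $(\GK\ltimes\ZZZ)/ml\ZZZ$ --- with $i_*$ injective by Hilbert 90; it then glues the finite-level sections into a section of the profinite extension by correcting each $s_{lk}$ with a Kummer element $\beta^\sigma/\beta$, Hilbert 90 again. The two proofs burn the same fuel, and neither can claim the obstruction group vanishes (you observe $H^2(\GK,\ZZZ)\neq 0$; the paper correspondingly proves injectivity of $i_*$ rather than vanishing of $H^2$), but the packaging differs: your ${\varprojlim}^1$ step is the abstract form of the paper's hands-on compatibility-of-sections argument, and your graph-of-cocycle reformulation eliminates its finite-level $H^2$ computation altogether. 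What you gain is brevity and a clean identification of where the obstruction lives and why it dies; what you spend is the appeal to the exact sequence $0\to {\varprojlim}^1 H^{i-1}(\GK,M_n)\to H^i(\GK,\varprojlim M_n)\to \varprojlim H^i(\GK,M_n)\to 0$ for continuous cohomology of profinite coefficient modules (Jannsen; Neukirch--Schmidt--Wingberg), machinery beyond anything the paper cites, whereas the paper stays self-contained at the level of finite-coefficient cohomology and explicit computation. Two details you should make explicit in a write-up: surjectivity of $\varprojlim$ onto each term requires a countable cofinal chain in the index set (true here, divisibility on $\N$), and $T$ must be taken closed --- which the graph of a continuous cocycle is --- so that the continuous bijection $T\lra\GK$ is an isomorphism of profinite groups.
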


\noindent Let us prove the main theorem, and leave this lemma for later.

\begin{teor}
Every dessin with one face is defined over its field of moduli.
\end{teor}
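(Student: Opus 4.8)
The plan is to verify the splitting criterion of Proposition~\ref{model}: writing the dessin as an open subgroup $H<\Fd$ and setting $G=\gall=\gal\ltimes\Fd$, it suffices to split
$$1\lra \frac{N_{\Fd}(H)}{H}\lra \frac{N_{G}(H)}{H}\lra \M(H)\lra 1.$$
First I would translate the hypothesis. Having one face means the face rotation $z=(xy)^{-1}$ acts on $X=H\backslash\Fd$ as a single $n$-cycle, where $n=[\Fd:H]$; equivalently $\Fd=H\zz$. From $\Fd=H\zz$ one gets $[\zz:\zz\cap H]=n$, and since $\zz\cong\wh\Z$ this forces $\zz\cap H=\langle z^n\rangle$. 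Identifying $X\cong\Z/n$ as a $\zz$-set, an automorphism is a translation, so by Proposition~\ref{regular} the group $N_{\Fd}(H)/H$ embeds in $\Z/n$ and $N_{\Fd}(H)=H\langle z^d\rangle$, where $d\mid n$ is determined by $d\wh\Z=\{a:z^aHz^{-a}=H\}$. Thus the obstruction to splitting lives over a \emph{cyclic} automorphism group generated by $z^d$.

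The core of the argument is to push everything into the face stabiliser $S_\infty=\{g\in G:g(\wh P_\infty)=\wh P_\infty\}\cong\gal\ltimes\ZZZ$, with $\ZZZ=\zz$. I would put $U=N_{G}(H)\cap S_\infty$ and show $U$ projects onto $\M(H)$. Given $\sigma\in\M(H)$, lift it to $g\in N_{G}(H)$; since $g$ normalises $H$ it restricts to an automorphism of $\KK^H$ fixing $\Q(t)$, hence carries the place over $\infty$ to a place over $\infty$, and as the dessin has a single face that place is unique, so $g$ fixes $\wh P_\infty|_{\KK^H}$. Therefore $g(\wh P_\infty)$ and $\wh P_\infty$ agree on $\KK^H$, and because $\Gal(\KK/\KK^H)=H$ acts transitively on the places of $\KK$ above a fixed place of $\KK^H$, there is $h\in H$ with $hg\in S_\infty$. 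Then $hg\in U$ still projects to $\sigma$, giving surjectivity; the same adjustment shows $N_{G}(H)=HU$. A short computation gives $U\cap\Fd=N_{\Fd}(H)\cap\zz=\langle z^d\rangle\subset\ZZZ$, and $U$ is open in $S_\infty$.

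Now I would apply Lemma~\ref{cohom} to the open subgroup $U<\gal\ltimes\ZZZ$, obtaining $T<U$, a number field $K$ and an integer $m$ with $T\cap m\ZZZ=1$, $T(m\ZZZ)=U$ and $T\cong\Gal(\QQ/K)$. Intersecting $U=T(m\ZZZ)$ with $\Fd$ gives $\langle z^d\rangle=U\cap\Fd=m\ZZZ$, so $m=d$ and $T\cap\Fd=1$; hence $T$ maps isomorphically onto the image of $U$ in $\gal$, namely $\M(H)=\Gal(\QQ/\M(C,f))$, which also identifies $K=\M(C,f)$. Finally set $\wt H=TH$, a subgroup since $T$ normalises $H$. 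As $T\cap\Fd=1$ one checks $\wt H\cap\Fd=H$ and $\wt H\cap N_{\Fd}(H)=H$, while $N_{G}(H)=HU$ yields $\wt H\,N_{\Fd}(H)=TH\langle z^d\rangle=HU=N_{G}(H)$. Thus $\wt H/H$ is a complement to $N_{\Fd}(H)/H$ mapping isomorphically onto $\M(H)$, the sequence splits, and Proposition~\ref{model} gives that the dessin is defined over its field of moduli.

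The only genuinely non-formal points are the surjectivity of $U\to\M(H)$, which is precisely where the one-face hypothesis enters through the uniqueness of the preimage of $\infty$, and Lemma~\ref{cohom}, which carries the cohomological weight (it trivialises the relevant class in $\gal\ltimes\ZZZ$, essentially a Kummer/cyclotomic computation built on Proposition~\ref{accionrara}). Everything else is bookkeeping inside the semidirect product $G=\Fd\rtimes\gal$; the step I would most carefully double-check is the reduction to $S_\infty$, since it relies on $\Gal(\KK/\KK^H)=H$ and on the transitivity of $H$ on the places of $\KK$ lying over a given place of $\KK^H$.
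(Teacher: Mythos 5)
Your proposal is correct and follows essentially the same route as the paper: the splitting criterion of Proposition~\ref{model}, the subgroup $U=N_{G}(H)\cap S_\infty$ inside $S_\infty\cong\gal\ltimes\ZZZ$, Lemma~\ref{cohom} producing the complement $T\cong\Gal(\QQ/K)$, and the model $\wt H=TH$. The only local difference is how the one-face hypothesis enters in proving that $U$ surjects onto $\M(H)$ (equivalently $N_G(H)=HU$): you argue geometrically, via the uniqueness of the place of $\KK^H$ over $\infty$ and the transitivity of $H=\Gal(\KK/\KK^H)$ on the places of $\KK$ above it, whereas the paper argues with cosets --- since $\zz H=\Fd$, every coset of $N_{\Fd}(H)$ in $\Fd$ is of the form $z^lN_{\Fd}(H)$, so a lift $\sigma'g\in N_G(H)$ of $\sigma\in\M(H)$ with $\sigma'\in A$, $g\in\Fd$ may be replaced by $\sigma'z^l\in U$ --- both being valid, and your extra bookkeeping ($N_{\Fd}(H)=H\langle z^d\rangle$, $m=d$) is harmless.
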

\begin{proof}
Let the dessin be given by an open subgroup $H<\Fd$. The fact that it has one face means that $z$ acts transitively on the edges, i.e. it acts transitively on the cosets of $H$, so $\langle z\rangle H=\Fd$.

Now, as we know, the field of moduli of the dessin is the fixed field of $\M(H)$, the image in $\gal$ of $N_G(\Fd)$. Consider now the group $A$ as previously defined. Take $U=N_G(H)\cap S_\infty=N_G(H)\cap A\langle z\rangle$.

Since $U$ is open, applying lemma \ref{cohom}, we see that $U=T\langle z^m\rangle$ for some $m$, where $T\cap \zz=1$ and $T\zz=U$. Since $T\cap \zz=1$ and $T\cap \Fd\subset A\zz\cap \Fd=\zz$, it follows that $T\cap \Fd=1$.

Also, let us prove that the image of $T$ in $\gal$ is $\M(H)$. Suppose $\sigma\in \M(H)$. Then, we can take its unique representative $\sigma'\in A$, and for some $g\in\Fd$, $\sigma'g$ will be in $N_G(H)$. Now, every coset of $N_{\Fd}(H)$ is $z^lN_{\Fd}(H)$ for some $l$; so $\sigma' z^l$ will also be in $N_G(H)$. Since $\sigma'z^l\in A\zz$, it is in $N_G(H)\cap A\zz=U$. Therefore, the image of $U$ in $\gal$ is $\M(H)$. Since the images of $U$ and $T$ in $\gal$ are the same (because $U=T\zz$), the image of $T$ is also $\M(H)$. So $T\Fd=N_G(H)\Fd$, and $TN_{\Fd}(H)=N_G(H)$.

So $T$ satisfies the following:
$$
T\cap \Fd=1;T\Fd=N_G(H)\Fd
$$
Therefore, if we take $\wh H=TH$, we will have that
$$
\wh H \cap \Fd=H;\wh H N_{\Fd}(H)=N_G(H)
$$
And this is precisely the characterization of $H$ being definable over its field of moduli, by proposition \ref{model}.
\end{proof}

\noindent Let us prove lemma \ref{cohom}. To do it, we are going to use some cohomology of groups, such as the relationship between $H^2$ and extensions, Hilbert's Theorem 90 and the long exact sequence in cohomology. For a reference about these facts, see \cite{serre}. Suppose we have some open subgroup $U$ in $\gal \ltimes \wh\Z(1)$. Its intersection with $\wh\Z(1)$ is an open subgroup, so it equals $m\ZZZ$ for some $m$. Let $K$ be such that the image of $U$ in $\gal$ equals $\Gal(\QQ/K)$. Then, we have an exact sequence
$$
1\lra m\ZZZ\lra U\lra \Gal(\QQ/K)\lra 1
$$
We want to prove that it is split. If it is split, then the section $T$ of $\Gal(\QQ/K)$ will be the group we are looking for and the result will be proven.

\begin{lema}
For every $l\in \N$, the following sequence splits:
$$
1\lra \frac{m\ZZZ}{ml\ZZZ} \lra \frac{U}{ml\ZZZ} \lra \Gal(\QQ/K)\lra 1
$$
\end{lema}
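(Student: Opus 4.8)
The plan is to recognize the sequence as an extension of profinite groups with finite abelian kernel, so that it splits if and only if its class in the continuous cohomology group $H^2(\Gal(\QQ/K),m\ZZZ/ml\ZZZ)$ vanishes, and then to kill this class by a short Kummer-theoretic computation. Write $G_K=\Gal(\QQ/K)$ for brevity.

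First I would identify the modules that occur with groups of roots of unity. By Proposition \ref{accionrara} the action of $\gal$ on $\ZZZ=\wh\Z(1)$ is multiplication by the cyclotomic character $\sigma\mapsto\alpha_\sigma$, so for each $n$ the finite quotient $\wh\Z(1)/n\ZZZ$ is $\Z/n\Z$ carrying the cyclotomic action, i.e. it is canonically the $G_K$-module $\mu_n$ of $n$-th roots of unity. In particular $\wh\Z(1)/m\ZZZ\cong\mu_m$, $\wh\Z(1)/ml\ZZZ\cong\mu_{ml}$, the kernel $m\ZZZ/ml\ZZZ\cong\mu_l$, and the projection $\wh\Z(1)/ml\ZZZ\to\wh\Z(1)/m\ZZZ$ becomes the $l$-th power map. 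This gives a short exact sequence of finite $G_K$-modules
$$1\lra \mu_l\lra \mu_{ml}\overset{x\mapsto x^l}\lra \mu_m\lra 1.$$

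Next I would pin down the class of our extension. Since $U$ is a subgroup of $\gal\ltimes\wh\Z(1)$ whose image in $\gal$ is $G_K$ and with $U\cap\wh\Z(1)=m\ZZZ$, it sits inside the \emph{split} extension $1\to\wh\Z(1)\to G_K\ltimes\wh\Z(1)\to G_K\to1$. Choosing for each $\sigma\in G_K$ a preimage in $U$ and recording its $\wh\Z(1)/m\ZZZ=\mu_m$-component yields a continuous $1$-cocycle $\bar t\in Z^1(G_K,\mu_m)$ (the cocycle condition holds because $U$ is a subgroup and $U\cap\wh\Z(1)=m\ZZZ$). Then the class in $H^2(G_K,\mu_l)$ of the extension $1\to\mu_l\to U/ml\ZZZ\to G_K\to1$ is exactly $\delta[\bar t]$, where $\delta$ is the connecting map of the sequence above. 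Hence it suffices to prove $\delta=0$.

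Finally I would run the long exact cohomology sequence
$$H^1(G_K,\mu_{ml})\overset{(\cdot)^l}\lra H^1(G_K,\mu_m)\overset{\delta}\lra H^2(G_K,\mu_l),$$
and invoke Kummer theory — which rests on Hilbert's Theorem 90, $H^1(G_K,\QQ^\times)=0$ (see \cite{serre}) — to identify $H^1(G_K,\mu_n)\cong K^\times/(K^\times)^n$ so that $(\cdot)^l$ becomes the natural projection $K^\times/(K^\times)^{ml}\to K^\times/(K^\times)^m$. This projection is visibly surjective, so its image is all of $H^1(G_K,\mu_m)$, whence $\delta=0$; therefore $\delta[\bar t]=0$, the extension class vanishes, and the sequence splits. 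The one point requiring care is the bookkeeping of the middle step: correctly extracting the cocycle $\bar t$ from the embedding $U\hookrightarrow\gal\ltimes\wh\Z(1)$ and verifying that the class of the $ml$-reduction is precisely $\delta[\bar t]$. Once this identification is in place, the vanishing is formal and independent of the particular $U$.
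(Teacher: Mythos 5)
Your proof is correct, and the step you single out as delicate does work: with the paper's right-action convention, the factor set of $1\lra m\ZZZ/ml\ZZZ\lra U/ml\ZZZ\lra \GK\lra 1$ computed from the section $\sigma\mapsto(\sigma,t(\sigma))\bmod ml\ZZZ$ is $\wt t(\sigma_1)^{\sigma_2}\,\wt t(\sigma_2)\,\wt t(\sigma_1\sigma_2)^{-1}$ with $\wt t(\sigma)=t(\sigma)\bmod ml\ZZZ$, which is exactly the connecting-map formula, so the extension class is $\delta[\bar t]$; continuity of $\bar t$ follows from openness of $U$. Your route differs from the paper's in where it sits on the long exact sequence of $1\lra\mu_l\lra\mu_{ml}\lra\mu_m\lra 1$. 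The paper pushes the extension class $\phi$ forward along $i:\mu_l=m\ZZZ/ml\ZZZ\hookrightarrow\ZZZ/ml\ZZZ=\mu_{ml}$: the pushed-out extension is $(\GK\ltimes\ZZZ)/ml\ZZZ$, split because it is a quotient of the split ambient extension, so $i_*\phi=0$; it then shows $i_*$ is injective by factoring the map $H^2(\GK,\mu_l)\lra H^2(\GK,\QQ^\times)$ (injective by Hilbert 90 applied to $1\lra\mu_l\lra\QQ^\times\lra\QQ^\times\lra 1$) through $H^2(\GK,\mu_{ml})$. You instead realize $\phi$ as $\delta[\bar t]$ and prove $\delta=0$, using Kummer theory to identify $H^1(\GK,\mu_{ml})\lra H^1(\GK,\mu_m)$ with the surjection $K^\times/(K^\times)^{ml}\lra K^\times/(K^\times)^{m}$. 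By exactness these are literally two readings of the same sequence: $\delta=0$ if and only if $i_*$ is injective, and $i_*\phi=0$ if and only if $\phi\in\im\delta$. What each buys: the paper's pushout is choice-free and needs no cocycle bookkeeping at all, while your version pays for that bookkeeping but makes the obstruction concrete --- $[\bar t]$ is a class in $K^\times/(K^\times)^m$, and splitting the sequence amounts to lifting it to $K^\times/(K^\times)^{ml}$, which is automatic. Your picture also dovetails with the part of the paper's argument that follows this lemma, where the splittings modulo $ml$ are patched into a splitting of $1\lra m\ZZZ\lra U\lra\GK\lra 1$ by correcting sections with quotients $\beta^\sigma/\beta$: that correction is again Theorem 90 at the $H^1$ level, i.e.\ the same lifting-of-roots mechanism your Kummer identification makes explicit.
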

\begin{proof}
The first group in the sequence is the cyclic group $C_l$, with the same module structure as the group $\mu_l$ of $l$-th roots of unity. Let $\phi\in H^2(\Gal(\QQ/K),\mu_l)$ be the cocycle associated to this extension. Consider the sequence
$$
1\lra \frac{\ZZZ}{ml\ZZZ} \lra \frac{\Gal(\QQ/K)\ltimes \ZZZ}{ml\ZZZ} \lra \Gal(\QQ/K)\lra 1
$$
It is clearly split because a section is just restricting the section of $\gal\ltimes \ZZZ$ to $\Gal(\QQ/K)$. Also, the inclusion $i:\frac{m\ZZZ}{ml\ZZZ}\lra \frac{\ZZZ}{ml\ZZZ}\cong \mu_{ml}$ induces a map $i_*:H^2(\GK,\mu_l)\lra H^2(\GK,\mu_{ml})$, and $i_*(\phi)$ is the cocycle corresponding to this second extension. Since the extension is split, $i_*(\phi)=1$ in the cohomology group.

\noindent Now, we claim that $i_*$ is injective. From this, it will follow that the cocycle for the first extension was 1, and therefore the extension also splits. Take the exact sequence
$$
1\lra \mu_{l} \lra \QQ^\times \overset{\cdot^{l}}{\lra}\QQ^\times \lra 1
$$
For this exact sequence, take the long exact sequence in cohomology, which goes
$$
\cdots \lra H^1(\GK,\QQ^\times)\lra H^2(\GK,\mu_{l})\lra H^2(\GK,\QQ^\times)\lra \cdots
$$
Hilbert's Theorem 90 states that for any field $K$, $H^1(\Gal(\o{K}/K),\overline K^\times)=0$. Therefore, the map that goes from $H^2(\GK,\mu_{ml})$ to $ H^2(\GK,\QQ^\times)$ is injective. In particular, since it is the composition of the following two maps:
$$H^2(\GK,\mu_{l})\lra H^2(\GK,\mu_{ml})\lra H^2(\GK,\QQ^\times)$$
The first map is also injective. This is what we wanted to prove, since this map is $i_*$. Therefore the cocycle $\phi$ is 0 in $H^2(\GK,\mu_l)$.
\end{proof}
\noindent We have proven that for every $l$, the following sequence splits:
$$
1\lra \frac{m\ZZZ}{ml\ZZZ} \lra \frac{U}{ml\ZZZ} \lra \Gal(\QQ/K)\lra 1
$$
Let $s_l:\GK\lra  \frac{U}{ml\ZZZ}$ be a section. We need to prove that there is a section $s:\GK \lra U$. Of course, if the image of every other section of the form $s_{lk}$ in the quotient $\frac{U}{ml\ZZZ}$ was $s_l$, our job would be done. So what we need to do is construct such compatible sections.

Take a section $s_{lk}:\GK\lra \frac{U}{mlk\ZZZ}$, and let $\o{s_{lk}}$ be its image in the quotient $\frac{U}{ml\ZZZ}$, i.e. if $\pi$ is the projection, $\o{s_{lk}}=\pi\circ s_{lk}$. $\o{s_{lk}}$ is also a section of the sequence for $l$, so let us see the relation between $s_l$ and $\o{s_{lk}}$. For every $\sigma\in \GK$, $s_l(\sigma)$ and $\o{s_{lk}}(\sigma)$ are in the same coset of $\mu_l$, so there is some $\alpha(\sigma)\in \mu_l$ such that
$$
\o{s_{lk}}(\sigma)=s_l(\sigma)\alpha(\sigma)
$$
Now, both $s_l$ and $\o{s_{lk}}$ are group homomorphisms, so, for $\sigma,\tau\in \GK$,
$$
s_l(\sigma\tau)\alpha(\sigma\tau)=\o{s_{lk}}(\sigma\tau)=\o{s_{lk}}(\sigma)\o{s_{lk}}(\tau)=s_l(\sigma)\alpha(\sigma)s_l(\tau)\alpha(\tau)=s_l(\sigma)s_l(\tau)\alpha(\sigma)^{s_l(\tau)}\alpha(\tau)=s_l(\sigma\tau)\alpha(\sigma)^{s_l(\tau)}\alpha(\tau)
$$
Since $\GK$ acts as automorphisms of $\mu_l$, $\alpha(\sigma)^{s_l(\tau)}=\alpha(\sigma)^\tau$, and what we have is
$$
\alpha(\sigma\tau)=\alpha(\sigma)^\tau\alpha(\tau)
$$
This means that $\alpha$ is a cocycle in $H^1(\GK,\mu_l)$. This cohomology group is not $0$, but $H^1(\GK,\QQ^\times)$ is, by Theorem 90. Therefore, in $H^1(\GK,\QQ^\times)$, the cocycle $\alpha$ is indeed $0$; so there exists some $\beta'\in \QQ$ such that $\alpha(\sigma)=\beta^{\prime \sigma}/\beta'$. Now, take some $\beta\in \QQ$ such that $\beta^{-k}=\beta'$, and define
$$s_{lk}'(\sigma)=s_{lk}(\sigma)\beta^\sigma/\beta$$
Note that $\beta^\sigma/\beta$ lies in $\mu_{lk}$. Let us check two things: that $s_{lk}'$ is another section, and that its projection $\o{s_{lk}'}$ equals $s_l$.

Since $\beta^\sigma/\beta\in \mu_{lk}$, it is clear that $s_{lk}'=s_{lk}$ modulo $m\ZZZ$, so there is no problem there. We only need to check that it is a homomorphism. Let $\sigma,\tau\in \GK$. Then
$$
s_{lk}'(\sigma)s_{lk}'(\tau)=s_{lk}(\sigma)\frac{\beta^\sigma}{\beta} s_{lk}(\tau)\frac{\beta^\tau}{\tau}=s_{lk}(\sigma)s_{lk}(\tau)\left(\frac{\beta^\sigma}{\beta}\right)^\tau \frac{\beta^\tau}{\beta}=s_{lk}(\sigma)s_{lk}(\tau)\frac{\beta^{\sigma\tau}}{\beta^\tau} \frac{\beta^\tau}{\beta}=
s_{lk}(\sigma\tau)\frac{\beta^{\sigma\tau}}{\beta}=s_{lk}'(\sigma\tau)
$$
For the second part, let us check that $\o{s_{lk}'}=s_l$. Note that the projection $\frac{m\ZZZ}{mlk\ZZZ}\lra \frac{m\ZZZ}{ml\ZZZ}$ can be given by $\xi\mapsto \xi^k$.
$$
\o{s_{lk}'}(\sigma)=\o{s_{lk}}(\sigma)\left(\frac{\beta^\sigma}{\beta}\right)^k=\o{s_{lk}}(\sigma)\alpha(\sigma)^{-1}=s_l(\sigma)
$$
So $s_{lk}'$ is a section that projects to $s_l$. We can continue doing this for bigger $k$'s, and we will obtain a compatible sequence of sections, which will yield a section of
$$
1\lra m\ZZZ\lra U\lra \GK\lra 1
$$
As we wanted in the first place. This concludes the proof of lemma \ref{cohom}.

\section{A dessin that is not defined over its field of moduli}\label{notdefined}

We are going to give an example of a dessin that isn't defined over its field of moduli, following \cite{rabbit}.

The easiest field where we can prove that a dessin isn't defined is $\R$. Let us see how complex conjugation acts on $\Fd$ when we embed $\gal$ in $\Aut(\Fd)$. Conjugation is a continuous map: therefore, it maps the fundamental group of $\P^1\setminus\{0,1,\infty\}$ to itself. By this homomorphism, $x$, the path that goes around $0$ is mapped to $x^{-1}$ and $y$ is mapped to $y^{-1}$. 

Take a Belyi pair $(C,f)$, with a base point $P$. Let $P^x$ be the monodromy action of $x\in F_2=\pi_1(\P^1\setminus\{0,1,\infty\})$ on $P$. Then, if we look at the conjugate Belyi pair $(\o C,\o f)$ (conjugate in the sense of complex conjugation), the point $\o P^{\o x}$ will be determined, as usual, by the lifting of $x$ to $\o C$. Now, we just need to note that $\o C$ is homeomorphic to $C$, that $\o f$ equals $f$ composed with conjugation in $\P^1$. Thus, we have the following commutative diagram, if we call conjugation $c$, and we use $\wt{x^{-1}}$ for the lifting of $x^{-1}$ to $C$. 
\begin{center}\hspace{0.5 cm}
\begindc{\commdiag}[4]
\obj(-5,10)[a]{$[0,1]$}
\obj(10,0)[b]{$\P^1$}
\obj(10,10)[c]{$\P^1$}
\obj(10,20)[d]{$C$}
\mor{a}{b}{$x$}
\mor{a}{c}{$x^{-1}$}
\mor{a}{d}{$\wt{x^{-1}}$}
\mor{d}{c}{$f$}
\mor{c}{b}{$c$}
\cmor((12,20)(13,20)(14,19)(17,10)(14,1)(13,0)(12,0)) \pleft(20,10){$\o{f}$}
\enddc
\end{center}
\noindent Therefore, the path $x$ will lift to the same path in $\o{C}$ that $x^{-1}$ lifts to in $C$. Since, if we identify $C$ with $\o C$, we have that $\o f=c\circ f$, then $x=c\circ f \circ \wt{x^{-1}}=\o{f} \circ \wt{x^{-1}}$. Therefore, $\o{P}^{\o{x}}=P^{x^{-1}}$, if we identify $C$ with $\o{C}$.
 
In conclusion, the automorphism induced by $c$, the conjugation, in $\Fd$ is given by
$$
x\longmapsto x^{-1}$$ $$
y\longmapsto y^{-1}$$
Take a dessin given by an open subgroup $H<\Fd$. Its  moduli field is real if it is fixed by the conjugation $c$. Suppose we number the points on the dessin $P_1,\ldots ,P_n$. Let $s_x$ and $s_y$ be the permutations such that $P_i^x=P_{s_x(i)}$. Then, we have that
$$
(P_i^c)^x=(P_i^{x^{c^{-1}}})^c=(P_i^{x^{-1}})^c=P_{i^{s_x^{-1}}}^c
$$
So the permutation $x$ induces on the conjugate dessin is $s_x^{-1}$, and similarly for $y$.

Suppose there is an isomorphism $\phi:(C,f)\lra (\o C,\o f)$. This isomorphism is determined by the image of a point, so it is equivalent to a permutation $\omega\in S_n$ defined in the following way:
$$
\phi(P_i)=P_{i^\omega}^c
$$For $\phi$ to be an isomorphism, it is required that it commute with the action of $x$, that is, $\phi(P_i^x)=\phi(P_i)^x$. This, in terms of $\omega$, is $x\omega=\omega x^{-1}$. Therefore, $x^{-1}=x^{\omega}$.

Now, suppose a dessin has a real model $\wh H$. Then the image of $\wh H$ in $\gal$ is the Galois group of a real field, so it contains $c$. Therefore, there will be some $c'\in \wh H$ such that its image in $\gal$ is $c$. Since $c$ has order 2, $c^{\prime 2}$ will lie in $\Fd$ Also, $c^{\prime 2}\in\wh H$, and since $\wh H\cap \Fd=H$, $c^{\prime 2}$ must lie in $H$.

Suppose $c'=(c,g)$. Then, $c^{\prime 2}\in H$ means that
$$
c^{\prime 2}=(c,g)(c,g)=(c^2,g^cg)=(1,g^cg)\in H
$$
So we must have that $g^cg\in H$. In terms of permutations, it means that $g^cg$ acts on the edges of $H$ as the identity. Let us see what this means: $g$ maps $H^c$ to $H$, so it maps $\KK^{H^c}$ to $\KK^H$. Therefore, the corresponding morphism of dessins maps $(C,f)$ to $(\o C,\o f)$, so we get a permutation $\omega\in S_n$ as before, given by
$$
\phi(P_i)=P_{i^\omega}^c
$$
Now, $g^c$ maps $(\o C,\o f)$ to $(C,f)$, and it is given by
$$
\phi^c(P_i^c)=(\phi(P_i))^c=P_{i^\omega}^{c^2}=P_{i^\omega}
$$
So, for us to have that $g^cg\in H$, we must have that $\phi^c\circ \phi=1$, so, for every point,
$$
P_i=\phi^c(\phi(P_i))=\phi^c(P_{i^\omega}^c)=P_{i^{\omega^2}}
$$
In other words, $\omega$ must have order 2.

Therefore, we have the following, which is theorem 2 in \cite{rabbit}.

\begin{prop}
Let a dessin d'enfant be given by its cartographic group $\langle x,y\rangle<S_n$. Then, the dessin's field of moduli is real if and only if there exists some $\omega\in S_n$ such that $x^\omega=x^{-1}$ and $y^{\omega}=y^{-1}$. The dessin can be defined over a real field if and only if this $\omega$ can be chosen of order 2.
\end{prop}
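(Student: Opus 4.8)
The plan is to reduce the entire statement to the explicit action of complex conjugation $c\in\gal$ on $\Fd$, which was just computed to be $x\mapsto x^{-1}$, $y\mapsto y^{-1}$. Throughout I would write the dessin as an open subgroup $H<\Fd$ with cartographic group $\langle x,y\rangle<S_n$ acting on the $n$ edges $P_1,\dots,P_n$, and recall from the field of moduli section that $\mathrm{St}\,H<\gal$ is the stabilizer of the dessin and that the field of moduli is exactly its fixed field $\QQ^{\mathrm{St}\,H}$.

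For the first equivalence, I would note that $M=\QQ^{\mathrm{St}\,H}$ is real exactly when $M\subseteq\QQ^{\langle c\rangle}$, which by the (inclusion-reversing) Galois correspondence means $c\in\mathrm{St}\,H$, i.e. $c$ fixes the dessin up to isomorphism. So the content is to show that $c$ fixes the dessin iff there is $\omega\in S_n$ with $x^\omega=x^{-1}$ and $y^\omega=y^{-1}$. An isomorphism $\phi:(C,f)\lra(C^c,f^c)$ is determined by a permutation $\omega$ via $\phi(P_i)=P_{i^\omega}^c$; since $c$ turns the monodromy permutations $s_x,s_y$ into $s_x^{-1},s_y^{-1}$ on the conjugate dessin, the requirement that $\phi$ commute with the $F_2$-action becomes $s_x\omega=\omega s_x^{-1}$ and the analogous relation for $y$, i.e. precisely $x^\omega=x^{-1}$ and $y^\omega=y^{-1}$ after identifying $x,y$ with $s_x,s_y$. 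This gives the first ``iff''.

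For the second equivalence I would pass to $G=\gall=\gal\ltimes\Fd$. If the dessin has a model over a real number field $K$, then $c\in\Gal(\QQ/K)$ lies in the image of the model $\wt H$, so there is $c'=(c,g)\in\wt H$ lying over $c$; since $c^2=1$ we get $c'^2=(1,g^cg)\in\wt H\cap\Fd=H$, and unwinding $g^cg\in H$ through the isomorphism $\phi$ attached to $g$ shows exactly $\phi^c\circ\phi=\mathrm{id}$, i.e. $\omega^2=1$. Conversely, given $\omega$ of order $2$, the first part already supplies $g\in\Fd$ with $H^c=gHg^{-1}$, and $\omega^2=1$ translates back into $g^cg\in H$; setting $c'=(c,g)$ one checks, using $H^c=gHg^{-1}$, that $c'$ normalizes $H$ and that $c'^2\in H$, so $\wt H=\langle H,c'\rangle=H\sqcup Hc'$ meets $\Fd$ in exactly $H$ and projects onto a group containing $c$, yielding definition over a real field.

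The main obstacle is the reverse direction of the second part: from $\omega^2=1$ one must produce a genuine model over a real \emph{number} field rather than merely over $\R\cap\QQ$. The delicate point is verifying that $c'=(c,g)$ really behaves like an involution modulo $H$ (this is exactly what the order-$2$ condition buys), so that $\wt H$ is an honest subgroup with $\wt H\cap\Fd=H$ and the attached extension does not enlarge the intersection with $\Fd$; the finiteness of the resulting field of definition then follows because any concrete set of defining equations uses only finitely many coefficients, and these generate a real number field inside $\R\cap\QQ$. The first part, by contrast, is essentially formal once the dictionary between isomorphisms $(C,f)\lra(C^c,f^c)$ and permutations $\omega$ is in place.
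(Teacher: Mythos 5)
Your treatment of the first equivalence and of the implication ``real model $\Rightarrow$ some $\omega$ of order $2$'' is precisely the paper's own argument: the same dictionary between isomorphisms $\phi\colon(C,f)\lra(C^c,f^c)$ and permutations $\omega$ with $x^\omega=x^{-1}$, $y^\omega=y^{-1}$, and the same computation $c'^2=(c^2,g^cg)=(1,g^cg)\in\wt H\cap\Fd=H$. Note, however, that this is also \emph{all} the paper proves: the reverse implication of the second claim is exactly what it delegates to Couveignes \cite{rabbit}. Your attempt at that converse is therefore the genuinely new part of the proposal, and it is where the gap sits.

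The gap is the step ``$\wt H=H\sqcup Hc'$ \ldots\ projects onto a group containing $c$, yielding definition over a real field.'' Your $\wt H$ is a perfectly good closed subgroup of $G=\gall$ with $\wt H\cap\Fd=H$, but it is \emph{not open}: its image in $\gal$ is the two-element group $\langle c\rangle$, which is not of the form $\Gal(\QQ/K)$ for any number field $K$, since its fixed field is $\QQ\cap\R$, an infinite extension of $\Q$. Consequently Lemma \ref{fielddef} --- the only bridge in the paper from subgroups of $\gall$ to models given by equations, stated and proved only for open subgroups and number fields --- does not apply to $\wt H$, so at this point there is no ``concrete set of defining equations'' over $\R\cap\QQ$ whose coefficients you could collect; as written, the finiteness patch assumes the very thing it must produce. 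The repair is to run your finiteness idea one level down, where finiteness is actually available: let $\wt F=\KK^{\wt H}$; the Galois correspondence for closed subgroups (which the paper proves) gives $\wt F\cap\QQ(t)=(\QQ\cap\R)(t)$ and $\wt F\,\QQ(t)=\KK^H$, with $[\wt F:(\QQ\cap\R)(t)]=[\KK^H:\QQ(t)]$ finite by the paper's second-isomorphism-theorem computation. A primitive element $\alpha$ of $\wt F$ over $(\QQ\cap\R)(t)$ has a minimal polynomial involving only finitely many elements of $\QQ\cap\R$; these generate a real number field $K_0$, and $K_0(t)(\alpha)$ is a finite subextension which, over the real number field $K_1$ defined by $K_1(t)=K_0(t)(\alpha)\cap\QQ(t)$, satisfies the paper's field-theoretic characterization of a model, so Lemma \ref{fielddef} now applies. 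Alternatively one can stay group-theoretic: in $N_G(H)/H$ the kernel $A=N_{\Fd}(H)/H$ of the projection onto $\M(H)$ is finite (it is the automorphism group of the dessin), so there is an open normal subgroup $W$ with $W\cap A\langle\o{c'}\rangle=1$; then $T=W\langle\o{c'}\rangle$ is open, contains $\o{c'}$, meets $A$ trivially, and its preimage in $N_G(H)$ is an open subgroup of $G$ that is a genuine model over a real number field, directly in the sense of Lemma \ref{fielddef}.
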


\noindent It is relatively easy then to give dessins that are invariant under conjugation but are not defined over the reals. For example, take the following dessin on a curve of genus 1. The opposing sides of the rectangle are identified.

\begin{center}
\begin{tikzpicture}[line cap=round,line join=round,>=triangle 45,x=0.5cm,y=0.5cm]
\clip(-1,-1) rectangle (25,7);
\draw (0,0)--(24,0)--(24,6)--(0,6)--(0,0);
\draw (6,0)--(6,6);
\draw (12,0)--(12,6);
\draw (18,0)--(18,6);
\draw (0,3)--(2,3);
\draw (6,3)--(8,3);
\draw (12,3)--(14,3);
\draw (18,3)--(20,3);

\draw (9,0)--(9,2);
\draw (3,4)--(3,6);
\draw (15,4)--(15,6);
\draw (21,0)--(21,2);
\begin{scriptsize}
\puntonegro{(0,0)} 
\puntonegro{(0,6)}
\puntonegro{(6,0)} 
\puntonegro{(6,6)}
\puntonegro{(12,0)} 
\puntonegro{(12,6)}
\puntonegro{(18,0)} 
\puntonegro{(18,6)}
\puntonegro{(24,0)} 
\puntonegro{(24,6)}
\puntonegro{(2,3)}
\puntonegro{(8,3)}
\puntonegro{(14,3)}
\puntonegro{(20,3)}
\puntonegro{(3,4)}
\puntonegro{(9,2)}
\puntonegro{(15,4)}
\puntonegro{(21,2)}
\puntoblanco{(0,3)}
\puntoblanco{(6,3)}
\puntoblanco{(12,3)}
\puntoblanco{(18,3)}
\puntoblanco{(24,3)}
\puntoblanco{(3,0)}
\puntoblanco{(9,0)}
\puntoblanco{(15,0)}
\puntoblanco{(21,0)}
\puntoblanco{(3,6)}
\puntoblanco{(9,6)}
\puntoblanco{(15,6)}
\puntoblanco{(21,6)}

\draw (0,1.5) node[anchor=east] {1};
\draw (0,4.5) node[anchor=east] {3};
\draw (6,1.5) node[anchor=east] {9};
\draw (6,4.5) node[anchor=east] {7};
\draw (12,1.5) node[anchor=east] {13};
\draw (12,4.5) node[anchor=east] {15};
\draw (18,1.5) node[anchor=east] {21};
\draw (18,4.5) node[anchor=east] {19};
\draw (24,1.5) node[anchor=west] {1};
\draw (24,4.5) node[anchor=west] {3};
\draw (-1,0.5) node[anchor=east] {9};
\draw (-1,-0.5) node[anchor=east] {3};
\draw (1,3) node[anchor=south] {2};
\draw (7,3) node[anchor=south] {8};
\draw (13,3) node[anchor=south] {14};
\draw (19,3) node[anchor=south] {20};
\draw (1.5,0) node[anchor=north] {4};
\draw (4.5,0) node[anchor=north] {6};
\draw (7.5,0) node[anchor=north] {10};
\draw (10.5,0) node[anchor=north] {12};
\draw (13.5,0) node[anchor=north] {16};
\draw (16.5,0) node[anchor=north] {18};
\draw (19.5,0) node[anchor=north] {22};
\draw (22.5,0) node[anchor=north] {24};
\draw (1.5,6) node[anchor=south] {4};
\draw (4.5,6) node[anchor=south] {6};
\draw (7.5,6) node[anchor=south] {10};
\draw (10.5,6) node[anchor=south] {12};
\draw (13.5,6) node[anchor=south] {16};
\draw (16.5,6) node[anchor=south] {18};
\draw (19.5,6) node[anchor=south] {22};
\draw (22.5,6) node[anchor=south] {24};
\draw (3,5) node[anchor=east] {5};
\draw (9,1) node[anchor=east] {11};
\draw (15,5) node[anchor=east] {17};
\draw (21,1) node[anchor=east] {23};
\end{scriptsize}
\end{tikzpicture}
\end{center}

\noindent This dessin has the following monodromy action, as it is seen from the picture:
\begin{align*}
x &\longmapsto (4,1,24,3)(9,6,7,10)(16,13,12,15)(21,18,19,22)\\
y &\longmapsto (1,2,3)(4,5,6)(9,8,7)(12,11,10)(13,14,15)(16,17,18)(21,20,19)(24,23,22)
\end{align*}
\noindent It clearly has an automorphism of order 2, given by sending the edges numbered $i$ to $i+12$. Also, we have that, if we take
$$
\omega=(1,7,13,19)(2,8,14,20)(3,9,15,21)(4,10,16,22)(5,11,17,23)(6,12,18,24)
$$
Conjugation by $\omega$ sends $x$ to $x^{-1}$ and $y$ to $y^{-1}$. In particular, $c$ leaves the dessin invariant, so its field of moduli is real. However, $\omega$ has order 4, and if there is another $\omega'$ such that $x^\omega=x^{-1}$ and $y^\omega=y^{-1}$, then
$$
x^{\omega}=x^{-1}=x^{\omega'}
$$
And the same for $y$. Therefore, $\omega\omega^{\prime-1}$ is an automorphism of the dessin. So any other $\omega'$ is of the form $g\phi$, where $g$ is an automorphism. The only nontrivial automorphism is the one sending $i$ to $i+12$, and we can check that this composed with $\omega$ equals $\omega^{-1}$.

Therefore, we have an example of a dessin that is not defined over its field of moduli.
\chapter{A regular dessin whose field of moduli is $\Q(\sqrt[3] 2)$}

In this part we are going to give an example of a regular dessin d'enfant whose field of moduli is $\Q(\rd)$. This is an example of a regular dessin whose field of moduli is not an abelian extension of $\Q$, thus answering a question in \cite{cojowo}.

The way we are going to do this is as follows: we are going to explicitly construct a dessin on the elliptic curve $C=V(Y^2-X(X-1)(X-\rd))$. Then, we are going to consider its regular cover and we are going to prove that when a Galois automorphism $\sigma\in \gal$ doesn't fix $\rd$, it doesn't fix this regular cover either. This cover has genus 145. Then, we are going to consider a dessin which is covered by this one, so it is somewhat simpler (it has genus 61), and it also has the same field of moduli. We are also going to give an explicit description for the field of functions of this last dessin. Finally, we are going to prove that the underlying curve for this dessin also has the same field of moduli.

There is another example of a regular dessin with non-abelian cubic field of moduli. It is the regular cover of a dessin described by Shabat and Voevodsky in \cite{voed} and by Malle in \cite{malle}. In the last section, we will expand on this example.

\section{A dessin $D_0$ over an elliptic curve}\label{dibujitos}

Let us define a dessin on the elliptic curve given by the equation $Y^2=X(X-1)(X-\sqrt[3]{2})$. The dessin we are going to give is also described in \cite{abc}.

Consider the map on $C$ given by $(X,Y)\longmapsto X$, or, in homogeneous coordinates, $(X:Y:Z)\mapsto (X:Z)$. This map is ramified at four points: $(0:0:1)$, $(1:0:1)$, $(\rd:0:1)$ and the point at infinity, $(0:1:0)$. Their images are $\{0,1,\rd,\infty \}\subset \P^1$. In order to have a Belyi map for $C$, we need a map which is only ramified over 3 points. To do this, we are going to mirror the proof we gave for Belyi's theorem, in section \ref{belyi1}: we compose with the map $X\longmapsto X^3$, which ramifies only over $0$ and $\infty$, and maps the points $\{0,1,\rd,\infty\}$ to $\{0,1,2,\infty\}$. Then, we compose with the map $X\longmapsto (X-1)^2$, which ramifies over $0$ and $\infty$ and maps $\{0,1,2,\infty\}$ to $\{0,1,\infty\}$. Finally we will compose with $X\longmapsto 1-X$ because it is more pleasant to have $0$ mapped to $0$ and $1$ mapped to $1$.

We now have a Belyi map $f:C\longmapsto \P^1$, which is given by the composition of all the maps we have mentioned above: it is given by $f(X,Y)=1-(X^3-1)^2$, or equivalently, by $f(X:Y:Z)=(Z^6-(X^3-Z^3)^2:Z^6)$. In order to be able to draw the dessin d'enfant corresponding to $f$, we need to know $f^{-1}([0,1])$. Let us do this step by step. We will use the usual notation for dessins d'enfants: a black dot will represent a point in the preimage of $0$ and a white point will represent one in the preimage of $1$. The starting picture, just the segment $[0,1]$, would look something like this:
\begin{center}
\begin{tikzpicture}[line cap=round,line join=round,>=triangle 45,x=2.0cm,y=2.0cm]
\clip(-1,-0.2) rectangle (2,0.4);
\draw (0,0)-- (1,0);
\begin{scriptsize}
\fill [color=black] (0,0) circle (1.5pt);
\draw [color=black] (1,0) circle (1.5pt);
\fill[color=white] (1,0) circle (1.3pt);
\draw (0,0.24) node[anchor=north west] {$$ 0 $$};
\draw (1,0.24) node[anchor=north west] {$$1$$};
\end{scriptsize}
\end{tikzpicture}
\end{center}
\noindent Now, the map $X\longmapsto 1-X$ just flips the segment, so its preimage is the same picture with the black and white dots interchanged. The map $X\longmapsto X^2$ looks like this:

\begin{tabular}{ccccc}
\begin{tikzpicture}[line cap=round,line join=round,>=triangle 45,x=1.5cm,y=1.5cm]
\clip(-1.5,-0.3) rectangle (1.5,0.3);
\draw (-1,0)-- (1,0);
\begin{scriptsize}
\fill [color=black] (-1,0) circle (1.5pt);
\fill [color=black] (1,0) circle (1.5pt);
\draw [color=black] (0,0) circle (1.5pt);
\fill[color=white] (0,0) circle (1.3pt);
\draw (0,0.24) node[anchor=north west] {$$ 0 $$};
\draw (1,0.24) node[anchor=north west] {$$1$$};
\draw (-1,0.24) node[anchor=north west] {$$-1$$};
\end{scriptsize}
\end{tikzpicture}
& 
\begin{tikzpicture}[line cap=round,line join=round,x=1.0cm,y=1.0cm]
\clip(-0.6,-0.3) rectangle (0.6,0.5);
\draw[|->] (-0.5,0) -- (0.5,0);
\draw (0,0) node[anchor=south] {\small{$X^2$}};
\end{tikzpicture}
&
\begin{tikzpicture}[line cap=round,line join=round,>=triangle 45,x=1.5cm,y=1.5cm]
\clip(-1,-0.3) rectangle (2,0.5);
\draw (0,0)-- (1,0);
\begin{scriptsize}
\fill [color=black] (1,0) circle (1.5pt);
\draw [color=black] (0,0) circle (1.5pt);
\fill[color=white] (0,0) circle (1.3pt);
\draw (0,0.24) node[anchor=north west] {$$ 0 $$};
\draw (1,0.24) node[anchor=north west] {$$1$$};
\end{scriptsize}
\end{tikzpicture}
&
\begin{tikzpicture}[line cap=round,line join=round,x=1.0cm,y=1.0cm]
\clip(-0.6,-0.3) rectangle (0.6,0.5);
\draw[|->] (-0.5,0) -- (0.5,0);
\draw (0,0) node[anchor=south] {\small{$1-X$}};
\end{tikzpicture}
&
\begin{tikzpicture}[line cap=round,line join=round,>=triangle 45,x=1.5cm,y=1.5cm]
\clip(-1,-0.3) rectangle (2,0.5);
\draw (0,0)-- (1,0);
\begin{scriptsize}
\fill [color=black] (0,0) circle (1.5pt);
\draw [color=black] (1,0) circle (1.5pt);
\fill[color=white] (1,0) circle (1.3pt);
\draw (0,0.24) node[anchor=north west] {$$ 0 $$};
\draw (1,0.24) node[anchor=north west] {$$1$$};
\end{scriptsize}
\end{tikzpicture}
\end{tabular}

\noindent Now, we have to compose with the map $X^3$. This map is ramified three times at $0$. Here, $\xi$ is the cubic root of unity $\frac{-1+i\sqrt 3}{2}$:
\begin{center}
\begin{tabular}{ccc}
\begin{tikzpicture}[line cap=round,line join=round,>=triangle 45,x=1.0cm,y=1.0cm]
\clip(-2.5,-2.5) rectangle (2.7,2.5);
\draw (-1,-1.73)-- (0,0);
\draw (0,0)-- (2,0);
\draw (0,0)-- (-1,1.73);
\begin{scriptsize}
\draw (2,0) node[anchor=south west]  {$\sqrt[3]{2}$};
\draw (-1,1.98) node[anchor=north west] {$\xi\sqrt[3]{2}$};
\draw (-1,-1.5) node[anchor=north west] {$ \xi^2\sqrt[3]{2} $};
\draw (1,0) node[anchor=south west] {$1 $};
\draw (-0.5,1.1) node[anchor=north west] {$ \xi $};
\draw (-0.5,-0.62) node[anchor=north west] {$ \xi^2 $};
\draw (0,0) node[anchor=south west] {$ 0 $};
\fill [color=black] (0,0) circle (1.5pt);
\draw [color=black] (1,0) circle (1.5pt);
\fill [color=white] (1,0) circle (1.3pt);
\fill [color=black] (2,0) circle (1.5pt);
\draw [color=black] (-0.5,0.87) circle (1.5pt);
\fill [color=white] (-0.5,0.87) circle (1.3pt);
\fill [color=white] (-0.5,-0.87) circle (1.3pt);
\draw [color=black] (-0.5,-0.87) circle (1.5pt);
\fill [color=black] (-1,1.73) circle (1.5pt);
\fill [color=black] (-1,-1.73) circle (1.5pt);
\end{scriptsize}
\end{tikzpicture}
& 
\begin{tikzpicture}[line cap=round,line join=round,x=1.0cm,y=1.0cm]
\clip(-0.6,-2.5) rectangle (0.6,2.5);
\draw[|->] (-0.5,0) -- (0.5,0);
\draw (0,0) node[anchor=south] {$X^3$};
\end{tikzpicture}
&
\begin{tikzpicture}[line cap=round,line join=round,>=triangle 45,x=1.0cm,y=1.0cm]
\clip(-1.5,-2.5) rectangle (1.5,2.5);
\draw (-1,0)-- (1,0);
\fill [color=black] (-1,0) circle (1.5pt);
\fill [color=black] (1,0) circle (1.5pt);
\draw [color=black] (0,0) circle (1.5pt);
\fill[color=white] (0,0) circle (1.3pt);
\begin{scriptsize}
\draw (0,0) node[anchor=south west] {$ 1 $};
\draw (1,0) node[anchor=south west] {$2$};
\draw (-1,0) node[anchor=south west] {$0$};
\end{scriptsize}
\end{tikzpicture}
\end{tabular}

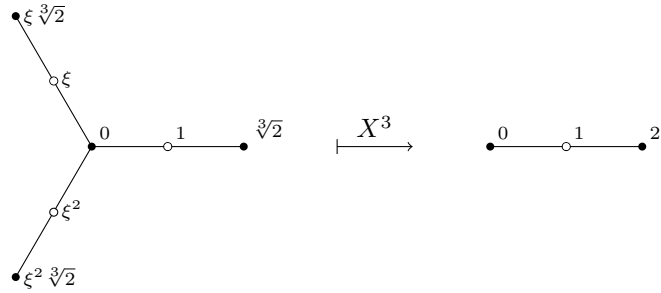
\captionof{figure}{The dessin corresponding to $X\longmapsto 1-(X^3-1)^2$.}\label{des6}
\end{center}
\noindent Finally, we need the preimage of this graph on the curve $C$ by the projection on the first coordinate to know what the dessin looks like. The projection has degree 2 and it is ramified over $0$, $1$, $\rd$ and $\infty$. Since it is ramified at $\infty$, it has only one face. We would like to know the ordering of the edges around that face. To do this, call the canonical generators of the fundamental group of $\P^1\setminus \{0,1,\infty\}$ $x $ and $y$. Recall that the monodromy action on the edges of $x$ consists on rotating counterclockwise around black vertices, and the action of $y$ consists on rotating counterclockwise around white vertices. Therefore, $xy$ will consist on rotating clockwise around a face (we adopt the convention that the monodromy group acts on the right, so $xy$ means $x$ then $y$). Therefore, if we number the edges in figure \ref{des6} like so:
\begin{center}
\begin{tabular}{cc|cc}
Endpoints & Label & Endpoints & Label
\\
\hline
$0-1$ & 1 & $1-\rd$ & 2 \\
$0-\xi$ & 3 & $1-\xi\rd$ & 4 \\
$0-\xi^2$ & 5 & $1-\xi^2\rd$ & 6 \\
\end{tabular}
\end{center}
\noindent The action of $x$ corresponds to the permutation $(135)$, that of $y$ corresponds to $(12)(34)(56)$, and $xy$ corresponds to $(143652)$. Now, the dessin we are looking at has a monodromy action that factors through this one. We can number its edges $1,\ldots,6,1',\ldots ,6'$. Then, since $xy$ has order 12 because the action is ramified at $\infty$, it must correspond, with appropriate labeling, to
$$
z\longmapsto(1436521'4'3'6'5'2')
$$
So, the face of the dessin will look like figure \ref{dodec}. The labels for the vertices mark their image by the map $(X,Y)\longmapsto X$.
\begin{center}
\begin{tikzpicture}[line cap=round,line join=round,>=triangle 45,x=1.5cm,y=1.5cm]
\clip(-1.7,-1.5) rectangle (1.5,1.5);
\draw (1,0)-- (0.97,0.26);
\draw (0.97,0.26)-- (0.87,0.5);
\draw (0.87,0.5)-- (0.71,0.71);
\draw (0.71,0.71)-- (0.5,0.87);
\draw (0.5,0.87)-- (0.26,0.97);
\draw (0.26,0.97)-- (0,1);
\draw (0,1)-- (-0.26,0.97);
\draw (-0.26,0.97)-- (-0.5,0.87);
\draw (-0.5,0.87)-- (-0.71,0.71);
\draw (-0.71,0.71)-- (-0.87,0.5);
\draw (-0.87,0.5)-- (-0.97,0.26);
\draw (-0.97,0.26)-- (-1,0);
\draw (-1,0)-- (-0.97,-0.26);
\draw (-0.97,-0.26)-- (-0.87,-0.5);
\draw (-0.87,-0.5)-- (-0.71,-0.71);
\draw (-0.71,-0.71)-- (-0.5,-0.87);
\draw (-0.5,-0.87)-- (-0.26,-0.97);
\draw (-0.26,-0.97)-- (0,-1);
\draw (0,-1)-- (0.26,-0.97);
\draw (0.26,-0.97)-- (0.5,-0.87);
\draw (0.5,-0.87)-- (0.71,-0.71);
\draw (0.71,-0.71)-- (0.87,-0.5);
\draw (0.87,-0.5)-- (0.97,-0.26);
\draw (0.97,-0.26)-- (1,0);
\begin{scriptsize}
\fill [color=black] (1,0) circle (1.5pt);
\draw (1,0) node[anchor= west] {$0$};
\puntonegro{(0.87,0.5)}
\draw (0.87,0.5) node[anchor=south west] {$\xi^2\rd$};
\fill [color=black] (0.5,0.87) circle (1.5pt);
\draw (0.5,0.87) node[anchor=south west] {$0$};
\puntonegro{(0,1)}
\draw (0,1) node[anchor= south] {$\xi\rd$};
\fill [color=black] (-0.5,0.87) circle (1.5pt);
\draw (-0.5,0.87) node[anchor=south east] {$0$};
\puntonegro{(-0.87,0.5)}
\draw (-0.87,0.5) node[anchor=south east] {$\rd$};
\fill [color=black] (-1,0) circle (1.5pt);
\draw (-1,0) node[anchor= east] {$0$};
\puntonegro{(-0.87,-0.5)}
\draw (-0.87,-0.5) node[anchor=north east] {$\xi^2\rd$};
\fill [color=black] (-0.5,-0.87) circle (1.5pt);
\draw (-0.5,-0.87) node[anchor=north east] {$0$};
\puntonegro{(0,-1)}
\draw (0,-1) node[anchor= north] {$\xi\rd$};
\fill [color=black] (0.5,-0.87) circle (1.5pt);
\draw (0.5,-0.87) node[anchor=north west] {$0$};
\puntonegro{(0.87,-0.5)}
\draw (0.87,-0.5) node[anchor=north west] {$\rd$};
\draw (0,0)-- ++(-1.0pt,-1.0pt) -- ++(2.0pt,2.0pt) ++(-2.0pt,0) -- ++(2.0pt,-2.0pt) ++ (-1.0pt,-0.4pt) -- ++(0,2.8pt) ++(-1.4pt,-1.4pt)-- ++(2.8pt,0);
\draw (0,0) node[anchor=south west] {$\infty$};
\puntoblanco{(0.27,0.96)} \draw (0.27,0.96) node[anchor=south] {$\xi$};
\puntoblanco{(0.71,0.71)} \draw (0.71,0.71) node[anchor=south west] {$\xi^2$};
\puntoblanco{(0.96,0.27)} \draw (0.96,0.27) node[anchor=west] {$\xi^2$};
\puntoblanco{(-0.27,0.96)} \draw (-0.27,0.96) node[anchor=south] {$\xi$};
\puntoblanco{(-0.71,0.71)} \draw (-0.71,0.71) node[anchor=south east] {$1$};
\puntoblanco{(-0.96,0.27)} \draw (-0.96,0.27) node[anchor=east] {$1$};
\puntoblanco{(-0.27,-0.96)} \draw (-0.27,-0.96) node[anchor=north] {$\xi$};
\puntoblanco{(-0.71,-0.71)} \draw (-0.71,-0.71) node[anchor=north east] {$\xi^2$};
\puntoblanco{(-0.96,-0.27)} \draw (-0.96,-0.27) node[anchor=east] {$\xi^2$};
\puntoblanco{(0.27,-0.96)} \draw (0.27,-0.96) node[anchor=north] {$\xi$};
\puntoblanco{(0.71,-0.71)} \draw (0.71,-0.71) node[anchor=north west] {$1$};
\puntoblanco{(0.96,-0.27)} \draw (0.96,-0.27) node[anchor=west] {$1$};
\end{scriptsize}
\end{tikzpicture}

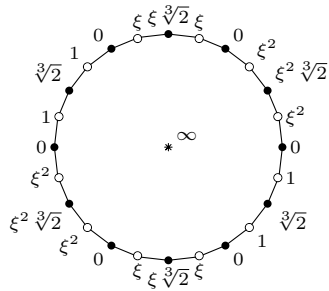
\captionof{figure}{The labels mark the $x$ coordinate of the points}\label{dodec}
\end{center}
\noindent The vertices of the face are the preimages of $\{0,1,\xi,\xi^2,\rd,\xi\rd,\xi^2\rd \}$. Some of the points, namely $\xi$, $\xi^2$, $\xi\rd$ and $\xi^2\rd$ have two different preimages and nonetheless these have the same label in the figure, because we are only labeling the $X$ coordinates, for simplicity. Note that the points are in the same order as in the dessin in figure \ref{des6}.

The dessin corresponding to $(C,f)$ is figure \ref{dodec} with some of its edges identified. What we are going to do now is figure out how the edges are identified. Since the curve is an orientable surface, the edges in the figure are identified in pairs, and the first coordinates of two identified edges must coincide. This leaves for each edge only two possibilities as to which edge it is identified with: take, for instance, the edges with endpoints marked 0 and 1, of which there are four. They are oriented in two different ways: two of them have endpoints $0-1$ in clockwise order, and the other two have endpoints $1-0$. Two edges with the same orientation cannot be identified, for the resulting surface must be orientable: therefore, each of the edges is identified with one of the other two which have the opposite orientation, which gives two possibilities for the way they are identified.

Now, if we look at the points over $\xi\rd$, we know that these are not ramified, so there must be two of them. There are two ways of identifying the edges marked $\xi-\xi\rd$, but the one identifying the non-adjacent ones gives only one preimage of $\xi\rd$, which is not the case. Therefore, the adjacent edges must be identified, and the same thing happens for $\xi^2\rd$. With the corresponding edges identified, the dessin must look like the following:
\begin{center}
\begin{tikzpicture}[line cap=round,line join=round,>=triangle 45,x=1.5cm,y=1.5cm]
\clip(-1.7,-1.5) rectangle (1.5,1.5);
\draw (-0.92,-0.38)-- (-0.71,-0.71);
\draw (-0.71,-0.71)-- (-0.38,-0.92);
\draw (-0.38,-0.92)-- (0,-1);
\draw (0,-1)-- (0.38,-0.92);
\draw (0.38,-0.92)-- (0.71,-0.71);
\draw (0.71,-0.71)-- (0.92,-0.38);
\draw (0.92,-0.38)-- (1,0);
\draw (1,0)-- (0.92,0.38);
\draw (0.92,0.38)-- (0.71,0.71);
\draw (0.71,0.71)-- (0.38,0.92);
\draw (0.38,0.92)-- (0,1);
\draw (0,1)-- (-0.38,0.92);
\draw (-0.38,0.92)-- (-0.71,0.71);
\draw (-0.71,0.71)-- (-0.92,0.38);
\draw (-0.92,0.38)-- (-1,0);
\draw (-1,0)-- (-0.92,-0.38);
\draw (0.65,0.27) -- (0.92,0.38);
\draw (0.27,0.65) -- (0.38,0.92);
\draw (-0.65,-0.27) -- (-0.92,-0.38);
\draw (-0.27,-0.65) -- (-0.38,-0.92);
\begin{scriptsize}
\draw (0,0)-- ++(-1.0pt,-1.0pt) -- ++(2.0pt,2.0pt) ++(-2.0pt,0) -- ++(2.0pt,-2.0pt) ++ (-1.0pt,-0.4pt) -- ++(0,2.8pt) ++(-1.4pt,-1.4pt)-- ++(2.8pt,0);
\draw (0,0) node[anchor=south west] {$\infty$};
\puntonegro{(0.65,0.27)} \draw (0.65,0.27) node[anchor=north] {$\xi^2\rd$};
\puntonegro{(0.27,0.65)} \draw (0.27,0.65) node[anchor=east] {$\xi\rd$};
\puntonegro{(-0.65,-0.27)} \draw (-0.65,-0.27) node[anchor=south] {$\xi^2\rd$};
\puntonegro{(-0.27,-0.65)} \draw (-0.27,-0.65) node[anchor=west] {$\xi\rd$};
\puntonegro{(1,0)} \draw (1,0) node[anchor= west] {$0$};
\puntoblanco{(0.92,0.38)} \draw (0.92,0.38) node[anchor=south west] {$\xi^2$};
\puntonegro{(0.71,0.71)} \draw (0.71,0.71) node[anchor=south west] {$0$};
\puntoblanco{(0.38,0.92)} \draw (0.38,0.92) node[anchor=south west] {$\xi$};
\puntonegro{(0,1)} \draw (0,1) node[anchor= south] {$0$};
\puntonegro{(0,-1)} \draw (0,-1) node[anchor= north] {$0$};
\puntonegro{(-1,0)} \draw (-1,0) node[anchor= east] {$0$};
\puntoblanco{(-0.92,0.38)} \draw (-0.92,0.38) node[anchor=south east] {$1$};
\puntonegro{(-0.71,0.71)} \draw (-0.71,0.71) node[anchor=south east] {$\rd$};
\puntoblanco{(-0.38,0.92)} \draw (-0.38,0.92) node[anchor=south east] {$1$};
\puntoblanco{(-0.92,-0.38)} \draw (-0.92,-0.38) node[anchor=north east] {$\xi^2$};
\puntonegro{(-0.71,-0.71)} \draw (-0.71,-0.71) node[anchor=north east] {$0$};
\puntoblanco{(-0.38,-0.92)} \draw (-0.38,-0.92) node[anchor=north east] {$\xi$};
\puntoblanco{(0.92,-0.38)} \draw (0.92,-0.38) node[anchor=north west] {$1$};
\puntonegro{(0.71,-0.71)} \draw (0.71,-0.71) node[anchor=north west] {$\rd$};
\puntoblanco{(0.38,-0.92)} \draw (0.38,-0.92) node[anchor=north west] {$1$};
\end{scriptsize}
\end{tikzpicture}
\end{center}

\noindent The same kind of reasoning applies to the points over $\xi$ and $\xi^2$, so the resulting figure is

\begin{center}
\begin{tikzpicture}[line cap=round,line join=round,>=triangle 45,x=2cm,y=2cm]
\clip(-1.7,-1.5) rectangle (1.5,1.5);
\draw (1,1)--(1,-1)--(-1,-1)--(-1,1)--(1,1);
\draw (1,0)--(0.28,0.46);
\draw (1,0)--(0.28,-0.46);
\draw (-1,0)--(-0.28,0.46);
\draw (-1,0)--(-0.28,-0.46);
\begin{scriptsize}
\draw (0,0)-- ++(-1.0pt,-1.0pt) -- ++(2.0pt,2.0pt) ++(-2.0pt,0) -- ++(2.0pt,-2.0pt) ++ (-1.0pt,-0.4pt) -- ++(0,2.8pt) ++(-1.4pt,-1.4pt)-- ++(2.8pt,0);
\draw (0,0) node[anchor=south west] {$\infty$};
\puntonegro{(1,0)} \draw (1,0) node[anchor= west] {$0$};
\puntoblanco{(1,1)} \draw (1,1) node[anchor=south west] {$1$};
\puntonegro{(0,1)} \draw (0,1) node[anchor= south] {$\rd$};
\puntonegro{(0,-1)} \draw (0,-1) node[anchor= north] {$\rd$};
\puntonegro{(-1,0)} \draw (-1,0) node[anchor= east] {$0$};
\puntoblanco{(-1,1)} \draw (-1,1) node[anchor=south east] {$1$};
\puntoblanco{(-1,-1)} \draw (-1,-1) node[anchor=north east] {$1$};
\puntoblanco{(1,-1)} \draw (1,-1) node[anchor=north west] {$1$};
\puntoblanco{(0.64,0.23)} \draw (0.64,0.23) node[anchor= south west] {$\xi$};
\puntoblanco{(0.64,-0.23)} \draw (0.64,-0.23) node[anchor= north west] {$\xi^2$};
\puntoblanco{(-0.64,0.23)} \draw (-0.64,0.23) node[anchor= south east] {$\xi^2$};
\puntoblanco{(-0.64,-0.23)} \draw (-0.64,-0.23) node[anchor= north east] {$\xi$};
\puntonegro{(0.28,0.46)} \draw (0.28,0.46) node[anchor= south west] {$\xi\rd$};
\puntonegro{(0.28,-0.46)} \draw (0.28,-0.46) node[anchor= north west] {$\xi^2\rd$};
\puntonegro{(-0.28,0.46)} \draw (-0.28,0.46) node[anchor= south east] {$\xi^2\rd$};
\puntonegro{(-0.28,-0.46)} \draw (-0.28,-0.46) node[anchor= north east] {$\xi\rd$};
\end{scriptsize}
\end{tikzpicture}
\end{center}

\noindent If we look at the remaining edges, we can see that for $0$, $1$ and $\rd$ to have one preimage each, there is just one way in which they can be identified, which is by identifying opposite sides of the square (note that this agrees with the fact that the curve has genus 1).

\begin{wrapfigure}[12]{l}{0.3\textwidth}
\centering
\vspace{-5 mm}
\begin{tikzpicture}[line cap=round,line join=round,>=triangle 45,x=1.5cm,y=1.5cm]
\clip(-1.7,-1.5) rectangle (1.5,1.5);
\draw (1,1)--(1,-1)--(-1,-1)--(-1,1)--(1,1);
\draw (1,0)--(0.28,0.46);
\draw (1,0)--(0.28,-0.46);
\draw (-1,0)--(-0.28,0.46);
\draw (-1,0)--(-0.28,-0.46);
\begin{scriptsize}
\puntonegro{(1,0)} 
\puntoblanco{(1,1)} 
\puntonegro{(0,1)} 
\puntonegro{(0,-1)}
\puntonegro{(-1,0)} 
\puntoblanco{(-1,1)} 
\puntoblanco{(-1,-1)} 
\puntoblanco{(1,-1)} 
\puntoblanco{(0.64,0.23)}
\puntoblanco{(0.64,-0.23)} 
\puntoblanco{(-0.64,0.23)}
\puntoblanco{(-0.64,-0.23)}
\puntonegro{(0.28,0.46)}
\puntonegro{(0.28,-0.46)} 
\puntonegro{(-0.28,0.46)}
\puntonegro{(-0.28,-0.46)}
\draw (1,0.5) node[anchor=west] {9};
\draw (1,-0.5) node[anchor=west] {3};
\draw (-1,0.5) node[anchor=east] {9};
\draw (-1,-0.5) node[anchor=east] {3};
\draw (0.5,1) node[anchor=south] {$6$};
\draw (-0.5,1) node[anchor=south] {$12$};
\draw (0.5,-1) node[anchor=north] {6};
\draw (-0.5,-1) node[anchor=north] {12};
\draw (0.78,0.11) node[anchor=south west] {1};
\draw (0.78,-0.11) node[anchor=north west] {2};
\draw (-0.78,0.11) node[anchor=south east] {8};
\draw (-0.78,-0.11) node[anchor=north east] {7};
\draw(0.46,0.34) node[anchor=south west] {4};
\draw(0.46,-0.34) node[anchor=north west] {5};
\draw(-0.46,0.34) node[anchor=south east] {11};
\draw(-0.46,-0.34) node[anchor=north east] {10};
\end{scriptsize}
\end{tikzpicture}
\vspace{-3 mm}
\caption{The dessin $D_0$ with a numbering on its edges\label{numeros12}}
\end{wrapfigure}

\noindent We want to look at this dessin's regular cover. In order to do that, we will look at its cartographic group (recall that the cartographic group of a dessin is the automorphism group of its regular cover, proposition \ref{autmono}). To be able to present it as a permutation group by its action on the edges of the dessin, we will number them as in figure \ref{numeros12}.

With this numbering, the cartographic group is generated by $x=(1,2,3,7,8,9)(6,12)$, $y=(1,4)(2,5)(7,10)(8,11)(3,6,9,12)$ and $z=(xy)^{-1}=(2,5,1,4,9,12,8,11,7,10,3,6)$ (we adopt the convention that the cartographic group acts on the right).

If we see the dessin as a subgroup $D_0$ of $\Fd$, it is the stabilizer of an edge in the dessin (any edge, since the action is transitive, and thus any two stabilizers are conjugate).
\newpage
\noindent If we take the edge marked 1, the stabilizer is the closure of the free group freely generated by
$$
\langle x^6,y^2,(y^x)^4,(y^{x^2})^2,(y^{x^3})^2,(y^{x^5})^2,x^y,(x^{yx})^2,x^{yx^2}, x^{yx^3},x^{yx^5},x^2y^2x,x^{-1}y^{-1}x^{-1}y^{-1}x\rangle
$$
(Note the convention that the cartographic group acts on the right, and that $a^b$ means $b^{-1}ab$). These generators are obtained as the generators for the fundamental group of the elliptic curve without the ramified points (recall that the subgroup associated to a cover is the pushforward of the fundamental group of the covering space, see section \ref{monodromy}). The elliptic curve without the ramification points is a torus with 12 points removed, which has fundamental group $F_{13}$. Each of the generators corresponds to a loop going around one of the points, except for the last two, which are the loops that generate the fundamental group of the torus. Then, its preimage in $\Fd$ is the same as its closure. Nonetheless, we won't need this presentation, and we will be able to obtain a nicer one.

We are interested in the regular cover $\D{0}$ of the dessin. Recall that the regular cover is given by $\D{0}=\cor_{\Fd} D_0$. The quotient $\Fd/\D{0}$ is then the cartographic group of the dessin, so $\D{0}$ is the set of relations defining this group. We will call this group $\mathcal C\cong \Fd/\D{0}$.

We will see that this dessin $\D{0}$ has $\Q(\rd)$ as field of moduli, by finding out also about the cartographic groups of its Galois conjugates, and seeing that they have different sets of relations.

\section{The cartographic group of $D_0$}

The cartographic group of $D_0$ is the group $\Fd/\D{0}$, and, since it acts faithfully on the edges of $D_0$, we can see it as the permutation group
$$
\CC=\langle x=(1,2,3,7,8,9)(6,12),y=(1,4)(2,5)(7,10)(8,11)(3,6,9,12)\rangle<S_{12}
$$
\noindent The first thing we notice is that both generators, and thus the whole group, preserve the partition
$$\{\{1,7\},\{2,8\},\{3,9\},\{4,10\},\{5,11\},\{6,12\} \} $$
This partition corresponds to the 180º rotational symmetry around the origin that the map exhibits, which is the map $(X,Y)\mapsto (X,-Y)$ in the curve. If we map $\{1,2,3,4,5,6\}\leftrightarrow \{\{1,7\},\{2,8\},\{3,9\},\{4,10\},\{5,11\},\{6,12\} \}$ in the obvious way, this induces a homomorphism $\pi:\CC \longrightarrow S_6$, which takes
$$
\begin{array}{rcl}
x=(1,2,3,7,8,9)(6,12)& \longmapsto &  (1,2,3)\\
y=(1,4)(2,5)(7,10)(8,11)(3,6,9,12) & \longmapsto &  (1,4)(2,5)(3,6)
\end{array}
$$
\noindent After few calculations, we see that the image of this map is the group
$$
F=\langle
x,y|x^3=y^2=[x,x^y]=1
\rangle \cong\zn{2} \ltimes (\zn{3} \times \zn{3})
$$
\noindent Where the action of $\zn{2}$ on $\zn{3}\times \zn{3}$ is given by interchanging the coordinates, and the map that takes $\pi(\CC)$ to $\zn{2} \ltimes (\zn{3} \times \zn{3})$ takes $\pi(x)$ to $(0,1,0)$ and $\pi(y)$ to $(1,0,0)$. $\pi(x^y)$ is then $(0,0,1)$. The semidirect product $\zn{2} \ltimes (\zn{3} \times \zn{3})$ can also be seen as the wreath product $\zn{3} \wr_{\{1,2\}} S_2$.

The kernel of the map $\pi$ is composed of the permutations that fix the previous partition pointwise, that is
$$
\CC\cap \langle (1,7),(2,8),(3,9),(4,10),(5,11),(6,12)\rangle
$$
We will call the group $\langle (1,7),(2,8),(3,9),(4,10),(5,11),(6,12)\rangle=\overline K$, and its intersection with $\CC$ will be called $K$. Let us find out what $K$ is. It is the kernel of the map $\pi$, so it is generated by the relations defining its image group. Thus, it is the normal subgroup of $\CC$ generated by $\{x^3,y^2,[x,x^y] \}$. To simplify the notation, we will call the transpositions in $\K$ $a_i=(i,i+6)$. Since the group they generate is commutative, we will use additive notation, so, for instance, $a_1+a_2$ means $(1,7)(2,6)$. Note that the group generated by the $a_i$'s is isomorphic to $(\zn{2})^6$. With this notation, we have
$$
x^3=a_1+a_2+a_3+a_6;y^2=a_3+a_6;[x,x^y]=a_2+a_3+a_5+a_6
$$
All the generators lie in the subgroup $\overline K\cap A_{12}$. Let us produce a set of generators of this group within $K$ to prove that $K=\overline K\cap A_{12} \cong (\zn{2})^5$. The set of generators can be given this way:
$$
\begin{array}{cc}
x^3y^2=a_1+a_2 & (x^3y^2)^y=a_4+a_5 \\
(x^3y^2)^x=a_2+a_3 & (x^3y^2)^{xy}=a_5+a_6 \\
y^2=a_3+a_6&  \\
\end{array}
$$
So we conclude that the kernel of $\pi$ is $K=A_{12}\cap \overline K$, which is isomorphic to $(\zn{2})^5$. Thus, the group $\CC$ is an extension of $F\cong \zn{3} \wr S_2$ by the group $(\zn{2})^5$, and in particular its order is $(3^2\cdot 2)\cdot 2^5=2^6\cdot 3^2$. Since the normal subgroup is abelian, $F$ is mapped into $\mathrm{Aut}(K)$, and if we see $K$ as a vector subspace of $(\zn{2})^6$, we can write the images of $F$ in $\mathrm{Aut}(K)$ using matrices, like so:
$$
x\longmapsto \left(\begin{matrix}
0 & 0 & 1 & 0 & 0 & 0\\
1 & 0 & 0 & 0 & 0 & 0\\
0 & 1 & 0 & 0 & 0 & 0\\
0 & 0 & 0 & 1 & 0 & 0\\
0 & 0 & 0 & 0 & 1 & 0\\
0 & 0 & 0 & 0 & 0 & 1\\
\end{matrix}\right);
y\longmapsto \left(\begin{matrix}
0 & 0 & 0 & 1 & 0 & 0\\
0 & 0 & 0 & 0 & 1 & 0\\
0 & 0 & 0 & 0 & 0 & 1\\
1 & 0 & 0 & 0 & 0 & 0\\
0 & 1 & 0 & 0 & 0 & 0\\
0 & 0 & 1 & 0 & 0 & 0\\
\end{matrix}\right)
$$
$K$ is an $F$-module with the action induced by these matrices.

The subgroup of $\Fd$ corresponding to the dessin $\D{0}$ is the group of relations defining $\CC$. For example, one such relation is $x^3y^2(x^3y^2)^x(x^3y^2)^{x^2}=1$, since
$$
x^3y^2(x^3y^2)^x(x^3y^2)^{x^2}=(a_1+a_2)+(a_1+a_2)^x+(a_1+a_2)^{x^2}=a_1+a_2+a_2+a_3+a_3+a_1=0
$$
We will see that this relation is not satisfied by the dessins which are Galois conjugate to $\D{0}$.



\section{The dessins conjugate to $\D{0}$}

We will now look at the conjugate dessins to our original dessin $(C,f)$ and its regular cover $\D{0}$. Recall that the Galois action preserves regular covers (proposition \ref{autmono}), so the dessins conjugate to $\D{0}$ are the regular covers of the dessins conjugate to $D_0$. Therefore, if we take a $\sigma\in \gal$ that maps our dessin $(C,f)$ to some dessin $(C^\sigma,f^\sigma)$, it will map $\reg$ to $\widetilde{(C^\sigma,f^\sigma)}$. What we need then is to find the dessins conjugate to $(C,f)$ and their regular covers.

Our dessin is defined over the field $\Q(\rd)$, so its field of moduli is contained in this field, and to know its image by a Galois automorphism $\sigma$, it suffices to know $\sigma(\rd)$. Therefore, there are two other dessins, $D_1$ and $D_2$, conjugate to $(C,f)$, which are given by the Belyi pairs $(C_1,f_1)$ and $(C_2,f_2)$, where
$$
C_1=V(Y^2=X(X-1)(X-\xi\rd));C_2=V(Y^2=X(X-1)(X-\xi^2\rd))
$$
\noindent The maps $f_1$ and $f_2$ have the same expression as $f$, namely $(X,Y)\longmapsto 1-(X^3-1)^2$, since it is defined over $\Q$. Note that these dessins are all different, since the underlying curves are not isomorphic. A different proof of this fact will follow from the fact that their regular covers are not isomorphic.

We will use the same procedure as before to draw the dessins. We start with the map $X\mapsto 1-(X^3-1)^2$, as in figure \ref{des6}, since this part of the map is the same for all three. Now, the last part is different, since for $D_1$ it is ramified over $\{0,1,\xi\rd,\infty\}$ and for $D_2$ over $\{0,1,\xi^2\rd,\infty \}$. Like it happened before, the only face of the dessins will look the same as figure \ref{dodec}, since $\infty$ is ramified. For $D_1$, the points $\rd$ and $\xi^2\rd$ are unramified, so we know the way their neighboring edges are identified. Once the neighboring sides to the points over $\xi^2\rd$ are identified, we see the way the neighboring sides to $\xi^2$ are identified. An analogous reasoning holds for $D_2$.

\begin{center}
\begin{tabular}{cc}
\begin{tikzpicture}[line cap=round,line join=round,>=triangle 45,x=1.5cm,y=1.5cm]
\clip(-2.4,-1.3) rectangle (2.4,1.3);
\draw (-2,-1)--(-2,1)--(2,1)--(2,-1)--(-2,-1);
\draw (-2,0)--(-1,0);
\draw (2,0) --(1,0);
\draw (-2,-1) --(-0.5,-0.3);
\draw (2,1) --(0.5,0.3);
\begin{scriptsize}
\draw (0,0)-- ++(-1.0pt,-1.0pt) -- ++(2.0pt,2.0pt) ++(-2.0pt,0) -- ++(2.0pt,-2.0pt) ++ (-1.0pt,-0.4pt) -- ++(0,2.8pt) ++(-1.4pt,-1.4pt)-- ++(2.8pt,0);
\draw (0,0) node[anchor=south west] {$\infty$};
\puntoblanco{(-2,0)} \draw (-2,0) node[anchor=east] {$1$};
\puntoblanco{(2,0)} \draw (2,0) node[anchor=west] {$1$};
\puntonegro{(-1,0)} \draw (-1,0) node[anchor=west] {$\rd$};
\puntonegro{(1,0)} \draw (1,0) node[anchor=east] {$\rd$};
\puntonegro{(-2,1)} \draw (-2,1) node[anchor= south east] {$0$};
\puntonegro{(-2,-1)} \draw (-2,-1) node[anchor= north east] {$0$};
\puntonegro{(2,1)} \draw (2,1) node[anchor= south west] {$0$};
\puntonegro{(2,-1)} \draw (2,-1) node[anchor= north west] {$0$};
\puntoblanco{(1,1)} \draw (1,1) node[anchor= south] {$\xi$};
\puntoblanco{(-1,1)} \draw (-1,1) node[anchor= south] {$\xi$};
\puntoblanco{(1,-1)} \draw (1,-1) node[anchor= north] {$\xi$};
\puntoblanco{(-1,-1)} \draw (-1,-1) node[anchor= north] {$\xi$};
\puntonegro{(0,1)} \draw (0,1) node[anchor=south] {$\xi\rd$};
\puntonegro{(0,-1)} \draw (0,-1) node[anchor=north] {$\xi\rd$};
\puntoblanco{(1.25,0.65)} \draw (1.25,0.65) node[anchor= north west] {$\xi^2$};
\puntoblanco{(-1.25,-0.65)} \draw (-1.25,-0.65) node[anchor= south east] {$\xi^2$};
\puntonegro{(0.5,0.3)} \draw (0.5,0.3) node[anchor=south east] {$\xi^2\rd$};
\puntonegro{(-0.5,-0.3)} \draw (-0.5,-0.3) node[anchor=north west] {$\xi^2\rd$};
\end{scriptsize}
\end{tikzpicture}
&
\begin{tikzpicture}[line cap=round,line join=round,>=triangle 45,x=1.5cm,y=1.5cm]
\clip(-2.4,-1.3) rectangle (2.4,1.3);
\draw (-2,-1)--(-2,1)--(2,1)--(2,-1)--(-2,-1);
\draw (-2,0)--(-1,0);
\draw (2,0) --(1,0);
\draw (-2,1) --(-0.5,0.3);
\draw (2,-1) --(0.5,-0.3);
\begin{scriptsize}
\draw (0,0)-- ++(-1.0pt,-1.0pt) -- ++(2.0pt,2.0pt) ++(-2.0pt,0) -- ++(2.0pt,-2.0pt) ++ (-1.0pt,-0.4pt) -- ++(0,2.8pt) ++(-1.4pt,-1.4pt)-- ++(2.8pt,0);
\draw (0,0) node[anchor=south west] {$\infty$};
\puntoblanco{(-2,0)} \draw (-2,0) node[anchor=east] {$1$};
\puntoblanco{(2,0)} \draw (2,0) node[anchor=west] {$1$};
\puntonegro{(-1,0)} \draw (-1,0) node[anchor=west] {$\rd$};
\puntonegro{(1,0)} \draw (1,0) node[anchor=east] {$\rd$};
\puntonegro{(-2,1)} \draw (-2,1) node[anchor= south east] {$0$};
\puntonegro{(-2,-1)} \draw (-2,-1) node[anchor= north east] {$0$};
\puntonegro{(2,1)} \draw (2,1) node[anchor= south west] {$0$};
\puntonegro{(2,-1)} \draw (2,-1) node[anchor= north west] {$0$};
\puntoblanco{(1,1)} \draw (1,1) node[anchor= south] {$\xi^2$};
\puntoblanco{(-1,1)} \draw (-1,1) node[anchor= south] {$\xi^2$};
\puntoblanco{(1,-1)} \draw (1,-1) node[anchor= north] {$\xi^2$};
\puntoblanco{(-1,-1)} \draw (-1,-1) node[anchor= north] {$\xi^2$};
\puntonegro{(0,1)} \draw (0,1) node[anchor=south] {$\xi^2\rd$};
\puntonegro{(0,-1)} \draw (0,-1) node[anchor=north] {$\xi^2\rd$};
\puntoblanco{(1.25,-0.65)} \draw (1.25,-0.65) node[anchor= north east] {$\xi$};
\puntoblanco{(-1.25,0.65)} \draw (-1.25,0.65) node[anchor= south west] {$\xi$};
\puntonegro{(0.5,-0.3)} \draw (0.5,-0.3) node[anchor=north east] {$\xi\rd$};
\puntonegro{(-0.5,0.3)} \draw (-0.5,0.3) node[anchor=south west] {$\xi\rd$};
\end{scriptsize}
\end{tikzpicture}
\end{tabular}

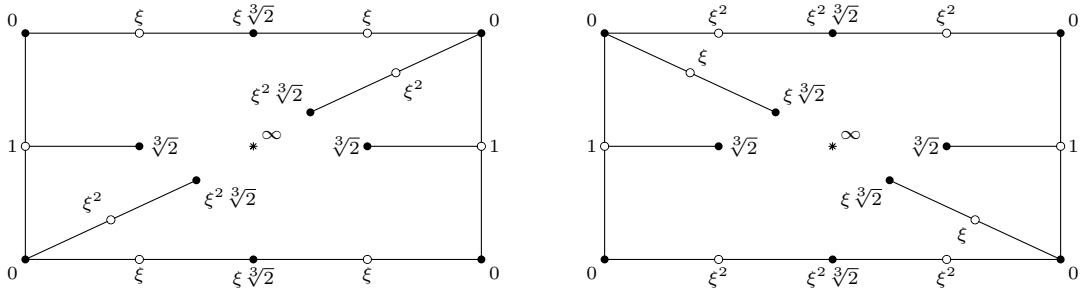
\captionof{figure}{To the left, the points on $C_1$ and to the right, the points on $C_2$}
\end{center}

\noindent In order to see how the remaining edges are identified, note that the edges surrounding 1 have just one way to be identified in order for 1 to be ramified, and the same thing happens with the edges surrounding $\xi\rd$ (in $C_1$). The four remaining edges then have only one way to be paired for $0$ to have one preimage, and similarly for $C_2$. Thus, the opposing sides of the rectangle are identified. We can then number the edges as follows:

\begin{center}
\begin{tabular}{cc}
\begin{tikzpicture}[line cap=round,line join=round,>=triangle 45,x=1.5cm,y=1.5cm]
\clip(-2.4,-1.3) rectangle (2.4,1.3);
\draw (-2,-1)--(-2,1)--(2,1)--(2,-1)--(-2,-1);
\draw (-2,0)--(-1,0);
\draw (2,0) --(1,0);
\draw (-2,-1) --(-0.5,-0.3);
\draw (2,1) --(0.5,0.3);
\begin{scriptsize}
\puntoblanco{(-2,0)}
\puntoblanco{(2,0)}
\puntonegro{(-1,0)} 
\puntonegro{(1,0)} 
\puntonegro{(-2,1)}
\puntonegro{(-2,-1)}
\puntonegro{(2,1)}
\puntonegro{(2,-1)} 
\puntoblanco{(1,1)}
\puntoblanco{(-1,1)} 
\puntoblanco{(1,-1)} 
\puntoblanco{(-1,-1)}
\puntonegro{(0,1)}
\puntonegro{(0,-1)}
\puntoblanco{(1.25,0.65)} 
\puntoblanco{(-1.25,-0.65)}
\puntonegro{(0.5,0.3)} 
\puntonegro{(-0.5,-0.3)}
\draw (-1.5,1) node[anchor=south]{1};
\draw (-0.5,1) node[anchor=south]{4};
\draw (0.5,1) node[anchor=south]{10};
\draw (1.5,1) node[anchor=south]{7};
\draw (-1.5,-1) node[anchor=north]{1};
\draw (-0.5,-1) node[anchor=north]{4};
\draw (0.5,-1) node[anchor=north]{10};
\draw (1.5,-1) node[anchor=north]{7};
\draw (2,0.5) node[anchor=west]{9};
\draw (2,-0.5) node[anchor=west]{3};
\draw (-2,0.5) node[anchor=east]{9};
\draw (-2,-0.5) node[anchor=east]{3};
\draw (-1.7,-0.8) node[anchor=south]{2};
\draw (1.7,0.8) node[anchor=north]{8};
\draw (-0.9,-0.5) node[anchor=south]{5};
\draw (0.9,0.5) node[anchor=north]{11};
\draw (1.5,0) node[anchor=north]{12};
\draw (-1.5,0) node[anchor=south]{6};
\end{scriptsize}
\end{tikzpicture}
&
\begin{tikzpicture}[line cap=round,line join=round,>=triangle 45,x=1.5cm,y=1.5cm]
\clip(-2.4,-1.3) rectangle (2.4,1.3);
\draw (-2,-1)--(-2,1)--(2,1)--(2,-1)--(-2,-1);
\draw (-2,0)--(-1,0);
\draw (2,0) --(1,0);
\draw (-2,1) --(-0.5,0.3);
\draw (2,-1) --(0.5,-0.3);
\begin{scriptsize}
\puntoblanco{(-2,0)}
\puntoblanco{(2,0)}
\puntonegro{(-1,0)} 
\puntonegro{(1,0)} 
\puntonegro{(-2,1)}
\puntonegro{(-2,-1)}
\puntonegro{(2,1)}
\puntonegro{(2,-1)} 
\puntoblanco{(1,1)}
\puntoblanco{(-1,1)} 
\puntoblanco{(1,-1)} 
\puntoblanco{(-1,-1)}
\puntonegro{(0,1)}
\puntonegro{(0,-1)}
\puntoblanco{(1.25,-0.65)} 
\puntoblanco{(-1.25,0.65)}
\puntonegro{(0.5,-0.3)} 
\puntonegro{(-0.5,0.3)}
\draw (-1.5,1) node[anchor=south]{2};
\draw (-0.5,1) node[anchor=south]{5};
\draw (0.5,1) node[anchor=south]{11};
\draw (1.5,1) node[anchor=south]{8};
\draw (-1.5,-1) node[anchor=north]{2};
\draw (-0.5,-1) node[anchor=north]{5};
\draw (0.5,-1) node[anchor=north]{11};
\draw (1.5,-1) node[anchor=north]{8};
\draw (2,0.5) node[anchor=west]{9};
\draw (2,-0.5) node[anchor=west]{3};
\draw (-2,-0.5) node[anchor=east]{3};
\draw (-2,0.5) node[anchor=east]{9};
\draw (-1.7,0.8) node[anchor=north]{1};
\draw (1.7,-0.8) node[anchor=south]{7};
\draw (-0.9,0.5) node[anchor=north]{4};
\draw (0.9,-0.5) node[anchor=south]{10};
\draw (1.5,0) node[anchor=north]{12};
\draw (-1.5,0) node[anchor=south]{6};
\end{scriptsize}
\end{tikzpicture}
\end{tabular}

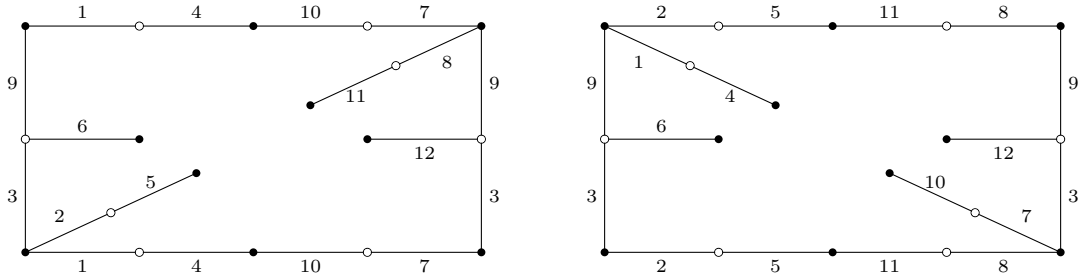
\captionof{figure}{The way the edges are identified in $C_1$ and $C_2$, with a numbering.}
\end{center}

\noindent This way, the cartographic group of $D_1$ can be seen as generated by $x=(1,2,3,7,8,9)(4,10)$ and $y=(1,4)(2,5)(7,10)(8,11)(3,6,9,12)$, and the cartographic group of $D_2$ is generated by $x=(1,2,3,7,8,9)(5,11)$ and $y=(1,4)(2,5)(7,10)(8,11)(3,6,9,12)$. We will call the cartographic groups $\CC_1=\Fd/\D{1}$ and $\CC_2=\Fd/\D{2}$. In terms of subgroups of $\Fd$, this means that to the dessins $D_1$ and $D_2$ correspond two open subgroups such that the action of $\Fd$ on their right cosets gives these permutations.

Note that with the numbering we have chosen, the maps $\pi_1:\CC_1\longrightarrow F<S_6$ and $\pi_2$ can be defined the same as for $\CC$, and that the images of $x$ and $y$ are still $(123)$ and $(14)(25)(36)$ respectively. The kernels are the same subgroups of $S_{12}$ (isomorphic to $\zn{2}^5$), and the kernels of $\pi_1$ and $\pi_2$ are isomorphic to the kernel of $\pi$ as $F$-modules.

However, in $\CC_1$, $x^3=a_1+a_2+a_3+a_4$, so $x^3y^2=a_1+a_2+a_3+a_4+a_3+a_6=a_1+a_2+a_4+a_6$. This gives a hint that there may not be an isomorphism between the cartographic groups preserving $x$ and $y$. Indeed, the relation $x^3y^2(x^3y^2)^x(x^3y^2)^{x^2}=1$, which holds in $\CC$, in $\CC_1$ reads
\begin{align*}
x^3y^2(x^3y^2)^x(x^3y^2)^{x^2}=&(a_1+a_2+a_4+a_6)+(a_1+a_2+a_4+a_6)^x+(a_1+a_2+a_4+a_6)^{x^2}=\\ =&(a_1+a_2+a_4+a_6)+(a_2+a_3+a_4+a_6)+(a_3+a_1+a_4+a_6)=a_4+a_6\neq 0
\end{align*}
\noindent And analogously, in $\CC_2$,
$$
x^3y^2(x^3y^2)^x(x^3y^2)^{x^2}=a_5+a_6
$$
\noindent This means that there is no isomorphism from $\CC$ to $\CC_1$ (or $\CC_2$) taking $x$ to $x$ and $y$ to $y$, since it wouldn't map $x^3y^2(x^3y^2)^x(x^3y^2)^{x^2}$ to $1$. If there is no isomorphism, the dessin $\D{0}$ is not fixed by the Galois action: take $\sigma\in \gal$ such that $\sigma(\rd)=\xi\rd$. Then, $D_0^\sigma=D_1$, and $(\D{0})^\sigma=\widetilde{D_0^\sigma}=\widetilde{D_1}$. But $\D{0}$ and $\D{1}$ are not conjugate: since they are normal, if they were conjugate they would have to be the same group, but $x^3y^2(x^3y^2)^x(x^3y^2)^{x^2}$ belongs to $\D{0}$ but not to $\widetilde D_1$. The same applies to a $\sigma \in \gal$ taking $\rd$ to $\xi^2\rd$: it would map $\D{0}$ to $\wt{D_2}$, and it is not the same group for the same reason.

We conclude that an automorphism $\sigma\in \gal$ fixes $\D{0}$ if and only if it fixes $\rd$. This means that the field of moduli of $\D{0}$ is $\Q(\rd)$. In particular, we have constructed a regular dessin with non-abelian field of moduli. For $\D{1}$ and $\D{2}$, since they are conjugate to $\D{0}$, we have that their fields of moduli are $\Q(\xi\rd)$ and $\Q(\xi^2\rd)$, respectively.

We can calculate the genus $g$ of $\D{0}$: the monodromy action on it is given by the group acting on itself. Therefore, its edges are the elements of the group $\CC$, it has a face for every 12 edges (since $xy$ has order 12), a black vertex for every $6$ edges (the order of $x$), and a white vertex for every $4$ edges. Its Euler-Poincaré characteristic is then $2g-2=|\CC|\left(\frac{1}{|x|}+\frac{1}{|y|}+\frac{1}{|xy|}-1\right)=2^6\cdot 3^2\left(\frac{1}{6}+\frac{1}{4}+\frac{1}{12}-1\right)=2^4\cdot 3\cdot 5=288$. So its genus is $145$.

We are going to make another interpretation of this picture in terms of subgroups of $\Fd$, and to use it to give an example with smaller genus. We have three subgroups $\wt{D_0}$, $\wt{D_1}$ and $\wt{D_2}$ of $\Fd$, which are permuted by the action of the Galois group. Their intersection is then some subgroup $A$ of $\Fd$, and by its construction it must be fixed by the Galois action (and since it is a regular dessin, it is defined over its field of moduli, which is $\Q$). The quotients $\Fd/\wt{D_i}$ are the cartographic groups of the dessins, and we have seen they can all be mapped into the group $F$. This means that the group $B$ of defining relations for $F$ contains all of them, and it is also fixed by the Galois action.

\begin{center}\hspace{0.5 cm}
\begindc{\commdiag}[40]
\obj(0,2)[f]{$\Fd$}
\obj(0,1)[b]{$B$}
\obj(-1,0)[d0]{$\wt{D_0}$}
\obj(0,0)[d1]{$\wt{D_1}$}
\obj(1,0)[d2]{$\wt{D_2}$}
\obj(0,-1)[a]{$A$}
\mor{b}{f}{$18$}[\atleft,\solidline]
\mor{d0}{b}{$2^5$}[\atleft,\solidline]
\mor{d1}{b}{$2^5$}[\atleft,\solidline]
\mor{d2}{b}{$2^5$}[\atright,\solidline]
\mor{a}{d0}{}[\atleft,\solidline]
\mor{a}{d1}{}[\atleft,\solidline]
\mor{a}{d2}{}[\atright,\solidline]
\enddc
\end{center}

\noindent Let us find out what lies between $A$ and $B$, specifically, about the group $B/A$. Note that, since $B/\D{0}$, $B/\D{1}$ and $B/\D{2}$ are commutative, $B/(\D{0}\cap \D{1} \cap \D{2})=B/A$ also is commutative, and therefore $\Fd/B=F$ acts on it, so $B/A$ is an $F$-module. In order to find out about its structure, we can use the $F$-module homomorphism
$$
\f{J}{\displaystyle\frac{B}{\wt{D_0}\cap\wt{D_1}\cap \D{2}}}{\displaystyle\frac{B}{\wt{D_0}}\times \frac{B}{\wt{D_1}}\times \frac{B}{\D{2}}}{g}{\left(g \mod \wt{D_0},g\mod \wt{D_1},g\mod \D{2}\right)}
$$
\noindent This map is obviously well-defined and injective, and therefore its image is isomorphic to $B/(\wt{D_0}\cap\wt{D_1}\cap \D{2})$. We will call its image $\oB=J(B)$, and as a module it is a submodule of $K\times K\times K$ (recall the definition of $K$: $K\cong B/\D{0}\cong B/\D{1}\cong B/\D{2}\cong \zn{2}^5$).

The $F$ action on $\oB$ comes from the $F$ action on each of the modules $K$, like so: $J(g)^a=J(g^a)=(g^a \mod \wt{D_0},g^a \mod \wt{D_1},g^a \mod \D{2})$. Overall, we know the image of $J$ is generated as an $F$-module by $J(x^3)$, $J(y^2)$ and $J([x,x^y])$, because $x^3$, $y^2$ and $[x,x^y]$ generate $B$ as an $F$-module. We can use this to compute the image. The map on the generators goes as follows:
$$
\begin{array}{ccrll}
x^3 &\longmapsto &(a_1+a_2+a_3+a_6,&a_1+a_2+a_3+a_4,&a_1+a_2+a_3+a_5\\
y^2 &\longmapsto &(a_3+a_6,&a_3+a_6,&a_3+a_6)\\
\left[x,x^y\right] & \longmapsto &(a_2+a_3+a_5+a_6,&a_1+a_3+a_4+a_6,&a_1+a_2+a_4+a_5)\\
\end{array}
$$
\noindent We need to play around with the images for a bit to produce as many generators as we can. For example, using the image of $y^2$, we can generate every element of the form $(a,a,a)$, like so:
\begin{equation}\label{gens1}
\begin{array}{ccrllccrll}
y^2 &\longmapsto &(a_3+a_6,&a_3+a_6,&a_3+a_6)& &\\
y^2(y^2)^x &\longmapsto &(a_1+a_3,&a_1+a_3,&a_1+a_3)&(y^2)^y(y^2)^{xy} &\longmapsto &(a_4+a_6,&a_4+a_6,&a_4+a_6)\\
y^2(y^2)^{x^2} &\longmapsto &(a_2+a_3,&a_2+a_3,&a_2+a_3)&(y^2)^y(y^2)^{x^2y} &\longmapsto &(a_5+a_6,&a_5+a_6,&a_5+a_6)\\
\end{array}
\end{equation}
\noindent Also, we can produce other elements, like
$$
x^3y^2 \longmapsto (a_1+a_2,a_1+a_2+a_4+a_6,a_1+a_2+a_5+a_6)$$
\begin{equation}\label{gens2}
\begin{array}{ccrllccrll}
\phi=x^3y^2(y^2)^x(y^2)^{x^2} &\longmapsto &(0,&a_4+a_6,&a_5+a_6)&
\phi^y & \longmapsto &(0,&a_1+a_3,&a_2+a_3)\\
\phi^{x^y} &\longmapsto &(0,&a_4+a_5,&a_4+a_6)&
\phi^{yx} & \longmapsto &(0,&a_1+a_2,&a_1+a_3)\\
\end{array}
\end{equation}
\noindent We claim that the generators we have come up with so far generate the whole module as a group, i.e. if we define the submodule $K'$ of $K$ as $K'=\{n_1a_1+n_2a_2+n_3a_3+n_4a_4+n_5a_5+n_6a_6\in K:n_1+n_2+n_3\equiv n_4+n_5+n_6\equiv 0\mod 2\}$, it is clearly a codimension $1$ submodule of $K$, that is, an index 2 subgroup, and we claim that the whole module $M$ is given by
$$
M=\{(a,a,a):a\in K \}+\{(0,a',a'^{(xx^y)^{-1}}): a\in K,a'\in K'\}
$$
\noindent The proof goes as follows: first of all, this is indeed an $F$-module, since it is invariant under the actions of $x$ and $y$ (because $(xx^y)^{-1}$ commutes with both $x$ and $y$), and second of all, the images of $x^3$, $y^2$ and $[x,x^y]$, which are the generators for the module, lie in $M$, since
\begin{align*}
J(x^3)=&(a_1+a_2+a_3+a_6,a_1+a_2+a_3+a_4,a_1+a_2+a_3+a_5)=\\
=&(a_1+a_2+a_3+a_6,a_1+a_2+a_3+a_6,a_1+a_2+a_3+a_6)+(0,a_4+a_6,a_5+a_6)\\
J(y^2)=&(a_3+a_6,a_3+a_6,a_3+a_6)\\
J([x,x^y])=&(a_2+a_3+a_5+a_6,a_1+a_3+a_4+a_6,a_1+a_2+a_4+a_5)=\\
=&(a_2+a_3+a_5+a_6,a_2+a_3+a_5+a_6,a_2+a_3+a_5+a_6)+(0,a_1+a_2+a_4+a_5,a_1+a_3+a_5+a_6)
\end{align*}
\noindent Also, we have already seen that the image of $J$ generates at least the whole module we have given, since the generators for the first summand are given in (\ref{gens1}) and the generators for the second are given in (\ref{gens2}). Therefore, $B/A\cong M$

The first module in the description of $\oB$ has dimension $5$ as a $\zn{2}$-vector space, and the second module has dimension $4$. The modules clearly intersect only at 0, so the sum is direct.

Let us give a nicer description of the module $\oB$. If we let $t=a_1+a_2+a_3+a_4+a_5+a_6\in K$, we can write an element $a\in K$ as $a=a'+\eps_a t$, where $a'\in K'$ and $\eps_a \in \{0,1\}$. Since an element of $\oB$ can be written as $(a,a,a)+(0,a',a'^{(xx^y)^{-1}})$, where $a\in K$ and $a'\in K'$, we can decompose $a$ to obtain
\begin{align*}
(a,a,a)+(0,k',k'^{(xx^y)^{-1}})&=(a'+\eps_a t,a'+\eps_a t,a'+\eps_a t)+(0,k',k'^{(xx^y)^{-1}})=\\ &=\eps_a(t,t,t)+(a',a',a')+(0,k',k'^{(xx^y)^{-1}})\\&=\eps_a(t,t,t)+(a',0,a'^{xx^y})+(0,k'+a',k'^{(xx^y)^{-1}}+a'+a'^{xx^y})
\end{align*}
\noindent Now, note that for all $a'\in K'$, $a+a^{xx^y}+a^{(xx^y)^{-1}}=0$, so $a'+a'^{xx^y}=a'^{(aa^y)^{-1}}$, and, if we call $b'=k'+a'$, this means that

$$
(a,a,a)+(0,k',k'^{(xx^y)^{-1}})=\eps_a(t,t,t)+(a',0,a'^{xx^y})+(0,b',b'^{(xx^y)^{-1}})
$$

\noindent Thus, $\oB$ can be decomposed in the following way:

\begin{equation}\label{desc}
\oB=\langle (t,t,t)\rangle  \oplus \{(0,a,a^{(xx^y)^{-1}}):a\in K\}\oplus \{(a,0,a^{xx^y}):a\in K\} =T\oplus \K_0 \oplus \K_1
\end{equation}

\noindent The sum is direct since one module doesn't intersect the sum of the other two. Note that the first module is isomorphic to $\zn{2}$ with the trivial action and that the second and third modules are isomorphic to $K$. The order of $M$ is then $2\cdot |K| \cdot |K|=2^9$.

Also, we can see what $\D{0}$ is mapped into in $\oB$: its image is composed of the elements with first coordinate $0$ (since this coordinate is $g\mod \D{0}$), which is the submodule $\K_0$. Similarly, $J(\D{1})=\K_1$, and finally, the image of $\D{2}$ is the elements of third coordinate $0$, which is the module $\K_2=\{(a^{(xx^y)^{-1}},a^{xx^y},0):a\in K\}$. It follows that $(\D{0}\cap \D{1})/(\D{0}\cap \D{1}\cap\D{2})\cong \K_0 \cap \K_1 = 0$, so $\D{0}\cap \D{1}=\D{0}\cap \D{1}\cap \D{2}$.

We can now draw a diagram of the subgroups of $\Fd$ with more information: If we call $B'=\D{0}\D{1}$ (that is, $B'=J^{-1}(\K_0+\K_1)$), $E_i=J^{-1}(K_i+T)$, and $T'=J^{-1}(T)$, we can draw the following:

\begin{center}\hspace{0.5 cm}
\begindc{\commdiag}[3]
\obj(0,30)[f]{$\Fd$}
\obj(0,20)[b]{$B$}
\obj(0,10)[b']{$B'$}
\obj(-20,5)[e0]{$E_0$}
\obj(20,5)[e1]{$E_1$}
\obj(-20,-5)[d0]{$\wt{D_0}$}
\obj(20,-5)[d1]{$\wt{D_1}$}
\obj(0,-10)[t]{$T'$}
\obj(0,-20)[a]{$A$}
\mor{b}{f}{$18$}[\atleft,\solidline]
\mor{b'}{b}{$2$}[\atleft,\solidline]
\mor{e0}{b}{$2^4$}[\atleft,\solidline]
\mor{d0}{b'}{$2^4$}[\atleft,\solidline]
\mor{e1}{b}{$2^4$}[\atleft,\solidline]
\mor{d1}{b'}{$2^4$}[\atright,\solidline]
\mor{d0}{e0}{$2$}[\atleft,\solidline]
\mor{d1}{e1}{$2$}[\atright,\solidline]
\mor{t}{e0}{$2^4$}[\atleft,\solidline]
\mor{t}{e1}{$2^4$}[\atright,\solidline]
\mor{a}{d0}{$2^4$}[\atleft,\solidline]
\mor{a}{d1}{$2^4$}[\atright,\solidline]
\mor{a}{t}{$2$}[\atright,\solidline]
\enddc
\end{center}

\noindent We are omitting $\D{2}$ and $E_2$ for simplicity. There are three groups, namely $E_0$, $E_1$ and $E_2$, which contain the dessins we are working with, the $\D{i}$'s, and since the Galois group preserves the subgroup structure, they can't be left fixed by its action. In the next section, we will see that the Galois group has the same action on these dessins, which have smaller genus, namely 61 (which we will see), and smaller degree, namely $18\cdot 2^4=288$.

\section{Another dessin with non-abelian field of moduli}

In the previous diagram, there was a group $E_i$ containing each of the $\D{i}$. Let us see that if $\sigma\in \gal$ sends $\D{i}$ to $\D{j}$, then $E_i^\sigma=E_j$ also. In order to to this, we are going to see that $E_i/\D{i}$ is the center of $\Fd/\D{i}= \CC_i$.

First of all, the center of $\Fd/\D{i}$ has to be contained in the projection of the center of $\Fd/B$, which is $\{1,xx^y,(xx^y)^2\}$. However, $xx^y$ doesn't act trivially on the module, so its class is not in the center. Therefore, $Z(\Fd/\D{i})\subset B/\D{i}\cong T\oplus K$, and it must be the set of elements fixed by the $F$-action. It is straightforward to see that the elements fixed by the $F$ action are the elements of $T=T'/A=\{0,(t,t,t)\}$.

Therefore, the modules $E_i$ are the centers of $\Fd/\D{i}$, and they are characteristic subgroups of them. Since the Galois group acts by automorphisms of $\Fd$, if $\sigma \in \gal$ takes $\D{i}$ to $\D{j}$, it will induce an isomorphism of the quotients $\Fd/\D{i}$ and $\Fd/\D{j}$, which will map $Z(\Fd/\D{i})=E_i$ to $Z(\Fd/\D{j})=E_j$. Therefore, the action of the Galois group on the $E_i$'s is the same as the action on the $\D{i}$'s, and in particular the field of moduli of $E_i$ is $\Q(\xi^i\rd)$.

If we look at $E_i/\D{i}$ as a subgroup of $\Fd/\D{i}$, it is a subgroup of order 2, and it is generated by $J^{-1}(t,t,t) \mod \D{i}=a_1+a_2+a_3+a_4+a_5+a_6$. As permutations, $E_i/\D{i}=\{1,(1,7)(2,8)(3,9)(4,10)(5,11)(6,12)\}< \CC_i$. Therefore,
$$\frac{\Fd}{E_i}\cong \frac{\CC_i}{E_i/\D{i}}=\frac{\CC_i}{\langle (1,7)(2,8)(3,9)(4,10)(5,11)(6,12)\rangle}$$
\noindent Note that $(xy)^6=(1,7)(2,8)(3,9)(4,10)(5,11)(6,12)$, so in the cartographic group of this dessin, $z$ will have order 6. Since the orders of $x$ and $y$ are $6$ and $4$ respectively, the formula for the Euler-Poincaré characteristic is $2-2g=24\cdot 12\cdot(\frac{1}{6}+\frac{1}{4}+\frac{1}{6}-1)=-120$, so the genus is $61$.

Now, the module $T'=E_0\cap E_1\cap E_2$ is fixed by the Galois action, and the quotient $B/T'$ is isomorphic to $K_0 \oplus  K_1$, since we are taking the quotient by the module $T$. The module $E_0/T$, for instance, maps into $(T\oplus K_0)/T \cong K_0$.

We can represent the module $K'$ by means of the finite field with four elements $\F_4$. If we see it as $\F_4=\F_2[\xi]$ for a cubic root of unity $\xi$, we can take the group isomorphism
$$
\begin{array}{rcl}
\Phi:K' & \longrightarrow & \F_4^2 \\
a_1+a_2 & \longmapsto & (1,0)  \\
a_2+a_3 & \longmapsto & (\xi^2,0)  \\
a_4+a_5 & \longmapsto & (0,1)  \\
a_5+a_6 & \longmapsto & (0,\xi^2)  \\
\end{array}
$$
\noindent And the homomorphism
$$
\begin{array}{rcl}
\rho:F & \longrightarrow & GL_2(\F_4) \\
x & \longmapsto & \left(\begin{matrix}\xi^2 & 0\\0 &1\end{matrix}\right)  \\
y & \longmapsto & \left(\begin{matrix}0 & 1\\1 &0\end{matrix}\right)  \\
\end{array}
$$
\noindent The reason we are taking $\xi^2$ and not $\xi$ is that in the next section, notation will be easier to remember. One can check that $\Phi(a)\rho(g)=\Phi(a^g)$, so we have a nice module isomorphic to $K'$. Since $B/T'\cong K'\times K'$, we have another description of the module $B/T'$. Also, its generators are given by
$$
\begin{array}{ccccccc}
x^3 & \overset{J}\longmapsto &(a_1+a_2+a_3+a_6,a_1+a_2+a_3+a_4,\cdot)&\overset{\mod T'}\equiv &(a_4+a_5,a_5+a_6)&\overset{\Phi\times \Phi}\longmapsto\\
&\overset{\Phi\times \Phi}\longmapsto & (0,1,0,\xi^2)\\
y^2 & \overset{J}\longmapsto &(a_3+a_6,a_3+a_6,\cdot )&\overset{\mod T'}\equiv & (a_1+a_2+a_4+a_5,a_1+a_2+a_4+a_5)&\overset{\Phi\times \Phi}\longmapsto \\
&\overset{\Phi\times \Phi}\longmapsto & (1,1,1,1)\\
\left[x,x^y\right] & \overset{J}\longmapsto &(a_2+a_3+a_5+a_6,a_1+a_3+a_4+a_6,\cdot)&\overset{\mod T'}\equiv& (a_2+a_3+a_5+a_6,a_1+a_3+a_4+a_6,)&\overset{\Phi\times \Phi}\longmapsto\\
&\overset{\Phi\times \Phi}\longmapsto & (\xi^2,\xi^2,\xi,\xi)\\
\end{array}
$$
\noindent We are now going to give a more explicit description of the dessins $E_i$, with equations. We are going to give more explicit constructions of the dessin $B$ and then describe $E_0$ as a covering of this curve.

So let us look at the dessin $B$, which is a regular dessin with cartographic group $F$. If we look back on the dessins we used to construct the dessin $D_0$, we had that $D_0$ was a two sheet covering of the dessin to the left in figure \ref{des6}, by a map that we will call $\pi$. This means that the regular cover of that dessin is itself covered by the regular cover of $D_0$, which suggests that it might be the group $B$. As a matter of fact it is, for if we look at the permutations induced by $x,y$, they are precisely, given the right numbering of the edges, $(1,2,3)$ and $(1,4)(2,5)(3,6)$. This means that the dessin we are looking for is the regular cover of this dessin. In $B$, $x$ has order $3$, $y$ has order $2$, and $xy$ has order $6$. This means two things: first, that $B$ has genus 1, for $\frac{1}{2}+\frac{1}{3}+\frac{1}{6}=1$, and also the only points that are ramified are $\{\rd,\xi\rd,\xi^2\rd\}$. This suggests taking the function $(X-\rd)(X-\xi\rd)(X-\xi^2\rd)=X^3-2$ which has order 1 at each of these points, and considering the Fermat curve $X^3+Y^3=2$, with the map $(X,Y)\longmapsto X$. This map has degree 3 and it is only ramified over these three points (note that the point at infinity, $(1:0)$, has three preimages, namely $(1:1:0)$, $(1:\xi:0)$ and $(1:\xi^2:0)$, so it is unramified).

We have the Belyi pair, which we will also call $B$, given by $$\left(\{(X:Y:Z)\in \P^2:X^3+Y^3=2Z^3 \},(X:Y:Z)\longmapsto Z^6-(Z^3-X^3)^2\right)$$
\noindent We can see that this map is regular: the Fermat curve has two automorphisms, namely $(x,y)\longmapsto (\xi x,y)$, and $(x,y)\longmapsto (y,x)$, both of which preserve the Belyi map (since $1-x^3=y^3-1$), and they generate the group $F$. Since the degree of the pair is $18$, and we have 18 automorphisms, it is the whole automorphism group, and the dessin is regular. This means that it is indeed the regular cover of the dessin in figure \ref{des6}, and it corresponds to the group $B$, as we desired.

Our objective now is to draw the dessin $B$. It is easy to do, since we know its cartographic group, which is $F$, and its edges can be mapped to the cosets of $B$. If we see $\Fd/B$ as a subgroup of $S_6$ (via the map $x\longmapsto (1,2,3)$ and $y\longmapsto (1,4)(2,5)(3,6)$), we can draw the dessin, as it is seen in figure \ref{Fermat}.

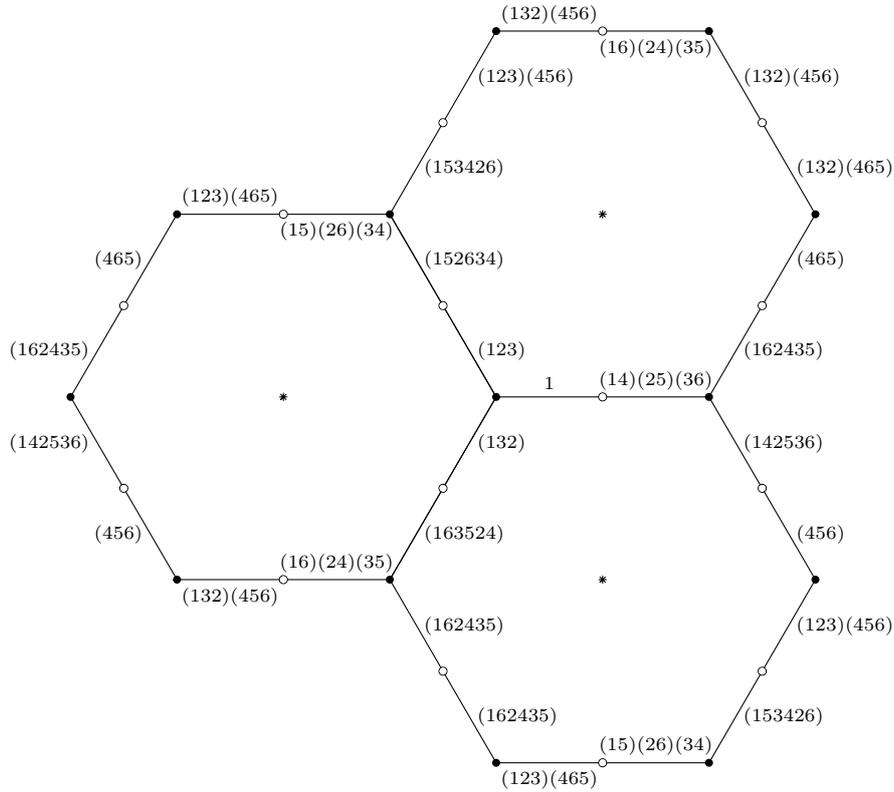
\begin{figure}[h!]
\centering
\begin{tikzpicture}[line cap=round,line join=round,>=triangle 45,x=0.7cm,y=0.7cm]
\clip(-10,-8) rectangle (8,8);
\begin{scriptsize}
\draw (0,0)-- (4,0)-- (6,3.46)-- (4,6.93)-- (0,6.93)-- (-2,3.46)-- (0,0);
\draw (4,0)-- (6,-3.46);
\draw (6,-3.46)-- (4,-6.93);
\draw (4,-6.93)-- (0,-6.93);
\draw (0,-6.93)-- (-2,-3.46);
\draw (-2,-3.46)-- (0,0);
\draw (0,0)-- (-2,-3.46);
\draw (-2,-3.46)-- (-6,-3.46);
\draw (-6,-3.46)-- (-8,0);
\draw (-8,0)-- (-6,3.46);
\draw (-6,3.46)-- (-2,3.46);
\draw (-2,3.46)-- (0,0);
\draw (2,3.46)-- ++(-1.0pt,-1.0pt) -- ++(2.0pt,2.0pt) ++(-2.0pt,0) -- ++(2.0pt,-2.0pt) ++ (-1.0pt,-0.4pt) -- ++(0,2.8pt) ++(-1.4pt,-1.4pt)-- ++(2.8pt,0);
\draw (2,-3.46)-- ++(-1.0pt,-1.0pt) -- ++(2.0pt,2.0pt) ++(-2.0pt,0) -- ++(2.0pt,-2.0pt) ++ (-1.0pt,-0.4pt) -- ++(0,2.8pt) ++(-1.4pt,-1.4pt)-- ++(2.8pt,0);
\draw (-4,0)-- ++(-1.0pt,-1.0pt) -- ++(2.0pt,2.0pt) ++(-2.0pt,0) -- ++(2.0pt,-2.0pt) ++ (-1.0pt,-0.4pt) -- ++(0,2.8pt) ++(-1.4pt,-1.4pt)-- ++(2.8pt,0);
\puntoblanco{(5,5.195)}
\puntoblanco{(5,1.73)}
\puntoblanco{(5,-1.73)}
\puntoblanco{(5,-5.195)}
\puntoblanco{(2,6.93)}
\puntoblanco{(2,-6.93)}
\puntoblanco{(2,0)}
\puntoblanco{(-1,5.195)}
\puntoblanco{(-1,-5.195)}
\puntoblanco{(-1,1.73)}
\puntoblanco{(-1,-1.73)}
\puntoblanco{(-4,-3.46)}
\puntoblanco{(-7,1.73)}
\puntoblanco{(-7,-1.73)}
\puntoblanco{(-4,3.46)}
\puntonegro{(0,0)}
\puntonegro{(4,0)}
\puntonegro{(6,3.46)}
\puntonegro{(4,6.93)}
\puntonegro{(0,6.93)}
\puntonegro{(-2,3.46)}
\puntonegro{(6,-3.46)}
\puntonegro{(4,-6.93)}
\puntonegro{(0,-6.93)}
\puntonegro{(-2,-3.46)}
\puntonegro{(-6,-3.46)}
\puntonegro{(-6,3.46)}
\puntonegro{(-8,0)}
\draw (1,0) node[anchor=south] {$1$};
\draw (3,0) node[anchor=south] {$(14)(25)(36)$};
\draw (4.5,-0.87) node[anchor=west] {$(142536)$};
\draw (5.5,-2.6) node[anchor=west] {$(456)$};
\draw (5.5,4.33) node[anchor=west] {$(132)(465)$};
\draw (4.5,0.87) node[anchor=west] {$(162435)$};
\draw (5.5,2.6) node[anchor=west] {$(465)$};
\draw (5.5,-4.33) node[anchor=west] {$(123)(456)$};
\draw (4.5,6.06) node[anchor=west] {$(132)(456)$};
\draw (4.5,-6.06) node[anchor=west] {$(153426)$};
\draw (3,6.93) node[anchor=north] {$(16)(24)(35)$};
\draw (1,6.93) node[anchor=south] {$(132)(456)$};
\draw (3,-6.93) node[anchor=south] {$(15)(26)(34)$};
\draw (1,-6.93) node[anchor=north] {$(123)(465)$};
\draw (-0.5,6.06) node[anchor=west] {$(123)(456)$};
\draw (-0.5,0.87) node[anchor=west] {$(123)$};
\draw (-0.5,-6.06) node[anchor=west] {$(162435)$};
\draw (-1.5,4.33) node[anchor=west] {$(153426)$};
\draw (-1.5,-4.33) node[anchor=west] {$(162435)$};
\draw (-0.5,-0.87) node[anchor=west] {$(132)$};
\draw (-1.5,2.6) node[anchor=west] {$(152634)$};
\draw (-1.5,-2.6) node[anchor=west] {$(163524)$};
\draw (-3,3.46) node[anchor=north] {$(15)(26)(34)$};
\draw (-3,-3.46) node[anchor=south] {$(16)(24)(35)$};
\draw (-5,3.46) node[anchor=south] {$(123)(465)$};
\draw (-5,-3.46) node[anchor=north] {$(132)(456)$};
\draw (-6.5,2.6) node[anchor=east] {$(465)$};
\draw (-6.5,-2.6) node[anchor=east] {$(456)$};
\draw (-7.5,0.87) node[anchor=east] {$(162435)$};
\draw (-7.5,-0.87) node[anchor=east] {$(142536)$};
\end{scriptsize}
\end{tikzpicture}
\caption{The dessin given by the group $B$, where the elements of $\Fd/B$ are seen as permutations.}\label{Fermat}
\end{figure}
\noindent In the preimage of $\{0,1,\infty\}$, there are 18 points, namely
$$
\begin{array}{ccc}
(0,\rd)& (1,1)& (1:-1:0)\\
(0,\xi\rd)& (1,\xi)& (1:-\xi:0)\\
(0,\xi^2\rd)& (1,\xi^2)& (1:-\xi^2:0)\\
(\rd,0)& (\xi,1)\\
(\xi\rd,0)& (\xi,\xi)\\
(\xi^2\rd,0)& (\xi,\xi^2)\\
& (\xi^2,1)\\
& (\xi^2,\xi)\\
& (\xi^2,\xi^2)\\
\end{array}
$$
\begin{figure}[h!]
\centering
\includegraphics[scale=1.3]{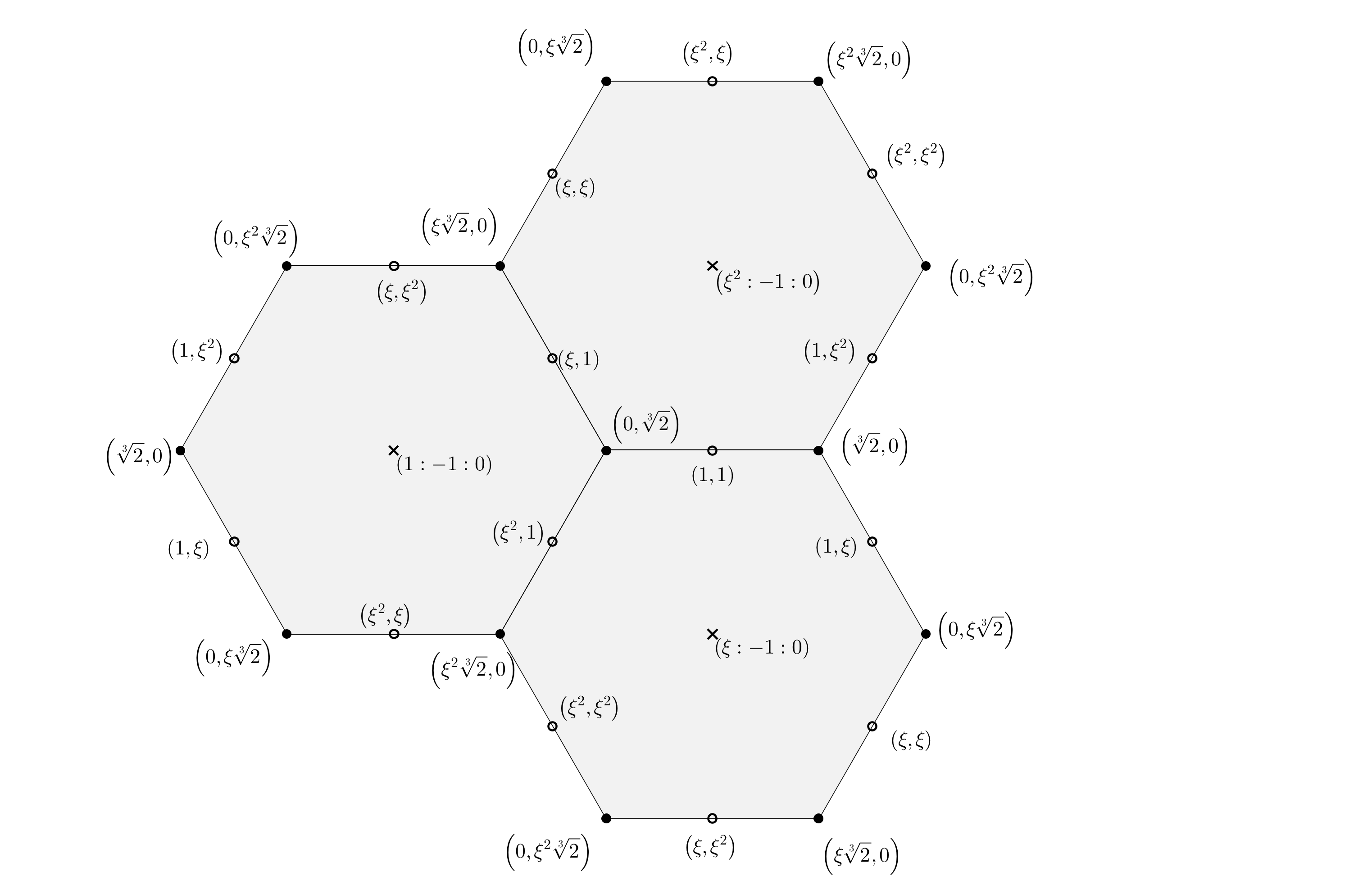}
\caption{The points on the dessin $B$}\label{fermatpuntos}
\end{figure}
\noindent Let us see how these points are distributed in the dessin: To do this, we can use its automorphism group, since we know it is $F=\langle x,y:y^2=x^3=[x,x^y]=1\rangle$, but on the other hand, the generator $x$ can be seen as the map $(X,Y)\longmapsto (\xi X,Y)$, and $y$ can be seen as the map $(X,Y)\longmapsto (Y,X)$. Then, the map $\pi$ is the quotient by the action of $x^y$, which is $(X,Y)\longmapsto (X,\xi Y)$.

In order to find out how the branch points are placed on the curve, we can use the action of the automorphism group on the dessin, which, since the dessin is regular, is the cartographic group acting on the left. Therefore, we can use figure \ref{Fermat} to see what the action is. Since the action is transitive on the edges and on vertices of the same color, we can choose any edge to contain the base point. We have chosen it to be the edge connecting $(0,\rd)$ and $(1,1)$ (which is indeed contained in the preimage of $[0,1]$, since $\pi\left(\{(t,\sqrt[3]{2-t^3}):t\in [0,1]\}\right)=[0,1]$). If we place the points $(0,\rd)$ and $(1,1)$ on the base edge, the automorphism group, seen as $x(X,Y)=(\xi X,Y);y(X,Y)=(Y,X)$, gives the placement of the rest of the points, as seen in figure \ref{fermatpuntos}. (Note that, in figure \ref{Fermat}, the action of $x=(123)$ is given by 120º rotation around the middle point and the action of $y=(14)(25)(36)$ is given by 180º rotation around the white point to the right of the middle point).

Using the dessin $B$, we are going to give equations for the dessin $E_0$, which is a covering of $B$ of $2^4$ sheets. We are going to construct a dessin of degree two over $B$, and the regular dessin that covers it will be the dessin $E_0$. To produce one of these dessins, we just need an index 2 subgroup of $B$ containing $E_0$. Since $B/E_0\cong K \cong \F_4^2$, we can take the subgroup $H_0$ of this quotient given by $\{(a,b)\in \F_4^2:b\in \F_2 \}$, which is an index 2 subgroup. To this subgroup corresponds a dessin, which can be constructed as a covering of $B$. To do this, we will use that an index 2 subgroup of $B$ (which is a free group on 19 generators, since it is the fundamental group of a curve of genus 1 with 18 points removed) can be determined by which of its generators are in the subgroup. It is straightforward to give, for a point on the Fermat curve, a loop going around it in $\Fd$, and we can compute its image in $K$, since we did this in the previous section. Recall that $x^3$ in $B/E_0\cap E_1$ maps to $(a_4+a_5,a_5+a_6)$, which modulo $E_0$ is $a_4+a_5$, and the isomorphism we gave maps this to $(0,1)$. Similarly, $y^2$ maps to $(a_1+a_2+a_4+a_5,a_1+a_2+a_4+a_5)$, and to  $(1,1)$. Also, we need the image of $(xy)^6$: in the cartographic groups $\CC_0$, $\CC_1$ and $\CC_2$, its image is $a_1+a_2+a_3+a_4+a_5+a_6$, so it is in the center, and it is contained in $E_0\cap E_1\cap E_2$. The result is the following:
$$\begin{array}{ccc|ccc}
\mathrm{Point} & \mathrm{Generator} & \mathrm{Image\ in\ }\F_4^2&\mathrm{Point} & \mathrm{Generator} & \mathrm{Image\ in\ }\F_4^2\\
\hline 
(0,\rd)      & x^3             & (0,1)     & (1,1)         & y^2             & (1,1) \\
(0,\xi\rd)   & (x^3)^{(x^y)^2} & (0,\xi)   & (\xi,1)       & (y^2)^{x^2}     & (\xi,1) \\
(0,\xi^2\rd) & (x^3)^{x^y}     & (0,\xi^2) & (\xi^2,1)     & (y^2)^x         & (\xi^2,1)\\
(\rd,0)      & (x^3)^{y}       & (1,0)     & (1,\xi)       & (y^2)^{x^2y}    & (1,\xi) \\
(\xi\rd,0)   & (x^3)^{yx^2}    & (\xi,0)   & (\xi,\xi)     & (y^2)^{x^2yx^2} & (\xi,\xi) \\
(\xi^2\rd,0) & (x^3)^{yx}      & (\xi^2,0) & (\xi^2,\xi)   & (y^2)^{x^2yx}   & (\xi^2,\xi) \\
(1:-1:0)     & (xy)^6          & (0,0)     & (1,\xi^2)     & (y^2)^{xy}      & (1,\xi^2)\\
(\xi:-1:0)   & ((xy)^6)^{x^2}  & (0,0)     &(\xi,\xi^2)    & (y^2)^{xyx^2}   & (\xi,\xi^2)\\
(\xi^2:-1:0) & ((xy)^6)^x      & (0,0)     & (\xi^2,\xi^2) & (y^2)^{xyx}     & (\xi^2,\xi^2)\\
- & y^{-1}x^{-1}yxyxyx^{-1} & (1,1) & - & y^{-1}x^{-1}y^{-1}x^{-1}y^{-1}xyx & (\xi^2,1)
\end{array}
$$
\noindent The last two generators correspond to loops generating the fundamental group of the torus, and they are mapped in the permutation groups to $(a_3+a_6,a_1+a_2+a_3+a_5,a_1+a_4+a_5+a_6)$ and $(a_1+a_6,a_1+a_2+a_3+a_5,a_2+a_4+a_5+a_6)$, which correspond to $(1,1,0,\xi)$ and $(\xi^2,1,0,\xi)$. The generators which are not in the subgroup $H$ are then $\{(0,\xi\rd),(0,\xi^2\rd),(1,\xi),(\xi,\xi),(\xi^2,\xi),(1,\xi^2),(\xi,\xi^2),(\xi^2,\xi^2)\}$.

Let us look at the dessin's fields of functions. $B$ has $\QQ(B)=\QQ(X)[Y]/(X^3+Y^3-2)$ as field of functions (and as a dessin, it is the extension $\QQ(t=1-(1-X^3)^2)\subset \QQ(B)$). The field of functions of the dessin $H$ is an extension of degree 2 of $\QQ(B)$, and it will therefore be generated by an element $Z$, such that $Z^2\in \QQ(B)$.

Now, we need to use the information we have on the ramification points of the map from the degree two covering of $B$, which we will call $B_0$ to find out which is the element $Z$ that generates the extension. Let us call $\phi=Z^2\in \QQ(B)$.

\begin{lema}
Let $\QQ(B)$ be the field of functions of a curve $B$, and let $\QQ(B_0)=\QQ(B)(Z)$ be the field of functions of a curve $B_0$ such that $Z^2=\phi\in \QQ(B)$. Let $\pi:B_0\longrightarrow B$ be the covering map. Then, $\pi$ ramifies over a point $P$ if and only if $\ord_P(\phi)$ is odd.
\end{lema}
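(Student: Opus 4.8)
The plan is to work locally at a single point $P$ of $B$ and translate everything into the language of valuations, exactly as set up earlier in the text. Write $n=\ord_P(\phi)$. Since $\pi$ has degree $2$ and the ground field $\QQ$ is algebraically closed (so all residue fields are trivial), the fundamental identity of the Proposition on ramification gives $\sum_{Q\mapsto P}e_Q=2$; hence over $P$ there are either two points with $e=1$ (unramified) or a single point with $e=2$ (ramified). The whole statement therefore reduces to deciding which alternative holds in terms of the parity of $n$.

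The first, easy step records the basic relation. For any point $Q$ of $B_0$ over $P$, with ramification index $e=e_Q$, the valuation $\nu=\ord_Q$ restricts on $\QQ(B)$ to $e\cdot\ord_P$. Applying $\nu$ to the identity $Z^2=\pi^*\phi$ gives
$$2\,\ord_Q(Z)=e\cdot\ord_P(\phi)=e\,n.$$
If $n$ is odd this forces $e$ to be even, and since $e\in\{1,2\}$ we get $e=2$; by the identity there is exactly one such $Q$, so $\pi$ ramifies over $P$. This settles the direction ``$n$ odd $\Rightarrow$ ramified''.

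For the converse I would argue by contradiction, and this is where the real work lies. Assume $n$ is even, say $n=2m$, and suppose there were a (necessarily unique) point $Q$ over $P$ with $e=2$. Choose a uniformizer $t\in\QQ(B)$ at $P$ and set $W=Z\,t^{-m}\in\QQ(B_0)$, so that $W^2=u$ with $u=\phi\,t^{-2m}\in\QQ(B)$ a unit at $P$; thus $\ord_Q(W)=0$ and the residue $c=W(Q)\in\QQ$ satisfies $c^2=u(P)\neq 0$, in particular $c\neq 0$. Writing an arbitrary nonzero element of $\QQ(B_0)=\QQ(B)\oplus\QQ(B)\,W$ as $\alpha=g+hW$, I would show that $\ord_Q(\alpha)$ is always even: when $\ord_P(g)\neq\ord_P(h)$ this is clear because $\ord_Q$ restricted to $\QQ(B)$ takes only even values and $\ord_Q(W)=0$; when $\ord_P(g)=\ord_P(h)$ one factors out the common power of $t$ to reduce to units $g_0,h_0$ and uses the conjugate $g_0-h_0W$, noting that $(g_0+h_0W)(g_0-h_0W)=g_0^2-h_0^2u$ lies in $\QQ(B)$ (hence has even order at $Q$) while, because $c\neq 0$ and $\mathrm{char}=0$, at most one of $g_0\pm h_0W$ can lie in $\m_Q$. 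Hence the value group of $\ord_Q$ would be contained in $2\Z$, contradicting the surjectivity of $\ord_Q$. Therefore $e=2$ is impossible, both points over $P$ are unramified, and $\pi$ does not ramify over $P$.

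The main obstacle is precisely this last parity computation for $\ord_Q$ on the cross terms $g+hW$: the naive bound $\ord_Q(g+hW)\ge\min(\ord_Q g,\ord_Q(hW))$ is not enough, since the minimum could be attained twice and the terms could cancel to produce an a priori odd value, and the conjugate-factor trick is exactly what rules this out. An alternative, if one is willing to invoke completions and Hensel's lemma, is to observe that $X^2-u$ has simple roots modulo $\m_P$ and hence splits over the completion of $\QQ(B)$ at $P$, isomorphic to $\QQ((t))$, yielding two unramified points directly; I prefer the self-contained valuation argument above, since completions are not developed in this text.
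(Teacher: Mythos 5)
Your proof is correct, and your forward direction (odd order $\Rightarrow$ ramified) is essentially the paper's: both apply the extended valuation to $Z^2=\phi$ and use that the extensions of $\ord_P$ are controlled by the degree $2$. Where you genuinely diverge is in the converse. The paper argues constructively: after normalizing so that $\ord_P(\phi)=0$, it writes $\phi(P)=a^2$ and exhibits two distinct maximal ideals $(Z-a,\pi)$ and $(Z+a,\pi)$ of $\O_P[Z]$ (with $\pi$ a uniformizer at $P$), hence two distinct extensions of $\ord_P$; since there can be at most two, both are unramified. This is the classical split-prime argument --- a decompleted version of the Hensel's-lemma route you mention and set aside --- and it has the virtue of exhibiting the two points over $P$ together with their residues $\pm a$, but it quietly relies on the correspondence between maximal ideals of $\O_P[Z]$ over $\m_P$ and extensions of the valuation, which the text never develops. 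Your argument instead runs by contradiction entirely inside the valuation framework the paper does set up: assuming a ramified point $Q$, you show every nonzero $g+hW\in\QQ(B_0)$ has even order at $Q$, the conjugate trick $(g_0+h_0W)(g_0-h_0W)=g_0^2-h_0^2u$ disposing of the dangerous cancellation case, and this contradicts the surjectivity of $\ord_Q$, which is built into the paper's definition of a point. That is more self-contained relative to the text's toolkit, at the price of being nonconstructive and needing two small observations you leave implicit: that $g_0^2-h_0^2u\neq 0$ (otherwise $W\in\QQ(B)$ and the covering would be trivial, contrary to hypothesis), and that both factors $g_0\pm h_0W$ have nonnegative order at $Q$, so that ``at most one lies in $\m_Q$'' together with the even sum really does force both orders to be even.
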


\begin{proof}
Suppose $\ord_P(\phi)$ is odd. Multiplying by the square of a uniformizing parameter for $P$, we can assume that $\ord_P(\phi)=1$. Now, if $\nu$ is a valuation extending $\ord_P$ to $\QQ(B_0)$, then we must have that $2\nu(Z)=\nu(Z^2)=\nu(\phi)$. Since $\nu(\phi)$ must be positive, it must equal at least 2. But, since the degree of the map is 2, there must be only one extension of the valuation, with ramification index 2.

Now, if $\ord_P(\phi)$ is even, we can assume that it is 0, doing the same as in the previous case. For a valuation $\nu$ extending $\ord_P(\phi)$, we must have $\nu(Z)=0$. Now, let $\pi$ be a uniformizing parameter for $\ord_P$. Let $\phi(P)=a^2$, and let $\O_P$ be the local ring for $P$. Then, $\O_P[Z]$ is a ring extending $\O_P$ and it has two maximal ideals, namely $(Z-a,\pi)$ and $(Z+a,\pi)$. Therefore, the valuation $\ord_P$ extends in at least two different ways, and since the maximum number of extensions it can have is 2, we must have that there are two of them, each unramified.
\end{proof}

\noindent So, we have that the covering is of the form $\QQ(B)\subset \QQ(B)[Z]$, where $Z^2=\phi\in \QQ(B)$. For the map to ramify at the list of points we want it to ramify, $\phi$ must have odd order at precisely the points $\{(0,\xi\rd),(0,\xi^2\rd),(1,\xi),\\(\xi,\xi),(\xi^2,\xi),(1,\xi^2),(\xi,\xi^2),(\xi^2,\xi^2)\}$. One such function is
$$
\phi=\frac{(Y-\xi)(Y-\xi^2)}{(Y-\xi\rd)(Y-\xi^2\rd)}
$$
The problem is, this is not the only function with this property, not even modulo multiplication by squares. If we take the functions
$$
\psi_1=\frac{X+Y-2}{X+Y-2\xi};\psi_2=\frac{X+Y-2\xi}{X+Y-2\xi^2};\psi_3=\psi_2/\psi_1=\frac{X+Y-2\xi}{X+Y-2\xi^2}
$$
We have that their divisors of zeroes and poles are the following:
$$(\psi_1)=2(1,1)-2(\xi,\xi);(\psi_2)=2(1,1)-2(\xi^2,\xi^2);(\psi_3)=2(\xi,\xi)-2(\xi^2,\xi^2)$$
Since their order at every point is even, but they are not squares (because they would have to be squares of functions of degree 1, and there are no such functions on an elliptic curve), the functions $\phi\psi_i$ are also functions with odd order at the prescribed points, which are different from $\phi$ and from each other modulo squares. Let us prove that these are the only functions with even multiplicity at every point, modulo squares.

Suppose a function $\psi$ has divisor of zeros and poles $\sum 2n_iP_i$. If we denote the sum in the elliptic curve by $\oplus$, we know that this means that $\bigoplus 2n_iP_i=0$ (see, for example, \cite{Miranda}). Now, if $\psi$ is not a square, this means that $\bigoplus n_iP_i\neq 0$, otherwise $\bigoplus n_iP_i$ would be a principal divisor and a function with such divisor would be the square root of $\psi$ except for a multiplicative constant. Therefore, $\bigoplus n_iP_i$ is a point of order 2, of which there are three. Now, if we take $(0,\rd)$ as the origin for our curve, we have that the divisors of the $\psi_i$'s halved are all different in the group of the curve, since $(1,1)-(\xi,\xi)=(\xi\sqrt[3]{4},-\rd)$, $(1,1)-(\xi^2,\xi^2)=(\xi^2\sqrt[3]{4},-\rd)$ and $(\xi,\xi)-(\xi^2,\xi^2)=(\sqrt[3]{4},-\rd)$. Therefore, we have that $\bigoplus n_iP_i=\frac{1}{2}(\psi_j)$, for some $j$. From this follows that $\psi\psi_j^{-1}$ is a square, since it is the square of a function with divisor $\oplus n_iP_i-\frac{1}{2}(\psi_j)$, which is $0$ in the Picard group, and therefore it is a principal divisor.

Thus we are left with 4 possibilities for the function we are looking for. Let us take a look at two modules. The first one is $B/B^2$ (where $B^2=\langle g^2:g\in B\rangle$). Since $B$ is freely generated by 18 elements, $B/B^2\cong (\zn{2})^{18}$. $B^2$ is a characteristic subgroup of $B$, and therefore it is fixed by every automorphism of $B$, for example, by conjugation by an element of $\Fd$. Furthermore, since $B/B^2$ is commutative, every element of $B$ acts as the identity on the quotient $B/B^2$, so it is an $\Fd/B$-module, or an $F$-module.

The other module consists of the functions $f\in \QQ(B)^\times$ such that $\QQ(B)\left[\sqrt f\right]/\QQ(B)$ is ramified over at most the 18 points on the dessin. We take these functions with the product as a group law, and we quotient out by $(\QQ(B))^2$, so $f\sim f'$ if and only if $\QQ(B)\left[\sqrt f\right]$ and $\QQ(B)\left[\sqrt{f'}\right]$ are the same fields of functions. This group, which we will call $R$ obviously has exponent 2. Now, we can see that $B/B^2$ and $R$ are dual to one another.

If we take the field $\mathcal{K}/\QQ(t)$, which is the biggest extension unramified outside of $\{0,1,\infty\}$, we can see $B$ as $\mathrm{Gal}(\mathcal K/\QQ(B))$. Now, $B^2$ is the intersection of all the index 2 subgroups of $B$, and therefore the field of functions of the dessin corresponding to the group $B^2$ will be the field $\QQ(B^2)$ which is the field generated by all the field extensions of $\QQ(B)$ of degree 2 unramified outside of $\{0,1,\infty\}$.

Now, if we take an element $g\in B/B^2=\mathrm{Gal}(\QQ(B^2)/\QQ(B))$, and a function class $f\in R$, we can define a pairing with image $\{\pm 1\}$ in the following way: choose a square root $h$ of $f$, which generates a degree 2 extension of $\QQ(B)$ unramified outside of the 18 points, and therefore contained in $\QQ(B^2)$. If $h^g=h$, then define $\langle f,g\rangle=1$, and if $h^g=-h$, define $\langle f,g\rangle=-1$. In other words, $\langle f,g\rangle=\frac{h^g}{h}$. It is clear that this definition does not depend upon the choice of the square root, since $(-h)^g=-h^g$. Also, the pairing is bilinear: if one takes two functions $f_1$ and $f_2$ with square roots $h_1$ and $h_2$, respectively, then $\langle f_1f_2,g \rangle=\frac{(h_1h_2)^g}{h_1h_2}=\frac{h_1^g}{h_1}\frac{h_2^g}{h_2}=\langle f_1,g\rangle \langle f_2,g\rangle$, and if one takes two automorphisms $g_1$ and $g_2$, then
$$\langle f,g_1g_2\rangle=
\frac{h^{g_1g_2}}{h}
=\frac{h^{g_1g_2}}{h^{g_2}}\frac{h^{g_2}}{h}
=\left(\frac{h^{g_1}}{h}\right)^{g_2}\frac{h^{g_2}}{h}=\langle f,g_1\rangle \langle f,g_2\rangle$$
Because $\langle f,g_1\rangle$ is fixed by $g$. Finally, since the square roots of these functions generate the field $\QQ(B^2)$, $\langle f,g\rangle=1$ for all $f\in R$ implies that $g\in B^2$, and dually, if the square root of a function is fixed by every element of $\mathrm{Gal}(\QQ(B^2)/\QQ(B))$, then it must lie in $\QQ(B)$, and the function is a square. Overall, the pairing is perfect, which means that $R$ and $B/B^2$ are dual as $\Zn{2}$-vector spaces. In particular, $R\cong (\zn{2})^{18}$.

Now, let us look at the $F$-module structure: $B/B^2$ is a left $F$-module with the action by conjugation and $R$ is a left $F$-module with the action of $F=\Gal(\QQ(B)/\QQ(t))$. Now, let $\sigma\in F,f\in R,g\in B/B^2$. If $h$ is a square root of $f$, then, for any $\widetilde \sigma \in \Fd$ such that its class in $\Fd/B=F$ is $\sigma$, $\widetilde\sigma(h)$ will be a square root of $f^{\wt \sigma}=f^\sigma$. Therefore,
$$
\langle f^\sigma,g\rangle=\frac{h^{\wt \sigma g}}{h^{\wt \sigma}}
$$
Now, since the right hand side equals $\pm 1$, it is fixed by $\widetilde \sigma^{-1}$, so
$$
\langle f^\sigma,g\rangle=\left(\frac{h^{\wt \sigma g}}{h^{\wt \sigma}}\right)^{\widetilde\sigma^{-1}}=\frac{h^{g^{\widetilde\sigma^{-1}}}}{h}=\langle f,g^{\sigma^{-1}}\rangle
$$
Now, from this fact we can obtain a lot of information from the submodule structure of the modules: Since they are dual, to each subspace $S<B/B^2$ we can associate to it its dual subspace $S^*=\{g\in G:\langle S,g\rangle=1\}$, and vice versa. This correspondence is inclusion-reversing and it satisfies the identity $S^{**}=S$, and it is also the Galois correspondence: to a subspace $S<R$ we associate the subgroup that fixes $\QQ(B)[\{\sqrt f:f\in R \}]$.

Now, if we take a subspace $S<B/B^2$, and an element $\sigma\in F$, we will have, from the previous identity, that $(S^\sigma)^*=(S^*)^\sigma$. If we denote by $\overline S$ the module generated by $S$, and $\mathring S$ to be the largest submodule inside $S$, we will have
$$
\left(\overline S\right)^*=\left(\sum_{\sigma \in F} S^\sigma\right)^*=\bigcap_{\sigma \in F} (S^\sigma)^*=\bigcap_{\sigma \in F} (S^*)^\sigma=\mathring{(S^*)}
$$
And also
$$
\left(\mathring S\right)^*=\left(\bigcap_{\sigma \in F} S^\sigma\right)^*=\sum_{\sigma \in F} (S^\sigma)^*=\sum_{\sigma \in F} (S^*)^\sigma=\overline{(S^*)}$$
And these identities work both for $S\subset B/B^2$ and $S\subset R$. Now, we are looking for a group $B_0$ of $B$ in particular. This group has a special property: with the list of generators in our hands, we can check that it is invariant under conjugation by $x$. This means that $x$ must fix the function whose square root generates the field, since the subspace duality is the Galois correspondence. Of the four functions $\phi,\phi\psi_1,\phi\psi_2,\phi\psi_3$, only $\phi$ is fixed by $x$ (modulo squares). Therefore, the field we are looking for must be generated by a function $Z$ such that
$$
Z^2=\phi=\frac{(Y-\xi)(Y-\xi^2)}{(Y-\xi\rd)(Y-\xi^2\rd)}
$$
Finally, the field $\QQ(E_0)$ will be the normal closure of the extension $\QQ(B_0)/\QQ(t)$. This means adding all the Galois conjugates to $Z^2$ (or taking the module generated by $\phi$). Since this module has dimension 4, it will be generated by the conjugates of $\phi$. A basis is given by
$$
\left\{
\phi_1=\phi=\frac{(Y-\xi)(Y-\xi^2)}{(Y-\xi\rd)(Y-\xi^2\rd)};\phi_2=\phi^{x^y}=\frac{(Y-1)(Y-\xi)}{(Y-\rd)(Y-\xi\rd)};\right.
$$ $$
\left.
\phi_3=\phi^y=\frac{(X-\xi)(X-\xi^2)}{(X-\xi\rd)(X-\xi^2\rd)};\phi_4=\phi^{x^yy}=\frac{(X-1)(X-\xi)}{(X-\rd)(X-\xi\rd)}
\right\}
$$
Now, we can explicitly give the field of functions of the dessin $E_0$, which is a regular dessin whose field of moduli is not abelian. It is
$$
\QQ(E_0)=\frac{\QQ(t)[X,Y,Z_1,Z_2,Z_3,Z_4]}{\left(
t-(1-(X^3-1)^2),X^3+Y^3-2,Z_1^2-\phi_1,Z_2^2-\phi_2,Z_3^2-\phi_3,Z_4^2-\phi_4
\right)}$$
The conjugate dessins are obtained by the action of $\gal$: this means that they are built by taking the action of $\gal$ on the coefficients of the $\phi_i$'s, which are the only functions involved with irrational coefficients. We can draw pictures of the ramification points of $\QQ(B_0)/\QQ(B)$, in order to visualize why the dessin $E_0$ changes under the Galois action. In figure \ref{fermatpuntos} we can mark the points where the extension is ramified, but $E_0$ is the common cover (the intersection) of all six dessins conjugate to $B_0$, which means having $\Fd$ act on these points. For example, the action of $y$ is a 180º rotation around $(1,1)$, and it is not clear if we saw a picture of the rotated points whether they are conjugate to the same dessin. However, if we draw the universal cover of the dessin, which is $\C$, since the dessin has genus 1, we can visualize it a lot better, like in figure \ref{fer}.

\begin{figure}[h!]
\centering
\includegraphics[width=0.8\textwidth]{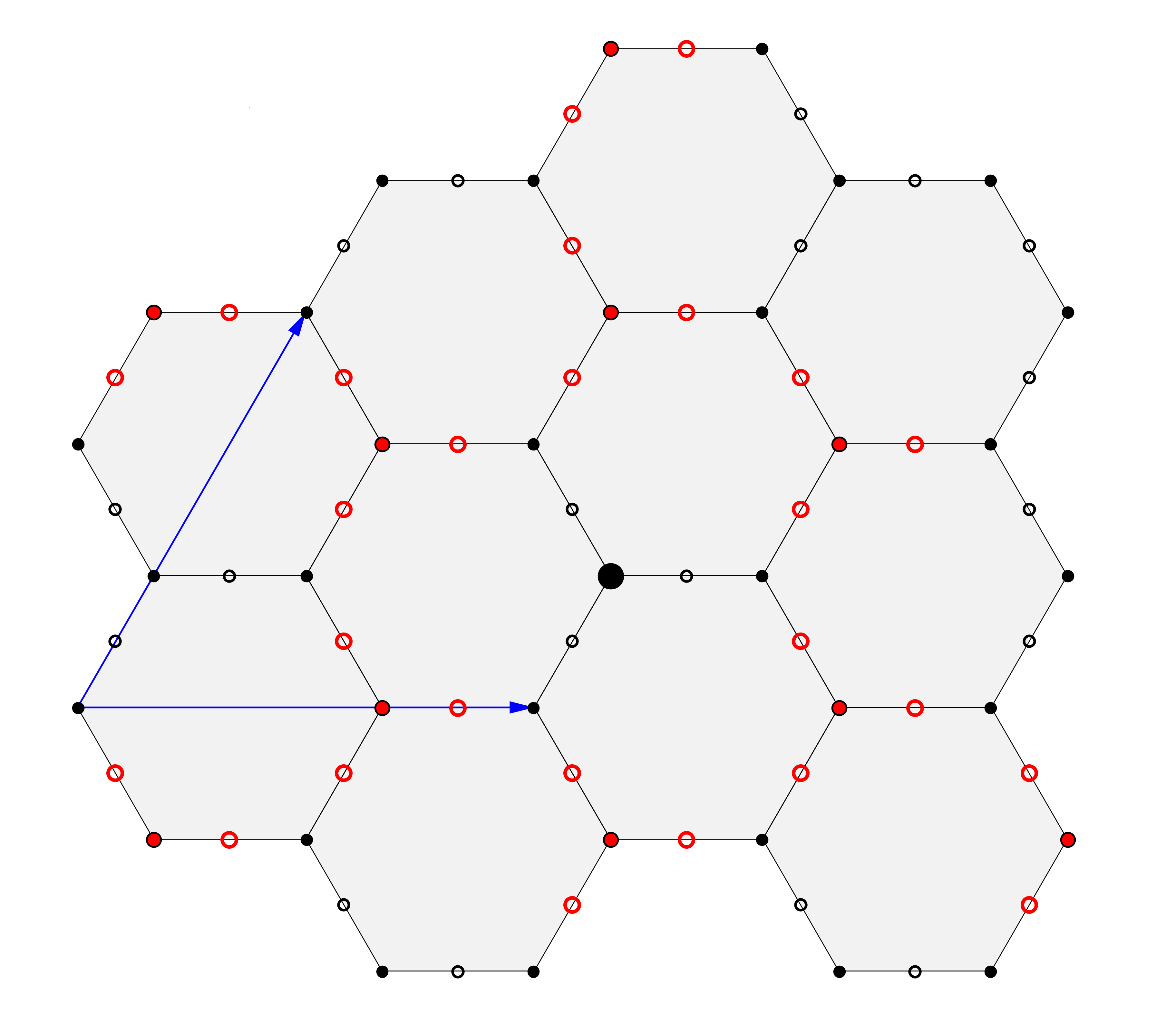}
\captionof{figure}{The universal cover of $B$, with the points where $B_0$ ramifies marked in red.}\label{fer}
\end{figure}

The fundamental domain is given by the two vectors, and the red points are the ones over which $B_0$ ramifies. Now, if we imagine that $(0,\rd)$ is the biggest black point, the three hexagons around it make up the dessin as we have drawn it in figure \ref{fermatpuntos}. Now, the action of an element of $\Fd$, such as $y$, can be lifted to this cover, as a rotation: if we take the $y$ rotation, we are left with essentially the same image, and the effect is the same as changing the fundamental domain. This way, we can produce every dessin conjugate to $B_0$, and they are all within this image.

However, if we take the Galois action on the dessin $B_0$, the points over which it ramifies are also affected by the same action, and we are left with the pictures in figure \ref{ferreticulo2}.

\begin{figure}[h!]
\centering
\begin{center}
\includegraphics[width=0.4\textwidth]{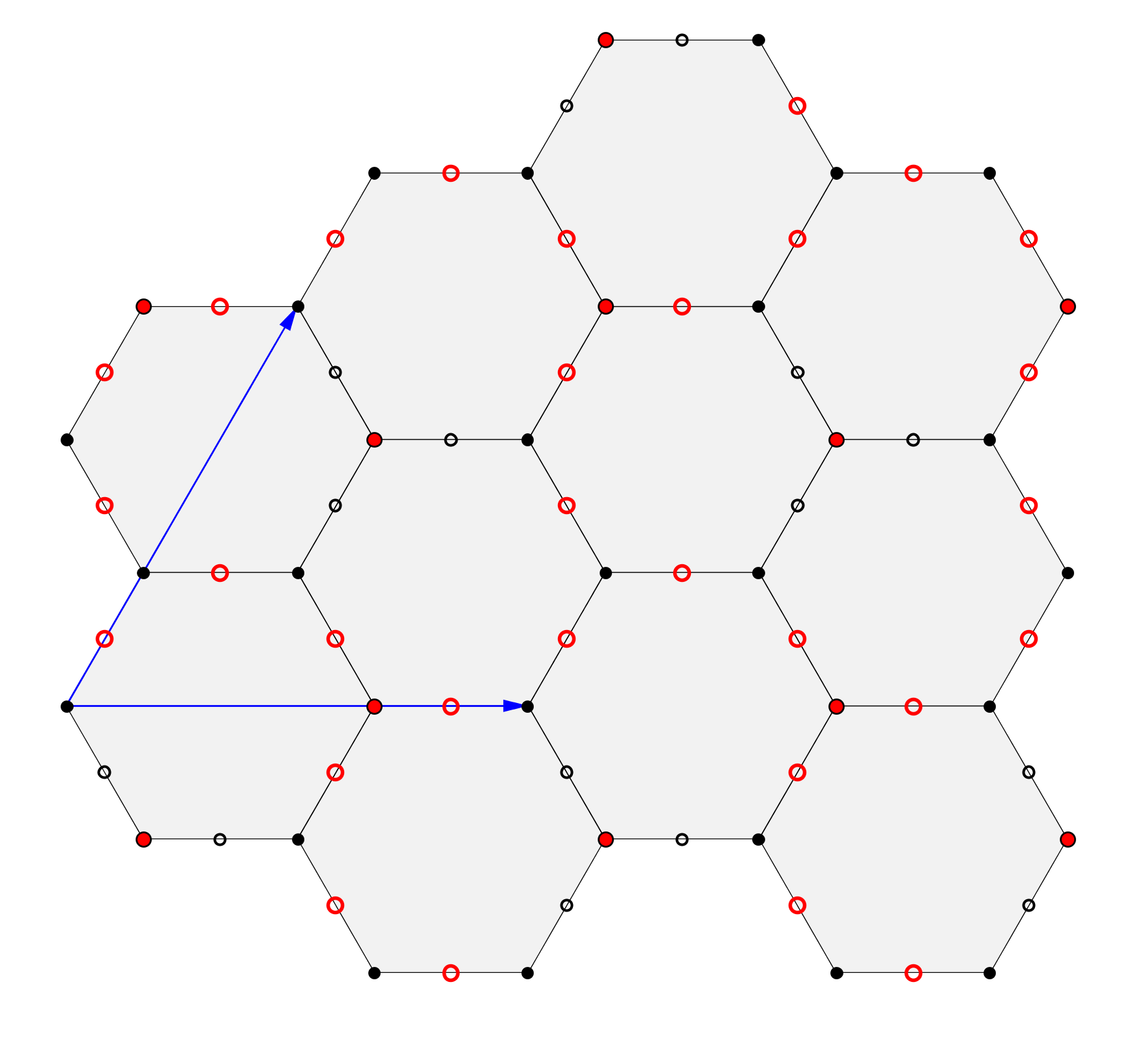}\includegraphics[width=0.4\textwidth]{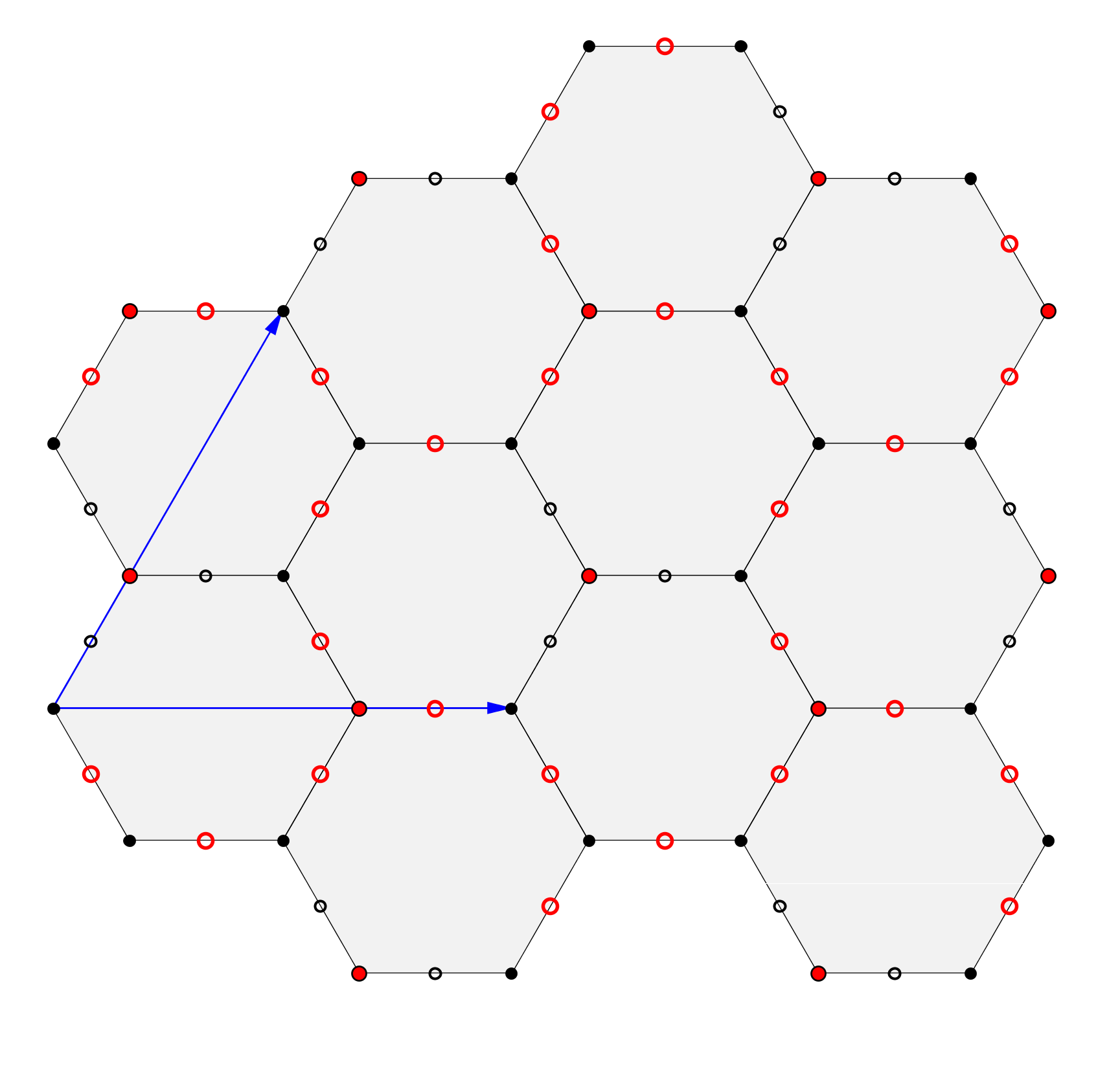}
\caption{The universal cover of $B$, with the points where $B_0^\sigma$ ramifies, where $\sigma\in \Gal(\Q(\rd,\xi)/\Q)$.}\label{ferreticulo2}
\end{center}
\end{figure}


These three are clearly not isomorphic to each other: there is no map that will preserve the red points. This just proves the fact that $B_0$ is not fixed by the Galois action. It could be that its core, $\cap_{g\in \Fd} B_0^g$ was fixed, but it is not the case since we have proved it.

\section{The field of moduli of the underlying curve}

We have constructed the regular dessin given by the subgroup $E_0<\Fd$. This dessin corresponds to some Belyi pair $(C,f)$. We have also proven that when $\sigma\in \gal$ doesn't fix $\rd$, the Belyi pair $(C^\sigma,f^\sigma)$ is not isomorphic to $(C,f)$. However, it could be that the curves are isomorphic, i.e. $C^\sigma\cong C$. In this section, we are going to prove that this is not the case, so the field of moduli of the curve is also $\Q(\rd)$.

The following is a sketch of the proof:

If the curves $C$ and $C^\sigma$ were isomorphic, then the curve $C$ would have two regular dessins of the same degree. By a theorem of Wolfart \cite{abc}, a curve with a regular dessin also has one unique regular dessin of maximum degree, which is given by the map $C\lra C/\Aut(C)$. We are going to prove that the dessin we have on $C$ is already maximal, and therefore the curve cannot have more dessins of the same degree.

The way to do this is using the result in \cite{multiply}: if a curve has a regular dessin given by the map $C\lra C/H$, where $H<\Aut(C)$, then the maximal dessin is given by doing the following: consider the dessin as a subgroup $N$ of a triangle group $\Delta_1$. Then, the maximal dessin is given by including this $\Delta_1$ in another triangle group $\Delta_2$ as a finite index subgroup. In order for the dessin obtained in this way to be regular, $N$ must be a normal subgroup of $\Delta_2$ under this inclusion. Our dessin is given in the triangle group $\Delta(6,4,6)$, which can only be included in $\Delta(6,8,2)$ (the complete list of triangle group inclusions can be found in \cite{singerman}). We prove that this inclusion doesn't make the subgroup of $\Delta(6,4,6)$ normal in $\Delta(6,8,2)$, so the dessin we have must be of maximal degree.

Let us now talk about triangle groups and dessins with types. For a more detailed treatment of the subject, one can consult Wolfart's survey \cite{abc}, section 2.2 in \cite{jule03}, or Girondo and González's  textbook \cite{GG}.

\begin{definicion}
We say that a dessin, or a Belyi pair $(C,f)$, is of \textbf{type} $(l,m,n)$ if the ramification order of every point in $f^{-1}(0)$ divides $l$, the ramification order at $f^{-1}(1)$ divides $m$ and the ramification at $f^{-1}(\infty)$ divides $n$.
\end{definicion}

\noindent In this definition, we are going to allow for some of $l,m,n$ to equal $\infty$, with the convention that every number divides $\infty$. For example, every dessin is of type $(\infty,\infty,\infty)$.

In terms of drawings, this means that the order of every black vertex divides $l$, the order of every white vertex divides $m$ and the number of sides on each face divides $2n$ (if the ramification index at a point in the preimage of $\infty$ is $e$, the corresponding face has $2e$ sides: $e$ that are oriented black-white in counterclockwise order and $e$ in the opposite direction). In terms of the monodromy, since $x$ takes the edges and makes them turn around black points, it means that $x^l=1$ in the cartographic group, and analogously, that $y^m=1$. Since the action of $z=(xy)^{-1}$ is turning around a face, it means that $z^n=1$. Recall that the cartographic group is $\Fd/\cor_{\Fd} H$, so the subgroup associated to the dessin contains the normal subgroup given by
$$
N_{(l,m,n)}=\langle\langle x^l,y^m,z^l\rangle\rangle
$$
Where the double brackets stand for the normal subgroup generated by the elements. Thus, by the correspondence between subgroups and dessins, dessins of type $(l,m,n)$ are in correspondence with subgroups of finite index $H$ such that $N_{(l,m,n)}<H<\Fd$. Since the subgroups $H$ are closed, they also contain the closure of $N_{(l,m,n)}$, which we will call $\wh{N_{(l,m,n)}}$.

The closure of a normal subgroup is also normal, so the quotient $\Fd\lra \Fd/\wh{N_{(l,m,n)}}$ is well-defined and we can give it the quotient topology, which will make it a profinite group, which we will call $\wh{\Delta(l,m,n)}$. It is easy to check that it is the profinite completion of the group
$$
\Delta(l,m,n)=F_2/N_{(l,m,n)}
$$
Therefore, dessins of type $(l,m,n)$ are in correspondence with open subgroups of $\tri$.

We are going to rely heavily on the uniformization theorem (which we have also used before, since it is used to prove that every compact Riemann surface is algebraic).

\begin{teor}[Uniformization theorem]
There are only three simply connected Riemann surfaces, up to biholomorphism. They are
\begin{itemize}
\item $\P^1$, the Riemann sphere.
\item $\C$, the complex plane.
\item $\H=\{z\in \C:\mathrm{Re}(z)>0\}$, which is biholomorphic to the unit disc in $\C$.
\end{itemize}
\end{teor}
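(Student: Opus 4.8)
The plan is to follow the classical potential-theoretic route of Poincaré and Koebe, reducing the theorem to the existence of harmonic functions with prescribed singularities. That the three surfaces are genuinely distinct is the easy half: $\P^1$ is compact while the other two are not, and $\C$ is not biholomorphic to the unit disc by Liouville's theorem, since a bounded holomorphic function on $\C$ must be constant. So the work is to show that an arbitrary simply connected Riemann surface $X$ is biholomorphic to one of the three. First I would dispose of the compact case: if $X$ is compact, then constructing a non-constant meromorphic function on it (via the Riemann--Roch theorem, say) yields a non-constant holomorphic map $X\lra\P^1$; since $X$ is simply connected its genus is $0$, and a compact Riemann surface of genus $0$ is biholomorphic to $\P^1$.

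The substance lies in the non-compact case, where I would set up a dichotomy according to whether $X$ carries a Green's function. Fixing $p\in X$ with a local coordinate $z$ centered there, a Green's function with pole at $p$ is a positive harmonic function $g$ on $X\setminus\{p\}$ such that $g(q)+\log|z(q)|$ extends harmonically across $p$, taken minimal among such functions. Using Perron's method one builds $g$ as the upper envelope of a suitable family of subharmonic functions, and shows that either such a $g$ exists for every $p$ or for none. In the hyperbolic case (it exists), I would exploit that $X$ is simply connected to take a global harmonic conjugate $g^*$ and set $f=e^{-(g+ig^*)}$. Positivity of $g$ forces $|f|<1$, so $f$ maps $X$ into the unit disc, the logarithmic singularity makes $f$ vanish to first order at $p$, and the minimality of $g$ is precisely what proves $f$ injective and surjective, hence a biholomorphism onto $\{z\in\C:|z|<1\}$, which is biholomorphic to $\H$.

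In the parabolic case (no Green's function), I would instead construct, by an analogous Perron-type exhaustion argument producing an Evans--Selberg potential, a harmonic function $u$ on $X$ with a single logarithmic pole and exhausting $X$; then $f=u+iu^*$ gives a biholomorphism $X\lra\C$. The main obstacle, and the reason the complete argument is beyond the scope of this exposition so that we simply cite \cite{farkas}, is the hard analysis behind the Perron construction: showing that the relevant families of subharmonic functions are non-empty and that their envelopes are genuinely harmonic with the prescribed singular behaviour, that the hyperbolic and parabolic alternatives are exhaustive and mutually exclusive, and — most delicate of all — deducing bijectivity of $f$ in the hyperbolic case from minimality, via the Koebe comparison of $g$ with the pullback of the Green's function of the disc.
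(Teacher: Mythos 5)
The paper never proves this theorem: it is stated as a black box and its proof is explicitly deferred to Farkas--Kra \cite{farkas} (this is said back in Part~1, where the algebraicity of compact Riemann surfaces is discussed, and the theorem is only ever used as an input to the discussion of quasiplatonic curves). So there is no internal proof to compare yours against; what you have written is an outline of precisely the potential-theoretic argument contained in the cited reference. At that level of resolution your outline is the right one: the three models are pairwise distinct for exactly the reasons you give, the compact case reduces to genus $0$ plus Riemann--Roch, and the non-compact case splits into the hyperbolic/parabolic dichotomy according to the existence of a Green's function, with Perron's method supplying the harmonic functions and the Koebe comparison doing the real work. Your proposal therefore supplies strictly more than the paper does, while honestly deferring the same hard analysis to the same source.

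Two technical slips are worth repairing if the sketch were ever expanded. First, in the hyperbolic case the conjugate $g^*$ lives on $X\setminus\{p\}$, which is \emph{not} simply connected; simple connectivity of $X$ kills all periods except the one around $p$, which equals $2\pi$, and it is only after exponentiating that $f=e^{-(g+ig^*)}$ becomes single-valued. Second, and more substantively, in the parabolic case the formula $f=u+iu^*$ cannot be right as stated: if $u$ has a logarithmic singularity, then $u+iu^*$ is multivalued around $p$ (its imaginary part jumps by $2\pi$), so it does not define a map to $\C$ at all. One must either exponentiate, taking $F=e^{u+iu^*}$, which for an Evans--Selberg potential is a proper degree-one map onto $\C$, or run the construction with a dipole singularity $\mathrm{Re}(1/z)$ instead, in which case $u+iu^*$ is a genuine single-valued meromorphic function with one simple pole, embedding $X$ into $\P^1$ with image $\P^1$ minus a point. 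Neither repair changes the architecture of your argument, but as written the parabolic endgame is not a well-defined map.
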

\noindent Using this theorem, one can consider a compact Riemann surface $C$ and its universal cover, which is also a Riemann surface and it is simply connected, so it will be one in the list. Then, $C$ will be the quotient of one of these surfaces by the action of the fundamental group of $C$, which acts on the universal cover as biholomorphisms, and it does so properly and discontinuously. Studying the groups that act properly and discontinuously on these three surfaces, as it is done in \cite{GG}, leads to concluding that the universal cover of a surface is $\P^1$ if and only if the surface is the sphere, it is $\C$ if it has genus 1, and it is $\H$ if it has genus greater than 1.

If one has a regular dessin $(C,f)$ of genus at least 2, one can consider the universal cover of $C$, which, as we have said, is $\H$. Then, $C$ is the quotient of $\H$ by the action of its fundamental group $\Gamma$ (since the universal cover is a regular cover whose associated subgroup is $1$). Let $\pi$ be the projection from $\H$ to $\H/\Gamma$.

\begin{lema}
Let $C\cong \H/\Gamma$ be a curve of genus at least 2 and its universal cover. Then, $\Aut(C)$ lifts to $N(\Gamma)$, where $N$ is the normalizer of $\Gamma$ in the group of automorphisms of $\H$. That is, if $\wt \phi\in N(\Gamma)$, there exists some $\phi\in \Aut(C)$ such that $\pi\circ \wt \phi=\phi \circ \pi$, and reciprocally, if $\wt \phi\in N(G)$, then there exists some $\phi\in \Aut(C)$ such that the identity holds.

This gives an isomorphism between $\Aut(C)$ and $N(\Gamma)/\Gamma$.
\end{lema}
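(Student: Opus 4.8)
The plan is to build a surjective group homomorphism $N(\Gamma)\lra\Aut(C)$ whose kernel is exactly $\Gamma$, and then to invoke the first isomorphism theorem; the content of the lemma then splits between surjectivity (which is the assertion that every automorphism of $C$ lifts) and the identification of the kernel. Throughout I will freely use that genus at least $2$ forces, by the uniformization theorem, the universal cover of $C$ to be $\H$, so that $\pi\colon\H\lra C$ is the universal covering and $\Gamma$ is its group of deck transformations, acting freely and properly discontinuously by biholomorphisms of $\H$.

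First I would produce the descent direction. Given $\wt\phi\in N(\Gamma)$, since it normalizes $\Gamma$ it carries $\Gamma$-orbits to $\Gamma$-orbits, so it induces a well-defined self-map $\phi$ of $C=\H/\Gamma$ with $\pi\circ\wt\phi=\phi\circ\pi$; applying the same reasoning to $\wt\phi^{-1}\in N(\Gamma)$ produces an inverse for $\phi$, so $\phi\in\Aut(C)$. The assignment $\wt\phi\mapsto\phi$ is a homomorphism, because the defining relation $\pi\circ\wt\phi=\phi\circ\pi$ composes functorially. Its kernel consists of those $\wt\phi$ with $\pi\circ\wt\phi=\pi$, that is, the deck transformations of $\pi$, which are precisely the elements of $\Gamma$.

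The main work is surjectivity. Given $\phi\in\Aut(C)$, I would lift $\phi\circ\pi\colon\H\lra C$ through the covering $\pi$ using the lifting lemma: $\H$ is simply connected and locally path connected, so the condition $(\phi\circ\pi)_*(\pi_1(\H))\subset\pi_*(\pi_1(\H))$ holds trivially and there is a lift $\wt\phi\colon\H\lra\H$ with $\pi\circ\wt\phi=\phi\circ\pi$ once base points are fixed. This $\wt\phi$ is holomorphic because locally $\pi$ is a biholomorphism, so locally $\wt\phi=\pi^{-1}\circ\phi\circ\pi$; lifting $\phi^{-1}$ the same way and using uniqueness of lifts shows the two lifts are mutually inverse, whence $\wt\phi\in\Aut(\H)$. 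Finally, to see $\wt\phi\in N(\Gamma)$, take $\gamma\in\Gamma$ and compute, using $\pi\circ\gamma=\pi$ and $\pi\circ\wt\phi^{-1}=\phi^{-1}\circ\pi$, that
$$
\pi\circ(\wt\phi\,\gamma\,\wt\phi^{-1})=\phi\circ\pi\circ\gamma\circ\wt\phi^{-1}=\phi\circ\pi\circ\wt\phi^{-1}=\phi\circ\phi^{-1}\circ\pi=\pi,
$$
so $\wt\phi\,\gamma\,\wt\phi^{-1}$ is a deck transformation, hence lies in $\Gamma$. Thus $\wt\phi$ normalizes $\Gamma$ and maps to $\phi$, giving surjectivity and therefore $\Aut(C)\cong N(\Gamma)/\Gamma$.

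The step I expect to require the most care is verifying that the lift $\wt\phi$ is a genuine biholomorphism of $\H$ rather than merely a holomorphic self-map: this is where one must combine the lifting of $\phi^{-1}$ with the uniqueness clause of the lifting lemma to produce a two-sided inverse, and where the hypotheses that $\pi$ is the universal (hence regular and fixed-point-free) cover are genuinely used.
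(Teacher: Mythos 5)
Your proof is correct and follows essentially the same route as the paper's: lift $\phi\circ\pi$ through the universal cover via the lifting lemma, use the fact that two lifts of the same map differ by a deck transformation (an element of $\Gamma$) to show the lift normalizes $\Gamma$, and descend elements of $N(\Gamma)$ to automorphisms of $\H/\Gamma$, the kernel being exactly $\Gamma$. Your packaging via the first isomorphism theorem, and your explicit check that the lift is a genuine biholomorphism of $\H$ (a point the paper leaves implicit), are minor refinements rather than a different argument.
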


\begin{proof}
We are going to prove that the automorphism group of $C$, $\Aut(C)$, lifts to $\H$ inside of $N(\Gamma)$. Suppose we have an automorphism $\phi\in \Aut(C)$. We can consider $\phi\circ \pi:\H\lra C$. This map will lift to $\H$ by the lifting lemma, giving some $\wt\phi:\H\lra \H$ such that $\pi\circ \wt \phi=\phi \circ \pi$. This implies that $\wt \phi$ will be invariant by the action of $\Gamma$, because, if we take some $\gamma\in \Gamma$, then,
$$
\pi\circ\wt \phi\circ\gamma=\phi \circ\pi\circ\gamma=\phi\circ \pi=\pi \circ \wt \phi
$$
Therefore, $\wt \phi$ and $\wt \phi\circ \gamma$ are liftings of the same map, so they must differ in the base point, and there must exist some $\gamma'\in \Gamma$ that takes one base point to the other, so $\gamma'\circ \wt \phi=\wt \phi\circ \gamma$. Therefore,
$$
\wt \phi \circ \gamma \circ \wt \phi^{-1}=\gamma'\in \Gamma
$$
Which is what we wanted to prove: the lifting lies in $N(\Gamma)$.

Now for the reciprocal. Suppose $\wt \phi\in N(\Gamma)$. Then, for every $\gamma\in \Gamma$, there exists some $\gamma'\in \Gamma$ such that $\gamma'\circ \wt \phi=\wt \phi\circ \gamma$. Therefore, $\pi\circ \wt\phi$ is invariant under $\Gamma$: for any $\gamma\in \Gamma$,
$$
\pi\circ \wt\phi\circ \gamma=\pi\circ\gamma' \circ \wt\phi=\pi\circ \wt \phi
$$
Therefore, $\wt \phi$ is well-defined on the quotient $\H/\Gamma$, and it descends to some $\phi$ that will satisfy the required identity.

Note that an isomorphism $\wt \phi\in N(\Gamma)$ descends to the identity if and only if it is in $\Gamma$. Therefore, $\Aut(C)\cong N(\Gamma)/\Gamma$.
\end{proof}

\noindent This should not come as a surprise, since we have already proven that the automorphism group of a dessin is $N_{F_2}(H)/H$. By Hurwitz's automorphism theorem, the order of $\Aut(C)$ satisfies $|\Aut(C)|<84(g-1)$, and in particular, it is finite, and so is the index of $\Gamma$ in $N(\Gamma)$.

Now, consider the dessin $(C,f)$. The map $f$ induces another map $f \circ \pi:\H\lra \P^1$. In Wolfart's paper \cite{abc} and in \cite{GG}, it is proven that this map is of the form $\H\lra \H/\Delta$, where $\Delta$ is a triangle group.

The triangle groups in $\Aut(\H)$ are defined as follows: take $(l,m,n)$ such that $\frac{1}{l}+\frac{1}{m}+\frac{1}{n}<1$ (if this is not the case, one can do the same in $\C$ or $\P^1$, but we are working in the case where the genus is greater than 1). Then, construct in $\H$, with the hyperbolic metric, a triangle with angles $2\pi/l$, $2\pi/m$, $2\pi/n$. Such a triangle exists, since there are triangles with any angles provided their sum is less than $2\pi$. Take the group generated by the reflections on the sides of this triangle. If we call them $a,b,c$, then the triangle group is the index 2 subgroup generated by $ab,bc,ca$. It can be proven that this subgroup is isomorphic to the $\Delta(l,m,n)$ we have already defined, and also that every two subgroups of $\Aut(\H)$ isomorphic to the triangle group are conjugate in $\Aut(\H)$ \cite{GG}.

Suppose we have a regular dessin on a curve $C=\H/\Gamma$ of type $(l,m,n)$, such that the ramification indices are precisely $(l,m,n)$. Let us prove that the corresponding $\Delta$ is actually $\Delta(l,m,n)$: consider the unramified covers of $C$. When composed with the Belyi map on $C$, these covers also give covers of type $(l,m,n)$. Reciprocally, every cover of type $(l,m,n)$ that covers $C$ must be unramified over $C$, since, for any point $P$ and any two functions $f,g$, $e_P (f\circ g)=e_{g(P)}(f)e_P(g)$. Therefore, if we call $U$ the universal cover of $\P^1\setminus \{0,1,\infty\}$, the subgroup of $F_2$ associated to the covering $U\lra \H$ is precisely $N_{(l,m,n)}$. Also, suppose a regular dessin corresponds to a subgroup $N\tl F_2$, so that the Belyi map is $U/N\lra U/F_2$. Then, the map factors through $U/N_{(l,m,n)}$, since $N\supset N_{(l,m,n)}$. Then, we have the chain of coverings
$$
U\lra U/N_{(l,m,n)}\lra U/H \lra U/F_2\cong \P^1
$$
Also, $U/H$ descends to $U/N_{(l,m,n)}$ as the group $\o{H}=H/N_{(l,m,n)}$, and the covering is then just
$$
\H/\o{H}\lra \H/\Delta({l,m,n)}
$$
In Wolfart's paper \cite{abc}, we have the following theorem (theorem 4).

\begin{teor}
Let $C$ be a curve of genus at least 2. The following are equivalent:
\begin{enumerate}
\item $C$ has a regular dessin. We say that $C$ is \textbf{quasiplatonic}.
\item When $C$ is seen as the quotient of $\H$ by the fundamental group $\Gamma$ of $C$, there is some triangle group $\Delta$ such that $\Gamma <\Delta <N(\Gamma)$.
\item $N(\Gamma)$ is a triangle group.
\item The map $C\lra C/\Aut(C)$ is a Belyi map (so $\Aut(C)\backslash C$ is isomorphic to $\P^1$ and the map is ramified over at most three points).
\end{enumerate}
\end{teor}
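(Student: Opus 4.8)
The plan is to establish the four conditions equivalent via the cycle $(1)\Rightarrow(2)\Rightarrow(3)\Rightarrow(4)\Rightarrow(1)$, working throughout with the uniformization $C\cong\H/\Gamma$ and the identification $\Aut(C)\cong N(\Gamma)/\Gamma$ from the previous lemma. The only genuinely hard link will be $(2)\Rightarrow(3)$, which is a statement purely about Fuchsian groups; the remaining three implications are bookkeeping built on material already developed in this section.

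For $(1)\Rightarrow(2)$ I would argue as follows. Because the automorphism group of a regular dessin acts transitively on the edges, it acts transitively on black vertices, on white vertices and on faces, so all black vertices share a common order $l$, all white vertices a common order $m$, and all faces have $2n$ sides; hence the dessin has exact type $(l,m,n)$. The Euler--Poincaré computation $2-2g=|\CC|\left(\tfrac1l+\tfrac1m+\tfrac1n-1\right)$ together with $g\ge 2$ forces $\tfrac1l+\tfrac1m+\tfrac1n<1$, so $\Delta(l,m,n)$ is a genuine hyperbolic triangle group acting on $\H$. As explained just before the theorem, the Belyi map is then $\H/\Gamma\lra\H/\Delta(l,m,n)$, and since the dessin is regular its subgroup $N\tl F_2$ is normal, so $\Gamma$ is normal in $\Delta(l,m,n)$; in particular $\Delta(l,m,n)$ normalizes $\Gamma$. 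This yields $\Gamma\tl\Delta(l,m,n)\le N(\Gamma)$ with $\Gamma\neq\Delta(l,m,n)$, which is exactly $(2)$.

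The heart of the matter is $(2)\Rightarrow(3)$, and this is where I expect the real work to lie. Here $N(\Gamma)$ is a Fuchsian group containing the triangle group $\Delta$ with $[N(\Gamma):\Delta]\le[N(\Gamma):\Gamma]=|\Aut(C)|<\infty$, finiteness coming from Hurwitz's bound $|\Aut(C)|<84(g-1)$, so $N(\Gamma)$ is cofinite. I would then study the orbifold covering $\H/\Delta\lra\H/N(\Gamma)$. The orbifold $\H/\Delta$ has underlying genus $0$ and exactly three cone points; since its underlying surface branch-covers that of $\H/N(\Gamma)$, the latter also has underlying genus $0$. What remains, and is delicate, is to show $\H/N(\Gamma)$ has exactly three cone points: one feeds the orbifold Riemann--Hurwitz identity $\chi^{\mathrm{orb}}(\H/\Delta)=[N(\Gamma):\Delta]\,\chi^{\mathrm{orb}}(\H/N(\Gamma))$, observes that a cofinite hyperbolic orbifold of genus $0$ must carry at least three cone points, and then rules out four or more using the rigidity of a triangular cover — concretely, Singerman's classification of triangle group inclusions in \cite{singerman}. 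The point that makes it work is that $N(\Gamma)$ is the \emph{full} normalizer, hence the maximal Fuchsian group in which $\Gamma$ is normal, and it is exactly this maximality that pins it down to a triangle group rather than to some intermediate (e.g.\ quadrilateral) Fuchsian group. Controlling this cone-point count is the obstacle I expect to demand the most care.

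Finally, $(3)\Rightarrow(4)$ I would dispatch immediately: if $N(\Gamma)=\Delta(l',m',n')$ is a triangle group, then by the previous lemma
\[
C/\Aut(C)=(\H/\Gamma)/(N(\Gamma)/\Gamma)=\H/N(\Gamma)=\H/\Delta(l',m',n'),
\]
which is $\P^1$ with the quotient map ramified only over the three cone points, so $C\lra C/\Aut(C)$ is a Belyi map. And $(4)\Rightarrow(1)$ needs nothing new, since a map of the form $C\lra C/\Aut(C)$ ramified over at most three points is, by condition $(2)$ of Proposition \ref{regular} with $G=\Aut(C)$, precisely a regular dessin on $C$. This closes the cycle and proves the four conditions equivalent.
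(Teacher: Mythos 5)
First, a point of comparison: the paper does not prove this theorem at all --- it is quoted from Wolfart's survey \cite{abc} (Theorem 4 there) and then applied to the example --- so your proposal has to be judged on its own merits rather than against an in-paper argument. Your cycle $(1)\Rightarrow(2)\Rightarrow(3)\Rightarrow(4)\Rightarrow(1)$ is the standard route, and three of the four arrows are correct as you give them: in $(1)\Rightarrow(2)$, transitivity of the automorphism group on edges does give exact type $(l,m,n)$ (faces are the right cosets of $\langle z\rangle$ in the cartographic group, on which the automorphism group acts transitively by left multiplication), the Euler--Poincar\'e count together with $g\ge 2$ does force $\frac1l+\frac1m+\frac1n<1$, and the paper's discussion preceding the theorem identifies the Belyi map with $\H/\Gamma\lra\H/\Delta(l,m,n)$, regularity translating into $\Gamma\lhd\Delta(l,m,n)$; the implications $(3)\Rightarrow(4)$ and $(4)\Rightarrow(1)$ are indeed bookkeeping, the latter being exactly condition (2) of Proposition \ref{regular} with $G=\Aut(C)$.

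The genuine problem is your justification of $(2)\Rightarrow(3)$, in two respects. First, Singerman's list \cite{singerman} classifies inclusions between pairs of \emph{triangle} groups; it cannot, even in principle, rule out that $N(\Gamma)$ is a genus-$0$ Fuchsian group with four or more cone points, because that would be an inclusion of a triangle group into a group not assumed to be triangular --- which is precisely the situation at issue. Second, your closing claim that ``it is exactly this maximality that pins it down'' is false: maximality of $N(\Gamma)$ plays no role anywhere. The statement you actually need is Greenberg's rigidity theorem (the result underlying \cite{singerman}): \emph{every} Fuchsian group containing a hyperbolic triangle group with finite index is itself a triangle group. Concretely: $\Delta\le N(\Gamma)$ has finite index because $[N(\Gamma):\Gamma]=|\Aut(C)|<\infty$ by Hurwitz; the inclusion induces an injection of Teichm\"uller spaces $T(N(\Gamma))\hookrightarrow T(\Delta)$ by restriction of hyperbolic structures; and $\dim T(\Delta)=0$, so if $N(\Gamma)$ has signature $(g';m_1,\dots,m_{r'})$ then $6g'-6+2r'=0$, forcing $g'=0$ and $r'=3$. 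Once you replace the appeal to the inclusion classification and to maximality by this theorem (or by the dimension count just sketched), your genus-$0$ and cone-point bookkeeping becomes superfluous and $(2)\Rightarrow(3)$ is complete; the rest of your proposal then stands.
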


\noindent Let us apply this to the curve in our example: the curve $C$ that has the dessin given by the subgroup $E_0<\Fd$. Take $\sigma \in \gal$ such that it doesn't fix $\rd$. If the conjugate curve $C^\sigma$ was isomorphic to $C$, then $C$ would have two different regular dessins: the one given by $E_0$ and the one given by $E_0^\sigma$. Both of these are given by maps $C\lra C/H$, where $H$ is a subgroup of $\Aut(C)$. By the theorem, it follows that there is a dessin of maximum degree, given by $C\lra C/\Aut(C)$, and it must be unique. Therefore, if $C^\sigma\cong C$, the curve $C$ must have another regular dessin, of greater degree then $E_0$.

We can see $C$ as $\H/\Gamma$, so that the Belyi map $f$ is $\H/\Gamma \lra \H/\Delta$, where $\Gamma<\Delta<N(\Gamma)$. Now, $(C,f)$ has type $(6,4,6)$, so the triangle group is $\Delta(6,4,6)$.

Therefore, we have that $\Gamma<\Delta(6,4,6)<N(\Gamma)$. If the dessin we are considering is not the regular dessin with the biggest degree, then $N(\Gamma)$ must strictly contain $\Delta(6,4,6)$. Also, by the theorem, it is another triangle group. Now, Singerman \cite{singerman} lists all the possible finite index inclusions between triangle groups, and \cite{multiply} gives a list with inclusions that generate every inclusion. Looking at this list, we see that the only triangle group that contains $\Delta(6,4,6)$ is $\Delta(6,8,2)$. If we give names to the generators so that
$$
\Delta(6,4,6)=\langle x,y,z|x^6=y^4=z^6=xyz=1\rangle
$$
$$
\Delta(6,8,2)=\langle \wt x,\wt y,\wt z|\wt x^6=\wt y^8=\wt z^2=\wt x\wt  y\wt z=1\rangle
$$
Then, the inclusion is given by
$$
x=\wt x$$ $$y=\wt y^2 $$ $$z=\wt z \wt x \wt z
$$
So $\Gamma\tl \Delta(6,4,6)<\Delta(6,8,2)$ in this way, and it is clear that the inclusion between the triangle groups is of index 2. Now, the regular dessin given by $E_0$ will be the maximal one, and therefore $C\not \cong C^\sigma$, if and only if $\Gamma$ is not a normal subgroup of $\Delta(6,8,2)$. We will prove that this is the case.

As we said before, $\Gamma$ is actually the group $E_0/N_{(6,4,6)}$.  Therefore, $\Gamma$ will be normal in $\Delta(6,8,2)$ if and only if $E_0/N_{(6,4,6)}$ is normal in $\Delta(6,8,2)$, with the inclusion written above.

$\Delta(6,4,6)$ is indeed normal in $\Delta(6,8,2)$, because it has index 2, so conjugation by elements of $\Delta(6,8,2)$ gives an automorphism of $\Delta(6,4,6)$. We need to see if this automorphism fixes $E_0/N_{(6,4,6)}$.

Let us look at the action of the group by conjugation. In particular, let us look at the action of $\wt y$ (which we are choosing because it doesn't lie in $\Delta(6,4,6)$). We have that ${\wt y}^{-1}y\wt y=y^{\wt y}=({\wt y}^2)^{\wt y}=y$. Also,
Since $\wt z=(\wt x\wt y)^{-1}$ has order 2, $\wt x \wt y=\wt y^{-1} \wt x^{-1}$, so
$$
x^{\wt y}=\wt y^{-1} \wt x\wt y=\wt y^{-1}\wt y^{-1} \wt x^{-1}=\wt y^{-2}\wt x^{-1}=y^{-1}x^{-1}
$$
Therefore, the automorphism of $E_0/N_{(6,4,6)}$ induced by $\wt y$ is given by
$$
x\longmapsto y^{-1}x^{-1}$$ $$
y\longmapsto y
$$
Let us see if it fixes $E_0$. Take the subgroup $F/N_{(6,4,6)}$, which contains $E_0$. Recall that it is the normal subgroup generated by $\{x^3,y^2,[x,x^y]\}$. In particular, it is contained in the group $A=\langle \langle x,y^2\rangle\rangle$, so $E_0$ is also contained in this group. 

Recall that $E_0$ contained the element $x^3(x^3)^y(x^3)^{y^2}$. However, we are going to see that $(x^3y^2(x^3y^2)^x(x^3y^2)^{x^2})^{\wt y}\notin A$. Indeed, we have that
$$
(x^3y^2(x^3y^2)^x(x^3y^2)^{x^2})^{\wt y}=
(y^{-1}x^{-1})^3y^2
\left((
y^{-1}x^{-1})^3y^2
\right)^{y^{-1}x^{-1}}\left((
y^{-1}x^{-1})^3y^2
\right)^{(y^{-1}x^{-1})^2}
$$
And the word on the right contains an odd number of $y$'s, so it is not in $A$. Therefore, $(x^3y^2(x^3y^2)^x(x^3y^2)^{x^2})^{\wt y}$ isn't even in $A$, so it can't belong to $E_0$. Therefore, $E_0$ is not normal in $\Delta(6,8,2)$, and we have proven that the dessin on $C$ is the maximal one. Therefore, the curve $C^\sigma$ is different from $C$.

We have thus found a quasiplatonic curve of genus 61 with field of moduli $\Q(\rd)$.

\section{A different example}

Let us give another example of a regular dessin that has a field of moduli that is not abelian. Consider the following dessins which have the following sets of ramification indices: the points over 0 (the black vertices) have indices $(2,2,1,1)$, the points over $1$ have indices $(3,2,1)$ and there is one point over $\infty$ with index 6. In particular, its Euler characteristic is $4+3-6+1=2$, so the genus of the dessin is $0$, and they are trees on $\P^1$.

Some fiddling around shows that there are three different dessins with these indices, namely the ones depicted in figure \ref{trees}.

\begin{center}
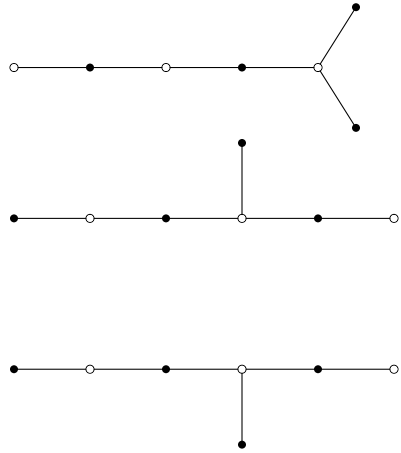

\begin{tikzpicture}[line cap=round,line join=round,>=triangle 45,x=1cm,y=1cm]
\clip(-1,-3.5) rectangle (7,3);
\draw (1,2)-- (5,2);
\draw (1,0)-- (6,0);
\draw (1,-2)-- (6,-2);
\draw (4,0)-- (4,1);
\draw(4,-2)--(4,-3);
\draw(5,2)--(5.5,2.8);
\draw(5,2)--(5.5,1.2);
\puntonegro{(2,2)}
\puntonegro{(4,2)}
\puntonegro{(5.5,2.8)}
\puntonegro{(5.5,1.2)}
\puntonegro{(1,0)}
\puntonegro{(3,0)}
\puntonegro{(5,0)}
\puntonegro{(1,-2)}
\puntonegro{(3,-2)}
\puntonegro{(5,-2)}
\puntonegro{(4,1)}
\puntonegro{(4,-3)}
\puntoblanco{(1,2)}
\puntoblanco{(3,2)}
\puntoblanco{(5,2)}
\puntoblanco{(2,-2)}
\puntoblanco{(4,-2)}
\puntoblanco{(6,-2)}
\puntoblanco{(2,0)}
\puntoblanco{(4,0)}
\puntoblanco{(6,0)}
\end{tikzpicture}
\captionof{figure}{The three trees with vertex orders $(2,2,1,1)$, $(3,2,1)$ and $(6)$.}\label{trees}
\end{center}

\noindent Each of these dessins is given by a Belyi function $f_i:\P^1\lra \P^1$, for $i=0,1,2$. Since the only point in the preimage of $\infty$ is $\infty$, the functions are actually polynomials. They are calculated in \cite{voed} (also in \cite{sch94}, and it is example 4.58 in \cite{GG}), and the formula for $f_i$ is
$$
f_i(z)=1-z^3(z+1)^2(z+a_i)
$$
Where $a_i$ runs through the roots of the irreducible polynomial
$$
P=25x^3-12x^2-24x-16
$$
It turns out that the roots of this polynomial are
$$
a_0=\frac{4+18\rd+6\sqrt[3]{4}}{25}
$$
And its conjugates
$$
a_1=\frac{4+18\xi\rd+6\xi^2\sqrt[3]{4}}{25}
$$
$$
a_2=\frac{4+18\xi^2\rd+6\xi\sqrt[3]{4}}{25}
$$
Where $\xi^3=1$.$\gal$ permutes these roots, and therefore it induces a permutation of the three dessins. Also, their fields of moduli are $\Q(a_i)=\Q(\xi^{i}\rd)$. If we call $K=\Q(a_0,a_1,a_2)=\Q(\rd,\xi)$, we have that $\Gal(K/\Q)\cong S_3$. Thus, the Galois group of the extensions acts on the dessin as the usual action of $S_3$ on three points.

Let us look at their regular covers, and prove that they are different. This will mean that, since the Galois action maps regular covers to regular covers, their fields of moduli are also $\Q(a_i)$.

We are going to see that the cartographic group is $S_6$. This can be found in \cite{malle}, where a list of genus 0 dessins of degree up to 13 is given, along with their cartographic groups and their fields of definition.

Take their associated subgroups $H_0,H_1,H_2<\Fd$. The cartographic group is the image in $S_6$ of the homomorphism $\Fd/\cor_{\Fd}(H_i)\lra S_6$ induced by the action. For example, for a certain way of numbering the edges in figure \ref{trees}, these maps are given by
$$
\begin{array}{ccccc}
& & \text{Top} & \text{Middle} & \text{Bottom}\\
x& \longmapsto& (14)(56)& (14)(26) & (14)(36)\\
y&\longmapsto &(123)(45)&(123)(45)&(123)(45)
\end{array}
$$
In every dessin, the permutation given by $z=(xy)^{-1}$ is a cycle of order 6, since the exterior face has 6 edges, and $y^3$ is a transposition, so together they generate the whole group $S_6$.

Let us call $\o{H_i}=\cor_{\Fd}(H_i)$. Since the Galois group acts on these as the full permutation group, if two of them are equal, they are all equal. Suppose then that they are all equal. Take an element $\sigma$ that has order three in $\Gal(K/\Q)$, so it permutes all three roots. For example, take the one that induces the permutation $H_0\rightarrow H_1\rightarrow H_2$. If $\sigma$ fixed $\o{H_i}$, then it would induce an automorphism of the quotient $\Fd/\o{H_i}$. This quotient is the cartographic group of the dessins, so it is isomorphic to $S_6$, for example by the isomorphism given by the monodromy of the first dessin. Furthermore, this automorphism is outer: it maps $H_0/\o{H_0}$, which is the stabilizer of 1 by the action of $S_6$, to a subgroup $H_1$, that isn't conjugate to $H_1$, for if they were conjugate, the degree 6 dessins would be isomorphic.

However, $\Out (S_6)$ has order 2. Therefore, $\sigma^4=\sigma\in \Inn(S_6)$ (for $\sigma$ has order 3). This contradicts the possibility that $\o{H_0}=\o{H_1}=\o{H_2}$, so these subgroups are different and they are interchanged by the Galois action. Therefore, their fields of moduli are $\Q(a_i)=\Q(\xi^i\rd)$. Their genus is also 61, since their Euler characteristic is $720\cdot(1/2+1/6+1/6-1)=-120$.

\begin{Referencias}
\bitem{belyi}
\noun{Belyi, G. V.}, \textit{On Galois Extensions of a Maximal Cyclotomic Field}, Mathematics of the USSR-Izvestiya 14 (2): 247, 1980

\bitem{cojowo}
\noun{Conder, M., Jones, G., Streit, M., Wolfart, J.}, \textit{Galois actions on regular dessins of small genera}, Rev. Mat. Iberoam. 28 (2012), no. 4, 1–19

\bitem{couv}
\noun{Couveignes, J.-M.}, \textit{Calcul et rationalité de fonctions de Belyi en genre 0}, Ann. de l'Inst. Fourier, 994, vol. 44, no. 1, 1-38

\bitem{rabbit}
\noun{Couveignes, J.-M.}, \textit{Dessins from a geometric point of view}, The Grothendieck theory of dessins d'enfants pp. 79-114, London Math. Soc. Lecture notes 200, Cambridge University Press, 1994.

\bitem{farkas}
\noun{Farkas, H., Kra, I.}, \textit{Riemann Surfaces}, Graduate texts in mathematics, Vol. 71, Springer, 1992

\bitem{multiply}
\noun{Girondo, E.}, \textit{Multiply quasiplatonic Riemann surfaces}, Experimental Mathematics, Vol. 12, No. 4, 2003

\bitem{GG}
\noun{Girondo, E., González-Diez, G.}, \textit{Introduction to Compact Riemann Surfaces and Dessins d'Enfants}, London Mathematical Society Student Texts, Cambridge University Press, 2011

\bitem{genus}
\noun{Girondo, E., González-Diez, G.}, \textit{A note on the action of the absolute Galois group on dessins}, Bulletin of the London Mathematical Society, Vol. 39 Issue 5, p721, 2007

\bitem{jule03}
\noun{González-Diez, G., Jaikin-Zapirain, A.}, \textit{The absolute Galois group acts faithfully on regular dessins and on Beauville surfaces}, preprint, available at \url{http://www.uam.es/personal_pdi/ciencias/gabino/Jule03.pdf}, 2013

\bitem{gro}
\noun{Grothendieck, A.}, \textit{Esquisse d'un Programme}, 1984. It is published in Geometric Galois Actions. Around Grothendieck's Esquisse d'un Programme, edited by L. Schneps and P. Lochak, Cambridge University Press, 1997.

\bitem{Guillot}
\noun{Guillot, P.}, \textit{An elementary approach to Grothendieck's dessins d'enfants and the Grothendieck-Teichmüller group}, {\tt arXiv:1309.1968}

\bitem{freepro}
\noun{Herfort, W., Ribes, L.}, \textit{Torsion elements and centralizers in free products of profinite groups}, Journal für die reine und angewandte Mathematik. Vol. 1985, 358, pp. 155–161

\bitem{Lando}
\noun{Lando, S. K., Zvonkin, A. K.}, Graphs on Surfaces and their Applications, Encyclopaedia of Mathematical Sciences: Lower-Dimensional Topology II 141, Berlin, New York, Springer-Verlag, 2004

\bitem{malle}
\noun{Malle, G.}, \textit{Fields of definition of some three point ramified field extensions}, The Grothendieck theory of dessins d'enfants, London Math. Soc. Lecture notes 200, Cambridge University Press, 1994. 

\bitem{Miranda}
\noun{Miranda, R.}, \textit{Algebraic curves and Riemann surfaces}, Graduate studies in mathematics, American Mathematical Society, 1995

\bitem{Munkres}
\noun{Munkres, J.}, \textit{Topology}, Prentice Hall, 2000

\bitem{ucdavis}
\noun{Osserman, B.}, \textit{Infinite Galois theory}, notes available at \url{https://www.math.ucdavis.edu/~osserman/classes/250C/}

\bitem{profinite}
\noun{Rives, L., Zaleskii, P.}, \textit{Profinite groups}, A Series of Modern Surveys in Mathematics, Springer-Verlag, 2000

\bitem{serre}
\noun{Serre, J. P.}, \textit{Galois cohomology}, Springer, 2002

\bitem{sch94}
\noun{Schneps, L.}, \textit{Dessins d'enfants on the Riemann sphere}, The Grothendieck theory of dessins d'enfants, London Math. Soc. Lecture notes 200, Cambridge University Press, 1994.

\bitem{voed}
\noun{Shabat, G., Voevodsky, V.}, \textit{Drawing curves over number fields}, The Grothendieck Festschrift, Progress in Mathematics Volume 88, 1990, pp 199-227

\bitem{singerman}
\noun{Singerman, D.}, \textit{Finitely maximal Fuchsian groups.}, J. London Math. Soc. (2) 6 (1972), 29–38.

\bitem{weil}
\noun{Weil, A.}, \textit{The field of definition of a variety}, American Journal of Mathematics, Vol. 78, No. 3, 1956

\bitem{abc}
\noun{Wolfart, J.,} \textit{ABC for polynomials, dessins and uniformization - a survey.} Proceedings der ELAZ-Konferenz 2004, pp. 314-346 Hrsg. W. Schwarz, J. Steuding. Also available at \url{http://www.math.uni-frankfurt.de/\~wolfart/}


\end{Referencias}

\end{document}